\newcommand{\nc}{\newcommand}
\nc{\exto}[1]{\stackrel{#1}{\longrightarrow}}
\nc{\dlim}{{\mathop{\lim\limits_{\longrightarrow}\,}}}
\nc{\ilim}{{\mathop{\lim\limits_{\longleftarrow}\,}}}
\nc{\hocolim}{{\mathop{\sf hocolim}\,}}
\nc{\holim}{{\mathop{\sf holim}}}
\nc{\lan}{\big\langle}
\nc{\ran}{\big\rangle}
\nc{\kk}{{\mathsf{k}}}
\nc{\C}{{\mathbb{C}}}
\nc{\HH}{{\mathbf{H}}}
\nc{\DD}{{\mathbb{D}}}
\nc{\LL}{{\mathbb{L}}}
\nc{\PP}{{\mathbb{P}}}
\nc{\QQ}{{\mathbb{Q}}}
\nc{\RR}{{\mathbb{R}}}
\nc{\ZZ}{{\mathbb{Z}}}
\nc{\CA}{{\mathcal{A}}}
\nc{\CB}{{\mathcal{B}}}
\nc{\CC}{{\mathcal{C}}}
\nc{\D}{{\mathcal{D}}}
\nc{\CE}{{\mathcal{E}}}
\nc{\CF}{{\mathcal{F}}}
\nc{\CG}{{\mathcal{G}}}
\nc{\CH}{{\mathcal{H}}}
\nc{\CL}{{\mathcal{L}}}
\nc{\CM}{{\mathcal{M}}}
\nc{\CN}{{\mathcal{N}}}
\nc{\CO}{{\mathcal{O}}}
\nc{\CQ}{{\mathcal{Q}}}
\nc{\CR}{{\mathcal{R}}}
\nc{\CS}{{\mathcal{S}}}
\nc{\CT}{{\mathcal{T}}}
\nc{\CU}{{\mathcal{U}}}
\nc{\CV}{{\mathcal{V}}}
\nc{\CW}{{\mathcal{W}}}
\nc{\CX}{{\mathcal{X}}}
\nc{\CY}{{\mathcal{Y}}}
\nc{\CMo}{{\mathcal{M}^\circ}}
\nc{\Co}{{{C}^\circ}}
\nc{\BY}{{\overline{Y}}}
\nc{\BYD}{{\overline{Y}{}^{|D|}}}
\nc{\OZ}{{\overline{Z}}}
\nc{\bg}{{\bar{g}}}
\nc{\bq}{{\mathbf{q}}}
\nc{\BD}{{\mathbf{D}}}
\nc{\BG}{{\mathbf{G}}}
\nc{\BM}{{\mathbf{M}}}
\nc{\BP}{{\mathbf{P}}}
\nc{\BZ}{{\mathbf{Z}}}
\nc{\BPr}{{\mathsf{P}}}
\nc{\BL}{{\mathbf{L}}}
\nc{\BR}{{\mathbf{R}}}
\nc{\BRO}[1]{{{\mathbf{R}}^{\circ}_{#1}}}
\nc{\BRD}[1]{{{\mathbf{R}}^{|D|}_{#1}}}
\nc{\BRP}[1]{{{\mathbf{R}}^{1}_{#1}}}
\nc{\BRTP}[1]{{{\mathbf{\tilde{R}}}{}^{1}_{#1}}}
\nc{\BS}{{\mathbf{S}}}
\nc{\BMS}{{{\mathbf{M}}^{{s}}}}
\nc{\BMSS}{{{\mathbf{M}}^{{ss}}}}
\nc{\BMZ}{{\mathbf{M}^{\circ}}}
\nc{\BCL}{{\mathbf{L}}}
\nc{\PCC}{{{}^\perp\CC}}
\nc{\Cl}{{\mathsf{Cliff}}}
\nc{\Clev}{{\mathop{\mathsf{Cliff}}^{\circ}}}
\nc{\FA}{{\mathfrak{A}}}
\nc{\FB}{{\mathfrak{B}}}
\nc{\fa}{{\mathfrak{a}}}
\nc{\fb}{{\mathfrak{b}}}
\nc{\fg}{{\mathfrak{g}}}
\nc{\fn}{{\mathfrak{n}}}
\nc{\fp}{{\mathfrak{p}}}
\nc{\FD}{{\mathfrak{D}}}
\nc{\FE}{{\mathfrak{E}}}
\nc{\FL}{{\mathfrak{L}}}
\nc{\FM}{{\mathfrak{M}}}
\nc{\FS}{{\mathsf{S}}}
\nc{\sfc}{{\mathsf{c}}}
\nc{\sfch}{{\mathsf{ch}}}
\nc{\sfh}{{\mathsf{h}}}
\nc{\SK}{{\mathsf{K}}}
\nc{\SM}{{\mathsf{M}}}
\nc{\SO}{{\mathsf{O}}}
\nc{\SQ}{{\mathsf{Q}}}
\nc{\SPV}{{\mathsf{S}^+\mathsf{V}}}
\nc{\SMV}{{\mathsf{S}^-\mathsf{V}}}
\nc{\SPMV}{{\mathsf{S}^\pm\mathsf{V}}}
\nc{\SX}{{S_X}}
\nc{\SY}{{S_Y}}
\nc{\phipsi}{{q}}
\nc{\eps}{\varepsilon}
\nc{\pim}{{\pi_-}}
\nc{\pip}{{\pi_+}}
\nc{\BE}{{\overline{\CE}}}
\nc{\TE}{{\tilde{\CE}}}
\nc{\TQ}{{\tilde{Q}}}
\nc{\TCF}{{\tilde{\CF}}}
\nc{\TCG}{{\tilde{\CG}}}
\nc{\TCL}{{\tilde{\CL}}}
\nc{\TF}{{\tilde{F}}}
\nc{\TW}{{\tilde{W}}}
\nc{\TCC}{{\tilde{\CC}}}
\nc{\TCX}{{\tilde{\CX}}}
\nc{\TCY}{{\tilde{\CY}}}
\nc{\TPhi}{{\tilde{\Phi}}}
\nc{\OPhi}{{\bar{\Phi}}}
\nc{\txi}{{\tilde{\xi}}}
\nc{\tp}{{\tilde{p}}}
\nc{\tq}{{\tilde{q}}}
\nc{\tzeta}{{\tilde{\zeta}}}
\nc{\tpi}{{\tilde{\pi}}}
\nc{\halpha}{{\hat{\alpha}}}
\nc{\HCA}{{\hat{\CA}}}
\nc{\HCB}{{\hat{\CB}}}
\nc{\HCC}{{\hat{\CC}}}
\nc{\HE}{{\widehat{\CE}}}
\nc{\HX}{{\hat{X}}}
\nc{\hxi}{{\hat{\xi}}}
\nc{\UH}{{\mathcal{H}}}
\nc{\TM}{{\widetilde{M}}}
\nc{\TCM}{{\widetilde{\CM}}}
\nc{\TU}{{\widetilde{U}}}
\nc{\TX}{{\widetilde{X}}}
\nc{\TY}{{\widetilde{Y}}}
\nc{\TYO}{{{\widetilde{Y}}^\circ}}
\nc{\barf}{{\bar{f}}}
\nc{\te}{{\tilde{e}}{}}
\nc{\tf}{{\tilde{f}}}
\nc{\tg}{{\tilde{g}}}
\nc{\ti}{{\tilde{\imath}}}
\nc{\tj}{{\tilde{\jmath}}}
\nc{\ty}{{\tilde{y}}}
\nc{\tphi}{{\tilde{\phi}}}
\nc{\urho}{{\underline{\rho}}}
\nc{\LRA}{\Leftrightarrow}
\nc{\RA}{\Rightarrow}
\nc{\lotimes}{\mathbin{\mathop{\otimes}\limits^{\mathbb{L}}}}
\nc{\CEnd}{\mathop{\mathcal{E}\mathit{nd}}\nolimits}
\nc{\CExt}{\mathop{\mathcal{E}\mathit{xt}}\nolimits}
\nc{\CHom}{\mathop{\mathcal{H}\mathit{om}}\nolimits}
\nc{\RH}{\mathop{{\mathsf{R}}\Gamma}\nolimits}
\nc{\RGamma}{\mathop{{\mathsf{R}}\Gamma}\nolimits}
\nc{\RHom}{\mathop{\mathsf{RHom}}\nolimits}
\nc{\RCHom}{\mathop{\mathsf{R}\mathcal{H}\mathit{om}}\nolimits}
\nc{\RG}{\mathop{\mathsf{R\Gamma}}\nolimits}
\nc{\Hom}{\mathop{\mathsf{Hom}}\nolimits}
\nc{\Ext}{\mathop{\mathsf{Ext}}\nolimits}
\nc{\End}{\mathop{\mathsf{End}}\nolimits}
\nc{\Tor}{\mathop{\mathsf{Tor}}\nolimits}
\nc{\Tordim}{\mathop{\mathsf{Tor}\text{\rm-}\mathsf{dim}}\nolimits}
\nc{\Hilb}{\mathop{\mathsf{Hilb}}\nolimits}
\nc{\Spec}{\mathop{\mathsf{Spec}}\nolimits}
\nc{\Pic}{\mathop{\mathsf{Pic}}\nolimits}
\renewcommand{\Im}{\mathop{\mathsf{Im}}\nolimits}
\nc{\Tr}{\mathop{\mathsf{Tr}}\nolimits}
\nc{\Cone}{\mathop{\mathsf{Cone}}\nolimits}
\nc{\Fiber}{\mathop{\mathsf{Fiber}}\nolimits}
\nc{\Ker}{\mathop{\mathsf{Ker}}\nolimits}
\nc{\Coker}{\mathop{\mathsf{Coker}}\nolimits}
\nc{\codim}{\mathop{\mathsf{codim}}\nolimits}
\nc{\sing}{{\mathsf{sing}}}
\nc{\supp}{\mathop{\mathsf{supp}}}
\nc{\vol}{\mathop{\mathsf{vol}}\nolimits}
\nc{\ch}{\mathop{\mathsf{ch}}\nolimits}
\nc{\perf}{{\mathsf{perf}}}
\nc{\rank}{\mathop{\mathsf{rank}}}
\nc{\Pf}{{\mathsf{Pf}}}
\nc{\Gr}{{\mathsf{Gr}}}
\nc{\OGr}{{\mathsf{OGr}}}
\nc{\Flag}{{\mathsf{Fl}}}
\nc{\Kosz}{{\mathsf{Kosz}}}
\nc{\LGr}{{\mathsf{LGr}}}
\nc{\GTGr}{{\mathsf{G_2Gr}}}
\nc{\GTF}{{\mathsf{G_2F}}}
\nc{\OF}{{\mathsf{OF}}}
\nc{\Fl}{{\mathsf{Fl}}}
\nc{\Bl}{{\mathsf{Bl}}}
\nc{\GL}{{\mathsf{GL}}}
\nc{\PGL}{{\mathsf{PGL}}}
\nc{\SL}{{\mathsf{SL}}}
\nc{\SP}{{\mathsf{Sp}}}
\nc{\Spin}{{\mathsf{Spin}}}
\nc{\Tot}{{\mathsf{Tot}}}
\nc{\ev}{{\mathsf{ev}}}
\nc{\od}{{\mathsf{odd}}}
\nc{\coev}{{\mathsf{coev}}}
\nc{\id}{{\mathsf{id}}}
\nc{\opp}{{\mathsf{opp}}}
\nc{\PS}{{{\PP^3}}}
\nc{\Qu}{{{Q^3}}}
\nc{\tdim}{\mathop{\Tor\dim}}
\nc{\ecart}{{\fbox{$\scriptstyle\mathsf{EC}$}}}
\nc{\ad}{{\mathop{\mathsf ad}}}
\nc{\sg}{{\mathop{\mathsf sg}}}
\nc{\hf}{{\mathop{\mathsf hf}}}
\nc{\gr}{{\mathop{\mathsf gr}}}
\nc{\qgr}{{\mathop{\mathsf qgr}}}
\nc{\Coh}{{\mathop{\mathsf Coh}}}
\nc{\Ab}{{\mathop{\mathcal{A}\mathit{b}}}}
\nc{\Ccoh}{{\mathop{\mathsf Ccoh}}}
\nc{\Qcoh}{{\mathop{\mathsf Qcoh}}}
\nc{\At}{{\mathop{\mathsf{At}}\nolimits}}
\nc{\tra}{{\mathsf{T}}}
\nc{\fsl}{{\mathfrak{sl}}}
\nc{\fso}{{\mathfrak{so}}}
\nc{\fgl}{{\mathfrak{gl}}}
\nc{\AAV}{{\mathcal{AAV}}}
\nc{\Rep}{{\mathsf{Rep}}}
\nc{\Cubics}{{{\mathcal{S}}_3}}
\nc{\VFT}{{{\mathcal{S}}_{14}}}
\nc{\VFTE}{{{\mathcal{N}}_{\mathrm{reg,sm}}}}
\nc{\MX}{{\CM_X}}
\nc{\MY}{{\CM_Y}}
\nc{\MYE}{{\CM_{Y,\CE}}}
\nc{\Yd}{{Y_d}}
\nc{\Yfive}{{Y_5}}
\nc{\Xg}{{X_{2g-2}}}
\nc{\Xtt}{{X_{22}}}
\nc{\Xst}{{X_{16}}}
\nc{\Xtw}{{X_{12}}}
\nc{\Xe}{{X_{8}}}
\nc{\Xf}{{X_{4}}}
\nc{\git}{{/\!\!/\!{}_\chi}}
\nc{\HOH}{{\mathsf H\mathsf H}}
\nc{\HHE}{{\mathsf H\mathsf E}}
\theoremstyle{plain}
\newtheorem{theorem}{Theorem}[section]
\newtheorem{conjecture}[theorem]{Conjecture}
\newtheorem{lemma}[theorem]{Lemma}
\newtheorem{proposition}[theorem]{Proposition}
\newtheorem{corollary}[theorem]{Corollary}
\theoremstyle{definition}
\newtheorem{definition}[theorem]{Definition}
\theoremstyle{remark}
\newtheorem{remark}[theorem]{Remark}
\title{Hochschild homology and semiorthogonal decompositions}
\author{Alexander Kuznetsov}
\address{\sloppy
\parbox{0.9\textwidth}{
Algebra Section, Steklov Mathematical Institute,
8 Gubkin str., Moscow 119991 Russia
\hfill\\[5pt]
The Poncelet Laboratory, Independent University of Moscow
\hfill
}\bigskip}
\email{akuznet@mi.ras.ru}
\date{}
\thanks{I was partially supported by
RFFI grants 08-01-00297, 07-01-00051, and 07-01-92211,
INTAS 05-1000008-8118,
and the Russian Science Support Foundation.}
\begin{document}

\begin{abstract}
We investigate Hochschild cohomology and homology of admissible subcategories of 
derived categories of coherent sheaves on smooth projective varieties. We show that
the Hochschild cohomology of an admissible subcategory is isomorphic to the derived
endomorphisms of the kernel giving the corresponding projection functor, and 
the Hochschild homology is isomorphic to derived morphisms from this kernel to
its convolution with the kernel of the Serre functor. We investigate some basic 
properties of Hochschild homology and cohomology of admissible subcategories.
In particular, we check that the Hochschild homology is additive with respect
to semiorthogonal decompositions and construct some long exact sequences 
relating the Hochschild cohomology of a category and its semiorthogonal components.
We also compute Hochschild homology and cohomology of some interesting admissible
subcategories, in particular of the nontrivial components of derived categories
of some Fano threefolds and of the nontrivial components of the derived categories
of conic bundles.
\end{abstract}

\maketitle

\section{Introduction}

Cyclic homology and cohomology of schemes was defined by Loday~\cite{L},
Weibel~\cite{W} and Swan~\cite{S}. In the case of a smooth projective variety
they coincide with the Hochschild homology and cohomology and enjoy many pleasant
properties which were investigated by Markarian in~\cite{Ma,Ma2}. By definition
$$
\HOH^\bullet(X) = \Hom^\bullet_{X\times X}(\Delta_*\CO_X,\Delta_*\CO_X),
\qquad
\HOH_\bullet(X) = \HH^\bullet(X\times X,\Delta_*\CO_X\otimes\Delta_*\CO_X),
$$
where the tensor product is considered in the derived sense,
and $\HH^\bullet$ stands for the hypercohomology. Also it is very easy to show that
$\HOH^\bullet(X) = \Hom^\bullet_{X\times X}(\Delta_*\CO_X,\Delta_*\omega_X[\dim X])$.

Thus defined, Hochschild homology and cohomology provide an important
connection between algebra and geometry. On one hand, they generalize
the notion of Hochschild (co)homology of algebras. On the other hand,
as it was shown in \emph{loc.\ cit.} $\HOH_\bullet(X)$ is isomorphic
to the Hodge cohomology of $X$ (with a shifted grading) and $\HOH^\bullet(X)$
is isomorphic to the cohomology of polyvector fields on $X$.
At the same time it is well known that any equivalence $\D^b(X) \cong \D^b(Y)$
induces isomorphisms $\HOH^\bullet(X) \cong \HOH^\bullet(Y)$ and
$\HOH_\bullet(X) \cong \HOH_\bullet(Y)$ (see~\cite{O-HH}).
Thus the Hochschilid cohomology and homology are invariants
of the derived category of coherent sheaves (the natural categorical interpretation
of the Hochschild cohomology is just (a derived version of) the space
of endomorphism of the identity functor of $\D^b(X)$, while for the Hochschild homology
it is the space of self-$\Tor$'s of the identity functor of $\D^b(X)$, or alternatively,
%one can show that
%$$
%\HOH_\bullet(X) \cong \Hom^\bullet_{X\times X}(\Delta_*\CO_X,\Delta_*\omega_X[\dim X]),
%$$
%the Hochschild homology is isomorphic to
the space of maps from the identity functor of $\D^b(X)$ to its Serre functor).

On the other hand, the Hochschild homology and cohomology is defined for any
differential graded category (see e.g.~\cite{Ke-HH,Ke-HC,Ke}) and hence for any triangulated category
which admits a DG-enhancement. It is also well known that these two definitions agree
(for homology it was shown by Keller in~\cite{Ke-HHS} and for the cohomology it follows
easily from the results of To\"en~\cite{T}).

The goal of the present paper is an investigation of Hochschild homology and cohomology
of a certain class of triangulated categories which is very important for the algebraic
geometry --- admissible subcategories of derived categories of coherent sheaves.
These categories arise as components of semiorthogonal decompositions.

%The goal of the present paper is to adjust the machinery of Hochschild
%homology and cohomology for applications in algebraic geometry.
%First, we want to extend the definition of the Hochschild homology
%and cohomology to a class of triangulated categories which can be represented
%as components of semiorthogonal decompositions of derived categories of
%coherent sheaves on smooth projective varieties. This class of categories
%attracted big interest recently, and such an extension may give an important tool
%for their investigation. Also we would like to work out effective ways to compute
%the Hochschild homology and cohomology of these categories.

%It would be very pleasant to have a generalization of the notion of
%Hochschild (co)homology to any triangulated category, not only the derived category
%of something. This seems to be a very complicated problem in general, since the
%cateogry of endofunctors of a triangulated category is not triangulated itslef
%because of well known deficiency of axiomatics of triangulated categories.
%However, when triangulated categories under consideration have some additional
%rigidity (e.g.\ a DG-enhancement), the question turns out to be solvable.
%The main goal of the present paper is to introduce the notion of Hochschild homology
%and cohomology of a class of triangulated categories, which is very important
%from the point of view of algebraic geometry. This class consists of categories
%which can be embedded as semiorthogonal components into the derived categories
%of coherent sheaves on smooth projective algebraic varieties.

The notions of an admissible subcategory and of a semiorthogonal decomposition
were introduced by Bondal and Kapranov in~\cite{BK}. Roughly speaking, a pair of
strictly full triangulated subcategories $\CA$, $\CB$ of a triangulated
category $\CT$ gives a semiorthogonal decomposition if $\Hom(\CB,\CA) = 0$
and each object $T$ of $\CT$ can be included in a distinguished triangle
of the form $B \to T \to A$ with $A \in \CA$, $B \in \CB$. We write
$\CT = \langle\CA,\CB\rangle$ to indicate such a semiorthogonal decomposition.
In this case $\CA$ is left admissible and $\CB$ is right admissible.
There is a similar notion of a semiorthogonal decomposition with more
than two components, $\CT = \langle \CA_1,\dots,\CA_n\rangle$,
see Section~\ref{prelim} for details.

To define the Hochschild homology and cohomology of an admissible subcategory
$\CA \subset \D^b(X)$ we use a natural DG-enhancement of these categories.
We choose a generator $\CE$ for $\D^b(X)$ and take its component $\CE_\CA$ in $\CA$.
Then it is easy to see that $\CE_\CA$ is a generator of $\CA$ and $\CA$ is equivalent
to the category of perfect complexes over the DG-algebra $C^\bullet = \RHom^\bullet(\CE_\CA,\CE_\CA)$.
Thus, the Hochschild homology and cohomology of $\CA$ are defined as those of $C^\bullet$.

Our first result is a DG-algebra-free interpretation of Hochschild homology and cohomology
in the spirit of the Swan--Weibel definition for schemes. For this we use the results
of~\cite{K-FBC}, where it is proved that the projection functor $\D^b(X) \to \CA$
is representable by a kernel $P \in \D^b(X\times X)$. We show that
$$
\HOH^\bullet(\CA) \cong \Hom^\bullet_{X\times X}(P,P),
\qquad
\HOH_\bullet(\CA) \cong \HH^\bullet(X\times X,P\otimes P^\tra) \cong \Hom^\bullet(P,P\otimes p_2^*\omega_X[\dim X]),
$$
where $P^\tra$ is the transposed projection kernel (the pullback of $P$ under the transposition
of factors map $X\times X \to X\times X$).
%We also show that the Hochschild homology is
%isomorphic to $\HOH_\bullet(\CA) \cong \Hom^\bullet(P,P\otimes p_2^*\omega_X[\dim X])$.

Being motivated by these formulas we define {\em the generalized Hochschild cohomology of $X$
with support in $T \in \D^b(X\times X)$ and coefficients in $E \in \D^b(X\times X)$}\/ as
$$
\HOH^\bullet_T(X,E) = \Hom^\bullet(E,E\circ T),
$$
where $E\circ T$ is the convolution of $E$ and $T$ considered as kernels.
In the particular case of $T = \Delta_*\CO_X$, $E = P$ they give
the Hochschild cohomology of $\CA$, and in the case $T = \Delta_*\omega_X[\dim X]$, $E = P$
they give the Hochschild homology of $\CA$.
We investigate some general properties of generalized Hochschild cohomology,
especially their functoriality and behavior with respect to changing the coefficients
and supports. In particular, we show that any kernel functor $\Phi_K:\D^b(X) \to \D^b(Y)$ induces
a map on generalized Hochschild cohomology if the kernel $K$
satisfies some compatibility conditions.

Further, we go back to the Hochshild homology and cohomology of admissible subcategories.
As it follows easily from the definition any equivalence $\CA \cong \CB$ of admissible
subcategories $\CA \subset \D^b(X)$, $\CB \subset \D^b(Y)$ should give an isomorphism
of their Hochschild (co)homology (certainly, for this the equivalence should be compatible
with DG-enhancements). However, it is not so easy to construct an explicit isomorphism.
We use the formalism and functoriality of generalized Hochschild homology to give explicit
isomorphisms.

We also address a question of the relation between the Hochschild (co)homology of a scheme $X$
and those of semiorthogonal components of $\D^b(X)$. In case of homology, using the DG-approach
it is easy to argue that whenever $\D^b(X) = \langle \CA_1, \dots, \CA_n \rangle$, there is an isomorphism
$$
\HOH_\bullet(X) \cong \bigoplus_{i=1}^n \HOH_\bullet(\CA_i).
$$
We show that in this case we have a direct sum decomposition in the most rigid sense ---
for each admissible subcategory $\CA \subset \D^b(X)$ its Hochschild homology can be
identified with a canonical vector subspace $\HOH_\bullet(\CA) \subset \HOH_\bullet(X)$ and
for a semiorthogonal decomposition the Hochschild homology of $X$ decomposes
into the direct sum of the corresponding subspaces.

The situation with the Hochschild cohomology is much more complicated.
In this case we don't have any additivity. Instead, we argue that
if $\CA = \langle \CA_1,\CA_2 \rangle$ is a 2-term semiorthogonal decomposition
then there is a long exact sequence
$$
\dots \to \HOH^i(\CA) \to \HOH^i(\CA_1) \oplus \HOH^i(\CA_2) \to \Ext^i(\phi,\phi) \to \HOH^{i+1}(\CA) \to \dots,
$$
where $\phi:\CA_1 \to \CA_2$ is the gluing functor.

Finally, we compute the Hochschild homology and cohomology of some interesting
admissible subcategories, such as
$\langle \CO_X \rangle^\perp \subset \D^b(X)$, if $\CO_X$ is an exceptional bundle (e.g.\ if $X$ is a Fano variety),
$\langle E,\CO_X \rangle^\perp \subset \D^b(X)$, if $(E,\CO_X)$ is an exceptional pair,
and $(f^*\D^b(Y))^\perp \subset \D^b(X)$, if $f:X \to Y$ is a conic bundle.

We conclude the paper with a short discussion of a Nonvanishing Conjecture
for Hochschild homology.

\medskip

The paper is organized as follows. In Section~\ref{prelim} we introduce
the technical notions used in the paper, such as Hochschild homology and cohomology
of algebraic varieties, semiorthogonal decompositions and mutations.
In Section~\ref{kernels} we remind some standard results on the calculus of kernels,
sketch the results of~\cite{K-FBC} on representability of the projection functors
and investigate the relations between the projection kernels for a semiorthogonal
decomposition, which are essential for the further treatment.
In Section~\ref{sec-dg} we construct a DG-enhancement for any
admissible subcategory $\CA \subset \D^b(X)$, define the Hochschild
homology and cohomology of $\CA$, and compute it in terms of the kernel
of the corresponding projection functor.
In Section~\ref{ghoh} we define the generalized Hochschild cohomology and investigate
its properties with respect to change of supports and coefficients.
In Section~\ref{ghohfun} we investigate the action of kernel functors
on generalized Hochschild cohomology.
In Section~\ref{hohadm} we construct explicit isomorphisms between Hochschild homology
and cohomology of equivalent admissible subcategories and establish the additivity
of Hochschild homology and long exact sequences for the Hochschild cohomology
of an admissible category and of its semiorthogonal components.
%In Section~\ref{ind} we show that the definition of Hochschild (co)homology
%of a triangulated category does not depend on the choice of its embedding
%into the derived category of coherent sheaves.
In Section~\ref{s-comp} we compute the Hochschild homology
and cohomology of some interesting admissible subcategories.
Finally, in Section~\ref{sec-nvc} we discuss the nonvanishing conjecture
for Hochschild homology.

%In Section~\ref{props} we analyze the structures
%existing on generalized Hochschild cohomology.
%Finally, in Section~\ref{cycats}
%we investigate the Hochschild homology and cohomology of Calabi--Yau categories.

{\bf Acknowledgements:}
I thank  A.Bondal, D.Kaledin, and D.Orlov for many useful discussions.
I am very grateful to N.Markarian for careful reading of the draft version
of this paper and helpful comments.

\section{Preliminaries}\label{prelim}

\subsection{Notations}

The base field $\kk$ is assumed to be of zero characteristic.

Given an algebraic variety $X$ we denote by $\D^b(X)$ the bounded derived category of
coherent sheaves on $X$. Given a morphism $f:X \to Y$ we denote by $f_*$ and $f^*$
the {\em total}\/ derived pushforward and
the {\em total}\/ derived pullback functors.
The twisted pullback functor is denoted by $f^!$
(it is right adjoint to $f_*$ if $f$ is proper).
Similarly, $\otimes$ stands for the derived tensor product,
and $\RHom$, $\RCHom$ stand for the global and local $\RHom$ functors.
For an object $F \in \D^b(X)$ we put $F^\vee := \RCHom(F,\CO_X)$.
We denote by $\omega_X$ the canonical line bundle of $X$.

For a functor $\Phi$ between triangulated categories we denote by
$\Phi^*$ and $\Phi^!$ the left and the right adjoint functors to $\Phi$
(when they exist).

Given an algebraic variety $X$ we denote by $\Delta:X \to X\times X$ the diagonal
embedding. We also denote
$$
S_X = \Delta_*\omega_X[\dim X], \qquad
S^{-1}_X = \Delta_*\omega^{-1}_X[- \dim X] \qquad \in \D^b(X\times X)
$$
For a subcategory $\CA$ of a triangulated category $\CT$ we denote by $\CA^\perp$
and ${}^\perp\CA$ the right and the left orthogonal to $\CA$ defined by
$$
\CA^\perp = \{ T \in \CT \ |\ \forall A \in \CA\ \Hom(A[i],T) = 0 \},\qquad
{}^\perp\CA = \{ T \in \CT \ |\ \forall A \in \CA\ \Hom(T,A[i]) = 0 \}.
$$
These are triangulated subcategories of $\CT$ closed under taking direct summands.

%Given an object $F \in \D(X)$ we denote by $\CH^k(F)$ the $k$-th
%cohomology sheaf of $F$.

\subsection{Hochschild cohomology of algebraic varieties}

Let $X$ be a smooth projective variety.
The Hochschild cohomology and homology of $X$ are defined by
\begin{equation}\label{hohx}
\HOH^\bullet(X) = \Hom^\bullet_{X\times X}(\Delta_*\CO_X,\Delta_*\CO_X),
\qquad
\HOH_\bullet(X) = \HH^\bullet(X\times X,\Delta_*\CO_X\otimes\Delta_*\CO_X)
\end{equation}
(see~\cite{S,W,L}).
For our purposes another definition of Hochschild homology is more convenient.

\begin{lemma}\label{hohserre}
There is a canonical isomorphism
$\HH^\bullet(X\times X,\Delta_*\CO_X\otimes\Delta_*\CO_X) \cong
\Hom^\bullet_{X\times X}(\Delta_*\CO_X,S_X)$.
\end{lemma}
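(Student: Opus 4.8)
The plan is to identify both sides with $\Hom^\bullet$ out of $\Delta_*\CO_X$ by using adjunctions and Serre duality on $X \times X$, exploiting the fact that $\Delta$ is a closed embedding of a smooth variety into a smooth variety, so that $\Delta_*$ has both a left and a right adjoint. First I would rewrite the hypercohomology group on the left. By definition $\HH^\bullet(X\times X, F) = \Hom^\bullet_{X\times X}(\CO_{X\times X}, F)$, so the left-hand side is $\Hom^\bullet_{X\times X}(\CO_{X\times X}, \Delta_*\CO_X \otimes \Delta_*\CO_X)$. Now I would use the projection formula together with the adjunction $(\Delta_*, \Delta^!)$: writing one of the two copies of $\Delta_*\CO_X$ as $\Delta_*\CO_X$ and moving it across via $\Delta_*\CO_X \otimes G \cong \Delta_*(\CO_X \otimes \Delta^* G) = \Delta_*\Delta^* G$, we get $\Delta_*\CO_X \otimes \Delta_*\CO_X \cong \Delta_*\Delta^*\Delta_*\CO_X$. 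Hence the left-hand side becomes $\Hom^\bullet_{X\times X}(\CO_{X\times X}, \Delta_*\Delta^*\Delta_*\CO_X) \cong \Hom^\bullet_X(\Delta^*\CO_{X\times X}, \Delta^*\Delta_*\CO_X) = \Hom^\bullet_X(\CO_X, \Delta^*\Delta_*\CO_X)$.

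Next I would compute the right-hand side in the same style. By Serre duality on the smooth projective variety $X\times X$, whose canonical bundle is $\omega_{X\times X} = p_1^*\omega_X \otimes p_2^*\omega_X$ and whose dimension is $2\dim X$, we have
\[
\Hom^\bullet_{X\times X}(\Delta_*\CO_X, S_X) = \Hom^\bullet_{X\times X}(\Delta_*\CO_X, \Delta_*\omega_X[\dim X])
\cong \Hom^{2\dim X-\bullet}_{X\times X}(\Delta_*\omega_X[\dim X], \Delta_*\CO_X \otimes \omega_{X\times X}[2\dim X])^\vee.
\]
Using the $(\Delta^*, \Delta_*)$ adjunction on the first slot and the projection formula $\Delta_*\CO_X \otimes \omega_{X\times X} \cong \Delta_*\Delta^*\omega_{X\times X} = \Delta_*(\omega_X^{\otimes 2})$, this rewrites as $\Hom^{-\bullet}_X(\omega_X[\dim X], \Delta^*\Delta_*(\omega_X^{\otimes 2})[\dim X])^\vee = \Hom^{-\bullet}_X(\CO_X, \Delta^*\Delta_*(\omega_X^{\otimes 2})\otimes\omega_X^{-1})^\vee$. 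Now $\Delta^*\Delta_* \CO_X \cong \bigoplus_i \wedge^i N^\vee_{\Delta}[i]$ where $N_\Delta = T_X$ is the normal bundle of the diagonal (this is the standard Koszul-resolution computation, valid in characteristic zero), so $\Delta^*\Delta_*(\omega_X^{\otimes 2}) \cong \Delta^*\Delta_*\CO_X \otimes \omega_X^{\otimes 2}$ by the projection formula again. I would then apply Serre duality on $X$ itself to dualize back, converting $\Hom^{-\bullet}_X(\CO_X, \Delta^*\Delta_*\CO_X \otimes \omega_X)^\vee$ into $\Hom^\bullet_X(\CO_X, \Delta^*\Delta_*\CO_X)$, matching the expression obtained for the left-hand side.

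The main obstacle — and the only real content beyond formal adjunction-juggling — is to check that the two resulting identifications with $\Hom^\bullet_X(\CO_X, \Delta^*\Delta_*\CO_X)$ are \emph{the same} map, i.e.\ that the isomorphism is canonical and not merely an abstract coincidence of graded vector spaces. Concretely one must track the Serre-duality pairings carefully and verify compatibility: the Hochschild--Kostant--Rosenberg-type decomposition $\Delta^*\Delta_*\CO_X \cong \bigoplus_i \Omega^i_X[i]$ is canonical in characteristic zero, and both computations reduce the statement to the self-duality $\bigoplus_i H^{q-i}(X,\Omega^i_X) \cong \bigoplus_i H^{i-q}(X,\Omega^i_X)^\vee$ furnished by Serre duality on $X$ (together with the isomorphism $\Omega^i_X \cong \Omega^{n-i}_X{}^\vee \otimes \omega_X$, $n = \dim X$). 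I would phrase the argument so that canonicity is manifest: present the isomorphism as a composition of the projection formula isomorphism, the base-change/adjunction isomorphisms for the closed embedding $\Delta$, and Serre duality, each of which is canonical, so their composite is too. An alternative, cleaner route, if one wants to avoid invoking HKR, is to observe that $S_X$ is the kernel of the Serre functor of $\D^b(X)$ and $\Delta_*\CO_X$ the kernel of the identity, then use that $\HOH_\bullet(X) = \HH^\bullet(X\times X, \Delta_*\CO_X \otimes \Delta_*\CO_X)$ equals $\Hom^\bullet(\Delta^*\Delta_*\CO_X, \CO_X)^\vee$-type expression and directly identify it with maps from the identity kernel to the Serre kernel via the standard Serre-duality pairing on $X\times X$ between $\Delta_*\CO_X \otimes \Delta_*\CO_X$ and $\CHom(\Delta_*\CO_X, S_X)$; I expect this is in fact the argument the author gives.
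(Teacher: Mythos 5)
Your reduction of the left-hand side to $\Hom^\bullet_X(\CO_X,\Delta^*\Delta_*\CO_X)$ is fine, but the chain you propose for the right-hand side has a genuine gap. After Serre duality on $X\times X$ the first slot contains the pushforward $\Delta_*\omega_X[\dim X]$, so the relevant adjunction is $(\Delta_*,\Delta^!)$ with $\Delta^!(-)\cong\Delta^*(-)\otimes\omega_X^{-1}[-\dim X]$; you used $\Delta^*(-)[-\dim X]$ and thereby dropped a twist by $\omega_X^{-1}$. Done correctly, the chain lands on $\Hom^{-\bullet}_X(\CO_X,\Delta^*\Delta_*\CO_X)^\vee$, i.e.\ on the \emph{dual} of the left-hand side, and your final step (``apply Serre duality on $X$ to dualize back'') is not a formal duality application: it needs the self-duality $(\Delta^*\Delta_*\CO_X)^\vee\otimes\omega_X[\dim X]\cong\Delta^*\Delta_*\CO_X$, i.e.\ HKR together with $\Lambda^pT_X\otimes\omega_X\cong\Omega^{n-p}_X$ --- exactly the Hodge-level input you only gesture at in the ``obstacle'' paragraph. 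As literally written (with the dropped twist), your composite identifies the right-hand side with $\Hom^\bullet_X(\CO_X,\Delta^*\Delta_*\CO_X\otimes\omega_X^{-1})$, which is not the left-hand side: already for $X=\PP^1$ the dimensions disagree. Relatedly, your closing guess conflates the two canonical maps the paper is careful to distinguish: the Serre-duality pairing on $X\times X$ produces the canonical \emph{duality} between the two spaces discussed in the Remark following the Lemma, not the isomorphism asserted by the Lemma.

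The paper's own proof is shorter and uses neither HKR nor Serre duality. Since $\Delta_*\CO_X$ is a perfect complex ($X\times X$ is smooth), one has $\HH^\bullet(X\times X,\Delta_*\CO_X\otimes\Delta_*\CO_X)\cong\Hom^\bullet_{X\times X}((\Delta_*\CO_X)^\vee,\Delta_*\CO_X)$; Grothendieck duality for $\Delta$ gives $(\Delta_*\CO_X)^\vee\cong\Delta_*\omega_X^{-1}[-\dim X]=S_X^{-1}$; and tensoring with $p_i^*\omega_X[\dim X]$ takes $S_X^{-1}$ to $\Delta_*\CO_X$ and $\Delta_*\CO_X$ to $S_X$, whence $\Hom^\bullet(S_X^{-1},\Delta_*\CO_X)\cong\Hom^\bullet(\Delta_*\CO_X,S_X)$. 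This construction is manifestly functorial in the second copy of $\Delta_*\CO_X$, which is precisely what Proposition~\ref{hhserre} later exploits; such functoriality would be much harder to extract from a route passing through two Serre dualities and HKR. (Your worry that the two identifications with $\Hom^\bullet_X(\CO_X,\Delta^*\Delta_*\CO_X)$ must ``be the same map'' is beside the point --- a composite of canonical isomorphisms is canonical --- but the choice of route does matter for the later applications.)
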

\begin{proof}
We have
$$
\HH^\bullet(X\times X,\Delta_*\CO_X\otimes\Delta_*\CO_X) \cong
\Hom^\bullet_{X\times X}(\CO_{X\times X},\Delta_*\CO_X\otimes\Delta_*\CO_X) \cong
\Hom^\bullet_{X\times X}((\Delta_*\CO_X)^\vee,\Delta_*\CO_X).
$$
On the other hand, by Grothedieck duality we have
%\begin{multline*}
$$
(\Delta_*\CO_X)^\vee =
\RCHom(\Delta_*\CO_X,\CO_{X\times X}) \cong
\Delta_*\RCHom(\CO_X,\Delta^!(\CO_{X\times X})) \cong
%\\ \cong
%\Delta_*\RCHom(\CO_X,\omega_X^{-1}[-\dim X]) \cong
\Delta_*\omega_X^{-1}[-\dim X],
$$
%\end{multline*}
so we conclude that
$$
\HOH_\bullet(X) \cong
\Hom^\bullet_{X\times X}(\Delta_*(\omega_X^{-1}[-\dim X]),\Delta_*\CO_X) \cong
%\Hom^\bullet_{X\times X}(\Delta_*\CO_X,\Delta_*(\omega_X[\dim X])) \cong
\Hom^\bullet_{X\times X}(\Delta_*\CO_X,S_X),
$$
the last isomorphism is obtained by tensoring with $\omega_X[\dim X]$.
\end{proof}

So, we will use this as a definition of Hochschild homology
\begin{equation}\label{hohx1}
\HOH_\bullet(X) = \Hom^\bullet_{X\times X}(\Delta_*\CO_X,S_X).
\end{equation}

\begin{remark}
In addition to the canonical isomorphism of Lemma~\ref{hohserre}
there is also a canonical duality between the spaces
$H^\bullet(X\times X,\Delta_*\CO_X\otimes\Delta_*\CO_X)$ and
$\Hom^\bullet_{X\times X}(\Delta_*\CO_X,S_X)$. So, usually the space
$\Hom^\bullet_{X\times X}(\Delta_*\CO_X,S_X)$ is considered as the dual
to the Hochschild homology of $X$. However, we prefer to use the identification
of Lemma~\ref{hohserre} and consider it as the Hochschild homology of $X$ itself.
\end{remark}

\subsection{Semiorthogonal decompositions and projection functors}

\begin{definition}[\cite{BK,BO1,BO2}]
A {\sf semiorthogonal decomposition}\/ of a triangulated category $\CT$ is a sequence
of full triangulated subcategories
$\CA_1,\dots,\CA_m$ in $\CT$ such that $\CA_i \subset \CA_j^\perp$ for $i < j$
and for every object $T \in \CT$ there exists a chain of morphisms
$0 = T_m \to T_{m-1} \to \dots \to T_1 \to T_0 = T$ such that
the cone of the morphism $T_k \to T_{k-1}$ is contained in $\CA_k$
for each $k=1,2,\dots,m$. In other words, there exists a diagram
\begin{equation}\label{tower}
\vcenter{
\xymatrix@C-7pt{
0 \ar@{=}[r] & T_m \ar[rr]&& T_{m-1} \ar[dl] \ar[rr]&& \quad\dots\quad \ar[rr]&& T_2 \ar[rr]&& T_1 \ar[dl] \ar[rr]&& T_0 \ar[dl] \ar@{=}[r] & T \\
&& A_m \ar@{..>}[ul] &&& \dots &&& A_2 \ar@{..>}[ul]&& A_1 \ar@{..>}[ul]&&
}}
\end{equation}
where all triangles are distinguished (dashed arrows have degree $1$) and $A_k \in \CA_k$.
\end{definition}

Thus, every object $T\in\CT$ admits a decreasing ``filtration''
with factors in $\CA_1$, \dots, $\CA_m$ respectively.
The following properties of semiorthogonal decompositions are well-known.

\begin{lemma}\label{ff}
If $\CT = \lan \CA_1, \dots, \CA_m \ran$ is a semiorthogonal decomposition and $T \in \CT$
then the diagram~\eqref{tower} for $T$ is unique and functorial\/ {\rm(}for any morphism $T \to T'$
there exists a unique collection of morphisms $T_i \to T'_i$, $A_i \to A'_i$ combining into
a morphism of diagram~\eqref{tower} for $T$ into diagram~\eqref{tower} for $T'${\rm)}.
\end{lemma}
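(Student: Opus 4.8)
The plan is to establish existence and uniqueness of the filtration diagram~\eqref{tower} separately, and then to upgrade uniqueness to functoriality. Existence of the diagram for a given object $T$ is part of the definition of a semiorthogonal decomposition, so the content of the lemma is the uniqueness and the functoriality. First I would prove the following key sublemma: if $\CT = \lan \CA_1, \dots, \CA_m \ran$ and $A \to T \to B$ is a distinguished triangle with $A$ generated by $\CA_1,\dots,\CA_k$ (i.e.\ lying in the triangulated subcategory $\lan \CA_1,\dots,\CA_k\ran$) and $B$ generated by $\CA_{k+1},\dots,\CA_m$, then this triangle is unique up to unique isomorphism, and any morphism $T\to T'$ of objects admitting such triangles extends uniquely to a morphism of triangles. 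This is the standard ``two-step'' case; for $m=2$ it is exactly the statement that the subcategories $\lan\CA_1\ran$ and $\lan\CA_2\ran$ form a semiorthogonal pair, whose defining truncation triangle is functorial.

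For the two-step case the argument is the usual one for semiorthogonal (Bousfield-type) truncations. Given $A \to T \to B$ and $A' \to T' \to B'$ with $A,A'\in\lan\CA_1,\dots,\CA_k\ran$, $B,B'\in\lan\CA_{k+1},\dots,\CA_m\ran$, and a morphism $f:T\to T'$, I would apply $\Hom(A,-)$ to the triangle $A'\to T'\to B'$: since $\Hom^\bullet(A,B')=0$ by semiorthogonality (here one needs $\Hom^i(\CA_a,\CA_b)=0$ for $a\le k < b$ and all $i$, which follows from $\CA_a\subset\CA_b^\perp$), the composite $A\to T\xrightarrow{f} T'$ lifts uniquely through $A'\to T'$. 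This gives the unique induced morphism $A\to A'$; the induced morphism $B\to B'$ is then obtained from the axioms of a triangulated category (completion to a morphism of triangles), and is itself unique because $\Hom^{-1}(A,B')=0$ forces it. Taking $T=T'$ and $f=\id$ gives the uniqueness of the triangle up to a unique isomorphism.

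Next I would pass from two steps to $m$ steps by induction on $m$, peeling off $\CA_m$ (equivalently $\CA_1$). Concretely: the morphism $T_1 \to T_0 = T$ with cone $A_1\in\CA_1$ is the truncation triangle for the semiorthogonal pair $(\lan\CA_1\ran, \lan\CA_2,\dots,\CA_m\ran)$ — note $T_1 = A_1[-1]$-part lies in $\lan\CA_2,\dots,\CA_m\ran$, which follows from the upper row of~\eqref{tower} exhibiting $T_1$ as an iterated extension of $A_2,\dots,A_m$ — so by the two-step case it and the morphism $T_1\to T$ are unique and functorial in $T$. Then $T_1$, with its own diagram $0=T_m\to\dots\to T_1$, is an object of $\lan\CA_2,\dots,\CA_m\ran$, and by the inductive hypothesis applied inside that subcategory, its diagram is unique and functorial in $T_1$; composing the two functorialities (a morphism $T\to T'$ induces $T_1\to T_1'$, which induces the rest) yields the full statement. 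Uniqueness of the whole collection $\{T_i\to T_i', A_i\to A_i'\}$ follows by the same induction, using at each stage the vanishing $\Hom^{-1}$ and $\Hom^0$ groups that make the lift and its uniqueness automatic.

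The main obstacle, and the place requiring the most care, is bookkeeping the semiorthogonality hypotheses needed at each inductive step: one must verify that $T_k \in \lan\CA_{k+1},\dots,\CA_m\ran$ and $A_k\in\CA_k$ indeed have no higher Ext's between them in the required directions, i.e.\ that the pair $(\lan\CA_1,\dots,\CA_k\ran, \lan\CA_{k+1},\dots,\CA_m\ran)$ is semiorthogonal for every $k$. This is not quite immediate from $\CA_i\subset\CA_j^\perp$ ($i<j$) and should be recorded as a preliminary observation: the triangulated hull $\lan\CA_1,\dots,\CA_k\ran$ is contained in $\lan\CA_{k+1},\dots,\CA_m\ran^{\perp}$ because orthogonality is preserved under cones, shifts, and summands. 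Once that is in hand, every lifting/uniqueness step is a one-line diagram chase, so I would not belabor them.
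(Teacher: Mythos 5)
Your overall plan---reduce to a two-step truncation triangle, use a $\Hom$-vanishing to lift a morphism $T\to T'$ uniquely to a morphism of triangles, then induct on the number of components---is exactly the argument the paper gives. But the key sublemma, as you state it, has the orientation of the decomposition reversed, and with it the semiorthogonality vanishing, and in that form it is false. In the paper's conventions $\CT=\lan\CA,\CB\ran$ means $\Hom(\CB,\CA)=0$ with truncation triangle $B\to T\to A$, $B\in\CB$, $A\in\CA$; accordingly in diagram~\eqref{tower} the \emph{subobject} $T_k$ lies in $\lan\CA_{k+1},\dots,\CA_m\ran$ and the cone lies in $\lan\CA_1,\dots,\CA_k\ran$. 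Your sublemma instead takes $A\to T\to B$ with the subobject $A$ in $\lan\CA_1,\dots,\CA_k\ran$ and the quotient $B$ in $\lan\CA_{k+1},\dots,\CA_m\ran$, and the vanishing you invoke, $\Hom^i(\CA_a,\CA_b)=0$ for $a\le k<b$, does \emph{not} follow from $\CA_a\subset\CA_b^\perp$: with the paper's definition of $(-)^\perp$ that inclusion says $\Hom(\CA_b,\CA_a)=0$, i.e.\ morphisms vanish from later components to earlier ones, not the other way. As stated the two-step sublemma fails: in $\D^b(\PP^1)=\lan\CO,\CO(1)\ran$ take $T=\CO\oplus\CO(1)$ and the two ``wrong-way'' triangles $\CO\xrightarrow{(\id,0)}T\to\CO(1)$ and $\CO\xrightarrow{(\id,s)}T\to\CO(1)$ with $0\ne s\in\Hom(\CO,\CO(1))$; no isomorphism of these triangles is compatible with $\id_T$, precisely because $\Hom(\CO,\CO(1))\ne 0$, so neither uniqueness nor functoriality holds in that orientation.

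The error is local rather than conceptual: when you actually apply the sublemma you use the correct orientation (the triangle $T_1\to T\to A_1$ with $T_1\in\lan\CA_2,\dots,\CA_m\ran$, $A_1\in\CA_1$, where the needed vanishing $\Hom(T_1,A'_1[k])=0$ does hold), and your closing observation that $\lan\CA_1,\dots,\CA_k\ran\subset\lan\CA_{k+1},\dots,\CA_m\ran^{\perp}$ is likewise stated in the correct direction, contradicting the vanishing claimed in the middle paragraph. Once you swap the roles of the two blocks in the sublemma (subobject from $\lan\CA_{k+1},\dots,\CA_m\ran$, quotient from $\lan\CA_1,\dots,\CA_k\ran$, vanishing $\Hom^\bullet$ from the later block to the earlier one), your lifting and uniqueness computation and the induction coincide with the paper's proof, so the fix is a one-line correction of the statement rather than a new idea.
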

\begin{proof}
Note that $T_1 \in \lan \CA_2, \dots, \CA_m \ran$ by~\eqref{tower}.
It follows from the semiorthogonality property that $\Hom(T_1,A'_1[k]) = 0$ for all $k \in \ZZ$.
Therefore any map $T_0 = T \to T' = T'_0$ extends in a unique way to a map of the triangle
$T_1 \to T_0 \to A_1$ into the triangle $T'_1 \to T'_0 \to A'_1$. In particular, we obtain
a map $T_1 \to T'_1$ and proceed by induction.
\end{proof}

We denote by $\alpha_k:\CT \to \CT$ the functor $T \mapsto A_k$ and by $\tau_i:\CT \to \CT$
the functor $T \mapsto T_i$. We call $\alpha_k$ the {\sf $k$-th projection functor}\/
and $\tau_k$ the {\sf $k$-th truncation functor}\/ of the semiorthogonal decomposition.

\begin{definition}[\cite{BK,B}]
A full triangulated subcategory $\CA$ of a triangulated category $\CT$ is called
{\sf right admissible}\/ if for the inclusion functor $i:\CA \to \CT$ there is
a right adjoint $i^!:\CT \to \CA$, and
{\sf left admissible}\/ if there is a left adjoint $i^*:\CT \to \CA$.
Subcategory $\CA$ is called {\sf admissible}\/ if it is both right and left admissible.
\end{definition}

\begin{lemma}[\cite{B}]\label{sod_adm}
If $\CT = \lan\CA,\CB\ran$ is a semiorthogonal decomposition then
$\CA$ is left admissible and $\CB$ is right admissible.
Conversely, if $\CA \subset \CT$ is left admissible then
$\CT = \lan \CA,{}^\perp\CA \ran$ is a semiorthogonal decomposition, and
if $\CB \subset \CT$ is right admissible then
$\CT = \lan \CB^\perp,\CB \ran$ is a semiorthogonal decomposition.
\end{lemma}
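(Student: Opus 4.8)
The plan is to prove the first assertion directly from the definition of a semiorthogonal decomposition, and then deduce the two converse statements from it by applying it to the pairs $\langle\CA,{}^\perp\CA\rangle$ and $\langle\CB^\perp,\CB\rangle$, which requires first checking these are indeed semiorthogonal decompositions. So assume $\CT = \langle\CA,\CB\rangle$. For every $T\in\CT$ we are given a distinguished triangle $B \to T \to A$ with $A\in\CA$, $B\in\CB$, unique and functorial by Lemma~\ref{ff} (applied with $m=2$); thus $T\mapsto A$ and $T\mapsto B$ are functors $\CT\to\CA$ and $\CT\to\CB$. First I would show $T\mapsto B$ is right adjoint to the inclusion $\CB\hookrightarrow\CT$: given $B'\in\CB$, apply $\Hom(B',-)$ to the triangle $B\to T\to A$; since $\Hom(B',A[k])=0$ for all $k$ by semiorthogonality ($\Hom(\CB,\CA)=0$), the map $\Hom(B',B)\to\Hom(B',T)$ is an isomorphism, which is exactly the adjunction $\Hom_\CB(B',B) \cong \Hom_\CT(B',T)$. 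Symmetrically, $T\mapsto A$ is left adjoint to $\CA\hookrightarrow\CT$: apply $\Hom(-,A')$ with $A'\in\CA$ to the same triangle and use $\Hom(B,A'[k])=0$. Hence $\CB$ is right admissible and $\CA$ is left admissible.

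For the converse, suppose $\CA\subset\CT$ is left admissible with left adjoint $i^*$. I would check that $\CT = \langle\CA,{}^\perp\CA\rangle$ is a semiorthogonal decomposition. Semiorthogonality $\Hom({}^\perp\CA,\CA)=0$ is the definition of ${}^\perp\CA$. For the decomposition triangle: given $T\in\CT$, set $A = i(i^*T)$ and let the unit of the adjunction $T\to A$ have cone completed to a triangle $B\to T\to A\to B[1]$; I need $B\in{}^\perp\CA$, i.e.\ $\Hom(B,A'[k])=0$ for all $A'\in\CA$, $k\in\ZZ$. This follows by applying $\Hom(-,A'[k])$ to the triangle and using that $\Hom(T,A'[k]) = \Hom_\CA(i^*T,i^*... )$ — more precisely, the unit $T\to i i^* T$ induces an isomorphism $\Hom(ii^*T, A'[k]) \xrightarrow{\ \sim\ } \Hom(T,A'[k])$ because $i^*$ is left adjoint to the fully faithful $i$ (the counit $i^* i \to \id_\CA$ is an isomorphism), forcing $\Hom(B,A'[k])=0$ and $\Hom(B,A'[k-1])=0$ in the long exact sequence. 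The case of a right admissible $\CB$ with right adjoint $i^!$, giving $\CT=\langle\CB^\perp,\CB\rangle$, is entirely dual: set $B = i(i^!T)$, complete the counit $B\to T$ to a triangle, and use that $i^! i\to$ ... dualizes to show the third vertex lies in $\CB^\perp$.

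The routine part is the diagram-chasing in the long exact sequences; the only point needing a little care — and the one I would flag as the main obstacle — is verifying that full admissibility of $\CA$ is not needed for either converse, i.e.\ that \emph{one-sided} adjointness of a \emph{fully faithful} inclusion already produces the decomposition triangle with the third term in the appropriate orthogonal. The key mechanism is that fully faithfulness of $i$ makes the relevant unit (resp.\ counit) an isomorphism on the nose, which is what kills the Hom-groups against the cone; once that is isolated, everything else is formal.
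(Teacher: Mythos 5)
Your proof is correct: the first half is the standard adjoint-by-semiorthogonality argument (using the uniqueness/functoriality of the decomposition triangle, i.e.\ Lemma~\ref{ff}), and the converse correctly builds the triangle from the unit $T \to ii^*T$ (resp.\ counit $ii^!T \to T$) and uses full faithfulness of the inclusion to see that this map induces isomorphisms on $\Hom$'s into $\CA$ (resp.\ out of $\CB$), forcing the third vertex into ${}^\perp\CA$ (resp.\ $\CB^\perp$). The paper itself gives no proof — it cites Bondal's paper \cite{B} — and your argument is exactly the standard one found there, so there is nothing to reconcile.
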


\begin{lemma}[\cite{BK}]\label{sod_adm1}
If $\D^b(X) = \lan\CA_1,\dots,\CA_m\ran$ is a semiorthogonal decomposition and $X$
is smooth and projective then all subcategories $\CA_i \subset \D^b(X)$ are admissible.
\end{lemma}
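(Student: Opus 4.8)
\emph{Proof sketch.} The plan is to prove, by induction on $m$, the following slightly more general statement, of which the lemma is the special case $\CT=\D^b(X)$: \emph{if $\CT$ is a $\kk$-linear $\Ext$-finite triangulated category admitting a Serre functor and $\CT=\lan\CA_1,\dots,\CA_m\ran$ is a semiorthogonal decomposition, then all $\CA_i$ are admissible.} This applies to $\D^b(X)$ because for $X$ smooth and projective it is $\Ext$-finite ($X$ being proper) and the autoequivalence $F\mapsto F\otimes\omega_X[\dim X]$ --- convolution with the kernel $S_X$ of the previous subsection --- is a Serre functor by Serre duality, with inverse $F\mapsto F\otimes\omega_X^{-1}[-\dim X]$.

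The key input is the following fact, due to Bondal--Kapranov: in such a category $\CT$ with Serre functor $S_\CT$, a right admissible subcategory is automatically admissible and again has a Serre functor, and dually for left admissible subcategories. I would prove it as follows. Let $i\colon\CB\hookrightarrow\CT$ be a right admissible subcategory and $i^!$ the right adjoint of $i$. Then $S_\CB:=i^!\circ S_\CT\circ i$ is a Serre functor for $\CB$: indeed, for $B,B'\in\CB$,
\[
\Hom_\CB(B,B')=\Hom_\CT(iB,iB')\cong\Hom_\CT(iB',S_\CT iB)^*\cong\Hom_\CB(B',i^!S_\CT iB)^*=\Hom_\CB(B',S_\CB B)^*
\]
bifunctorially, and any functor satisfying this identity is an autoequivalence (Bondal--Kapranov), so $S_\CB^{-1}$ exists. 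Putting $i^*:=S_\CB^{-1}\circ i^!\circ S_\CT\colon\CT\to\CB$, Serre duality in $\CB$, the adjunction $i\dashv i^!$, Serre duality in $\CT$ and finite-dimensionality of morphism spaces give, naturally in $T\in\CT$ and $B\in\CB$,
\[
\Hom_\CB(i^*T,B)\cong\Hom_\CB(B,i^!S_\CT T)^*\cong\Hom_\CT(iB,S_\CT T)^*\cong\Hom_\CT(T,iB),
\]
so $i^*$ is left adjoint to $i$ and $\CB$ is admissible. The left admissible case is dual (use $S_\CA^{-1}:=i^*S_\CT^{-1}i$ and $i^!:=S_\CA\circ i^*\circ S_\CT^{-1}$).

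Now I would run the induction. The case $m=1$ is trivial. For $m\ge2$, the standard grouping property of semiorthogonal decompositions produces a two-term decomposition $\CT=\lan\CA_1,\CN\ran$, where $\CN\subset\CT$ is the triangulated subcategory generated by $\CA_2,\dots,\CA_m$: semiorthogonality is clear, and for $T\in\CT$ the triangle $T_1\to T\to A_1$ read off from diagram~\eqref{tower} does the job, since $T_1\in\CN$ and $A_1\in\CA_1$; moreover $\lan\CA_2,\dots,\CA_m\ran$ is a semiorthogonal decomposition of $\CN$. By Lemma~\ref{sod_adm}, $\CA_1$ is left admissible and $\CN$ is right admissible in $\CT$, so by the key fact and its dual both are admissible in $\CT$. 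In particular $\CN$ is again $\Ext$-finite and, by the key fact, carries a Serre functor, so the induction hypothesis applied to $\CN=\lan\CA_2,\dots,\CA_m\ran$ shows that $\CA_2,\dots,\CA_m$ are admissible in $\CN$. Finally, an admissible subcategory of an admissible subcategory is admissible (compose left adjoints, compose right adjoints), so each $\CA_i$ is admissible in $\CT$.

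The only genuinely substantial ingredient is the Bondal--Kapranov fact that a Serre functor is automatically an autoequivalence --- this is what makes $S_\CB^{-1}$ available and turns the left adjoint supplied by a semiorthogonal decomposition into the missing right adjoint. The remaining points (the exact form of the grouping property, the verification that $\lan\CA_2,\dots,\CA_m\ran$ is a semiorthogonal decomposition of $\CN$, and the naturality in the adjunction isomorphism above) are routine and I would only sketch them; the lemma is classical and there is no real obstacle.
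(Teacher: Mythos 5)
Your overall architecture (reduce to two-term decompositions by grouping, then upgrade one-sided admissibility to two-sided admissibility using the Serre functor of the ambient category, then induct) is the natural one, and the paper itself offers nothing to compare against: Lemma~\ref{sod_adm1} is quoted from [BK] without proof. But your ``key fact'' contains a genuine gap exactly at its crux. From right admissibility of $\CB\subset\CT$ you define $S_\CB:=i^!\circ S_\CT\circ i$ and verify the bifunctorial identity $\Hom_\CB(B,B')\cong\Hom_\CB(B',S_\CB B)^*$; you then assert that ``any functor satisfying this identity is an autoequivalence (Bondal--Kapranov)''. That is not what is automatic: a functor satisfying this identity (a \emph{right} Serre functor in the sense of Reiten--Van den Bergh) is automatically fully faithful, but essential surjectivity is not formal and is not a theorem of Bondal--Kapranov; in their setup the equivalence property is part of the definition (or is obtained from representability hypotheses), and there exist right Serre functors that are not equivalences. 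Worse, in your situation the missing essential surjectivity is essentially the statement you are trying to prove: by Yoneda, $B_0$ lies in the essential image of $S_\CB$ if and only if the functor $B'\mapsto\Hom_\CB(B',B_0)^*\cong\Hom_\CT(S_\CT^{-1}iB_0,iB')$ is corepresentable by an object of $\CB$, i.e.\ if and only if the sought-for left adjoint $i^*$ is defined at $S_\CT^{-1}iB_0$. So invoking $S_\CB^{-1}$ to build $i^*:=S_\CB^{-1}\circ i^!\circ S_\CT$ is circular. (Your formula for $i^*$ and the adjunction computation are fine \emph{once} $S_\CB$ is known to be an equivalence; the problem is precisely that step. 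The same issue affects the dual statement and the inheritance of a Serre functor by $\CN$ in your induction.)

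The reason no purely formal argument from ``$\Hom$-finite with Serre functor'' can work is that the actual content of the lemma is a representability statement. The argument behind the citation to [BK] uses that $\D^b(X)$ for $X$ smooth and projective is \emph{saturated}: every cohomological functor of finite type is (co)representable. One then shows that saturation is inherited by the components of a semiorthogonal decomposition (extend a cohomological functor on $\CA_i$ to $\D^b(X)$ via the one-sided projection supplied by the decomposition, represent it there, and project the representing object back), and that a saturated full triangulated subcategory of a $\Hom$-finite category is both right and left admissible, because the needed adjoints are obtained by representing the functors $A\mapsto\Hom(iA,T)$ and their covariant analogues. This is where smoothness and projectivity of $X$ enter beyond merely furnishing the Serre functor $-\otimes\omega_X[\dim X]$. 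To repair your write-up you should either reproduce this saturation argument (or cite it explicitly as the key input), or restrict your ``key fact'' to subcategories already known to possess a Serre functor; as it stands, the step that turns the left adjoint supplied by the decomposition into the missing right adjoint is unsupported.
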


If $\CA \subset \CT$ is an admissible subcategory, the projection functor onto $\CA$
with respect to the semiorthogonal decomposition $\CT = \langle \CA^\perp,\CA \rangle$
(resp.\ $\CT = \lan \CA,{}^\perp\CA \ran$) is called {\sf the right}\/ (resp.\ {\sf the left})
{\sf projection functor onto~$\CA$}.

Let $X$ be an algebraic variety over a smooth base scheme $S$, $f:X \to S$ being the structure morphism.
A triangulated subcategory $\CA \subset \D^b(X)$ is called {\sf $S$-linear}\/ if for any objects $F \in \CA$, $G \in \D^b(S)$
we have $F \otimes f^*G \in \CA$. A semiorthogonal decomposition $\D^b(X) = \langle \CA_1,\dots,\CA_m \rangle$
is {\sf $S$-linear}\/ if all its components $\CA_i$ are $S$-linear.

\begin{lemma}[\cite{K-HS}]
If $\CA$ is a left {\rm(}resp. right{\rm)} admissible $S$-linear subcategory in $\D^b(X)$ then
the subcategory ${}^\perp\CA$ {\rm(}resp. $\CA^\perp${\rm)} is also $S$-linear.
\end{lemma}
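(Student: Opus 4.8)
The statement to prove is: if $\CA \subset \D^b(X)$ is a left (resp. right) admissible $S$-linear subcategory, then ${}^\perp\CA$ (resp. $\CA^\perp$) is $S$-linear. The two cases are dual, so I would treat, say, the right admissible case and deduce the other by passing to opposite categories (or by the identical argument with arrows reversed). So assume $\CA$ is right admissible and $S$-linear; I want to show $\CA^\perp$ is $S$-linear, i.e. for $T \in \CA^\perp$ and $G \in \D^b(S)$ we have $T \otimes f^*G \in \CA^\perp$, which means $\Hom(A, (T\otimes f^*G)[i]) = 0$ for all $A \in \CA$ and all $i$.

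First I would use the projection/adjunction formula to rewrite this Hom group. Since $f: X \to S$ is proper (we are over a smooth projective $X$, or at least the setup guarantees the needed adjunctions), $f^*$ has a right adjoint $f_*$, and more to the point we have the tensor identity: for $F \in \D^b(X)$ and $G \in \D^b(S)$,
\begin{equation*}
\RHom_X(F, T \otimes f^*G) \cong \RHom_X(F \otimes f^*G^\vee, T),
\end{equation*}
valid when $G$ is a perfect complex on $S$ — which it is, since $S$ is smooth. This reduces the claim to showing $\Hom(A \otimes f^*G^\vee, T[i]) = 0$. Now $G^\vee \in \D^b(S)$ as well, so by the $S$-linearity of $\CA$ the object $A \otimes f^*G^\vee$ lies in $\CA$. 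Since $T \in \CA^\perp$, this Hom group vanishes for all $i$, which is exactly what we wanted. Thus $T \otimes f^*G \in \CA^\perp$.

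The one point deserving care — and the only place where something could go wrong — is the justification of the tensor identity with $G^\vee$ in place of $G$, i.e. that $\RHom_X(F, T\otimes f^*G) \cong \RHom_X(F\otimes f^*G^\vee, T)$. This is where smoothness of $S$ is used: it guarantees $G$ (hence $G^\vee$) is a perfect complex, so $f^*G$ is a perfect complex on $X$, and tensoring by a perfect complex commutes with everything in sight; in particular $- \otimes f^*G$ is left adjoint to $- \otimes f^*G^\vee$ on $\D^b(X)$ (with $f^*G^\vee \cong (f^*G)^\vee$). One can phrase this without any reference to $f_*$ at all: for any perfect $P \in \D^b(X)$ and any $F, T \in \D^b(X)$ there is a natural isomorphism $\RHom_X(F, T \otimes P) \cong \RHom_X(F \otimes P^\vee, T)$, and we apply it with $P = f^*G$. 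For the left admissible case, one argues symmetrically: for $T \in {}^\perp\CA$ and $A \in \CA$, $\Hom((T\otimes f^*G)[i], A) \cong \Hom(T[i], A \otimes f^*G^\vee)$, which vanishes since $A \otimes f^*G^\vee \in \CA$ by $S$-linearity and $T \in {}^\perp\CA$. Hence ${}^\perp\CA$ is $S$-linear.

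One small bookkeeping remark: the definition of $S$-linearity quantifies over $G \in \D^b(S)$, and the argument produces the required membership using $G^\vee$, which is again an object of $\D^b(S)$ — so there is no gap, and in fact the argument shows the tensoring-by-$f^*G$ operation preserves both orthogonals simultaneously once it is known to preserve $\CA$. There is nothing genuinely hard here; the "main obstacle", if any, is simply to be explicit that smoothness of $S$ (via perfectness of $G$) is what makes the dualized tensor identity available.
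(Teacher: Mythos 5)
Your argument is correct. Note that the paper itself gives no proof of this lemma -- it is stated with a citation to [K-HS] -- so there is nothing in the text to compare against line by line; but your reduction is exactly the expected one: smoothness of $S$ makes every $G \in \D^b(S)$ perfect, hence $f^*G$ is perfect on $X$ with $(f^*G)^\vee \cong f^*(G^\vee)$, so $-\otimes f^*G$ and $-\otimes f^*(G^\vee)$ are adjoint on both sides via $\Hom(F, T\otimes f^*G)\cong\Hom(F\otimes f^*(G^\vee), T)$, and $S$-linearity of $\CA$ then passes to either orthogonal. One observation worth making explicit: your proof never uses the admissibility hypothesis at all -- perfectness of objects of $\D^b(S)$ is the only input -- so you actually prove the slightly stronger statement that both orthogonals of any $S$-linear subcategory of $\D^b(X)$ are $S$-linear (admissibility only matters if one instead argues through the adjoint $i^!$ or $i^*$ of the inclusion, e.g.\ by showing the projection functors are $S$-linear; that is a legitimate alternative route but requires more machinery than your direct duality argument).
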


\subsection{Mutations}\label{ss-mut}

Given a semiorthogonal decomposition of the derived category of a smooth projective variety
one can produce many other decompositions. Actually, there is an action of the braid group
on $n$ strands on the set of all $n$-term semiorthogonal decompositions. The action is given
by mutations. Here we remind the necessary constructions.

Let $\CT = \langle \CA_1,\dots,\CA_m \rangle$ be a semiorthogonal decomposition with all $\CA_i$ being admissible
(the last condition holds automatically if $\CT = \D^b(X)$ for $X$ being smooth and projective).
Define
$$
\begin{array}{l}
\RR_i(\CA_\bullet)_j =
\begin{cases}
\CA_j, & \text{if $j \ne i-1,i$},\\
\CA_i, & \text{if $j = i-1$},\\
{}^\perp\langle \CA_1,\dots,\CA_{i-2},\CA_i \rangle \cap \langle \CA_{i+1},\dots,\CA_m \rangle^\perp, & \text{if $j = i$},
\end{cases}\\
\LL_i(\CA_\bullet)_j =
\begin{cases}
\CA_j, & \text{if $j \ne i,i+1$},\\
{}^\perp\langle \CA_1,\dots,\CA_{i-1} \rangle \cap \langle \CA_i,\CA_{i+2},\dots,\CA_m \rangle^\perp, & \text{if $j = i$},\\
\CA_i, & \text{if $j = i+1$}.
\end{cases}\
\end{array}
$$

\begin{theorem}[\cite{BK}]\label{muts}
If $\CT = \langle \CA_1,\dots,\CA_m \rangle$ is a semiorthogonal decomposition with all $\CA_i$ being admissible
then for each $i \ge 2$ there is a semiorthogonal decomposition
$\CT = \langle \RR_i(\CA_\bullet)_1,\dots,\RR_i(\CA_\bullet)_m \rangle$
and for each $i \le m-1$ there is a semiorthogonal decomposition
$\CT = \langle \LL_i(\CA_\bullet)_1,\dots,\LL_i(\CA_\bullet)_m \rangle$.
Moreover, the braid group relations are satisfied: $\RR_i\RR_{i+1}\RR_i = \RR_{i+1}\RR_i\RR_{i+1}$ and
$\LL_i\LL_{i-1}\LL_i = \LL_{i-1}\LL_i\LL_{i-1}$ for each $2 \le i \le m-1$.
%so we have an action of the braid group.
\end{theorem}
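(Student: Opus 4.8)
\emph{Proof plan.} The strategy is to reduce everything to two-term semiorthogonal decompositions, where the claim is immediate from Lemma~\ref{sod_adm}, and then to obtain the braid relations by a term-by-term comparison, using that the last component of a semiorthogonal decomposition is the left orthogonal of all the others. For $m=2$, unwinding the definitions gives $\RR_2(\CA_\bullet)=(\CA_2,{}^\perp\CA_2)$ and $\LL_1(\CA_\bullet)=(\CA_1^\perp,\CA_1)$; since $\CA_2$ is admissible, hence left admissible, Lemma~\ref{sod_adm} yields $\CT=\lan\CA_2,{}^\perp\CA_2\ran$, and since $\CA_1$ is admissible, hence right admissible, it yields $\CT=\lan\CA_1^\perp,\CA_1\ran$. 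This is exactly the two-term case.

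For a general $\RR_i$ the key point is that it alters only the adjacent pair $\CA_{i-1},\CA_i$, and does so entirely inside the block $\CB:=\lan\CA_{i-1},\CA_i\ran$. First I would record the standard ``associativity'' properties of semiorthogonal decompositions (which follow by iterating Lemma~\ref{sod_adm} together with Lemma~\ref{ff}): every block $\lan\CA_{a+1},\dots,\CA_b\ran$ is an admissible subcategory of $\CT$ equal to ${}^\perp\lan\CA_1,\dots,\CA_a\ran\cap\lan\CA_{b+1},\dots,\CA_m\ran^\perp$, and any semiorthogonal decomposition of a block refines to one of $\CT$. Applying the two-term case to the decomposition $\CB=\lan\CA_{i-1},\CA_i\ran$ produces $\CB=\lan\CA_i,\,\CB\cap{}^\perp\CA_i\,\ran$; splicing this into $\CT=\lan\CA_1,\dots,\CA_{i-2},\CB,\CA_{i+1},\dots,\CA_m\ran$, and using the block formula to identify $\CB\cap{}^\perp\CA_i$ with ${}^\perp\lan\CA_1,\dots,\CA_{i-2},\CA_i\ran\cap\lan\CA_{i+1},\dots,\CA_m\ran^\perp=\RR_i(\CA_\bullet)_i$, gives precisely the semiorthogonal decomposition $\lan\RR_i(\CA_\bullet)_1,\dots,\RR_i(\CA_\bullet)_m\ran$. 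The statement for $\LL_i$ is the mirror image, carried out inside the block $\lan\CA_i,\CA_{i+1}\ran$.

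Finally, for the braid relation $\RR_i\RR_{i+1}\RR_i=\RR_{i+1}\RR_i\RR_{i+1}$ only the three consecutive components $\CA_{i-1},\CA_i,\CA_{i+1}$ participate, so by associativity it suffices to verify $\RR_2\RR_3\RR_2=\RR_3\RR_2\RR_3$ for a three-term decomposition $\lan\CA_1,\CA_2,\CA_3\ran$; here one uses that each intermediate decomposition still has all components admissible, which is automatic for $\CT=\D^b(X)$ with $X$ smooth and projective by Lemma~\ref{sod_adm1} (and holds in general by a separate verification that mutations preserve admissibility). Both composites yield semiorthogonal decompositions of $\CT$; tracing the definitions shows that their first components coincide (both equal $\CA_3$), an explicit computation with orthogonal complements shows that their second components coincide as well (both equal ${}^\perp\CA_1\cap{}^\perp\CA_3$), and then the third components are forced to agree, being the left orthogonal of the first two. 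The relation $\LL_i\LL_{i-1}\LL_i=\LL_{i-1}\LL_i\LL_{i-1}$ follows by the same argument. The only real obstacle I anticipate is the careful bookkeeping with orthogonal complements — proving the block identity for $\RR_i(\CA_\bullet)_i$ and matching the second components in the braid computation — rather than any conceptual difficulty.
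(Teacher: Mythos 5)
The paper does not prove this theorem at all: it is quoted from Bondal--Kapranov \cite{BK} and used as a black box, so there is no internal proof to compare yours against. On its own merits your plan is sound and I verified its key computations: the two-term case is exactly Lemma~\ref{sod_adm}; the identity $\RR_i(\CA_\bullet)_i={}^\perp\langle\CA_1,\dots,\CA_{i-2},\CA_i\rangle\cap\langle\CA_{i+1},\dots,\CA_m\rangle^\perp=\CB\cap{}^\perp\CA_i$ for the block $\CB=\langle\CA_{i-1},\CA_i\rangle$ does hold (using ${}^\perp\langle\CS_1,\CS_2\rangle={}^\perp\CS_1\cap{}^\perp\CS_2$); and in the three-term braid check both composites do have second component ${}^\perp\CA_1\cap{}^\perp\CA_3$ --- one side gives ${}^\perp\langle\CA_1,\CA_3\rangle$ directly, the other gives ${}^\perp\CA_3\cap\bigl({}^\perp\langle\CA_2,\CA_3\rangle\bigr)^\perp={}^\perp\CA_3\cap\langle\CA_2,\CA_3\rangle={}^\perp\CA_3\cap{}^\perp\CA_1$, where the last equality uses $\langle\CA_2,\CA_3\rangle={}^\perp\CA_1$ from the original decomposition --- after which the third components are forced, as you say, because the last component of a semiorthogonal decomposition is the left orthogonal of the span of the preceding ones. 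The one place where you are currently relying on an unproved assertion is admissibility in the general triangulated setting: your argument repeatedly needs that the span $\langle\CA_i,\CA_j\rangle$ of semiorthogonal admissible subcategories is again admissible (to get the decompositions $\CT=\langle\CA_2,\CA_3,{}^\perp\langle\CA_2,\CA_3\rangle\rangle$, etc.), and that the mutated component $\RR_i(\CA_\bullet)_i$ is admissible so that the hypotheses of the next mutation are met; for $\CT=\D^b(X)$ this is Lemma~\ref{sod_adm1}, but for abstract $\CT$ you should prove the span statement (glue the adjoints of the two inclusions using the functorial filtration of Lemma~\ref{ff}) and deduce admissibility of the mutated component from it, rather than defer it to ``a separate verification.'' With that lemma supplied, your reduction-to-blocks argument is a complete and essentially standard proof of the \cite{BK} statement.
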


The operations $\RR_i$ and $\LL_i$ are known as the {\sf right}\/ and {\sf left mutations}\/ respectively.

Actually, the components of the decomposition obtained by any mutation don't change.
\begin{theorem}[\cite{BK}]\label{mut-fun}
Let\/ $\BR_i$ be the right projection functor onto the subcategory ${}^\perp\CA_i$
and\/ $\BL_i$ be the left projection functor onto $\CA_i^\perp$.
Then $\BR_i$ and $\BL_i$ induce equivalences
$$
\BR_i:\CA_{i-1} \to \RR_i(\CA_\bullet)_i,
\qquad
\BL_i:\CA_{i+1} \to \LL_i(\CA_\bullet)_i.
$$
Moreover, for any object $T \in \CA_{i-1}$ there is a distinguished triangle
$\BR_i(T) \to T \to \alpha\alpha^*(T)$,
and for any $T \in \CA_{i+1}$ there is a distinguished triangle
$\alpha\alpha^!(T) \to T \to \BL_i(T)$,
where $\alpha:\CA_i \to \CT$ is the embedding functor.
\end{theorem}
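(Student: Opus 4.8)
The plan is to reduce everything to the defining cone triangles of the projection functors together with the semiorthogonality relations $\Hom^\bullet(\CA_p,\CA_q)=0$ for $p>q$. First, passing to the opposite category interchanges left and right adjoints, left and right orthogonals, and the operations $\RR_i$ and $\LL_i$, so the assertion for $\BL_i$ in $\CT$ is the assertion for $\BR_i$ in $\CT^{\opp}$; I will therefore only discuss $\BR_i$. Since $\CA_i$ is left admissible we have the decomposition $\CT=\lan\CA_i,{}^\perp\CA_i\ran$ (Lemma~\ref{sod_adm}), and by definition $\BR_i$ is the projection onto its second component; concretely it sits in the functorial triangle
$$
\BR_i(T)\to T\to\alpha\alpha^*(T),\qquad \alpha\alpha^*(T)=i_{\CA_i}i^*_{\CA_i}(T)\in\CA_i,
$$
which is exactly the triangle in the statement. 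Dually, $\CA_i$ being right admissible, $\BL_i$ is the projection onto the first component of $\CT=\lan\CA_i^\perp,\CA_i\ran$ and is computed by $\BL_i(T)=\Cone(\alpha\alpha^!(T)\to T)\in\CA_i^\perp$, again as in the statement. So the triangles require no work and it remains to prove the equivalences.

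The engine is the classical two-term mutation: $\BR_i$ and $\BL_i$ restrict to mutually inverse equivalences $\CA_i^\perp\leftrightarrow{}^\perp\CA_i$. I would obtain this as follows. For $T\in\CA_i^\perp$ one has $i^!_{\CA_i}(T)=0$; applying $i^!_{\CA_i}$ to the triangle above and using $i^!_{\CA_i}i_{\CA_i}\cong\id$ then gives $\alpha\alpha^!(\BR_i(T))\cong\alpha\alpha^*(T)[-1]$, and substituting this into the triangle defining $\BL_i(\BR_i(T))$ and comparing with the rotation $\alpha\alpha^*(T)[-1]\to\BR_i(T)\to T$ of the first triangle yields $\BL_i(\BR_i(T))\cong T$. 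Symmetrically, using $i^*_{\CA_i}(T)=0$ for $T\in{}^\perp\CA_i$, one gets $\BR_i(\BL_i(T))\cong T$. (This is in~\cite{BK,B}.)

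Now write $\CN=\lan\CA_1,\dots,\CA_{i-2}\ran$ and $\CR=\lan\CA_{i+1},\dots,\CA_m\ran$. From the definition $\RR_i(\CA_\bullet)_i={}^\perp\CN\cap{}^\perp\CA_i\cap\CR^\perp$, while the orthogonal description of a contiguous block of a semiorthogonal decomposition gives $\CA_{i-1}={}^\perp\CN\cap\CA_i^\perp\cap\CR^\perp$; moreover $\CA_{i-1}\subseteq\CA_i^\perp$ and $\RR_i(\CA_\bullet)_i\subseteq{}^\perp\CA_i$ tautologically. Since $\BR_i$ and $\BL_i$ are mutually inverse between $\CA_i^\perp$ and ${}^\perp\CA_i$, to conclude it is enough to prove the two inclusions
$$
\BR_i(\CA_{i-1})\subseteq\RR_i(\CA_\bullet)_i
\qquad\text{and}\qquad
\BL_i\big(\RR_i(\CA_\bullet)_i\big)\subseteq\CA_{i-1};
$$
indeed these then force $\BR_i\colon\CA_{i-1}\to\RR_i(\CA_\bullet)_i$ and $\BL_i\colon\RR_i(\CA_\bullet)_i\to\CA_{i-1}$ to be mutually inverse, hence equivalences. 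Each inclusion is a short diagram chase. For $T\in\CA_{i-1}$: $\BR_i(T)\in{}^\perp\CA_i$ automatically; applying $\Hom^\bullet(-,A)$ with $A\in\CA_j$, $j\le i-2$, to $\BR_i(T)\to T\to\alpha\alpha^*(T)$ and using $\Hom^\bullet(\alpha\alpha^*(T),A)=0$ (as $j<i$) and $\Hom^\bullet(T,A)=0$ (as $j<i-1$) shows $\BR_i(T)\in{}^\perp\CN$, while applying $\Hom^\bullet(A,-)$ with $A\in\CA_j$, $j\ge i+1$, shows $\BR_i(T)\in\CR^\perp$. Dually, for $B\in\RR_i(\CA_\bullet)_i$: $\BL_i(B)\in\CA_i^\perp$ automatically, and the same long exact sequences applied to $\alpha\alpha^!(B)\to B\to\BL_i(B)$ (using $\alpha\alpha^!(B)\in\CA_i$ and $B\in{}^\perp\CN\cap\CR^\perp$) show $\BL_i(B)\in{}^\perp\CN\cap\CR^\perp$, hence $\BL_i(B)\in\CA_{i-1}$.

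I expect the only step that is not pure bookkeeping to be the two-term mutation equivalence of the second paragraph, and even that is classical; the remaining work is cone-chasing and keeping careful track of the index inequalities, which is precisely what pins the image of $\BR_i$ down to the prescribed component $\RR_i(\CA_\bullet)_i$ rather than merely to ${}^\perp\CA_i$. One could also organize the whole argument by first observing that the block $\lan\CA_{i-1},\CA_i\ran={}^\perp\CN\cap\CR^\perp$ is stable under $\BR_i$ and that $\BR_i$ restricted there is the two-term right mutation through $\CA_i$; the statement for $\BL_i$ then follows, as noted at the outset, by passing to the opposite category.
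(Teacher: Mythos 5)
Your proof is correct, but note that the paper itself does not prove this statement at all: Theorem~\ref{mut-fun} is quoted from Bondal--Kapranov (\cite{BK}) without argument, so there is no internal proof to compare against. What you supply is essentially the classical argument from \emph{loc.\ cit.}: the two triangles are definitional (they are the decomposition triangles for $\CT=\lan\CA_i,{}^\perp\CA_i\ran$ and $\CT=\lan\CA_i^\perp,\CA_i\ran$, with the third vertex identified with $\alpha\alpha^*$, resp.\ $\alpha\alpha^!$, via semiorthogonality); the heart is the two-term statement that $\BR_i$ and $\BL_i$ are mutually inverse between $\CA_i^\perp$ and ${}^\perp\CA_i$; and the index bookkeeping with $\CN=\lan\CA_1,\dots,\CA_{i-2}\ran$, $\CR=\lan\CA_{i+1},\dots,\CA_m\ran$, using $\CA_{i-1}={}^\perp\CN\cap\CA_i^\perp\cap\CR^\perp$ and $\RR_i(\CA_\bullet)_i={}^\perp\CN\cap{}^\perp\CA_i\cap\CR^\perp$, correctly pins the image down to the prescribed component; your long-exact-sequence checks of the four inclusions are all right, as is the reduction of the $\BL_i$ case to $\BR_i$ via the opposite category. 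Two small points deserve a word more care than you give them. First, in the step $\BL_i(\BR_i(T))\cong T$ for $T\in\CA_i^\perp$, having $\alpha\alpha^!(\BR_i(T))\cong\alpha\alpha^*(T)[-1]$ as objects is not by itself enough to identify the two triangles with the same cone: you must check that under this identification the rotated connecting map $\alpha\alpha^*(T)[-1]\to\BR_i(T)$ is the adjunction counit, which follows because composition with it induces isomorphisms on $\Hom(\alpha A,-)$ for all $A\in\CA_i$ (as $\Hom^\bullet(\alpha A,T)=0$), and two maps from objects of $\CA_i$ with this universal property agree up to a unique isomorphism. Second, to get genuine equivalences you need the objectwise isomorphisms $\BL_i\BR_i\cong\id$ and $\BR_i\BL_i\cong\id$ to be natural, which follows from the functoriality of the decomposition triangles (Lemma~\ref{ff}). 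Both points are standard and are covered by your citation of \cite{BK,B}, so they are omissions of detail rather than gaps.
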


Consider the following sequence of mutations
$$
%\DD = \prod_{k=1}^{m-1}(\LL_k\LL_{k-1}\dots\LL_1).
\DD = \prod_{k=2}^{m}(\RR_m\dots\RR_{k+1}\RR_k) =
(\RR_m\dots\RR_2)(\RR_m\dots\RR_3)(\RR_m\RR_{m-1})\RR_m.
$$
%and the corresponding functor
%$$
%\BD = \prod_{k=2}^{m}(\BR_m\dots\BR_{k+1}\BR_k) : \D^b(X) \to \D^b(X).
%$$
It is easy to see that it takes a semiorthogonal decomposition $\CT = \langle \CA_1,\dots,\CA_m \rangle$
to the decomposition $\CT = \langle \CB_m,\dots,\CB_1 \rangle$, where
\begin{equation}\label{bi}
\CB_i := {}^\perp \langle \CA_1,\dots,\CA_{i-1},\CA_{i+1},\dots,\CA_m \rangle,
\end{equation}
and it follows from Theorem~\ref{mut-fun}
that the functor $\BD_i = \BR_m\BR_{m-1}\dots\BR_{i+1}$ gives an equivalence $\CA_i \to \CB_i$.

\begin{lemma}\label{bdi}
For any $T \in \CA_i$ we have $\Hom^\bullet(\BD_i(T),T) \cong \Hom^\bullet(T,T)$.
\end{lemma}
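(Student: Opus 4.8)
The plan is to induct on the number of mutations composing $\BD_i$, reducing to the single-step claim hidden inside Theorem~\ref{mut-fun}. Recall $\BD_i = \BR_m\BR_{m-1}\dots\BR_{i+1}$, where at each stage $\BR_k$ is the right projection functor onto ${}^\perp\CA_k$ (relative to the current semiorthogonal decomposition). Fix $T \in \CA_i$. After applying $\BR_{i+1}$ we land in $\RR_{i+1}(\CA_\bullet)_{i+1}$, which becomes a component of a new decomposition; iterating, $\BD_i(T)$ lands in $\CB_i$. So it suffices to show: at each step, if $T'$ lies in the $(k-1)$-st slot of the current decomposition and $\BR_k$ is the right projection onto the left orthogonal of the $k$-th slot, then $\Hom^\bullet(\BR_k(T'),T') \cong \Hom^\bullet(T',T')$. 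Granting this, compose the isomorphisms along the chain $\CA_i \to \RR_{i+1}(\CA_\bullet)_i \to \dots \to \CB_i$ to conclude.

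For the single-step claim I would use the distinguished triangle from Theorem~\ref{mut-fun}: for $T' \in \CA_{k-1}$ there is a triangle $\BR_k(T') \to T' \to \alpha\alpha^*(T')$, where $\alpha:\CA_k \to \CT$ is the embedding. Apply $\Hom^\bullet(-,T')$ to this triangle to get a long exact sequence
$$
\dots \to \Hom^\bullet(\alpha\alpha^*(T'),T') \to \Hom^\bullet(T',T') \to \Hom^\bullet(\BR_k(T'),T') \to \Hom^{\bullet+1}(\alpha\alpha^*(T'),T') \to \dots
$$
Now $\alpha\alpha^*(T') \in \CA_k$, while $T' \in \CA_{k-1} \subset \CA_k^\perp$ (semiorthogonality of the current decomposition, since $k-1 < k$), so $\Hom^\bullet(\alpha\alpha^*(T'),T') = \Hom^\bullet(\CA_k, \CA_k^\perp) = 0$. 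Hence the middle arrow $\Hom^\bullet(T',T') \to \Hom^\bullet(\BR_k(T'),T')$ is an isomorphism. This is exactly the desired step.

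The only genuine bookkeeping point — and the step I expect to need the most care — is verifying that the hypotheses of Theorem~\ref{mut-fun} are met at each stage of the iteration: one must check that after applying $\RR_{k}$ the image of $\CA_i$ really is the $(k-1)$-st component of the new decomposition and that this new decomposition still has all components admissible (so that the next mutation and the next triangle make sense). Both follow from Theorem~\ref{muts} (which preserves admissibility) together with the explicit description of $\RR_k(\CA_\bullet)$: the slot $j = k-1$ receives $\CA_k$ and all slots $j \ne k-1, k$ are unchanged, so the component carrying the (successive images of the) object $T \in \CA_i$ is always in position $i$, one less than the index of the mutation being applied at the next step. Once this indexing is pinned down, the chain of isomorphisms composes formally and gives $\Hom^\bullet(\BD_i(T),T) \cong \Hom^\bullet(T,T)$.
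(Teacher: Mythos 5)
Your single-step computation is fine, but the induction you set up does not close. At stage $k$ you prove $\Hom^\bullet(\BR_k(T'),T') \cong \Hom^\bullet(T',T')$, where \emph{both} arguments refer to the current object $T' = \BR_{k-1}\cdots\BR_{i+1}(T)$. Writing $T_k = \BR_k\cdots\BR_{i+1}(T)$, your chain produces isomorphisms $\Hom^\bullet(T_k,T_{k-1}) \cong \Hom^\bullet(T_{k-1},T_{k-1})$ for each $k$, and these simply do not concatenate: the target $\Hom^\bullet(\BD_i(T),T)$ has the \emph{original} $T$ in the second slot, and nothing in your list of isomorphisms relates $\Hom^\bullet(T_k,T_{k-1})$ or $\Hom^\bullet(T_{k-1},T_{k-1})$ to $\Hom^\bullet(T_k,T)$. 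So the final sentence ``the chain of isomorphisms composes formally'' is exactly the missing step, and it is not formal.

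The repair is to keep the second argument fixed equal to $T$ throughout. Theorem~\ref{mut-fun} gives at each stage a triangle $T_k \to T_{k-1} \to A_k$ with $A_k \in \CA_k$ (the cone lies in the \emph{original} component $\CA_k$, not merely in some mutated slot). Since $T \in \CA_i$ and $k > i$, semiorthogonality of the original decomposition gives $\Hom^\bullet(A_k,T) = 0$, so applying $\Hom^\bullet(-,T)$ to this triangle yields $\Hom^\bullet(T_{k-1},T) \cong \Hom^\bullet(T_k,T)$; now these \emph{do} compose, from $\Hom^\bullet(T,T)$ down to $\Hom^\bullet(\BD_i(T),T)$. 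This is essentially the paper's argument unrolled: the paper assembles all the steps at once into a single triangle $\BD_i(T) \to T \to T'$ with $T' \in \langle \CA_{i+1},\dots,\CA_m\rangle$ and kills $\Hom^\bullet(T',T)$ by semiorthogonality in one stroke. Note also that the correct vanishing is $\Hom^\bullet(\CA_k,\CA_i)=0$ for $k>i$ coming from the original decomposition, not the vanishing $\Hom^\bullet(\CA_k,T')=0$ from the current mutated decomposition that your per-step statement relies on.
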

\begin{proof}
By Theorem~\ref{mut-fun} we have a distinguished triangle
$$
\BD_i(T) \to T \to T',
$$
where $T' \in \langle \CA_{i+1},\dots,\CA_m \rangle$. We have $\Hom^\bullet(T',T) = 0$ by semiorthogonality,
hence we have an isomorphism $\Hom^\bullet(\BD_i(T),T) \cong \Hom^\bullet(T,T)$.
\end{proof}

The semiorthogonal decomposition $\CT = \langle \CB_m,\dots,\CB_1 \rangle$ is known as
{\sf the dual} for $\CT = \langle \CA_1,\dots,\CA_m \rangle$.

\begin{lemma}[\cite{BK}]\label{dual-mut}
We have $\DD\circ\LL_i = \LL_{m+1-i}\circ\DD$, $\DD\circ\RR_i = \RR_{m+1-i}\circ\DD$.
\end{lemma}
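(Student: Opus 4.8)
\emph{Proof plan.} The natural setting is the braid-group action on semiorthogonal decompositions. The operations $\RR_2,\dots,\RR_m$ and $\LL_1,\dots,\LL_{m-1}$ are invertible transformations of the set of all $m$-term semiorthogonal decompositions of $\CT$; by Theorem~\ref{muts} adjacent right mutations braid, a glance at the definitions gives $\RR_i\circ\RR_j=\RR_j\circ\RR_i$ for $|i-j|\ge 2$ (the two mutations touch disjoint pairs of components) and $\RR_{i+1}\circ\LL_i=\LL_i\circ\RR_{i+1}=\id$ (a left mutation undoes the matching right mutation). Hence the group generated by all mutations is a quotient of the braid group $B_m$ on $m$ strands in which $\RR_{i+1}$ --- equivalently $\LL_i^{-1}$ --- is the $i$-th standard generator, and the lemma becomes the statement that $\DD$ is the half-twist (Garside element) of $B_m$ together with the classical description of conjugation by it. The plan is: (1) identify $\DD$ with the half-twist; (2) invoke that conjugation by the half-twist is the automorphism of $B_m$ which reverses the numbering of the standard generators; (3) translate back through $\RR_{i+1}=\LL_i^{-1}$, deducing the formula for $\LL$ from that for $\RR$ by passing to inverses.

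For step (1) I would argue that $\DD=\prod_{k=2}^m(\RR_m\cdots\RR_k)$ is a positive word of length $\binom{m}{2}$ in $\RR_2,\dots,\RR_m$ whose image in the symmetric group is a reduced expression for the longest element $w_0$ of $S_m$; since a reduced positive braid lifting $w_0$ is unique and equal to the half-twist, this identifies $\DD$. Equivalently one can run an induction on $m$ using the factorization $\DD_m=(\RR_m\RR_{m-1}\cdots\RR_2)\,\widehat{\DD}_{m-1}$, where $\widehat{\DD}_{m-1}$ is the analogous element built from the mutations $\RR_3,\dots,\RR_m$ of the last $m-1$ components. There is also a purely hands-on alternative that avoids braid groups entirely: by construction $\DD$ sends an arbitrary decomposition to its dual, $\langle\CA_1,\dots,\CA_m\rangle\mapsto\langle\CB_m,\dots,\CB_1\rangle$ with $\CB_j={}^\perp\langle\CA_1,\dots,\widehat{\CA_j},\dots,\CA_m\rangle$; applying $\LL_i$ first and computing the dual, and using that a mutation does not change the subcategory generated by the adjacent pair it acts on, one finds that $\CB_j$ is unchanged for $j\ne i,i+1$ while the two remaining dual components are interchanged up to one further mutation, and a slot-by-slot comparison with $\langle\CB_m,\dots,\CB_1\rangle$ exhibits the outcome as a single mutation of the dual at the slot obtained from the one moved by $\LL_i$ under the reversal of slot numbering.

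I expect the only real work to be bookkeeping. In the braid-group route the point that needs genuine verification is that $\DD$ is the half-twist rather than some other positive braid of length $\binom{m}{2}$, and this is precisely where the commutation relations for non-adjacent mutations --- not stated in the excerpt but immediate from the definitions --- are used. In the hands-on route the delicate step is to chase the orthogonality conventions carefully enough to read off which mutation and which index appear on the dual side, i.e.\ to follow the reversal $p\mapsto m+1-p$ of slot numbering precisely. Neither point is conceptually deep, but both reward close attention to indices.
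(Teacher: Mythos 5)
The paper offers no argument for this lemma (it is quoted from [BK]), so there is no internal proof to match; your braid-group route, or the hands-on dualization you sketch as an alternative, is the natural one, and its ingredients are sound. Two preparatory claims deserve one extra line each: $\LL_i\circ\RR_{i+1}=\RR_{i+1}\circ\LL_i=\id$ rests on the identity $\CA_i={}^\perp\lan\CA_1,\dots,\CA_{i-1}\ran\cap\lan\CA_{i+1},\dots,\CA_m\ran^\perp$, and commutativity of distant mutations is not literally ``a glance at the definitions'', since the formula for the mutated component involves \emph{all} the other components; you need the (easy) remark that $\RR_i$ changes nothing outside $\lan\CA_{i-1},\CA_i\ran$ and that its new component depends only on that pair. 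Your identification of $\DD$ with the half-twist is correct: it is a positive word of length $\binom{m}{2}$ whose image in $S_m$ is reduced and equals $w_0$, and any positive reduced lift of $w_0$ is the Garside element.

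The one thing you must not gloss is exactly the step you flagged, and when it is carried out in this paper's conventions the indices do \emph{not} come out as printed. Here $\RR_i$ acts on the slot pair $(i-1,i)$ and $\LL_i$ on $(i,i+1)$, so the dictionary with the standard generators is $\sigma_j\leftrightarrow\RR_{j+1}=\LL_j^{-1}$, and $\Delta\sigma_j\Delta^{-1}=\sigma_{m-j}$ translates into
\[
\DD\circ\RR_i=\RR_{m+2-i}\circ\DD \quad (2\le i\le m),
\qquad
\DD\circ\LL_i=\LL_{m-i}\circ\DD \quad (1\le i\le m-1),
\]
rather than the stated $m+1-i$, which is not even defined at the endpoints (e.g.\ $\LL_{m+1-i}$ for $i=1$). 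A direct check for $m=3$, independent of any braid-group formalism, confirms this: computing duals slot by slot gives $\DD\circ\LL_1=\LL_2\circ\DD$ and $\DD\circ\RR_2=\RR_3\circ\DD$, whereas the printed formula would demand $\LL_3$ (undefined) and $\RR_2$. Note also that the way the lemma is actually invoked later (in the proof of Corollary~\ref{so1}, where the mutated dual decomposition is described as ``the left mutation of $\CB_i$ through $\CB_{i+1}$'', i.e.\ $\LL_{m-i}$ applied to $\lan\CB_m,\dots,\CB_1\ran$) agrees with the corrected indices. So your method, executed carefully, proves the correct statement; just be aware that the subscript $m+1-i$ in the statement reflects a different indexing of mutations (by the slot pair acted on, as in [BK]) and should either be corrected or accompanied by that change of convention in your write-up.
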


It follows in particular that $\DD^2$ is the central element
in the braid group. Actually, it is well known that $\DD^2$ acts as the inverse of the Serre functor.

\section{The calculus of kernels}\label{kernels}

\subsection{Kernel functors}

Most part of the material of this section is standard.
Some of the statements we remind for the convenience of the reader,
some we make a little bit more precise than one can find in the literature.

In this section whenever we consider an algebraic variety $X$
over a base scheme $S$ we always assume that both $X$ and $S$ are smooth,
and the structure map $X \to S$ is proper and flat (although the morphism $X \to S$
doesn't need to be smooth).

For any collection of algebraic varieties $X_1$, \dots, $X_n$ over the same base scheme $S$
and any subset $I = \{i_1,\dots,i_m\} \subset \{1,\dots,n\}$
we denote by $p_I$ the projection $X_1\times_S \dots \times_S  X_n \to X_{i_1}\times_S \dots\times_S  X_{i_m}$.
Similarly, for any pair of indexes $i < j$ we denote by $\sigma_{ij}$ the transposition of the $i$-th and $j$-th factors
$X_1\times_S \dots\times_S  X_i\times_S \dots\times_S  X_j\times_S \dots \times_S  X_n \to
X_1\times_S \dots\times_S  X_j\times_S \dots\times_S  X_i\times_S \dots \times_S  X_n$.

For each object $K \in D^b(X_1\times_S X_2)$ we denote by $\Phi_K$ the functor
defined by
$$
\Phi_K(F) = p_{1*}(K \otimes p_2^*(F)),
$$
which is called \emph{the kernel functor} with kernel~$K$.
Note that since $X_2$ is smooth every object $F \in \D^b(X_2)$ is a perfect complex,
so $K\otimes p_2^*(F)$ has bounded coherent cohomology, and since the map $p_1$ is proper
we have $\Phi_K(F) \in \D^b(X_1)$. So, $\Phi_K$ is a functor $\D^b(X_2) \to \D^b(X_1)$.

Note that $\Delta_*\CO_X$ is the kernel of the identity functor of $\D^b(X)$,
and that
$$
S_{X/S} := \Delta_*\omega_{X/S}[\dim X/S]
$$
is the kernel giving the Serre functor of $X$ over $S$ (see~\cite{BK}).

We always consider the kernel functors acting from right to left,
i.e.\ from the derived category of the second factor to that of the first.
If we need a functor in the opposite direction, we use explicitly
the transposition morphism. We denote the transposed kernel by
$$
K^\tra = \sigma_{12}^*K \in \D^b(X_2\times_S X_1).
$$
Thus we have
\begin{equation}\label{tker}
p_{2*}(K \otimes p_1^*(F)) \cong \Phi_{K^\tra}(F)
\end{equation}

Given two kernels $K \in D^b(X_2\times_S X_3)$ and $L \in D^b(X_1\times_S X_2)$
we denote by $L\circ K \in D^b(X_1\times_S X_3)$ their \emph{convolution}.
It is defined by
$$
L\circ K := p_{13*}(p_{12}^*L \otimes p_{23}^*K),
$$
so that $\Phi_{L\circ K} \cong \Phi_L \circ \Phi_K$.

The convolution is associative --- there exists a canonical functorial (in each argument) isomorphism
$$
a_{M,L,K}:M\circ(L\circ K) \to (M\circ L) \circ K,
$$
such that the pentahedron equality holds,
$$
(a_{N,M,L}\circ \id_K)a_{N,M\circ L,K}(\id_N\circ a_{M,L,K}) = a_{N,M,L\circ K}a_{N\circ M,L,K}.
$$

%It is easy to see that there is natural isomorphism $(L\circ K)^\tra \cong K^\tra\circ L^\tra$.

In some cases the convolution is easy to compute.
We will need the following results

\begin{lemma}\label{conv-diag}
%Let $f:X \to S$ be an algebraic morphism, $i:X\times_S X \to X\times X$ is the natural embedding.
For any $E_1 \in \D^b(X_1)$, $E_2 \in \D^b(X_2)$, $K \in \D^b(X_1\times_S X_2)$ we have
$$
(\Delta_{X_1*}E_1)\circ K \cong p_1^*E_1\otimes K,
\qquad
K\circ (\Delta_{X_2*}E_2) \cong K\otimes p_2^*E_2.
$$
\end{lemma}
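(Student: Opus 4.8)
The plan is to prove Lemma~\ref{conv-diag} by direct computation with the definition of convolution, using the base-change and projection formulas together with the fact that $\Delta_{X_1}$ is a section of the projection onto $X_1$. I will prove the first isomorphism $(\Delta_{X_1*}E_1)\circ K \cong p_1^*E_1 \otimes K$; the second one follows by the same argument after applying the transposition $\sigma_{12}$, or simply by a symmetric computation.

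First I would unwind the definition. We have $\Delta_{X_1*}E_1 \in \D^b(X_1\times_S X_1)$ and $K \in \D^b(X_1\times_S X_2)$, so the convolution lives in $\D^b(X_1\times_S X_2)$ and is
\begin{equation*}
(\Delta_{X_1*}E_1)\circ K = p_{13*}\bigl(p_{12}^*(\Delta_{X_1*}E_1) \otimes p_{23}^*K\bigr),
\end{equation*}
where now the projections are from $X_1\times_S X_1\times_S X_2$, with the first and third factors being $X_1$ and $X_2$. The key geometric observation is that $p_{12}^*(\Delta_{X_1*}E_1)$ is supported on the partial diagonal $\delta:X_1\times_S X_2 \hookrightarrow X_1\times_S X_1\times_S X_2$ (the locus where the first two coordinates agree), and in fact by flat base change along the cartesian square relating $\Delta_{X_1}$ and $\delta$ one gets $p_{12}^*(\Delta_{X_1*}E_1) \cong \delta_*(q^*E_1)$, where $q:X_1\times_S X_2 \to X_1$ is the projection to the first factor (which equals $p_1$ on the target $X_1\times_S X_2$). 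Here one must check that the relevant square is Tor-independent so that flat base change applies; since everything is flat over $S$ and $\Delta_{X_1}$ is a regular embedding this is routine.

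Next I would substitute and use the projection formula. With $p_{13}\circ\delta = \id_{X_1\times_S X_2}$ (because $\delta$ is a section over the $(1,3)$-projection once we identify the first two factors), the projection formula gives
\begin{equation*}
p_{13*}\bigl(\delta_*(q^*E_1) \otimes p_{23}^*K\bigr) \cong p_{13*}\delta_*\bigl(q^*E_1 \otimes \delta^*p_{23}^*K\bigr) \cong q^*E_1 \otimes \delta^*p_{23}^*K.
\end{equation*}
Finally $p_{23}\circ\delta:X_1\times_S X_2 \to X_2\times_S X_2$ followed by... wait — more carefully, $p_{23}\circ\delta$ sends $(x_1,x_2)\mapsto(x_1,x_2)$ into $X_1\times_S X_2$ (the factors $2,3$ after imposing factor $1=$ factor $2$), so $\delta^*p_{23}^*K \cong K$, and $q^*E_1 \cong p_1^*E_1$. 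This yields $(\Delta_{X_1*}E_1)\circ K \cong p_1^*E_1 \otimes K$ as claimed.

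The main obstacle, such as it is, is bookkeeping: keeping straight which of the three copies of varieties each projection refers to, and verifying that the base-change square used in the second step is genuinely cartesian and Tor-independent so that the derived base-change isomorphism is valid. Once the diagram of projections is drawn correctly, each individual step (flat base change, projection formula, identifying a composition with a section) is standard. For the second isomorphism $K\circ(\Delta_{X_2*}E_2)\cong K\otimes p_2^*E_2$ I would either repeat the computation with the roles of the outer factors swapped, or deduce it from the first by applying $(-)^\tra$ and the identity $(L\circ K)^\tra \cong K^\tra \circ L^\tra$ together with $(\Delta_{X_2*}E_2)^\tra \cong \Delta_{X_2*}E_2$.
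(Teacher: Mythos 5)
Your proposal is correct and follows essentially the same route as the paper: rewrite $p_{12}^*(\Delta_{X_1*}E_1)$ as the pushforward along the partial diagonal $\tilde\Delta$ via base change (valid here since $p_{12}$ is flat, being a pullback of the flat map $X_2\to S$, so no Tor-independence discussion is really needed), apply the projection formula, and use $p_{13}\circ\tilde\Delta=p_{23}\circ\tilde\Delta=\id$. The paper likewise proves only the first isomorphism and treats the second as analogous.
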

\begin{proof}
Consider the product $X_1\times_S X_1\times_S X_2$. We have
\begin{multline*}
p_{13*}(p_{12}^*(\Delta_{X_1*}E_1) \otimes p_{23}^*K) \cong
p_{13*}(\tilde\Delta_*p_1^*E_1 \otimes p_{23}^*K) \cong
p_{13*}\tilde\Delta_*(p_1^*E_1 \otimes \tilde\Delta^*p_{23}^*K) \cong
p_1^*E_1 \otimes \tilde\Delta^*p_{23}^*K,
\end{multline*}
where $\tilde\Delta:X_1\times_S X_2 \to X_1\times_S X_1\times_S X_2$ is the product of the diagonal and the identity
(the first isomorphism is the base change, the second is the projection formula, and the third uses equalities
$p_{13}\circ\tilde\Delta = p_{23}\circ\tilde\Delta = \id$). The second claim is proved analogously.
\end{proof}

\begin{lemma}\label{conv-decomp}
%Let $f:X \to S$ be an algebraic morphism, $i:X\times_S X \to X\times X$ is the natural embedding.
For any $E_1 \in \D^b(X_1)$, $E_2 \in \D^b(X_2)$, $K \in \D^b(X_2\times_S X_3)$ we have
$$
(p_1^*E_1\otimes p_2^*E_2) \circ K \cong p_1^*E_1\otimes p_2^*\Phi_{K^\tra}(E_2).
$$
\end{lemma}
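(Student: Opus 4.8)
The plan is to compute both sides directly on the triple product $X_1 \times_S X_2 \times_S X_3$ using the projection formula and base change, mimicking the proof of Lemma~\ref{conv-diag}. By definition,
$$
(p_1^*E_1\otimes p_2^*E_2) \circ K = p_{13*}\big(p_{12}^*(p_1^*E_1\otimes p_2^*E_2) \otimes p_{23}^*K\big).
$$
First I would rewrite the pullbacks along $p_{12}$ in terms of the three projections from the triple product: on $X_1\times_S X_2 \times_S X_3$ one has $p_{12}^*p_1^* = p_1^*$ and $p_{12}^*p_2^* = p_2^*$ (where on the right the $p_i$ now denote the projections off the triple product), so the expression becomes
$$
p_{13*}\big(p_1^*E_1\otimes p_2^*E_2 \otimes p_{23}^*K\big).
$$

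Next I would pull the factor $p_1^*E_1$ through the pushforward $p_{13*}$ by the projection formula, after noting that $p_1^*E_1$ on the triple product is the $p_{13}$-pullback of $p_1^*E_1$ on $X_1\times_S X_3$; this gives $p_1^*E_1 \otimes p_{13*}(p_2^*E_2\otimes p_{23}^*K)$ on $X_1\times_S X_3$. It then remains to identify $p_{13*}(p_2^*E_2\otimes p_{23}^*K)$ with $p_2^*\Phi_{K^\tra}(E_2)$, i.e.\ with the $p_2$-pullback from $X_3$ — note that here the target is $X_1\times_S X_3$, and $p_2$ denotes the projection to $X_3$, consistent with the statement since $\Phi_{K^\tra}(E_2)\in\D^b(X_3)$. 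Since $E_2$ and $K$ live over the last two factors $X_2, X_3$ only, and since $X_1\times_S X_2\times_S X_3 = X_1\times_S(X_2\times_S X_3)$ with $p_{13}$ being $\id_{X_1}\times p_{3}'$ for the projection $p_3':X_2\times_S X_3\to X_3$, I would apply flat base change along the fiber square relating $p_{13}$ to the projection $X_2\times_S X_3 \to X_3$, reducing the computation to $p_{3*}'(p_2'^*E_2\otimes K)$ on $X_2\times_S X_3$, which is precisely $\Phi_{K^\tra}(E_2)$ by~\eqref{tker}. (One uses flatness of the structure maps over $S$, which is in force throughout this section.)

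The main obstacle, such as it is, is purely bookkeeping: keeping straight which $p_I$ refers to projections off the triple product versus off $X_1\times_S X_3$ versus off $X_2\times_S X_3$, and setting up the correct fiber squares so that base change applies — all of which is routine given smoothness and flatness over $S$. No genuinely hard step arises; the lemma is a formal consequence of the projection formula, flat base change, and the compatibility of the various pullback identifications on iterated fiber products, exactly as in Lemma~\ref{conv-diag}.
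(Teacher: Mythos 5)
Your argument is correct and follows essentially the same route as the paper's proof: rewrite the pullbacks on the triple product, pull $p_1^*E_1$ out through $p_{13*}$ by the projection formula, identify $p_{13*}p_{23}^*$ with $p_2^*p_{2*}$ by (flat) base change for the cartesian square over $X_3$, and conclude with~\eqref{tker}. The only difference is cosmetic bookkeeping in the order of the steps, so there is nothing to add.
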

\begin{proof}
Indeed,
\begin{multline*}
%\qquad\qquad
p_{13*}(p_{12}^*(p_1^*E_1\otimes p_2^*E_2) \otimes p_{23}^*K) \cong
p_{13*}(p_{13}^*(p_1^*E_1)\otimes p_{23}^*(p_1^*E_2 \otimes K)) \cong \\
p_1^*E_1\otimes p_{13*}p_{23}^*(p_1^*E_2 \otimes K) \cong
p_1^*E_1\otimes p_2^*p_{2*}(p_1^*E_2 \otimes K) \cong
%p_1^*E_1\otimes p_2^*p_{1*}(p_2^*E_2 \otimes K^\tra) \cong
p_1^*E_1\otimes p_2^*\Phi_{K^\tra}(E_2)
%\qquad\qquad\qquad\qquad
\end{multline*}
(the first isomorphism is evident, the second is the projection formula, \
the third is the base change, and the last is~\eqref{tker}).
\end{proof}

It is well known that every kernel functor between smooth projective varieties has
a left and a right adjoint functors, which are kernel functors as well.
Below we will need a more precise statement.

\begin{lemma}\label{adjs}
If $K \in \D^b(X_1\times_S X_2)$ and $\Phi_K:\D^b(X_2) \to \D^b(X_1)$ is the corresponding kernel functor
then the adjoint functors of $\Phi_K$ are kernel functors as well. Explicitly, $\Phi_K^* \cong \Phi_{K^*}$
and $\Phi_K^! \cong \Phi_{K^!}$, where
$$
K^* = \RCHom(K^\tra,p_2^*\omega_{X_1/S}[\dim X_1/S]),\qquad
K^! = \RCHom(K^\tra,p_1^*\omega_{X_2/S}[\dim X_2/S]).
%\begin{array}{ll}
%K^* = \RCHom(K^\tra,p_2^*\omega_{X_1/S}[\dim X_1/S]),\\
%K^! = \RCHom(K^\tra,p_1^*\omega_{X_2/S}[\dim X_2/S]).
%\end{array}
$$
Moreover, the adjunction maps $\id_{X_1} \to \Phi_K\Phi_K^*$ and $\Phi_K\Phi_K^! \to \id_{X_1}$
are induced by some natural maps $\lambda_K:\Delta_*\CO_{X_1} \to K \circ K^*$ and $\rho_K:K\circ K^! \to \Delta_*\CO_{X_1}$ respectively.
\end{lemma}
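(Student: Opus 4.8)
The plan is to obtain the two adjoint kernels from relative Grothendieck--Serre duality, to deduce the left adjoint from the right one by means of the relative Serre functors, and finally to produce $\lambda_K$ and $\rho_K$ by realising the relevant adjunction isomorphisms as morphisms of kernels.

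I would start with the right adjoint. Write $p_1,p_2$ for the two projections of $X_1\times_S X_2$. Since $X_2\to S$ is a flat morphism of smooth varieties it is Gorenstein (a flat local complete intersection), so its base change $p_1\colon X_1\times_S X_2\to X_1$ is proper with relative dualizing complex the invertible object $p_2^*\omega_{X_2/S}[\dim X_2/S]$, and $p_1^!G\cong p_1^*G\otimes p_2^*\omega_{X_2/S}[\dim X_2/S]$. For $F\in\D^b(X_2)$ and $G\in\D^b(X_1)$ one then has
\[
\Hom_{X_1}(\Phi_K(F),G)\cong\Hom_{X_1\times_S X_2}\bigl(K\otimes p_2^*F,\ p_1^*G\otimes p_2^*\omega_{X_2/S}[\dim X_2/S]\bigr),
\]
using the $(p_{1*},p_1^!)$ adjunction. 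Since $p_2^*F$ is perfect ($X_2$ being smooth) one may move it to the right; transposing along $\sigma_{12}$ and applying the adjunction $\Hom(A,\RCHom(B,C))\cong\Hom(A\otimes B,C)$ rewrites the group as $\Hom_{X_2}(F,\Phi_{K^!}(G))$ with $K^!=\RCHom(K^\tra,p_1^*\omega_{X_2/S}[\dim X_2/S])$, whence $\Phi_K^!\cong\Phi_{K^!}$ by uniqueness of adjoints.

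For the left adjoint I would not redo the computation, but use that $\D^b(X_i)$ carries a Serre functor over $S$ with kernel $S_{X_i/S}=\Delta_*\omega_{X_i/S}[\dim X_i/S]$, hence $\Phi_K^*\cong S_{X_2}^{-1}\circ\Phi_K^!\circ S_{X_1}$, i.e.\ $\Phi_K^*\cong\Phi_L$ with $L=S_{X_2/S}^{-1}\circ K^!\circ S_{X_1/S}$. By Lemma~\ref{conv-diag} convolution with a diagonal kernel is just tensoring with a pullback, so $L\cong p_1^*\omega_{X_2/S}^{-1}[-\dim X_2/S]\otimes K^!\otimes p_2^*\omega_{X_1/S}[\dim X_1/S]$; substituting the formula for $K^!$ and pulling the invertible twists inside the $\RCHom$, the twist by $\omega_{X_2/S}$ and its inverse cancel, leaving $L\cong\RCHom(K^\tra,p_2^*\omega_{X_1/S}[\dim X_1/S])=K^*$.

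It remains to construct $\lambda_K$ and $\rho_K$. The subtlety is that a morphism of kernels is not determined by the natural transformation it induces ($K\mapsto\Phi_K$ is not faithful), so one must realise each step of the adjunction isomorphisms above as an explicit morphism of kernels and take the composite. For $\rho_K\colon K\circ K^!\to\Delta_*\CO_{X_1}$ I would argue as follows. On $X_1\times_S X_2$ there is the evaluation map $K\otimes(K^!)^\tra\to p_2^*\omega_{X_2/S}[\dim X_2/S]$, where $(K^!)^\tra\cong\RCHom(K,p_2^*\omega_{X_2/S}[\dim X_2/S])$. The partial diagonal $\delta\colon X_1\times_S X_2\hookrightarrow X_1\times_S X_2\times_S X_1$ satisfies $\delta^*(p_{12}^*K\otimes p_{23}^*K^!)\cong K\otimes(K^!)^\tra$ and $p_{13}\circ\delta=\Delta_{X_1}\circ\mathrm{pr}_{X_1}$, so by the $(\delta^*,\delta_*)$ adjunction the evaluation map induces $p_{12}^*K\otimes p_{23}^*K^!\to\delta_*\bigl(p_2^*\omega_{X_2/S}[\dim X_2/S]\bigr)$, and applying $p_{13*}$ gives $K\circ K^!\to\Delta_{X_1*}\bigl(\mathrm{pr}_{X_1*}\,\mathrm{pr}_{X_1}^!\CO_{X_1}\bigr)$; composing with $\Delta_{X_1*}$ of the Grothendieck trace $\mathrm{pr}_{X_1*}\,\mathrm{pr}_{X_1}^!\CO_{X_1}\to\CO_{X_1}$ yields $\rho_K$. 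The map $\lambda_K$ is built dually, from the coevaluation map and the trace of $X_1\to S$ (here it is convenient, and harmless in the applications, to assume $K$ perfect, which holds e.g.\ whenever $X_1\times_S X_2$ is smooth). The step I expect to require the real work is the verification that $\Phi_{\rho_K}$ and $\Phi_{\lambda_K}$ coincide with the adjunction counit and unit: this is a diagram chase matching these composite kernel morphisms against the chains of canonical isomorphisms above, and it rests only on the standard compatibilities of the Grothendieck trace with flat base change, the projection formula, and composition of morphisms.
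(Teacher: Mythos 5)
Your proposal is correct, and it is a genuinely different route from the paper's. For the formulas $\Phi_K^*\cong\Phi_{K^*}$, $\Phi_K^!\cong\Phi_{K^!}$ the paper simply cites \cite{BO1}, whereas you rederive them: the right adjoint from relative Grothendieck duality for the (Gorenstein, since $X_2\to S$ is flat with smooth source and target) projection $p_1$, and the left adjoint by conjugating with Serre functors and cancelling the twists via Lemma~\ref{conv-diag} --- this is sound, and is essentially Corollary~\ref{sks} run in reverse, so no circularity. The real content of the paper's proof is the construction of $\lambda_K$: it computes $\Delta^!(K\circ K^*)\cong p_{1*}\RCHom(K,K)$ using the faithful base change theorem and the twisted pullback along $\tilde\Delta$, and then takes the identity section of $\RCHom(K,K)$ together with the $(\Delta_*,\Delta^!)$-adjunction; $\rho_K$ is declared analogous. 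Your construction of $\rho_K$ is different and arguably cleaner: evaluation into $p_1^!\CO_{X_1}\cong p_2^*\omega_{X_2/S}[\dim X_2/S]$, the $(\delta^*,\delta_*)$-adjunction for the partial diagonal, the identity $p_{13}\circ\delta=\Delta_{X_1}\circ p_1$, and the Grothendieck trace of $p_1$ --- it uses only $\ast$-pullback adjunctions and the trace, avoiding any twisted pullback along the relative diagonal and the faithful base change theorem that the paper invokes. The weaker spot is $\lambda_K$, which you only sketch ``dually'': a map \emph{into} $K\circ K^*$ out of $\Delta_*\CO_{X_1}$ does not come from a $\delta_*$-adjunction in the same way, and your coevaluation route indeed needs $K$ perfect, an assumption the paper's construction avoids (its identity section of $\RCHom(K,K)$ needs nothing); since in all applications $X\times X$ is smooth this is harmless, as you note, but if you want the statement in the stated generality you should either run the paper's $\Delta^!$-computation for $\lambda_K$ or spell out the dual construction more carefully. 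Finally, both you and the paper leave the verification that these kernel-level maps actually induce the adjunction unit and counit as a routine compatibility check; you at least state explicitly what it rests on.
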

\begin{proof}
The formulas for the kernels of the adjoint functors are well-known (see e.g.~\cite{BO1}),
so we only have to construct the morphisms $\lambda_K$ and $\rho_K$.
Let $d_i = \dim X_i/S$. Consider the following base change diagram
$$
\xymatrix{
X_1\times_S X_2 \ar[rr]^{\tilde\Delta} \ar[d]_{p_1} && X_1 \times_S X_2 \times_S X_1 \ar[d]^{p_{13}} \\
X_1 \ar[rr]^{\Delta} && X_1\times_S X_1
}
$$
where $\tilde\Delta$ is the product of the diagonal on $X_1$ with the identity on $X_2$.
Note that
\begin{multline*}
\Delta^!(K\circ K^*) =
\Delta^!p_{13*}(p_{12}^*K\otimes p_{23}^*\RCHom(K^\tra,p_2^*\omega_{X_1/S}[d_1])) \cong \\ \cong
p_{1*}\tilde\Delta^!(p_{12}^*K\otimes p_{23}^*\RCHom(K^\tra,p_2^*\omega_{X_1/S}[d_1])) \cong \\ \cong
p_{1*}(\tilde\Delta^*p_{12}^*K\otimes \tilde\Delta^*p_{23}^*\RCHom(K^\tra,p_2^*\omega_{X_1/S}[d_1])\otimes p_1^*\omega_{X_1/S}^{-1}[-d_1]) \cong \\ \cong
p_{1*}(K\otimes \sigma_{12}^*\RCHom(K^\tra,p_2^*\omega_{X_1/S}[d_1])\otimes p_1^*\omega_{X_1/S}^{-1}[-d_1]) \cong \\ \cong
p_{1*}(K\otimes \RCHom(K,p_1^*\omega_{X_1/S}[d_1])\otimes p_1^*\omega_{X_1/S}^{-1}[-d_1]) \cong \\ \cong
p_{1*}\RCHom(K,K\otimes p_1^*\omega_{X_1/S}[d_1]\otimes p_1^*\omega_{X_1/S}^{-1}[-d_1]) \cong
p_{1*}\RCHom(K,K)
\end{multline*}
(the first isomorphism is the definition of the convolution, the second follows
from the faithful base change theorem~\cite{K-HS}, the third is the definition
of the twisted pullback). Further, the canonical map
$p_1^*\CO_{X_1} \cong \CO_{X_1\times_S X_2} \to \RCHom(K,K)$ induces a map
$\CO_{X_1} \to p_{1*}\RCHom(K,K)$, and by adjunction we get a map $\lambda_K:\Delta_*\CO_{X_1} \to K\circ K^*$.
The map $\rho_K$ is constructed analogously.
%
%Similarly,
%\begin{multline*}
%\Delta^*(K\circ K^!) =
%\Delta^*p_{13*}(p_{12}^*K \otimes p_{23}^*\RCHom(K^\tra,p_1^*\omega_{X_2/S}[d_2])) \cong \\ \cong
%p_{1*}\tilde\Delta^*(p_{12}^*K \otimes p_{23}^*\RCHom(K^\tra,p_1^*\omega_{X_2/S}[d_2])) \cong
%p_{1*}(K\otimes \RCHom(K,p_2^*\omega_{X_2/S}[d_2])) \cong \\ \cong
%p_{1*}(\RCHom(K, K)\otimes p_2^*\omega_{X_2/S}[d_2]) \cong
%p_{1!}\RCHom(K, K),
%\end{multline*}
%where $p_{1!}$ is the left adjoint functor of $p_1^*$
%(the first isomorphism is the definition of the convolution, the second is the faithful base change theorem~\cite{K-HS}).
%The trace map $\RCHom(K,K) \to \CO_{X_1\times_S X_2} \cong p_1^*\CO_{X_1}$ induces a map
%$p_{1!}\RCHom(K,K) \to \CO_{X_1}$, and by adjunction we get a map $K\circ K^! \to \Delta_*\CO_{X_1}$.
%It is clear that the constructed maps give the adjunction morphism as claimed.
\end{proof}

We will also need a well-known relation of the Serre functor with the left and the right adjoints,
expressed in terms of kernels. By Lemma~\ref{conv-diag} and Lemma~\ref{adjs} we have
\begin{multline*}
S_{X_2}\circ K^* \cong
\RCHom(K^\tra,p_2^*\omega_{X_1/S}[\dim X_1/S])\otimes p_1^*\omega_{X_2/S}[\dim X_2/S] \cong \\ \cong
\RCHom(K^\tra,p_1^*\omega_{X_2/S}[\dim X_2/S])\otimes p_2^*\omega_{X_1/S}[\dim X_1/S] \cong
K^!\circ S_{X_1}.
\end{multline*}
Thus we have

\begin{corollary}\label{sks}
For any kernel $K$ we have an isomorphism
$$
S_{X_2}\circ K^* \cong K^!\circ S_{X_1}
$$
which is functorial in $K$.
\end{corollary}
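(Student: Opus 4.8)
The plan is that the isomorphism itself is essentially contained in the chain of isomorphisms displayed immediately before the statement: one feeds the explicit formulas $K^* \cong \RCHom(K^\tra, p_2^*\omega_{X_1/S}[\dim X_1/S])$ and $K^! \cong \RCHom(K^\tra, p_1^*\omega_{X_2/S}[\dim X_2/S])$ of Lemma~\ref{adjs} (both viewed as objects of $\D^b(X_2\times_S X_1)$) into Lemma~\ref{conv-diag}, which computes the convolution of any kernel on $X_2\times_S X_1$ with the diagonal-supported Serre kernels $S_{X_2} = \Delta_{X_2*}(\omega_{X_2/S}[\dim X_2/S])$ and $S_{X_1} = \Delta_{X_1*}(\omega_{X_1/S}[\dim X_1/S])$ as the external twists by $p_1^*\omega_{X_2/S}[\dim X_2/S]$ and $p_2^*\omega_{X_1/S}[\dim X_1/S]$ respectively. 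So both $S_{X_2}\circ K^*$ and $K^!\circ S_{X_1}$ take the form $\RCHom(K^\tra,-)$ tensored by the two pulled-back canonical bundles, and they are matched using the identity $\RCHom(F,A)\otimes B\cong\RCHom(F,A\otimes B)$ (valid for $B$ a line bundle) together with the symmetry of the tensor product of line bundles. I would simply present that computation, which is exactly the displayed one.

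The content that is not purely formal is the asserted functoriality in $K$. For this I would check that each isomorphism used is natural in $K$: the transposition $K\mapsto K^\tra$ is functorial, the isomorphism of Lemma~\ref{conv-diag} is functorial in its kernel argument (it is built from base change and the projection formula, both functorial), and the $\RCHom$–tensor identities and the reshuffling of the two line bundles are functorial in the first $\RCHom$–argument. Since $K\mapsto K^\tra$ is covariant while $\RCHom(-,L)$ is contravariant, the assignments $K\mapsto K^*$ and $K\mapsto K^!$ are contravariant functors, so functoriality in $K$ means precisely that for every morphism $K\to K'$ the square relating $S_{X_2}\circ (K')^*\to S_{X_2}\circ K^*$ with $(K')^!\circ S_{X_1}\to K^!\circ S_{X_1}$ commutes; this is immediate once one knows that every step in the chain is a natural transformation of functors of $K$.

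The only place where care is needed is the bookkeeping of the two projections: because the adjoint kernels $K^*$ and $K^!$ live on $X_2\times_S X_1$ rather than on $X_1\times_S X_2$, the roles of $X_1$ and $X_2$, hence of $p_1$ and $p_2$, are interchanged relative to Lemma~\ref{adjs}, and one must keep track at each stage of which pulled-back canonical bundle sits inside the $\RCHom$ and which is an external twist. I do not expect a genuine obstacle here: the whole argument rests only on Lemmas~\ref{conv-diag} and~\ref{adjs}, and the remaining work lies entirely in making the line-bundle reshuffling and the naturality verification explicit.
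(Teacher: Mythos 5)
Your proposal is correct and is essentially the paper's own argument: the paper establishes the isomorphism by exactly the displayed chain, feeding the formulas for $K^*$ and $K^!$ from Lemma~\ref{adjs} into Lemma~\ref{conv-diag} and then moving the two pulled-back (shifted) canonical bundles across the $\RCHom$, with functoriality in $K$ asserted because each step is natural. Your extra remarks on the contravariance of $K\mapsto K^*$, $K\mapsto K^!$ and on the $p_1$/$p_2$ bookkeeping on $X_2\times_S X_1$ are accurate but do not change the route.
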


A straightforward computation shows that we have the following result.

\begin{lemma}\label{adj-diag}
Assume that $K = \Delta_*E \in \D^b(X\times_S X)$ for some $E \in \D^b(X)$.
Then $K^* \cong K^! \cong \Delta_*(E^\vee)$.
\end{lemma}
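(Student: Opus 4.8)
The plan is to unwind the definitions in Lemma~\ref{adjs} in the special case $K = \Delta_*E$ and simplify using the standard manipulations with $\Delta_*$, the projection formula, and Grothendieck duality. First I would observe that $K^\tra = \sigma_{12}^*\Delta_*E \cong \Delta_*E$, since the transposition of factors restricts to the identity on the diagonal; so $K$ is symmetric and it suffices to treat $K^*$, as the computation for $K^!$ is identical with the roles of the two factors (and of $d_1 = d_2 = \dim X/S$) interchanged, giving the same answer since $K$ is symmetric. Thus I expect $K^* \cong K^!$ to be essentially automatic once either one is identified with $\Delta_*(E^\vee)$.

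Next I would compute $K^* = \RCHom(K^\tra, p_2^*\omega_{X/S}[\dim X/S]) \cong \RCHom(\Delta_*E, p_2^*\omega_{X/S}[\dim X/S])$. The key step is to pull the $\RCHom$ through the pushforward along the closed embedding $\Delta$ using Grothendieck duality: $\RCHom(\Delta_*E, G) \cong \Delta_*\RCHom(E, \Delta^! G)$ for $G \in \D^b(X\times_S X)$. Applying this with $G = p_2^*\omega_{X/S}[\dim X/S]$ and using $\Delta^! G \cong \Delta^* G \otimes \omega_\Delta[\dim\Delta]$ where $\omega_\Delta$ is the relative dualizing sheaf of $\Delta$ (which, since $p_2\circ\Delta = \id$, satisfies $\Delta^* p_2^* \omega_{X/S} \cong \omega_{X/S}$ and $\omega_\Delta[\dim\Delta] \cong \omega_{X/S}^{-1}[-\dim X/S]$ because the normal bundle of $\Delta$ is the tangent bundle $T_{X/S}$), the twists $\omega_{X/S}[\dim X/S]$ and $\omega_{X/S}^{-1}[-\dim X/S]$ cancel. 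This leaves $\Delta^! G \cong \CO_X$, hence $K^* \cong \Delta_*\RCHom(E, \CO_X) = \Delta_*(E^\vee)$ by the definition of $E^\vee$ recalled in the Notations subsection.

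For $K^!$ the same argument runs with $p_1^*\omega_{X/S}[\dim X/S]$ in place of $p_2^*\omega_{X/S}[\dim X/S]$; since $p_1\circ\Delta = \id$ as well, the identical cancellation occurs and $K^! \cong \Delta_*(E^\vee)$. Alternatively one may simply quote Corollary~\ref{sks}: from $S_{X}\circ K^* \cong K^!\circ S_{X}$ together with Lemma~\ref{conv-diag} (which gives $S_X \circ K^* \cong K^* \otimes p_2^*\omega_{X/S}[\dim X/S]$ and $K^!\circ S_X \cong K^! \otimes p_1^*\omega_{X/S}[\dim X/S]$, after noting $S_X = \Delta_*\omega_{X/S}[\dim X/S]$ acts on both sides via Lemma~\ref{conv-diag}) one reduces the equality of $K^*$ and $K^!$ to a short bookkeeping of line-bundle twists, which again collapses because $\Delta^*p_1^* = \Delta^*p_2^* = \id$ on $X$.

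I expect the only delicate point to be the careful identification of $\Delta^!$ of a pullback line bundle, i.e.\ keeping track of the shift and the dualizing sheaf of the regular embedding $\Delta$ so that the two twists genuinely cancel; everything else is a formal consequence of duality and the projection formula. Since this is a ``straightforward computation'' as the text already indicates, I would keep the write-up to the two displayed chains of isomorphisms above and leave the standard duality identities unproven.
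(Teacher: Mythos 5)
Your computation is correct, and the paper offers nothing to compare it with: Lemma~\ref{adj-diag} is stated there with only the remark that ``a straightforward computation'' proves it, so your argument (transpose-invariance of $\Delta_*E$, Grothendieck duality for the proper morphism $\Delta$, and cancellation of the twist $\omega_{X/S}[\dim X/S]$) supplies exactly the omitted computation. The one step you should justify differently is the identification $\omega_\Delta[\dim\Delta]\cong\omega_{X/S}^{-1}[-\dim X/S]$ ``because the normal bundle of $\Delta$ is $T_{X/S}$'': under the standing hypotheses of Section~\ref{kernels} the morphism $X\to S$ is proper and flat but \emph{not} assumed smooth, so $X\times_S X$ may be singular, the relative diagonal need not be a regular embedding, and $T_{X/S}$ need not be a vector bundle. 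The conclusion still holds and can be obtained without any regularity of $\Delta$: since $X$ and $S$ are smooth, $X\to S$ is a flat lci morphism, hence its base change $p_2$ satisfies $p_2^!(F)\cong p_2^*F\otimes p_1^*\omega_{X/S}[\dim X/S]$, so $p_2^*\omega_{X/S}[\dim X/S]\cong p_2^!(\omega_{X/S})\otimes p_1^*\omega_{X/S}^{-1}$, and therefore $\Delta^!\bigl(p_2^*\omega_{X/S}[\dim X/S]\bigr)\cong \Delta^!p_2^!(\omega_{X/S})\otimes\Delta^*p_1^*\omega_{X/S}^{-1}\cong \omega_{X/S}\otimes\omega_{X/S}^{-1}\cong\CO_X$, using $p_2\circ\Delta=\id$ and the compatibility $\Delta^!(A\otimes L)\cong\Delta^!A\otimes\Delta^*L$ for an invertible (shifted) twist $L$; the symmetric argument with $p_1$ handles $K^!$. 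With that substitution your two chains of isomorphisms give $K^*\cong K^!\cong\Delta_*(E^\vee)$ in the full generality in which the lemma is stated.
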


We also will need the following result.

\begin{lemma}\label{tr-ker}
Let $K \in \D^b(X_1\times_S X_2)$. Then $\Phi_{K^\tra}(F^\vee)^\vee \cong \Phi_K^*(F)$.
\end{lemma}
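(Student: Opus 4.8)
The plan is to unfold the left‑hand side using the elementary identity~\eqref{tker}, apply relative Grothendieck duality along one of the projections, and then recognize the outcome as the kernel description of $\Phi_K^*$ provided by Lemma~\ref{adjs}. Throughout, write $d_1 = \dim X_1/S$ and denote by $p_1,p_2$ the two projections from $X_1\times_S X_2$. Since $F\in\D^b(X_1)$, formula~\eqref{tker} gives $\Phi_{K^\tra}(F^\vee)\cong p_{2*}\bigl(K\otimes p_1^*(F^\vee)\bigr)$, so the task is to compute the dual of this object.

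The key step is relative Grothendieck duality for the projection $p_2:X_1\times_S X_2\to X_2$. This morphism is the base change of $X_1\to S$ along $X_2\to S$, hence it is proper and flat; since $X_1$ and $S$ are smooth, $X_1\to S$ is Gorenstein of relative dimension $d_1$ with relative dualizing sheaf $\omega_{X_1/S}$, and by compatibility of dualizing complexes with base change the relative dualizing complex of $p_2$ is $p_1^*\omega_{X_1/S}[d_1]$. Therefore, for any $G\in\D^b(X_1\times_S X_2)$, duality yields a natural isomorphism $(p_{2*}G)^\vee\cong p_{2*}\RCHom\bigl(G,p_1^*\omega_{X_1/S}[d_1]\bigr)$. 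Applying this with $G=K\otimes p_1^*(F^\vee)$ gives
\[
\Phi_{K^\tra}(F^\vee)^\vee\cong p_{2*}\RCHom\bigl(K\otimes p_1^*(F^\vee),\,p_1^*\omega_{X_1/S}[d_1]\bigr).
\]

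It then remains to simplify the inner $\RCHom$ and match it with $\Phi_K^*$. Because $X_1$ is smooth, $F$ is a perfect complex, so $p_1^*(F^\vee)$ is perfect with $(p_1^*(F^\vee))^\vee\cong p_1^*F$; combining this with the adjunction $\RCHom(A\otimes B,C)\cong\RCHom(A,\RCHom(B,C))$ turns the right‑hand side into $p_{2*}\bigl(p_1^*F\otimes\RCHom(K,p_1^*\omega_{X_1/S}[d_1])\bigr)$. Finally I would identify $\RCHom(K,p_1^*\omega_{X_1/S}[d_1])$ with $(K^*)^\tra$: applying $\sigma_{12}^*$ to the formula $K^*=\RCHom(K^\tra,p_2^*\omega_{X_1/S}[d_1])$ of Lemma~\ref{adjs} gives exactly this object in $\D^b(X_1\times_S X_2)$. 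Invoking~\eqref{tker} once more (with $K$ replaced by $(K^*)^\tra$ and using that the transposition is an involution), $p_{2*}\bigl(p_1^*F\otimes(K^*)^\tra\bigr)\cong\Phi_{K^*}(F)$, which by Lemma~\ref{adjs} equals $\Phi_K^*(F)$; this closes the chain of isomorphisms.

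The only place carrying real content is the Grothendieck duality step — one must correctly identify the relative dualizing complex $p_1^*\omega_{X_1/S}[d_1]$ of $p_2$ and keep track of the single shift $[d_1]$, noting that $X_1\to S$ need not be smooth but is still proper, flat and Gorenstein. The remaining manipulations (perfectness of $F$, involutivity of $(-)^\vee$ on perfect complexes and its compatibility with $p_1^*$, and the $\otimes$–$\RCHom$ adjunction) are formal, and naturality of the resulting isomorphism in $F$ — and in $K$ — follows automatically from the naturality of duality and of~\eqref{tker}.
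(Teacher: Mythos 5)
Your proof is correct and follows essentially the same route as the paper's: both express $\Phi_{K^\tra}(F^\vee)^\vee$ via~\eqref{tker}, apply relative Grothendieck duality along $p_2$ with $p_2^!\CO_{X_2}\cong p_1^*\omega_{X_1/S}[\dim X_1/S]$, simplify using perfectness and the $\otimes$--$\RCHom$ adjunction, and identify the result with $\Phi_{K^*}(F)$ through the formula for $K^*$ in Lemma~\ref{adjs}. The only (cosmetic) difference is that you keep $\RCHom(K,p_1^*\omega_{X_1/S}[d_1])$ and match it with $(K^*)^\tra$, whereas the paper rewrites it as $K^\vee\otimes p_1^*\omega_{X_1/S}[d_1]$.
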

\begin{proof}
Indeed,
\begin{multline*}
\Phi_{K^\tra}(F^\vee)^\vee =
\RCHom(p_{2*}(K\otimes p_1^*F^\vee),\CO_{X_2}) \cong
p_{2*}\RCHom(K\otimes p_1^*F^\vee,p_2^!\CO_{X_2}) \cong \\
p_{2*}(K^\vee\otimes p_1^*\omega_{X_1/S}[\dim X_1/S]\otimes p_1^*F) \cong
\Phi_{{K^\tra}^\vee\otimes p_1^*\omega_{X_1/S}[\dim X_1/S]}(F) =
\Phi_{K^*}(F)
\end{multline*}
and this is precisely what we need.
\end{proof}

\subsection{Kernels of the projection functors}

We start with a reminder on some results of~\cite{K-FBC}.

Let $X$ be an algebraic variety over a base scheme $S$ and assume that $\D^b(X) = \langle \CA_1,\dots,\CA_m \rangle$
is an $S$-linear semiorthogonal decomposition. Let $\phi:T \to S$ be a base change and denote $X_T = X\times_S T$.
Then under some technical conditions it is proved in~\cite{K-FBC} that there is a semiorthogonal decomposition
$\D^b(X_T) = \langle \CA_{1T},\dots,\CA_{mT} \rangle$ which is compatible with the initial decomposition
with respect to the functors $\phi_*$ and $\phi^*$. The construction of the subcategories $\CA_{iT} \subset \D^b(X_T)$
is rather complicated in general --- first one should consider the induced decomposition
of the category of perfect complexes on $X$, then pull it back to obtain a decomposition
of the category of perfect complexes on $X_T$, then extend it to a decomposition of $\D^-(X_T)$,
and finally obtain a decomposition of $\D^b(X_T)$ by taking the intersection.
However, in the case when both $X$ and $T$ are smooth, one can write down
a closed formula for $\CA_{iT}$
$$
\CA_{iT} = \langle\langle \phi^*A\otimes p^*F \rangle\rangle_{A \in \CA_i,\ F\in\D^b(T)},
$$
where $\langle\langle - \rangle\rangle$ denotes the extension of a category by adding
all homotopy colimits which are bounded and coherent (note that this includes adding all direct summands).
%
%both maps $X \to S$, $T \to S$ are smooth, the construction
%simplifies considerably. Explicitly, in this case $\CA_{iT} \subset \D^b(X_T)$ is
%the Karoubian closed triangulated subcategory generated by objects of the form
%$\phi^*A_i\otimes p^*F$, where $\phi:X_T \to X$ and $p:X_T \to T$ are the natural projections,
%$A_i \in \CA_i$ and $F \in \D^b(T)$.

Below we will need a special case of this result. Assume that $X$ is smooth. Consider the square
$$
\xymatrix{
X\times_S X \ar[r] \ar[d] & X \ar[d] \\ X \ar[r] & S
}
$$
as a base change diagram in two ways --- taking either the vertical or the horizontal arrows
to be the base change. This gives two semiorthogonal decompositions for $\D^b(X\times_S X)$
$$
\D^b(X\times_S X) = \langle \CA_{1X},\dots,\CA_{mX} \rangle = \langle {}_X\CA_1,\dots,{}_X\CA_m \rangle,
$$
where
$$
\CA_{iX} = \langle\langle p_1^*A_i \otimes p_2^*F \rangle\rangle_{A_i\in\CA_i,\ F \in \D^b(X)},
\qquad
{}_X\CA_i = \langle\langle p_1^*F \otimes p_2^*A_i \rangle\rangle_{A_i\in\CA_i,\ F \in \D^b(X)}.
$$
%and $\langle - \rangle^\oplus$ stands for the Karoubian closed triangulated envelope.
%A similar result holds for singular quasiprojective varieties as well, we still have
%two semiorthogonal decompositions, however the formula for their components became
%more complicated, see~\cite{K-FBC} for details.
%The above simple considerations lead to a very useful corollary.
%It turns out that in a contrast with arbitrary functors, the projection and the truncation
%functors of a semiorthogonal decomposition can be represented by kernels in a unique way.
%
These semiorthogonal decompositions allow to prove that the projection
and the truncation functors of a semiorthogonal decomposition can be represented by kernels.

\begin{theorem}[\cite{K-FBC}]\label{sod-kernels}
Let $X$ be a quasiprojective variety over $S$ and $\D^b(X) = \lan \CA_1,\dots,\CA_m \ran$ an $S$-linear semiorthogonal decomposition.
Let $\alpha_i:\D^b(X) \to \D^b(X)$ and $\tau_i:\D^b(X) \to \D^b(X)$ be the projection and the truncation functors
of the semiorthogonal decomposition.
Then for every $i \in \{1,\dots,m\}$ there are objects $P_i,D_i \in \D^b(X\times_S X)$, unique up to an isomorphism,
such that $\alpha_i \cong \Phi_{P_i}$ and $\tau_i \cong \Phi_{D_i}$.
Moreover, the triangles of functors $\tau_i \to \tau_{i-1} \to \alpha_i$
are induced by the distinguished triangles $D_i \to D_{i-1} \to P_i$ in $\D^b(X\times_S X)$.
\end{theorem}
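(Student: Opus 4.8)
The plan is to produce the kernels $P_i$ and $D_i$ by applying the formalism of base-changed semiorthogonal decompositions to the two decompositions of $\D^b(X\times_S X)$ recalled just above the statement, and then to identify the corresponding projection/truncation functors with kernel functors. Concretely, I would first establish the case of the truncation functors, since the projection kernels will then drop out of the distinguished triangles $\tau_i \to \tau_{i-1} \to \alpha_i$. Recall the decomposition $\D^b(X\times_S X) = \langle {}_X\CA_1,\dots,{}_X\CA_m\rangle$ obtained by taking the \emph{horizontal} arrows of the square as the base change; this is the decomposition ``along the second factor''. For each $i$, let $D_i := \tau^{(2)}_i(\Delta_*\CO_X)$, where $\tau^{(2)}_i$ denotes the $i$-th truncation functor of this decomposition applied to the structure sheaf of the diagonal. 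I claim $\Phi_{D_i} \cong \tau_i$.

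To prove the claim, the key point is compatibility of the base-changed decomposition with pullback/pushforward, which is exactly the content of the construction in~\cite{K-FBC}: for any $F \in \D^b(X)$, applying $p_2^*$ to $F$ and then the $i$-th truncation functor of $\langle {}_X\CA_1,\dots,{}_X\CA_m\rangle$ is the same as first truncating $F$ with respect to $\langle\CA_1,\dots,\CA_m\rangle$ and then pulling back. Combined with Lemma~\ref{conv-diag} — which gives $D_i \circ (\Delta_*F) \cong D_i \otimes p_2^* F$ — and the fact that $\Phi_{\Delta_*\CO_X} = \id$, one computes
$$
\Phi_{D_i}(F) = p_{1*}(D_i \otimes p_2^*F) \cong \Phi_{D_i \circ (\Delta_* F)}(\CO_X),
$$
and then one must check that $D_i \circ (\Delta_*F)$, which is the image of $\Delta_*F$ under the truncation functor along the second factor, has $p_{1*}$ equal to $\tau_i(F)$. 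This last step is where the faithful base change results of~\cite{K-HS,K-FBC} do the real work: the truncation of $\Delta_*F$ in the second-factor decomposition, pushed forward along $p_1$, reproduces the truncation of $F$ because $p_{1*}\Delta_* = \id$ and the decomposition on the left is engineered to be $\phi^*/\phi_*$-compatible with the one downstairs. Functoriality of $\tau_i$ in $F$ follows since every step is functorial.

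Uniqueness of $D_i$ (and hence of $P_i$) follows from the standard fact that a kernel functor $\D^b(X) \to \D^b(X)$ determines its kernel up to isomorphism when $X$ is smooth and projective over a point — or more precisely over $S$, using the faithful base change theorem of~\cite{K-HS} to reduce to the fibers. For the projection kernels, set $P_i := \Cone(D_i \to D_{i-1})$ (with $D_0 = \Delta_*\CO_X$, $D_m = 0$); since $\Cone$ is compatible with the exact functor $\Phi_{(-)}$, the triangle $D_i \to D_{i-1} \to P_i$ induces the triangle of functors $\tau_i \to \tau_{i-1} \to \alpha_i$, giving $\Phi_{P_i} \cong \alpha_i$ and the last assertion simultaneously. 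The main obstacle, I expect, is not any single computation but rather setting up cleanly the compatibility of the base-changed decomposition $\langle {}_X\CA_i \rangle$ with $p_{1*}$ and $p_2^*$ in a form directly usable here — i.e. unwinding precisely what~\cite{K-FBC} gives for the non-smooth structure map $X \to S$ and checking the technical hypotheses of that reference are met (properness and flatness of $X\to S$, quasiprojectivity), so that the truncation functor ``along the second factor'' genuinely computes $\tau_i$ after $p_{1*}$.
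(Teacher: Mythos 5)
You have picked the wrong induced decomposition of $\D^b(X\times_S X)$, and this invalidates your central claim $\Phi_{D_i}\cong\tau_i$. With the paper's convention $\Phi_K(F)=p_{1*}(K\otimes p_2^*F)$ the output of a kernel functor lives in the \emph{first} factor, so a kernel representing a functor with values in $\CA_i$ must lie in $\CA_{iX}$ (first factor in $\CA_i$); accordingly the paper filters $\Delta_*\CO_X$ with respect to $\langle \CA_{1X},\dots,\CA_{mX}\rangle$ and takes $D_i$, $P_i$ to be the truncations and factors of that filtration. You instead truncate $\Delta_*\CO_X$ with respect to $\langle {}_X\CA_1,\dots,{}_X\CA_m\rangle$. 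Since $({}_X\CA_i)^\tra=\CA_{iX}$ and $(\Delta_*\CO_X)^\tra\cong\Delta_*\CO_X$, your objects are precisely the transposes of the correct kernels; their factors lie in $\CB_i^\vee\boxtimes\CA_i$ rather than in $\CA_i\boxtimes\CB_i^\vee$ (compare Proposition~\ref{ki}), so the associated functors take values in the components $\CB_i^\vee$ of the dual decomposition, not in $\CA_i$. Concretely, take $X=\PP^1$ over $S=\Spec\kk$ with $\CA_1=\langle\CO\rangle$, $\CA_2=\langle\CO(1)\rangle$. The truncation of $\Delta_*\CO_X$ along the second factor is $\CO(-1)\boxtimes\CO(1)\to\Delta_*\CO_X$ (the transpose of the twisted Beilinson resolution), so your $D_1=\CO(-1)\boxtimes\CO(1)$ and $\Phi_{D_1}(F)\cong\CO(-1)\otimes H^\bullet(\PP^1,F(1))$; for $F=\CO(1)$ this gives $\CO(-1)^{\oplus 3}$, whereas $\tau_1(\CO(1))=\CO(1)$. (The correct kernel is $D_1=\CO(1)\boxtimes\CO(-1)$.) The auxiliary step you lean on also fails: tensoring with $p_2^*F$ preserves $\CA_{iX}$ but not ${}_X\CA_i$, so $D_i\otimes p_2^*F$ is not the second-factor truncation of $\Delta_*F$, and no base-change compatibility from \cite{K-FBC} repairs this.

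The fix is to use $\langle\CA_{1X},\dots,\CA_{mX}\rangle$, and then the argument is shorter than the one you set up: $K\mapsto\Phi_K(F)$ is exact in $K$, so the filtration $0=D_m\to\dots\to D_0=\Delta_*\CO_X$ induces, for every $F$, a filtration of $F=\Phi_{\Delta_*\CO_X}(F)$ whose factors $\Phi_{P_i}(F)$ lie in $\CA_i$ because $P_i\in\CA_{iX}$; by uniqueness and functoriality of such filtrations (Lemma~\ref{ff}) this is the canonical one, which yields $\Phi_{D_i}\cong\tau_i$, $\Phi_{P_i}\cong\alpha_i$ and the compatibility of the triangles all at once --- this is the paper's proof. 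Finally, your uniqueness argument rests on the assertion that a kernel functor between smooth projective varieties determines its kernel up to isomorphism; this is not a standard fact and is known to fail for general (non fully faithful) kernel functors, so it cannot be invoked here. The paper deduces uniqueness instead from the uniqueness of the semiorthogonal components of $\Delta_*\CO_X$ with respect to the decomposition $\langle\CA_{1X},\dots,\CA_{mX}\rangle$.
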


We give a sketch of proof for completeness.

\begin{proof}
Consider the induced semiorthogonal decomposition
$\D^b(X\times_S X) = \langle \CA_{1X},\dots,\CA_{mX} \rangle$.
Let
$$
0 = D_m \to D_{m-1} \to \dots \to D_1 \to D_0 = \Delta_*\CO_X
$$
be the corresponding filtration of $\Delta_*\CO_X$. Then the kernels $D_i$ represent
the truncation functors $\tau_i$ and the kernels $P_i = \Cone(D_i \to D_{i-1}) \in \CA_{iX}$
represent the projection functors. The uniqueness of kernels follows from the uniqueness
of semiorthogonal components of an object with respect to a semiorthogonal decomposition.
\end{proof}

The following results are essential for the rest of the paper.

%Recall (see~\cite{BK}) the notion of the dual semiorthogonal decomposition.
%Let $\CT = \lan \CA_1,\dots,\CA_m\ran$ be a semiorthogonal decomposition.
%Take
%\begin{equation}\label{bi}
%\CB_i := {}^\perp \langle \CA_1,\dots,\CA_{i-1},\CA_{i+1},\dots,\CA_m \rangle.
%\end{equation}
%Then $\CT = \langle \CB_m,\dots,\CB_1 \rangle$ is a semiorthogonal decomposition as well
%(indeed, it is obtained by the action of a certain element of the braid group
%acting by mutations on the set of all semiorthogonal decompositions, see~\cite{BK, B}).
%This decomposition is known as {\sf the dual decomposition} for $\CT = \lan \CA_1,\dots,\CA_m\ran$.

\begin{proposition}\label{ki}
Let $\D^b(X) = \lan \CA_1,\dots,\CA_m\ran$ be an $S$-linear semiorthogonal decomposition and
let $P_i$ be the kernels of the corresponding projection functors onto $\CA_i$.
Then
$$
P_i \in \CA_i\boxtimes \CB_i^\vee := \CA_{iX} \cap {}_X\CB_i^\vee \subset \D^b(X\times_S X),
$$
where $\CB_i$ is the component of the dual semiorthogonal decomposition defined by~\eqref{bi}.
\end{proposition}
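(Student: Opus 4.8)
The plan is to verify the two membership claims $P_i \in \CA_{iX}$ and $P_i \in {}_X\CB_i^\vee$ separately. The first of these is immediate from the construction of $P_i$ in the proof of Theorem~\ref{sod-kernels}: starting from the induced $S$-linear semiorthogonal decomposition $\D^b(X\times_S X) = \langle \CA_{1X},\dots,\CA_{mX}\rangle$ and the associated filtration of $\Delta_*\CO_X$, the kernel $P_i$ is precisely the cone $\Cone(D_i \to D_{i-1})$, which by definition of a semiorthogonal decomposition lies in $\CA_{iX}$. So the real content is the inclusion $P_i \in {}_X\CB_i^\vee$, equivalently $P_i^\vee \in {}_X\CB_i$, where ${}_X\CB_i = \langle\langle p_1^*F \otimes p_2^*B_i\rangle\rangle_{B_i \in \CB_i,\ F\in\D^b(X)}$ and $\CB_i = {}^\perp\langle\CA_1,\dots,\CA_{i-1},\CA_{i+1},\dots,\CA_m\rangle$ is the dual component from~\eqref{bi}.

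First I would rewrite the statement ``$P_i^\vee \in {}_X\CB_i$'' in a way that does not mention the mysterious subcategory ${}_X\CB_i$ explicitly but rather its defining orthogonality. Consider the decomposition $\D^b(X\times_S X) = \langle {}_X\CA_1,\dots,{}_X\CA_m\rangle$ obtained by base change along the \emph{first} projection. Applying the dualization functor $\Phi \mapsto \Phi^\vee$ (exact, contravariant, an anti-autoequivalence of $\D^b(X\times_S X)$ since $X\times_S X$ is smooth) and the duality $\CA_i^\vee \simeq \CA_i$-type bookkeeping, the category ${}_X\CB_i^\vee$ should be characterized as $\{K : \Phi_{K^\tra}$ kills $\langle\CA_1,\dots,\CA_{i-1},\CA_{i+1},\dots,\CA_m\rangle\}$, or something close to it. More concretely, by Lemma~\ref{tr-ker} we have $\Phi_{K^\vee}$-type identities relating $\vee$ and $\tra$ with adjoints, and $P_i^\vee$ being in ${}_X\CB_i$ should translate, via Lemma~\ref{conv-decomp}, into the vanishing of $P_i \circ (\Delta_*A)$, i.e. $\Phi_{P_i}(A) = \alpha_i(A)$, for $A$ running over the other components $\CA_j$, $j \ne i$. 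But this is exactly the defining property of the projection functor $\alpha_i$: it annihilates all $\CA_j$ with $j\ne i$. So the strategy is to convert the geometric ``support'' condition $P_i \in {}_X\CB_i^\vee$ into the categorical statement ``$\Phi_{P_i}|_{\CA_j} = 0$ for $j\ne i$'' and then invoke Theorem~\ref{sod-kernels}.

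The key steps, in order: (1) record that $P_i \in \CA_{iX}$ directly from the construction in Theorem~\ref{sod-kernels}; (2) identify ${}_X\CB_i^\vee$ as the full subcategory of kernels $K$ with $\Phi_{K}$ (or $\Phi_{K^\tra}$, depending on conventions) vanishing on $\langle \CA_1,\dots,\widehat{\CA_i},\dots,\CA_m\rangle$ --- this uses Lemma~\ref{conv-decomp} to express ``$K$ has second-factor support in $\CB_i$'' as a convolution vanishing, together with the orthogonality description~\eqref{bi} of $\CB_i$ and the $S$-linearity, noting that $\langle\langle - \rangle\rangle$ is generated under homotopy colimits and summands so it suffices to test on the generators $p_1^*F\otimes p_2^*B_i$; (3) check that $P_i$ satisfies this vanishing, which is just the fact that $\alpha_i = \Phi_{P_i}$ annihilates $\CA_j$ for $j \ne i$, a consequence of semiorthogonality and the definition of the projection functor. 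I expect step (2) to be the main obstacle: one has to be careful about which projection ($p_1$ versus $p_2$), which of $\vee$, $\tra$, $*$, $!$ intervenes, and to justify that membership in the ``$\langle\langle\,\rangle\rangle$-closure'' can be detected by an orthogonality/convolution-vanishing condition rather than requiring an explicit presentation as a homotopy colimit --- this last point presumably rests on the fact that ${}_X\CB_i$ is a component of a semiorthogonal decomposition of $\D^b(X\times_S X)$, hence is cut out by the vanishing of all the other truncation functors, which are themselves kernel functors by Theorem~\ref{sod-kernels} applied to $X\times_S X$ over $X$ (via the first projection).
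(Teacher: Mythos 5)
Your plan is essentially the paper's own proof: $P_i \in \CA_{iX}$ is read off from the construction of $P_i$ in Theorem~\ref{sod-kernels}, and the inclusion $P_i \in {}_X\CB_i^\vee$ is obtained exactly as you outline, by translating the defining vanishing $\Phi_{P_i}(\CA_j)=\alpha_i(\CA_j)=0$ for $j\ne i$, via the adjunction $p_1^*\dashv p_{1*}$ and dualization tested on the generators $p_1^*F\otimes p_2^*A$, into $P_i^\vee \in {}^\perp({}_X\CA_j)$ for all $j\ne i$, hence $P_i^\vee\in{}_X\CB_i$ by~\eqref{bi}. One caution: in your middle paragraph you identify this with the vanishing of the convolution $P_i\circ\Delta_*A\cong P_i\otimes p_2^*A$, which is nonzero in general (it differs from $\Phi_{P_i}(A)=p_{1*}(P_i\otimes p_2^*A)$ by the pushforward $p_{1*}$); your numbered steps use the correct functor-level vanishing, and that is what the argument needs.
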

\begin{proof}
According to the proof of Theorem~\ref{sod-kernels} we have $P_i \in \CA_{iX}$,
so it suffices to check that $P_i \in {}_X\CB_i^\vee$. For this we note that
$\Phi_{P_i}(\CA_j) = \alpha_i(\CA_j) = 0$ for $i \ne j$.
In the other words, we have $p_{1*}(P_i\otimes p_2^*(\CA_j)) = 0$.
It follows that for any $F \in \D^b(X)$, $A \in \CA_j$ we have
$$
0 =
\Hom(F,p_{1*}(P_i\otimes p_2^*A)) =
\Hom(p_1^*F,P_i\otimes p_2^*A) =
\Hom(P_i^\vee,p_1^*F^\vee\otimes p_2^*A),
$$
hence $P_i^\vee \in {}^\perp({}_X\CA_j)$ for all $j \ne i$.
Therefore $P_i^\vee \in {}_X\CB_i$ by~\eqref{bi}, so $P_i \in {}_X\CB_i^\vee$.
\end{proof}

\begin{corollary}\label{so-t}
Let $\D^b(X) = \lan \CA_1,\dots,\CA_m\ran$ be a semiorthogonal decomposition and
let $P_i$ be the kernels of the corresponding projection functors onto $\CA_i$.
Then we have $\HH^\bullet(X\times X,P_i\otimes P_j^\tra) = 0$ for $i \ne j$.
\end{corollary}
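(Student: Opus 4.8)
The plan is to deduce this directly from Proposition~\ref{ki}, after rewriting the hypercohomology group as a $\Hom$-space between transposed projection kernels and then invoking the semiorthogonality that is already contained in the proof of that proposition.

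First I would observe that, since $X\times X$ is smooth and all objects in sight are perfect, the standard manipulation used in the proof of Lemma~\ref{hohserre} applies and gives
$$
\HH^\bullet(X\times X,P_i\otimes P_j^\tra) \cong
\Hom^\bullet_{X\times X}(\CO_{X\times X},P_i\otimes P_j^\tra) \cong
\Hom^\bullet_{X\times X}(P_i^\vee,P_j^\tra),
$$
where the last isomorphism uses $P_i\otimes P_j^\tra \cong \RCHom(P_i^\vee,P_j^\tra)$ together with the adjunction $\Hom(\CO,\RCHom(-,-)) = \Hom(-,-)$. So it remains to show that $\Hom^\bullet_{X\times X}(P_i^\vee,P_j^\tra) = 0$ for $i\ne j$.

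Next I would place both objects in the induced semiorthogonal decomposition $\D^b(X\times X) = \langle {}_X\CA_1,\dots,{}_X\CA_m \rangle$. Since $P_j \in \CA_{jX}$ by Theorem~\ref{sod-kernels}, applying $\sigma_{12}^*$ (which interchanges $\CA_{jX}$ and ${}_X\CA_j$) gives $P_j^\tra \in {}_X\CA_j$. On the other hand, the proof of Proposition~\ref{ki} establishes exactly that $P_i^\vee \in {}^\perp({}_X\CA_j)$ for all $j\ne i$ — this is the intermediate assertion used there to conclude $P_i^\vee\in{}_X\CB_i$. Combining the two facts, $\Hom^\bullet_{X\times X}(P_i^\vee,P_j^\tra) = 0$ whenever $i\ne j$, which is the desired vanishing.

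Since the statement is essentially a repackaging of Proposition~\ref{ki}, I do not expect a genuine obstacle; the one place that demands a little care is the bookkeeping with transpositions — verifying that $\sigma_{12}^*$ interchanges $\CA_{iX}$ with ${}_X\CA_i$, and that the orthogonality extracted from the proof of Proposition~\ref{ki} sits on the side matching the $\Hom$-space that appears here.
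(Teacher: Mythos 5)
Your argument is correct, but it routes through slightly different inputs than the paper. The paper's proof uses the full statement of Proposition~\ref{ki}: from $P_i \in \CA_i\boxtimes\CB_i^\vee$ one gets $P_j^\tra \in \CB_j^\vee\boxtimes\CA_j$, and then the vanishing follows from a K\"unneth-type factorization, $\HH^\bullet(X,\CA_i\otimes\CB_j^\vee) = \Hom(\CB_j,\CA_i) = 0$, which relies on the dual decomposition $\langle\CB_m,\dots,\CB_1\rangle$. You instead rewrite the hypercohomology as $\Hom^\bullet_{X\times X}(P_i^\vee,P_j^\tra)$ (legitimate, since $P_i$ is perfect, so $P_i\otimes P_j^\tra\cong\RCHom(P_i^\vee,P_j^\tra)$) and then use only $P_j^\tra\in{}_X\CA_j$ — correct, as $\sigma_{12}^*$ does interchange $\CA_{jX}$ and ${}_X\CA_j$ and $P_j\in\CA_{jX}$ by Theorem~\ref{sod-kernels} — together with the intermediate assertion $P_i^\vee\in{}^\perp({}_X\CA_j)$ for $j\ne i$ established inside the proof of Proposition~\ref{ki}. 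So you bypass the dual subcategories $\CB_j$ and the K\"unneth step entirely, at the price of citing the proof rather than the statement of Proposition~\ref{ki}; the paper's version keeps the symmetric $\CA_i\boxtimes\CB_i^\vee$ picture, which it reuses immediately afterwards in Corollaries~\ref{so} and~\ref{so-new}, whereas your version is a bit leaner for this particular vanishing. Both arguments are sound.
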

\begin{proof}
By Proposition~\ref{ki} we have $P_i \in \CA_i\boxtimes\CB_i^\vee$, hence $P_j^\tra \in \CB_j^\vee\boxtimes\CA_j$,
so it suffices to note that $\HH^\bullet(X,\CA_i\otimes\CB_j^\vee) = \Hom(\CB_j,\CA_i) = 0$ for $i \ne j$
by the definition~\eqref{bi} of $\CB_j$.
\end{proof}

\begin{corollary}\label{so}
Let $\D^b(X) = \lan \CA_1,\dots,\CA_m\ran$ be a semiorthogonal decomposition and
let $P_i$ be the kernels of the corresponding projection functors onto $\CA_i$.
Then we have $\Hom^\bullet(P_i,P_j\circ S_X) = 0$ for $i \ne j$.
\end{corollary}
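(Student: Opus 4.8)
The plan is to deduce this from Corollary~\ref{so-t} together with the relation between the transposed kernel and the left adjoint kernel established in Lemma~\ref{adjs} and Lemma~\ref{tr-ker}. The key observation is that Hochschild-type hypercohomology groups $\HH^\bullet(X\times X, P_i\otimes P_j^\tra)$ can be rewritten as $\Hom$-groups. First I would recall the chain of isomorphisms used in Lemma~\ref{hohserre}: for any two kernels $E,F \in \D^b(X\times X)$ one has $\HH^\bullet(X\times X, E\otimes F) \cong \Hom^\bullet(E^\vee, F)$, so in particular
$$
\HH^\bullet(X\times X, P_i\otimes P_j^\tra) \cong \Hom^\bullet((P_i)^\vee, P_j^\tra).
$$
By Corollary~\ref{so-t} this vanishes for $i\ne j$. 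Now I would translate this into the statement about $\Hom^\bullet(P_i, P_j\circ S_X)$ by identifying $\Hom^\bullet((P_i)^\vee, P_j^\tra)$ with $\Hom^\bullet(P_i, P_j\circ S_X)$.

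For this identification, the natural route is via the formula for the left adjoint kernel. Recall from Lemma~\ref{adjs} that $\Phi_{P_j}^* \cong \Phi_{(P_j)^*}$ with $(P_j)^* = \RCHom((P_j)^\tra, p_2^*\omega_X[\dim X])$, and from Corollary~\ref{sks} that $S_X\circ (P_j)^* \cong (P_j)^!\circ S_X$; combining with the displayed computation just before Corollary~\ref{sks}, one gets a clean expression relating $P_j\circ S_X$ to the dual of $(P_j)^\tra$. Concretely, unwinding the definitions, $P_j\circ S_X \cong P_j\otimes p_2^*\omega_X[\dim X]$ by Lemma~\ref{conv-diag}, so
$$
\Hom^\bullet(P_i, P_j\circ S_X) \cong \Hom^\bullet(P_i, P_j\otimes p_2^*\omega_X[\dim X]) \cong \HH^\bullet(X\times X, P_i^\vee\otimes P_j\otimes p_2^*\omega_X[\dim X]).
$$
Then Grothendieck--Serre duality on $X\times X$ (whose dualizing complex is $\omega_{X\times X}[2\dim X] = p_1^*\omega_X\otimes p_2^*\omega_X[2\dim X]$) identifies this, up to the appropriate twist and shift, with the dual of $\HH^\bullet(X\times X, P_i\otimes P_j^\vee\otimes (\text{twist}))$, and one checks that $P_j^\vee\otimes p_1^*\omega_X[\dim X]$ transported through the transposition is exactly $P_j^\tra$ up to the remaining $p_2^*\omega_X$ twist, matching Corollary~\ref{so-t}.

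The main obstacle I expect is bookkeeping the twists and shifts correctly so that the duality pairing lands precisely on the group $\HH^\bullet(X\times X, P_i\otimes P_j^\tra)$ that Corollary~\ref{so-t} kills, rather than on some neighbouring group differing by a line-bundle factor or a degree shift. A cleaner alternative, which I would try first to avoid the explicit twist juggling, is purely categorical: by Proposition~\ref{ki} we have $P_i\in\CA_{iX}\cap{}_X\CB_i^\vee$, and since $S_X$ is the Serre kernel, convolution with $S_X$ on the right preserves the decomposition ${}_X\CB_j^\vee$ (it acts within the second factor), so $P_j\circ S_X \in \CA_{jX}\cap {}_X(\CB_j^\vee\circ S_X)$; one then observes that $\Hom^\bullet(P_i, P_j\circ S_X)$ decomposes as an external product of a $\Hom$ on the first factor between components of $\CA_{iX}$ and $\CA_{jX}$-type data, which already vanishes for $i\ne j$ by semiorthogonality of $\langle\CA_1,\dots,\CA_m\rangle$, exactly as in the proof of Corollary~\ref{so-t}. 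This second approach sidesteps duality entirely and is likely the intended one.
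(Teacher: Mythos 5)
Neither of your two routes closes the argument as written. For the first route: the identification $\HH^\bullet(X\times X,P_i\otimes P_j^\tra)\cong\Hom^\bullet(P_i,P_j\circ S_X)$ is not a formal consequence of duality, and the ``twist bookkeeping'' you flag as the main obstacle is exactly where it breaks. Unwinding it, $\HH^\bullet(X\times X,P_i\otimes P_j^\tra)\cong\Hom^\bullet((P_j^\tra)^\vee,P_i)$, and Grothendieck--Serre duality on $X\times X$ turns this into the dual of $\Hom^\bullet(P_i,(P_j^\tra)^\vee\otimes\omega_{X\times X}[2\dim X])\cong\Hom^\bullet(P_i,S_X\circ P_j^*)\cong\Hom^\bullet(P_i,P_j^!\circ S_X)$ (Lemma~\ref{adjs}, Lemma~\ref{conv-diag}, Corollary~\ref{sks}): the $\omega_X$-twist lands on the wrong factor, producing $P_j^!$ rather than $P_j$, so Corollary~\ref{so-t} kills a different group. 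Indeed, the identity you would need is (in the case $i=j$) precisely Proposition~\ref{hhserre}, whose proof in the paper relies on Corollary~\ref{so}; so pursuing this route either is circular or requires an independent argument you have not supplied.

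Your second, ``purely categorical'' route starts the same way as the paper (Proposition~\ref{ki} gives $P_i\in\CA_i\boxtimes\CB_i^\vee$, hence $P_j\circ S_X\in\CA_j\boxtimes(\CB_j^\vee\otimes\omega_X)$), but the claim that semiorthogonality of $\lan\CA_1,\dots,\CA_m\ran$ on the first factor already forces the vanishing for all $i\ne j$ is false: it only gives $\Hom(\CA_i,\CA_j)=0$ for $i>j$. This is not ``exactly as in Corollary~\ref{so-t}'', where the pairing is between $\CA_i$ and $\CB_j^\vee$ and vanishes for every $i\ne j$ by the very definition~\eqref{bi} of $\CB_j$; here the pairing on the first factor is $\CA$ against $\CA$, and for $i<j$ it gives nothing. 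The missing half is the crux of the paper's proof: for $i<j$ one must use the second factor, where Serre duality gives $\Hom(\CB_i^\vee,\CB_j^\vee\otimes\omega_X[\dim X])\cong\Hom(\CB_i,\CB_j)^\vee$, which vanishes because the dual decomposition is ordered $\lan\CB_m,\dots,\CB_1\ran$. The $\omega_X$-twist coming from $S_X$ is precisely what forces this case split and the appeal to the dual decomposition, and neither appears in your proposal.
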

\begin{proof}
By Proposition~\ref{ki} we have $P_i \in \CA_i\boxtimes\CB_i^\vee$,
hence $P_j\circ S_X \in \CA_j\boxtimes(\CB_j^\vee\otimes\omega_X)$.
So, for $i > j$ the required vanishing follows from the semiorthogonality of $\CA_i$ and $\CA_j$ and
for $i < j$ it follows from the semiorthogonality of $\CB_i^\vee$ and $\CB_j^\vee\otimes\omega_X$
(indeed, by Serre duality we have
$\Hom(\CB_i^\vee,\CB_j^\vee\otimes\omega_X[\dim X]) =
\Hom(\CB_j^\vee,\CB_i^\vee)^\vee =
\Hom(\CB_i,\CB_j)^\vee$
which is zero since $\D^b(X) = \langle \CB_m,\dots,\CB_1 \rangle$
is a semiorthogonal decomposition).
%
%
%we note that $P_j\circ S_X \cong P_j\otimes p_2^*\omega_X[\dim X])$
%by Lemma~\ref{conv-diag}, so $P_j\circ S_X \in {}_X\CB_j^\vee\otimes p_2^*\omega_X[\dim X]$.
%It remains to note that by Serre duality we have
%$\Hom(\CB_i^\vee,\CB_j^\vee\otimes\omega_X[\dim X]) =
%\Hom(\CB_j^\vee,\CB_i^\vee)^\vee =
%\Hom(\CB_i,\CB_j)^\vee = 0$
%since $\D^b(X) = \langle \CB_m,\dots,\CB_1 \rangle$ is a semiorthogonal decomposition.
\end{proof}

\begin{corollary}\label{so-new}
Let $\D^b(X) = \lan \CA_1,\dots,\CA_m\ran$ be a semiorthogonal decomposition and
let $P_i$ be the kernels of the corresponding projection functors.
Then we have $P_i\circ P_j^* = 0$ for $i < j$ and $P_i\circ P_j^! = 0$ for $i > j$.
\end{corollary}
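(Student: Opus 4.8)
The plan is to pass from the kernels to the functors they represent and then to pin down the essential image of the adjoint projection functors. By Theorem~\ref{sod-kernels} we have $\alpha_i\cong\Phi_{P_i}$, and by Lemma~\ref{adjs} (applied to $P_j$) the kernels $P_j^*$ and $P_j^!$ represent the left and right adjoints, $\alpha_j^*\cong\Phi_{P_j^*}$ and $\alpha_j^!\cong\Phi_{P_j^!}$; hence $P_i\circ P_j^*$ and $P_i\circ P_j^!$ represent the composites $\alpha_i\circ\alpha_j^*$ and $\alpha_i\circ\alpha_j^!$. Since a kernel on $X\times X$ representing the zero functor must itself vanish, it suffices to prove that $\alpha_i\circ\alpha_j^*=0$ for $i<j$ and $\alpha_i\circ\alpha_j^!=0$ for $i>j$.

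The key step will be the claim that $\Im(\alpha_j^*)\subseteq\CB_j$. By Proposition~\ref{ki} we have $P_j\in{}_X\CB_j^\vee$, so $P_j^\tra$ lies in the subcategory of $\D^b(X\times X)$ generated by the objects $p_1^*(B^\vee)\otimes p_2^*F$ with $B\in\CB_j$ and $F\in\D^b(X)$. Applying the formula $P_j^*\cong\RCHom(P_j^\tra,p_2^*\omega_X[\dim X])$ of Lemma~\ref{adjs} together with the identification $\RCHom(p_1^*(B^\vee)\otimes p_2^*F,p_2^*\omega_X[\dim X])\cong p_1^*B\otimes p_2^*(F^\vee\otimes\omega_X[\dim X])$, one sees that $P_j^*$ lies in the subcategory generated by the $p_1^*B\otimes p_2^*G$ with $B\in\CB_j$; therefore $\alpha_j^*(F)=p_{1*}(P_j^*\otimes p_2^*F)$ belongs to $\CB_j$ for every $F$.

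To finish the case $i<j$, I will locate $\CB_j$: from the distinguished triangle $\BD_j(T)\to T\to T'$ with $T'\in\langle\CA_{j+1},\dots,\CA_m\rangle$ for $T\in\CA_j$ (the one used in the proof of Lemma~\ref{bdi}) and the fact that $\BD_j$ is an equivalence $\CA_j\to\CB_j$, one gets $\CB_j\subseteq\langle\CA_j,\CA_{j+1},\dots,\CA_m\rangle$. When $i<j$ this subcategory is contained in $\langle\CA_1,\dots,\CA_{i-1},\CA_{i+1},\dots,\CA_m\rangle$, which is precisely the kernel of $\alpha_i$ (an object there admits a filtration~\eqref{tower} with vanishing $i$-th factor, cf.\ Lemma~\ref{ff}). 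Hence $\alpha_i\circ\alpha_j^*=0$, i.e.\ $P_i\circ P_j^*=0$ for $i<j$.

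For $i>j$ I would avoid redoing the argument for $P_j^!$ — the twist by $\omega_X$ in the formula for $K^!$ would break the analogue of the last step — and instead pass to left adjoints: since $\alpha_j^!$ is right adjoint to $\alpha_j$, the left adjoint of $\alpha_i\circ\alpha_j^!$ is $\alpha_j\circ\alpha_i^*$, which vanishes by the case already settled (now applied with $j<i$); and a functor whose left adjoint is zero is itself zero. This gives $\alpha_i\circ\alpha_j^!=0$, i.e.\ $P_i\circ P_j^!=0$ for $i>j$. The main obstacle is the claim $\Im(\alpha_j^*)\subseteq\CB_j$ of the second paragraph; everything after it is bookkeeping with the semiorthogonality relations satisfied by the $\CA_i$ and the dual components $\CB_j$.
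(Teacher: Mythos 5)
Your argument is correct, but it takes a different route from the paper. The paper stays entirely at the level of kernels: from Proposition~\ref{ki} and Lemma~\ref{adjs} it records that $P_i \in \CA_i\boxtimes\CB_i^\vee$, $P_j^* \in \CB_j\boxtimes(\CA_j^\vee\otimes\omega_X)$, $P_j^! \in (\CB_j\otimes\omega_X)\boxtimes\CA_j^\vee$, and then kills the convolutions directly because they are built from terms of the form $\Hom^\bullet(\CB_i,\CB_j)$ (resp.\ its Serre-dual partner), which vanish by the dual semiorthogonal decomposition $\lan\CB_m,\dots,\CB_1\ran$. You instead pass to the functors, prove $\Im(\alpha_j^*)\subseteq\CB_j\subseteq\lan\CA_j,\dots,\CA_m\ran\subseteq\Ker\alpha_i$ for $i<j$, and handle the case $i>j$ by the adjunction trick (the left adjoint of $\alpha_i\alpha_j^!$ is $\alpha_j\alpha_i^*=0$), which neatly replaces the paper's appeal to Serre duality. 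Two remarks. First, your translation back to kernels uses the fact that a kernel inducing the zero functor is itself zero; this is true and standard (evaluate on skyscrapers: $\Phi_K(\CO_x)\cong K|_{X\times\{x\}}$ in the derived sense, and if the top cohomology sheaf of $K$ were nonzero, Nakayama applied at a point of its support would give a nonzero restriction), but it is not stated anywhere in the paper, so it deserves a line of justification. Second, your key claim $\Im(\alpha_j^*)\subseteq\CB_j$ can be obtained purely formally, without the kernel-level dualization and the attendant bookkeeping with the $\langle\langle-\rangle\rangle$ closure: for $G\in\CA_k$ with $k\ne j$ one has $\Hom(\alpha_j^*F,G[\ast])=\Hom(F,\alpha_j(G)[\ast])=0$ since $\alpha_j$ kills $\CA_k$, so $\alpha_j^*F\in{}^\perp\lan\CA_k\ran_{k\ne j}=\CB_j$ by~\eqref{bi}; this would make your second paragraph both shorter and more robust (your dualization argument is fine, but it carries exactly the same implicit closure issues that the paper's own one-line citation of Lemma~\ref{adjs} does).
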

\begin{proof}
We have $P_i \in \CA_i\boxtimes\CB_i^\vee$ by Proposition~\ref{ki}, hence
$P_j^* \in \CB_j\boxtimes (\CA_j^\vee\otimes\omega_X)$, $P_j^! \in (\CB_j\otimes\omega_X)\boxtimes \CA_j^\vee$
by Lemma~\ref{adjs}. So, the first vanishing follows from semiorthogonality
of $(\CB_j,\CB_i)$ and the second --- from the semiorthogonality of $(\CB_i,\CB_j)$ and the Serre duality.
\end{proof}

\begin{corollary}\label{so1}
Let $\D^b(X) = \lan \CA_1,\dots,\CA_i,\CA_{i+1},\dots,\CA_m\ran$ be a semiorthogonal decomposition.
Let $\D^b(X) = \lan \CA_1,\dots,\CA'_{i+1},\CA_i,\dots,\CA_m\ran$ be the semiorthogonal decomposition
obtained by the left mutation of $\CA_{i+1}$ through $\CA_i$. Let $P_i$ and $P'_i$ be the kernels of the
projection functors onto $\CA_i$ with respect to these decompositions.
Then we have canonical isomorphisms
$\Hom^\bullet(P_i,P_i) \cong \Hom^\bullet(P'_i,P_i) \cong \Hom^\bullet(P'_i,P'_i)$.
\end{corollary}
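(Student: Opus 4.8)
The plan is to produce a canonical morphism of kernels $\phi\colon P'_i\to P_i$, identify its cone, and deduce the two isomorphisms from a one–sided orthogonality of that cone.

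\emph{Construction of $\phi$.}\/ Because the left mutation leaves the subcategory $\lan\CA_i,\CA_{i+1}\ran=\lan\CA'_{i+1},\CA_i\ran$ unchanged, the subcategory $\lan\CA_i,\dots,\CA_m\ran$ (onto which $\tau_{i-1}$ projects) and the subcategory $\lan\CA_{i+2},\dots,\CA_m\ran$ (onto which $\tau_{i+1}$ projects) are the same for the two decompositions; hence by Theorem~\ref{sod-kernels} the kernels $D_{i-1}$ and $D_{i+1}$ of these truncation functors agree. Set $R=\Cone(D_{i+1}\to D_{i-1})$. In the induced decomposition $\D^b(X\times X)=\lan\CA_{1X},\dots,\CA_{mX}\ran$ this is the component of $\Delta_*\CO_X$ belonging to the two–step subcategory $\lan\CA_{iX},\CA_{(i+1)X}\ran$, and for the induced decomposition of the mutated one it is the component belonging to $\lan\CA'_{(i+1)X},\CA_{iX}\ran$. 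Decomposing $R$ inside each of these two–step subcategories yields triangles
$$
P_{i+1}\to R\xrightarrow{\ b\ }P_i\quad\text{and}\quad P'_i\xrightarrow{\ c\ }R\to Q,
$$
where $P_{i+1}$ is the projection kernel onto $\CA_{i+1}$ for the first decomposition and $Q$ is the projection kernel onto $\CA'_{i+1}$ for the second. Put $\phi=b\circ c\colon P'_i\to P_i$; the octahedral axiom applied to $P'_i\xrightarrow{c}R\xrightarrow{b}P_i$ produces a distinguished triangle
$$
Q\to\Cone\phi\to P_{i+1}[1]\to Q[1].
$$

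\emph{Locating the kernels.}\/ Write $\CB_j$, resp.\ $\CB'_j$, for the components of the decomposition dual, in the sense of \eqref{bi}, to the first, resp.\ the second, one. By Proposition~\ref{ki} we have $P_i\in{}_X\CB_i^\vee$, $P_{i+1}\in{}_X\CB_{i+1}^\vee$, $Q\in{}_X(\CB'_i)^\vee$ and $P'_i\in{}_X(\CB'_{i+1})^\vee$. Writing out \eqref{bi} for the two dual decompositions one sees that $\CB_{i+1}$ and $\CB'_i$ coincide, both being ${}^\perp\lan\CA_1,\dots,\CA_{i-1},\CA_i,\CA_{i+2},\dots,\CA_m\ran$. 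Therefore $Q$ and $P_{i+1}$, and hence $\Cone\phi$, all lie in ${}_X\CB_{i+1}^\vee={}_X(\CB'_i)^\vee$. On the other hand $\lan\CB_m,\dots,\CB_1\ran$ and $\lan\CB'_m,\dots,\CB'_1\ran$ are semiorthogonal decompositions of $\D^b(X)$, in which $\CB_{i+1}$ precedes $\CB_i$ and $\CB'_{i+1}$ precedes $\CB'_i$; consequently $\Hom^\bullet_X(\CB_i,\CB_{i+1})=0$ and $\Hom^\bullet_X(\CB'_i,\CB'_{i+1})=0$. Via the K\"unneth formula on $X\times X$ these vanishings hold already on the generators $p_1^*F\otimes p_2^*(B^\vee)$ of the subcategories ${}_X(-)^\vee$, hence on the subcategories themselves, and we obtain
$$
\Hom^\bullet_{X\times X}(\Cone\phi,P_i)=0\quad\text{and}\quad\Hom^\bullet_{X\times X}(P'_i,\Cone\phi)=0.
$$

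\emph{Conclusion.}\/ Applying $\RHom_{X\times X}(-,P_i)$ to the triangle $P'_i\xrightarrow{\phi}P_i\to\Cone\phi$, the first vanishing shows that precomposition with $\phi$ is an isomorphism $\Hom^\bullet(P_i,P_i)\xrightarrow{\sim}\Hom^\bullet(P'_i,P_i)$; applying $\RHom_{X\times X}(P'_i,-)$ to the same triangle, the second vanishing shows that postcomposition with $\phi$ is an isomorphism $\Hom^\bullet(P'_i,P'_i)\xrightarrow{\sim}\Hom^\bullet(P'_i,P_i)$. Composing, we obtain the asserted canonical isomorphisms. The only step requiring genuine care is the middle one: organizing the two induced decompositions of $\D^b(X\times X)$ together with their duals so that, via Proposition~\ref{ki} and the identity $\CB'_i=\CB_{i+1}$, the four kernels entering the triangle for $\Cone\phi$ are recognized as living in the stated subcategories of the form ${}_X\CB^\vee$. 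Once that bookkeeping is in place the two orthogonality vanishings are immediate from the semiorthogonality of the dual decompositions, and everything else is formal.
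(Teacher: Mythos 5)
Your proof is correct and follows essentially the same route as the paper: the same composite morphism $P'_i \to D \to P_i$ (your $\phi = b\circ c$ with $R = D$), the same octahedron identifying the cone, Proposition~\ref{ki} to place the four kernels in the categories ${}_X\CB^\vee$ attached to the two dual decompositions, and the same two semiorthogonality vanishings to get both isomorphisms. The only difference is notational (you index the duals of the mutated decomposition by slot, where the paper invokes Lemma~\ref{dual-mut} and names the new component $\CB'_i$), so there is nothing substantive to add.
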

\begin{proof}
Let $\CA = \langle \CA_i,\CA_{i+1} \rangle = \langle \CA'_{i+1}, \CA_i \rangle$
and let $D$ be the component of $\Delta_*\CO_X$ in $\CA_X$.
Then we have distinguished triangles
$$
P_{i+1} \to D \to P_i,\qquad\text{and}\qquad
P'_i \to D \to P'_{i+1}
$$
Consider the composition map $P'_i \to D \to P_i$ and extend it to a distinguished triangle
\begin{equation}\label{kkp}
K \to P'_i \to P_i.
\end{equation}
By the octahedron axiom we also have a distinguished triangle
\begin{equation}\label{kkp1}
K \to P_{i+1} \to P'_{i+1}.
\end{equation}
Let $\D^b(X) = \langle \CB_m,\dots,\CB_{i+1},\CB_i,\dots,\CB_1 \rangle$
and $\D^b(X) = \langle \CB_m,\dots,\CB'_i,\CB_{i+1},\dots,\CB_1 \rangle$
be the dual decompositions (note that by Lemma~\ref{dual-mut}
the second is obtained from the first by the left mutation of $\CB_i$ through $\CB_{i+1}$).
By Proposition~\ref{ki} we have
$$
P_i \in \CA_{i}\boxtimes \CB_i^\vee,\quad
P'_i \in \CA_{i}\boxtimes (\CB'_i)^\vee,\quad
P_{i+1} \in \CA_{i+1}\boxtimes \CB_{i+1}^\vee,\quad
P'_{i+1} \in \CA'_{i+1}\boxtimes \CB_{i+1}^\vee.
$$
%
%On the other hand, it was shown in the proof of Corollary~\ref{so} that
%$P_{i+1},P'_{i+1} \in {}_X\CB_{i+1}^\vee$, where $\CB_{i+1} = {}^\perp\langle \CA_1,\dots,\CA_i,\CA_{i+2},\dots,\CA_m \rangle$.
%
Triangle~\eqref{kkp} implies that $K \in \CA_{iX}$, while~\eqref{kkp1} implies that $K \in {}_X\CB_{i+1}^\vee$,
which means that
$$
K \in \CA_i\boxtimes\CB_{i+1}^\vee.
$$
Since pairs
$(\CB_i^\vee,\CB_{i+1}^\vee)$ and $(\CB_{i+1}^\vee,{\CB'_i}^\vee)$ are semiorthogonal, we conclude that
$$
\Hom^\bullet(K,P_i) = \Hom^\bullet(P'_i,K) = 0.
$$
Now applying the functor $\Hom(P'_i,-)$ to~\eqref{kkp} we deduce that $\Hom^\bullet(P'_i,P_i) \cong \Hom^\bullet(P'_i,P'_i)$,
and applying the functor $\Hom(-,P_i)$ we deduce that $\Hom^\bullet(P_i,P_i) \cong \Hom^\bullet(P'_i,P_i)$.
\end{proof}

\section{Hochschild homology and cohomology of DG-algebras}\label{sec-dg}

\subsection{Derived categories of DG-algebras}
We refer to~\cite{Ke} for a comprehensive introduction in DG-algebras.
% and DG-categories.
Below we remind the basic definitions.

%Recall that a DG-category $\CC$ over a field $\kk$ is a category such that the space of
%morphisms $\CC(x,y)$ between any two objects $x,y \in \CC$ is equipped with a structure
%of a complex of $\kk$-modules such that the composition is induced by a morphism of complexes
%$$
%\CC(y,z)\otimes_\kk\CC(x,y) \to \CC(x,z).
%$$
%The simplest example of a DG-category is the category $\CC_{dg}(\kk)$
%of complexes of $\kk$-vector spaces with
%$$
%\CC_{dg}(\kk)(M^\bullet,N^\bullet) = \oplus_{n \in \ZZ} \prod_{k \in \ZZ} \Hom(M^k,N^{n+k}),
%$$

%A right DG-module over a DG-category $\CC$ is

%$C^\bullet$ of $\kk$-modules
%with an associative multiplication map $C^\bullet\otimes_\kk C^\bullet \to C^\bullet$
%such that for the differential of $C^\bullet$ the (graded) Leibnitz rule holds.
%A right DG-module over a DG-algebra $C^\bullet$ is a complex $M^\bullet$ of $\kk$-modules
%with an associative multiplication map $M^\bullet\otimes_\kk C^\bullet \to M^\bullet$
%such that for the differential of $M^\bullet$ the (graded) Leibnitz rule holds.
%A left DG-module is defined analogously, and a DG-bimodule over $C^\bullet$
%is a complex of $\kk$-modules with commuting structures of a left and of
%a right DG-module.

Recall that a DG-algebra over a field $\kk$ is a complex $C^\bullet$ of $\kk$-modules
with an associative multiplication map $C^\bullet\otimes_\kk C^\bullet \to C^\bullet$
such that for the differential of $C^\bullet$ the (graded) Leibnitz rule holds.
A right DG-module over a DG-algebra $C^\bullet$ is a complex $M^\bullet$ of $\kk$-modules
with an associative multiplication map $M^\bullet\otimes_\kk C^\bullet \to M^\bullet$
such that for the differential of $M^\bullet$ the (graded) Leibnitz rule holds.
A left DG-module is defined analogously, and a DG-bimodule over $C^\bullet$
is a complex of $\kk$-modules with commuting structures of a left and of
a right DG-module.

The derived category $\D(C)$ of a DG-algebra $C$ is defined as the localization of the homotopy category
of all (right) DG-modules over $C$ with respect to the class of quasiisomorphisms. This category is triangulated.
The category $\D^\perf(C)$ of perfect DG-modules over $C$ is the minimal closed under direct summands
triangulated subcategory of $\D(C)$ containing the free DG-module $C$. There is a similar notion
of a DG-category (which can be thought of as a DG-algebra with several objects) and of its derived category.

Triangulated categories equivalent to derived categories of DG-categories
(such triangulated categories are called {\sf enhanced}, see~\cite{BK-E}, and a choice of such DG-category
and of an equivalence is called an {\sf enhancement}) have many useful properties. In particular for any two
objects $M,N$ of an enhanced triangulated category there is a complex $\RHom^\bullet(M,N)$ such that
its $i$-th cohomology is isomorphic to $\Hom(M,N[i])$ and the multiplication of $\Hom$'s lifts to
a multiplication of $\RHom$ complexes. In particular, for any object $M$ of an enhanced triangulated
category $\CT$ there is a DG-algebra $\RHom^\bullet(M,M)$.

The derived category of quasicoherent sheaves on an algebraic variety is well known to be enhanced.
The standard enhancement for it is provided by the DG-category of h-injective complexes of quasicoherent
sheaves (see e.g.~\cite{KSch}, Theorem 14.3.1). Starting from this enhancement one can produce many others.
The most convenient come from appropriate generators.

Recall that an object $\CE$ is a {\sf generator} of a triangulated category $\CT$ if $\CE^\perp = 0$.
An object $\CE$ is {\sf compact}\/ if the functor $\Hom(\CE,-)$ commutes with arbitrary (infinite) coproducts.
All compact objects in $\CT$ form a triangulated subcategory denoted by $\CT_c$.
An object $\CE$ is a {\sf strong generator}\/ for $\CT$ (see~\cite{BV}) if there is an integer $N$
such that any object of $\CT$ can be obtained from $\CE$ by taking finite direct sums, shifts, direct summands
and no more than $N$ cones of morphisms. It is easy to see that any strong generator of $\D^b(X)$
is a generator of the unbounded derived category of quasicoherent sheaves on $X$ 
(indeed, if $\CE$ is a strong generator of $\D^b(X)$ then $\CE^\perp = (\D^b(X))^\perp = 0$).

\begin{theorem}[\cite{Ke-DG}]\label{ke-r}
Assume that $\CT$ is an enhanced triangulated category and $\CE$ is a compact generator of $\CT$.
Let $C = \RHom^\bullet(\CE,\CE)$. Then $\CT \cong \D(C)$ and $\CT_c \cong \D^\perf(C)$.
\end{theorem}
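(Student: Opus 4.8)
The plan is to make the comparison functor $\RHom^\bullet(\CE,-)$ explicit on the level of DG-enhancements and then run a standard d\'evissage argument, following~\cite{Ke-DG}. First I would fix a DG-enhancement $\CA$ of $\CT$ and replace $\CE$ by a quasi-isomorphic h-injective (equivalently, homotopically projective and injective) representative, so that $C = \RHom^\bullet(\CE,\CE)$ is computed by the honest Hom-complex $\Hom^\bullet_\CA(\CE,\CE)$, and so that for every object $M$ of $\CA$ the complex $\Hom^\bullet_\CA(\CE,M)$ carries a natural structure of a right DG-module over $C$. This assignment is a DG-functor, and passing to derived categories it yields an exact functor $F = \RHom^\bullet(\CE,-) : \CT \to \D(C)$. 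By construction $F(\CE) \cong C$, the free rank one DG-module, which is a compact generator of $\D(C)$; and since $\CE$ is a compact object of $\CT$, the functor $F$ commutes with arbitrary coproducts.

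Next I would observe that $F$ induces isomorphisms $\Hom_\CT(\CE,\CE[n]) \cong \Hom_{\D(C)}(C,C[n]) = H^n(C)$ for all $n$, which is immediate from the definitions. Now the d\'evissage: the full subcategory of objects $M \in \CT$ for which $F$ induces isomorphisms $\Hom_\CT(\CE[n],M) \to \Hom_{\D(C)}(C[n],F(M))$ for all $n$ is closed under shifts, cones and arbitrary coproducts (using compactness of $\CE$ in $\CT$ and of $C$ in $\D(C)$), hence is a localizing subcategory; it contains $\CE$, and since $\CE$ is a generator of $\CT$ it must be all of $\CT$. A second, formally identical d\'evissage --- now in the first variable, using that $F$ preserves coproducts --- shows that $F$ is fully faithful on all of $\CT$. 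Finally, the essential image of $F$ is a localizing subcategory of $\D(C)$ containing $C$, hence it is all of $\D(C)$ because $C$ generates. Thus $F$ is an exact equivalence $\CT \cong \D(C)$.

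For the remaining assertion, any exact equivalence of triangulated categories carries compact objects to compact objects, so $F$ restricts to an equivalence $\CT_c \cong \D(C)_c$; and $\D(C)_c = \D^\perf(C)$ by the standard fact that the compact objects of $\D(C)$ form precisely the smallest thick subcategory containing the free module $C$, which is the definition of $\D^\perf(C)$. Combining the two identifications gives $\CT_c \cong \D^\perf(C)$. I expect the main obstacle to be the first step: promoting $M \mapsto \Hom^\bullet_\CA(\CE,M)$ to a genuine DG-functor with values in DG-modules over $C$ and checking that it descends correctly to derived categories --- this requires care with h-injective (or cofibrant) replacements and with the DG-enhancement --- whereas the two d\'evissage arguments, though they are the technical heart, are routine once $F$ has been constructed.
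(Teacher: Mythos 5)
The paper offers no proof of this statement at all: it is imported verbatim from Keller's \emph{Deriving DG categories} and used as a black box, so there is no internal argument to compare yours against. Your proposal is the standard proof of Keller's theorem and is essentially correct: build the DG-functor $M \mapsto \Hom^\bullet(\CE,M)$ inside an enhancement, check $F(\CE)\cong C$ and that $F$ preserves coproducts (compactness of $\CE$), establish full faithfulness by d\'evissage, and then match up compact objects. A few points should be made explicit rather than glossed. First, the statement tacitly requires $\CT$ to admit arbitrary coproducts (in the paper it is applied to the unbounded derived category of quasicoherent sheaves); without coproducts ``compact generator'' has no force and the conclusion fails. Second, the step ``a localizing subcategory containing $\CE$ is all of $\CT$'' does not follow formally from $\CE^\perp=0$: it is Neeman's theorem on compactly generated triangulated categories, and the same theorem underlies your final identification of the compact objects of $\D(C)$ with $\D^\perf(C)$ (the thick closure of $C$); both uses deserve acknowledgement. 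Third, the parenthetical ``h-injective (equivalently, homotopically projective and injective)'' conflates two different notions: what you need is either that the enhancement consists of h-injective complexes, so that Hom-complexes into arbitrary objects already compute $\RHom$, or an h-projective replacement of $\CE$, not both at once. Finally, your first d\'evissage is vacuous: by construction $H^n(F(M)) \cong \Hom_\CT(\CE,M[n])$ for every $M$, so the comparison map you induct over is an isomorphism from the start; the genuine content lies in the second d\'evissage in the first variable (where preservation of coproducts by $F$ is used) and in the essential-image argument, both of which you state correctly.
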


\begin{theorem}[\cite{BV}]\label{bv-r}
Let $X$ be a smooth projective variety. Then the bounded derived category of coherent sheaves on $X$
has a strong generator, which is a compact generator of the unbounded derived category of
quasicoherent sheaves on $X$.
\end{theorem}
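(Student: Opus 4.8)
The plan is to treat the two assertions of Theorem~\ref{bv-r} in turn: first produce a \emph{strong} generator $\CE$ of $\D^b(X)$, and then show that any such $\CE$ is automatically a compact generator of $\D(\Qcoh X)$. For the first assertion the cleanest organization is to reduce it to two facts: (i) $\D^b(X)=\perf(X)$ is \emph{classically} generated by a single object $G$ (here no bound on the number of cones is needed), and (ii) the triangulated category $\D^b(X)$ has finite dimension; then the general principle that a classical generator of a finite-dimensional triangulated category is a strong generator gives the conclusion.

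For (i) I would pass to projective space. Choose a very ample line bundle $\CO_X(1)$, giving a closed embedding $j\colon X\hookrightarrow\PP^n$ with $\CO_X(1)=\CO_{\PP^n}(1)|_X$. The Beilinson resolution of the diagonal on $\PP^n$,
\[
0\to\Omega^n_{\PP^n}(n)\boxtimes\CO_{\PP^n}(-n)\to\dots\to\Omega^1_{\PP^n}(1)\boxtimes\CO_{\PP^n}(-1)\to\CO_{\PP^n}\boxtimes\CO_{\PP^n}\to\CO_{\Delta}\to0,
\]
shows, by convolution, that every object of $\D^b(\PP^n)$ is built from $\CO_{\PP^n},\CO_{\PP^n}(-1),\dots,\CO_{\PP^n}(-n)$ by at most $n$ cones (together with shifts, finite sums and summands); in particular $\bigoplus_{i=0}^{n}\CO_{\PP^n}(-i)$ strongly generates $\D^b(\PP^n)$. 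Applying the functor $Lj^*$ (defined and triangulated since $X$ is smooth) to this expression for $Rj_*F$, for $F\in\D^b(X)$, and using the canonical filtration of $Lj^*Rj_*F$ whose graded factors are $F\otimes\Lambda^kN_{X/\PP^n}^\vee[k]$, $0\le k\le\operatorname{codim}(X,\PP^n)$, to recover $F$, one concludes that $\bigoplus_{i}\CO_X(-i)$ (over a finite range of twists large enough to absorb the book-keeping) classically generates $\D^b(X)=\perf(X)$.

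The substantial point is (ii), which is where the smoothness of $X$ really enters. Since $X\times X$ is smooth, the diagonal sheaf $\CO_{\Delta_X}$ is a perfect complex on $X\times X$, hence admits a finite locally free resolution; a ``ghost lemma'' estimate using such a resolution then bounds, uniformly over all objects, the number of cones needed to build an arbitrary object of $\D^b(X)$ from any fixed classical generator. Equivalently, one may cover $X$ by finitely many affine opens, use that on a smooth affine variety the structure sheaf is already a strong generator because the global dimension is finite, and assemble the pieces by a \v{C}ech/Mayer--Vietoris argument that costs only boundedly many extra cones. This is the step I expect to be the main obstacle: exhibiting \emph{some} generator is soft, but the upgrade from classical to strong generation --- the uniform bound on the number of cones --- is precisely the content that requires finite global dimension or, equivalently, perfectness of the diagonal.

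Finally, for the second assertion: since $X$ is smooth, every bounded complex of coherent sheaves has finite Tor-dimension, so $\D^b(X)=\perf(X)$, and by the Thomason--Trobaugh--Neeman theorem $\perf(X)$ is exactly the subcategory of compact objects of $\D(\Qcoh X)$; in particular the strong generator $\CE$ is a compact object. The category $\D(\Qcoh X)$ is compactly generated (Neeman), so the right orthogonal to its subcategory of compact objects vanishes. On the other hand, because $\CE$ strongly generates $\D^b(X)$, its thick closure is all of $\D^b(X)=\perf(X)$, i.e.\ every compact object of $\D(\Qcoh X)$ is obtained from $\CE$ by finitely many cones, shifts and summands; hence any $G\in\D(\Qcoh X)$ with $\RHom^\bullet(\CE,G)=0$ also satisfies $\RHom^\bullet(C,G)=0$ for all compact $C$, and therefore $G=0$. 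Thus $\CE$ is a compact generator of $\D(\Qcoh X)$, which completes the plan.
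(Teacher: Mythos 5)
The paper itself gives no proof of Theorem~\ref{bv-r} --- it is imported verbatim from \cite{BV} --- so your proposal can only be measured against the standard arguments (Bondal--Van den Bergh's affine-cover induction, or the Orlov/Kontsevich d\'evissage with a resolution of the diagonal). In outline you follow exactly these, and your treatment of the second assertion is correct and matches the remark the paper makes just before Theorem~\ref{ke-r}: since $X$ is smooth, $\D^b(X)=\perf(X)$, perfect complexes are the compact objects of $\D(\Qcoh(X))$, and because $\D(\Qcoh(X))$ is compactly generated by them, a classical generator of $\perf(X)$ has vanishing right orthogonal and hence is a compact generator. The reduction ``classical generator $+$ existence of some strong generator $\Rightarrow$ every classical generator is strong'' is also sound.

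There are, however, two genuine gaps. First, in step (i) the mechanism is backwards: the Koszul-type filtration of $j^*j_*F$ has associated graded pieces $F\otimes\Lambda^k N^\vee_{X/\PP^n}[k]$, i.e.\ it builds $j^*j_*F$ \emph{out of} $F$, not $F$ out of $j^*j_*F$; the counit $j^*j_*F\to F$ is not split for a general closed embedding of smooth varieties (the splitting is special to the diagonal in characteristic zero), so ``recovering $F$'' does not follow as stated. Classical generation of $\D^b(X)$ by finitely many twists should instead be obtained, e.g., by restricting Koszul complexes from $\PP^n$ to show that every $\CO_X(-m)$, $m\ge 0$, lies in the thick envelope of $\CO_X,\dots,\CO_X(-n)$, and then invoking Neeman's theorem that a set of compact objects generating $\D(\Qcoh(X))$ has thick envelope equal to all compact objects, i.e.\ to $\perf(X)=\D^b(X)$. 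Second, and more importantly, the heart of the theorem --- the \emph{uniform} bound on the number of cones --- is only gestured at: the ``ghost lemma estimate using a finite locally free resolution of $\CO_{\Delta}$'' and the ``\v{C}ech/Mayer--Vietoris argument that costs only boundedly many extra cones'' are precisely the nontrivial content of \cite{BV} (finite global dimension gives the bound affine-locally, and their gluing lemma assembles a finite cover) or of Orlov's argument with the resolution of the diagonal on $X\times X$. You have correctly identified this as the crux, but the proposal names the tools without supplying the arguments, so as written it does not yet constitute a proof of the first assertion.
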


The following evident argument shows that any admissible subcategory of $\D^b(X)$ also has a strong generator.

%On the other hand we can associate a DG algebra with any admissible subcategory $\CA \subset \D^b(X)$
%using an approach of Bondal and Van den Bergh (\cite{BV}). For this we show that $\CA$ has a strong generator.
%Recall that an object $\CE$ is a strong generator for a triangulated category $\CT$ if there is an integer $N$
%such that any object of $\CT$ can be obtained from $\CE$ by taking finite direct sums, shifts, direct summands
%and no more than $N$ cones of morphisms.

\begin{lemma}
Let $\CE$ be a strong generator of a triangulated category $\CT$ and $\CA \subset \CT$ an admissible subcategory.
Then the component $\CE_\CA$ of $\CE$ in $\CA$ is a strong generator of $\CA$. In particular any admissible
subcategory $\CA \subset \D^b(X)$ is strongly generated. Moreover, there is an equivalence $\CA \cong \D^\perf(C)$
for the DG-algebra $C^\bullet = \RHom^\bullet(\CE_\CA,\CE_\CA)$.
\end{lemma}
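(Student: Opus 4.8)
The plan is to derive everything from exactness of the projection functor. Denote by $\alpha\colon\CT\to\CA$ the projection functor onto $\CA$ with respect to the semiorthogonal decomposition $\CT=\lan\CA,{}^\perp\CA\ran$, so that $\CE_\CA=\alpha(\CE)$. Since $\CA$ is admissible, $\alpha$ is exact, so it commutes with shifts and finite direct sums, preserves direct summands, and takes distinguished triangles to distinguished triangles; moreover $\alpha\circ i\cong\id_\CA$, where $i\colon\CA\to\CT$ is the embedding. First I would show $\CE_\CA$ strongly generates $\CA$. By hypothesis there is an integer $N$ such that every object of $\CT$, in particular every $A\in\CA$, is obtained from $\CE$ by finite direct sums, shifts, passing to direct summands and at most $N$ cones. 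Applying $\alpha$ to such a presentation and using that $\alpha$ commutes with all these operations together with $\alpha(A)\cong A$, one obtains a presentation of $A$ from $\CE_\CA=\alpha(\CE)$ using at most $N$ cones and lying entirely inside $\CA$. Hence $\CE_\CA$ is a strong generator of $\CA$; combined with Theorem~\ref{bv-r} this proves that every admissible $\CA\subset\D^b(X)$ is strongly generated.

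For the equivalence $\CA\cong\D^\perf(C)$ I would realize $\CA$ as the category of compact objects of a compactly generated enhanced category and invoke Theorem~\ref{ke-r}. Since $X$ is smooth projective, $\CE_\CA\in\D^b(X)$ is a perfect complex, hence a compact object of the unbounded derived category $\D(\Qcoh(X))$ of quasicoherent sheaves. Let $\widetilde{\CA}\subset\D(\Qcoh(X))$ be the smallest full triangulated subcategory containing $\CE_\CA$ and closed under arbitrary coproducts. Then $\CE_\CA$ is a compact generator of $\widetilde{\CA}$, and $\widetilde{\CA}$ is enhanced, being a full triangulated subcategory of the enhanced category $\D(\Qcoh(X))$. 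By Theorem~\ref{ke-r} we obtain equivalences $\widetilde{\CA}\cong\D(C)$ and $\widetilde{\CA}_c\cong\D^\perf(C)$ for $C^\bullet=\RHom^\bullet(\CE_\CA,\CE_\CA)$, so it remains to identify $\widetilde{\CA}_c$ with $\CA$.

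Here I would use the well-known fact that the subcategory of compact objects of a compactly generated triangulated category coincides with the smallest triangulated subcategory that contains a given compact generator and is closed under direct summands; applied to $\widetilde{\CA}$ it identifies $\widetilde{\CA}_c$ with the thick closure of $\CE_\CA$. On one hand $\CA$ is a triangulated subcategory closed under direct summands and contains $\CE_\CA$, so this thick closure lies in $\CA$; on the other hand $\CE_\CA$ strongly generates $\CA$, so $\CA$ lies in the thick closure of $\CE_\CA$. Therefore $\widetilde{\CA}_c=\CA$, and $\CA\cong\D^\perf(C)$. The only delicate point is precisely this last identification --- passing to the unbounded category and checking that the localizing subcategory generated by $\CE_\CA$ acquires no compact objects beyond $\CA$; everything else is formal, given exactness of $\alpha$ and Theorems~\ref{ke-r} and~\ref{bv-r}.
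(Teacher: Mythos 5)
Your proposal is correct and follows essentially the same route as the paper: the strong generation of $\CA$ by $\CE_\CA$ is obtained exactly as in the paper, by applying the exact projection functor $\alpha$ to an $N$-step presentation and using $\langle\CE\rangle_k\mapsto\langle\alpha(\CE)\rangle_k$. For the equivalence $\CA\cong\D^\perf(C)$ the paper simply cites Keller, and your argument (pass to the localizing subcategory of $\D(\Qcoh(X))$ generated by the compact object $\CE_\CA$, identify its compacts with the thick closure of $\CE_\CA$, which equals $\CA$, and apply Theorem~\ref{ke-r}) is the standard way of filling in that citation, so it is a faithful elaboration rather than a different proof.
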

\begin{proof}
Denote by $\langle \CE \rangle_k$ the subcategory of $\CT$ consisting of objects which can
be obtained from $\CE$ by taking finite direct sums, shifts, direct summands and no more than $k$ cones of morphisms.
Then it is easy to see that for any exact functor $\Phi:\CT \to \CT'$ we have
$\Phi(\langle \CE \rangle_k) \subset \langle \Phi(\CE) \rangle_k$.
In particular, if $\alpha:\CT \to \CA$ is the projection functor then $\alpha(\langle \CE \rangle_k) \subset \langle \CE_\CA \rangle_k$.
Now if $\CE$ is a strong generator for $\CT$ then $\langle \CE \rangle_N = \CT$ for some $N \in \ZZ$,
hence $\alpha(\CT) \subset \langle \CE_\CA \rangle_N$, but $\CA = \alpha(\CA) \subset \alpha(\CT)$,
hence $\CE_\CA$ is a strong generator for $\CA$.

By Theorem~\ref{bv-r} for any smooth projective variety $X$ the category $\D^b(X)$ has a strong generator.
Hence any admissible subcategory $\CA \subset \D^b(X)$ is strongly generated. Therefore, by~\cite{Ke-DG}
there is an equivalence $\CA \cong \D^\perf(C)$.
\end{proof}

\subsection{Hochschild cohomology and homology}

Each DG-algebra $C^\bullet$ has a canonical structure of a DG-bimodule over itself.
This is used to define the Hochschild cohomology and homology:
$$
\HOH^\bullet(C) = \RHom_{C\otimes C^{\opp}}(C,C),
\qquad
\HOH_\bullet(C) = C\mathop{\mathop{\otimes}^{\mathbf{L}}}\nolimits_{C\otimes C^{\opp}}C.
$$
The Hochschild homology and cohomology of DG algebras is known to be invariant under derived
Morita equivalence (see e.g.~\cite{Ke}).
This allows to define the Hochschild cohomology and homology of an admissible subcategory
of $\D^b(X)$ as those of the DG-algebra of endomorphisms of its appropriate generator.

\begin{definition}
Let $\CA \subset \D^b(X)$ be an admissible subcategory of $\D^b(X)$ for a smooth projective variety $X$.
Let $\CE_\CA$ be a strong generator of $\CA$ and $C_\CA = \RHom^\bullet(\CE_\CA,\CE_\CA)$. Then we define
$$
\HOH^\bullet(\CA) := \HOH^\bullet(C_\CA),\qquad
\HOH_\bullet(\CA) := \HOH_\bullet(C_\CA).
$$
\end{definition}

%By the above remarks it is clear that th

It turns out, however, that for the computation and investigation
of the Hochschild cohomology and homology of an admissible subcategory
one can forget everything about DG-algebras and to remain within the realm
of algebraic geometry.

\begin{theorem}\label{dghh}
Let $\D^b(X) = \langle \CA_1,\dots,\CA_m \rangle$ be a semiorthogonal decomposition.
Let $P_i \in \D^b(X\times X)$ be the kernels of the corresponding projection functors
onto $\CA_i$. Then
$$
\HOH^\bullet(\CA_i) \cong \Hom^\bullet(P_i,P_i),
\qquad
\HOH_\bullet(\CA_i) = \HH^\bullet(X\times X,P_i\otimes P_i^\tra).
%\HOH_\bullet(\CA_i) = \Hom^\bullet(P_i,P_i\circ S_X).
$$
\end{theorem}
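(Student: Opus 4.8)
The strategy is to unwind the DG-algebra definition of $\HOH^\bullet(\CA_i)$, $\HOH_\bullet(\CA_i)$ and reformulate everything in terms of $C$-bimodules, where a direct computation becomes available. Fix a strong generator $\CE$ of $\D^b(X)$ which is a compact generator of $\Qcoh(X)$ (Theorem~\ref{bv-r}), put $C = \RHom^\bullet(\CE,\CE)$, let $\CE_i$ be the component of $\CE$ in $\CA_i$ and $C_i = \RHom^\bullet(\CE_i,\CE_i)$, so that $\D^b(X)\cong\D^\perf(C)$, $\CA_i\cong\D^\perf(C_i)$, and, by definition, $\HOH^\bullet(\CA_i) = \RHom_{C_i\otimes C_i^{\opp}}(C_i,C_i)$, $\HOH_\bullet(\CA_i) = C_i\lotimes_{C_i\otimes C_i^{\opp}}C_i$. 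Since $\CE^\vee$ is again a compact generator of $\Qcoh(X)$, the object $\CE\boxtimes\CE^\vee$ is a compact generator of $\Qcoh(X\times X)$, and by the Künneth formula $\RHom^\bullet(\CE\boxtimes\CE^\vee,\CE\boxtimes\CE^\vee)\cong C\otimes C^{\opp}$ (the contravariant duality $F\mapsto F^\vee$ on perfect complexes turns $\RHom^\bullet(\CE^\vee,\CE^\vee)$ into $C^{\opp}$). Hence $\D^b(X\times X)\cong\D^\perf(C\otimes C^{\opp})$, the category of perfect $C$-bimodules; under this equivalence convolution of kernels goes to the derived tensor product of bimodules over $C$, $\Delta_*\CO_X$ goes to the diagonal bimodule $C$ (perfect over $C\otimes C^{\opp}$ since $X$ is smooth), and $S_X$ goes to the bimodule of the Serre functor of $\D^\perf(C)$, whose inverse is $C^{!}:=\RHom_{C\otimes C^{\opp}}(C,C\otimes C^{\opp})$. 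Specializing to $\Delta_*\CO_X$ this already reproves the classical $\HOH^\bullet(X)\cong\RHom_{C\otimes C^{\opp}}(C,C)$, which is a useful consistency check.

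The next step is to identify the bimodule attached to $P_i$. Under the equivalences above the fully faithful inclusion $\iota_i:\CA_i\hookrightarrow\D^b(X)$ and the projection $\pi_i:\D^b(X)\to\CA_i$ become functors $-\otimes_{C_i}N_i:\D^\perf(C_i)\to\D^\perf(C)$ and $-\otimes_C M_i:\D^\perf(C)\to\D^\perf(C_i)$ for a $(C_i,C)$-bimodule $N_i$ and a $(C,C_i)$-bimodule $M_i$, and from $\pi_i\iota_i\cong\id_{\CA_i}$ one gets $N_i\otimes_C M_i\cong C_i$ as $C_i$-bimodules. Because $X$ is smooth and projective and $\CA_i$ is admissible, $N_i$ and $M_i$ are perfect as one-sided modules over $C$ and over $C_i$, so the tensor--hom adjunctions apply and derived duals pass around freely. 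Since $\alpha_i=\Phi_{P_i}=\iota_i\pi_i$ is the composition of $\pi_i$ and $\iota_i$, uniqueness of the representing bimodule gives that $P_i$ corresponds to $B_i:=M_i\otimes_{C_i}N_i$; note $B_i\otimes_C B_i\cong B_i$, matching $P_i\circ P_i\cong P_i$, and $B_i$ is perfect over $C\otimes C^{\opp}$ since $P_i\in\D^b(X\times X)=\D^\perf(X\times X)$.

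The first isomorphism now comes from the chain
$$
\Hom^\bullet_{X\times X}(P_i,P_i)\cong\RHom_{C\otimes C^{\opp}}\!\big(M_i\otimes_{C_i}N_i,\,M_i\otimes_{C_i}N_i\big)\cong\RHom_{C_i\otimes C_i^{\opp}}\!\big(C_i,\,(N_i\otimes_C M_i)\otimes_{C_i}(N_i\otimes_C M_i)\big),
$$
obtained by applying the tensor--hom adjunctions in the left $C$-variable and then in the right $C$-variable (using perfectness of $M_i$ and $N_i$); substituting $N_i\otimes_C M_i\cong C_i$ collapses the right-hand side to $\RHom_{C_i\otimes C_i^{\opp}}(C_i,C_i)=\HOH^\bullet(C_i)=\HOH^\bullet(\CA_i)$. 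For the homology statement one first runs the same Grothendieck duality manipulation as in the proof of Lemma~\ref{hohserre} to rewrite $\HH^\bullet(X\times X,P_i\otimes P_i^\tra)$ as $\Hom^\bullet_{X\times X}(P_i,P_i\circ S_X)$, and then repeats the adjunction computation with the Serre bimodule $S$ (of $S_X$) inserted, reducing $\RHom_{C\otimes C^{\opp}}(B_i,B_i\lotimes_C S)$ to $\RHom_{C_i\otimes C_i^{\opp}}(C_i,C_i\lotimes_{C_i}S_i)$ where $S_i$ is the Serre bimodule of $\CA_i\cong\D^\perf(C_i)$; since $\RHom_{C_i\otimes C_i^{\opp}}(C_i,C_i\lotimes_{C_i}S_i)\cong C_i\lotimes_{C_i\otimes C_i^{\opp}}C_i=\HOH_\bullet(C_i)$ for the smooth proper DG-algebra $C_i$, this yields $\HOH_\bullet(\CA_i)=\HH^\bullet(X\times X,P_i\otimes P_i^\tra)$.

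The main obstacle will be the explicit adjunction computation displayed above: one must keep precise track of left and right module structures at every step and verify that the module being dualized is perfect on the appropriate side, so that $\RHom_C(-,C)$ followed by $\otimes_C(-)$ really computes the relevant $\RHom$ — this is exactly where smoothness and projectivity of $X$, admissibility of $\CA_i$, and compatibility of the DG-enhancements are used. A secondary technical point is making the dictionary ``kernels $\leftrightarrow$ $C$-bimodules, convolution $\leftrightarrow\otimes_C$, $\Delta_*\CO_X\leftrightarrow C$, $S_X\leftrightarrow$ Serre bimodule'' fully precise and checking that under it $P_i$ really goes to $B_i=M_i\otimes_{C_i}N_i$; this rests on the general equivalence between Fourier--Mukai kernels and exact functors (hence bimodules) for a DG-enhanced category, together with the factorization $\alpha_i=\iota_i\pi_i$.
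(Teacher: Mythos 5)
Your route is genuinely different from the paper's. The paper never passes through the big algebra of $\D^b(X)$: it works with $C_i=\RHom^\bullet(\CE_i,\CE_i)$ directly, takes the generator $\CF_i=\BD_i(\CE_i)^\vee$ of the \emph{dual} component $\CB_i^\vee$, and uses the fully faithful functor $\mu_i:\D^\perf(C_i\otimes C_i^\opp)\to\D^b(X\times X)$, $M\mapsto M\otimes_{C_i\otimes C_i^\opp}(\CE_i\boxtimes\CF_i)$, which sends the diagonal bimodule to $P_i$ precisely because of Proposition~\ref{ki} ($P_i\in\CA_i\boxtimes\CB_i^\vee$); both formulas then follow by devissage (using Lemma~\ref{bdi}). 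Your version — encode $\iota_i,\pi_i$ by bimodules $N_i,M_i$ over the big algebra $C$ and transport through $\D^b(X\times X)\simeq\D^\perf(C\otimes C^\opp)$ — is a legitimate alternative for the cohomology half, but one step needs more care than you give it: an isomorphism of \emph{triangulated} functors $\Phi_{P_i}\cong\iota_i\pi_i$ does not determine the representing bimodule, so "uniqueness of the representing bimodule" does not by itself give $\mu_2(P_i)\cong B_i=M_i\otimes_{C_i}N_i$; you need either an enhancement-level (quasi-functor) identification or a direct computation of $\mu_2(P_i)=\RHom(\CE\boxtimes\CE^\vee,P_i)$ together with its bimodule structure and the idempotent structure coming from $P_i\circ P_i\cong P_i$. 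This is fixable, and with it the adjunction chain for $\Hom^\bullet(P_i,P_i)$ goes through (granting the one-sided perfectness claims you flag).

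The homology half has a genuine gap. You assert that "the same Grothendieck duality manipulation as in Lemma~\ref{hohserre}" rewrites $\HH^\bullet(X\times X,P_i\otimes P_i^\tra)$ as $\Hom^\bullet(P_i,P_i\circ S_X)$. Lemma~\ref{hohserre} uses that $\Delta_*\CO_X$ is supported on the diagonal, so that $(\Delta_*\CO_X)^\vee\cong S_X^{-1}$ and the twist by $\omega_X[\dim X]$ can be moved between the two factors; none of this applies to $P_i$. For a general kernel $E$ the two spaces are simply different: on $X=\PP^1$ with $E=\CO_X\boxtimes\CO_X(-1)$ one has $\HH^\bullet(X\times X,E\otimes E^\tra)=0$ while $\Hom^\bullet(E,E\circ S_X)=\kk$. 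For $E=P_i$ the identification is exactly Proposition~\ref{hhserre}, whose proof needs the positional input of Proposition~\ref{ki} and the vanishings of Corollaries~\ref{so-t} and~\ref{so} — none of which enters your argument. Moreover, your final step $\RHom_{C_i\otimes C_i^\opp}(C_i,S_i)\cong C_i\lotimes_{C_i\otimes C_i^\opp}C_i$ is false for a merely proper DG-algebra (it would only give the graded \emph{dual} of $\HOH_\bullet(C_i)$, cf.\ the Remark after Lemma~\ref{hohserre}); it requires smoothness of $C_i$, i.e.\ of the admissible subcategory, which is true but is an additional nontrivial input not established in the paper. Both problems disappear if you prove the stated formula directly in bimodule terms, bypassing $S_X$: under your dictionary $\HH^\bullet(X\times X,K_1\otimes K_2^\tra)\cong\mu_2(K_1)\lotimes_{C\otimes C^\opp}\mu_2(K_2)$ (proved by devissage from $K_1=K_2=\CE\boxtimes\CE^\vee$), so $\HH^\bullet(X\times X,P_i\otimes P_i^\tra)\cong B_i\lotimes_{C\otimes C^\opp}B_i$, and the trace property of Hochschild homology together with $B_i\lotimes_C B_i\cong B_i$ and $N_i\lotimes_C M_i\cong C_i$ collapses this to $C_i\lotimes_{C_i\otimes C_i^\opp}C_i=\HOH_\bullet(\CA_i)$; this is, in small-algebra form, exactly how the paper computes the hypercohomology.
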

\begin{proof}
Let $\CE \in \D^b(X)$ be a strong generator
and $\CE_i$ the component of $\CE$ in $\CA_i$.
Then $\CE_i$ is a strong generator for $\CA_i$.
Moreover, $\CF_i = \BD_i(\CE_i)^\vee$ is a strong generator for $\CB_i^\vee$,
where $\CB_i$ is the component of the dual semiorthogonal decomposition and
$\BD_i:\CA_i \to \CB_i$ is the equivalence given by the iterated mutation functor,
see Section~\ref{ss-mut} for details.

Let
$$
C = \RHom(\CE_i,\CE_i) \cong \RHom(\BD_i(\CE_i),\BD_i(\CE_i)) \cong \RHom(\CF_i,\CF_i)^\opp.
$$
Note that the functors
$$
\epsilon_i:\D^\perf(C) \to \D^b(X),\quad
M \mapsto M\otimes_{C}\CE_i,
\qquad
\phi_i:\D^\perf(C^\opp) \to \D^b(X),\quad
N \mapsto N\otimes_{C^\opp}\CF_i,
$$
are fully faithful with $\CA_i$ and $\CB_i^\vee$ being the essential images (see e.g.~\cite{BV}).
It follows that the functor
$$
\mu_i:\D^\perf(C\otimes C^\opp) \to \D^b(X\times X),\qquad
M \mapsto M \otimes_{C\otimes C^\opp} (\CE_i\boxtimes\CF_i)
$$
is fully faithful as well. Note that $\mu_i$ takes the diagonal bimodule $C$ to
$$
C\otimes_{C\otimes C^\opp} (\CE_i\boxtimes\CF_i) \cong \CE_i\otimes_{C}\CF_i
$$
which is precisely the kernel $P_i$ of the projection functor onto $\CA_i$.
We conclude that
$$
\HOH^\bullet(\CA_i) \cong
\Hom^\bullet_{C\otimes C^\opp}(C,C) \cong
\Hom^\bullet(P_i,P_i),
$$
which is the first claim of the Theorem.

Further, note that for any $M \in \D^\perf(C)$, $N \in \D^\perf(C^\opp)$ we have
$$
M\otimes_{C}N \cong \HH^\bullet(X,M\otimes_{C}\CE_i\otimes\CF_i\otimes_{C}N).
$$
Indeed, in case $M = N = C$ the right-hand-side gives
$$
\HH^\bullet(X,\CE_i\otimes\CF_i) \cong
\HH^\bullet(X,\CE_i \otimes \BD_i(\CE_i)^\vee) \cong
\RHom^\bullet(\BD_i(\CE_i),\CE_i) \cong
\RHom^\bullet(\CE_i,\CE_i) =
C
$$
(the third isomorphism is given by Lemma~\ref{bdi})
and for general $M,N$ the formula follows by devissage.

Similarly, for any $K_1,K_2 \in \D^\perf(C\otimes C^\opp)$ we have
$$
K_1\otimes_{C\otimes C^\opp} K_2 \cong \HH^\bullet(X\times X,\mu_i(K_1)\otimes\mu_i(K_2)^\tra).
$$
Indeed, for $K_1 = K_2 = C\otimes C$ we have $\mu_i(C\otimes C) = \CE_i\boxtimes\CF_i$,
so the right-hand-side is isomorphic to
\begin{multline*}
\HH^\bullet(X\times X,(\CE_i\boxtimes\CF_i)\otimes(\CE_i\boxtimes\CF_i)^\tra) =
\HH^\bullet(X\times X,(\CE_i\otimes\CF_i)\boxtimes(\CE_i\boxtimes\CF_i)) = \\ =
\HH^\bullet(X,\CE_i\otimes\CF_i)\otimes \HH^\bullet(X,\CE_i\boxtimes\CF_i) =
C\otimes C
\end{multline*}
which coincides with the left-hand-side. For general $K_1,K_2$ we conclude by devissage.

Finally, applying the above formula for $K_1 = K_2 = C$ we obtain an isomorphism
$$
\HOH_\bullet(\CA_i) \cong
C\otimes_{C\otimes C^\opp}C \cong
\HH^\bullet(X\times X,\mu_i(C)\otimes\mu_i(C)^\tra) \cong
\HH^\bullet(X\times X,P_i\otimes P_i^\tra),
$$
which is the second claim of the Theorem.
\end{proof}

Actually, the formula for the Hochschild homology given by the above Theorem is not very convenient.
We will need the following its reinterpretation.

\begin{proposition}\label{hhserre}
Let $\D^b(X) = \langle \CA_1,\dots,\CA_m \rangle$ be a semiorthogonal decomposition.
Let $P_i \in \D^b(X\times X)$ be the kernels of the corresponding projection functors
onto $\CA_i$. Then $\HOH_\bullet(\CA_i) \cong \Hom(P_i,P_i\circ S_X)$.
\end{proposition}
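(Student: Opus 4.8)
The plan is to take Theorem~\ref{dghh} as the starting point, so that $\HOH_\bullet(\CA_i)\cong\HH^\bullet(X\times X,P_i\otimes P_i^\tra)$, and then to transform the right-hand side into $\Hom^\bullet_{X\times X}(P_i,P_i\circ S_X)$ through a chain of canonical isomorphisms modelled on the proof of Lemma~\ref{hohserre}. The one thing that makes the admissible case genuinely different from the case $\CA=\D^b(X)$ is that the transposed kernel $P_i^\tra$ is no longer the pushforward of something from the diagonal, so the Grothendieck-duality trick used in Lemma~\ref{hohserre} cannot be applied to it directly; the idea is to first trade $P_i^\tra$ for $\Delta_*\CO_X$, and at the very end trade $\Delta_*\CO_X$ back for $P_i$.

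Concretely, I would proceed as follows. Apply $\HH^\bullet(X\times X,P_i\otimes(-))$ to the filtration $0=D_m\to\dots\to D_0=\Delta_*\CO_X$ of the identity kernel produced in Theorem~\ref{sod-kernels}, transposed so that its graded pieces are the kernels $P_j^\tra$. Since $\HH^\bullet(X\times X,P_i\otimes P_j^\tra)=0$ for $j\ne i$ by Corollary~\ref{so-t}, the iterated long exact sequences collapse and give a canonical isomorphism
\[
\HH^\bullet(X\times X,P_i\otimes P_i^\tra)\;\cong\;\HH^\bullet(X\times X,P_i\otimes\Delta_*\CO_X).
\]
Now the argument of Lemma~\ref{hohserre} applies to the right-hand side: rewrite it as $\Hom^\bullet_{X\times X}((\Delta_*\CO_X)^\vee,P_i)$, use $(\Delta_*\CO_X)^\vee\cong\Delta_*\omega_X^{-1}[-\dim X]=S_X^{-1}$ to turn it into $\Hom^\bullet_{X\times X}(S_X^{-1},P_i)$, and then tensor both arguments by the invertible sheaf $p_2^*\omega_X[\dim X]$; by the projection formula $S_X^{-1}\otimes p_2^*\omega_X[\dim X]\cong\Delta_*\CO_X$, while $P_i\otimes p_2^*\omega_X[\dim X]\cong P_i\circ S_X$ by Lemma~\ref{conv-diag}, so this becomes $\Hom^\bullet_{X\times X}(\Delta_*\CO_X,P_i\circ S_X)$.

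The final step is symmetric to the first: apply $\Hom^\bullet_{X\times X}(-,P_i\circ S_X)$ to the same filtration $0=D_m\to\dots\to D_0=\Delta_*\CO_X$, now with graded pieces $P_j$; since $\Hom^\bullet_{X\times X}(P_j,P_i\circ S_X)=0$ for $j\ne i$ by Corollary~\ref{so}, the long exact sequences again collapse and give $\Hom^\bullet_{X\times X}(\Delta_*\CO_X,P_i\circ S_X)\cong\Hom^\bullet_{X\times X}(P_i,P_i\circ S_X)$. Composing all of the isomorphisms above proves the proposition. I expect the only real subtlety to be the one already flagged: one must resist dualizing $P_i^\tra$ directly — that would produce the adjoint kernel $P_i^*$ in place of $P_i$ and the identification would not close up — and instead insert and then remove $\Delta_*\CO_X$, which is legitimate precisely because of the orthogonality statements in Corollaries~\ref{so-t} and~\ref{so}. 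Checking that those vanishings hold in every cohomological degree, so that the two filtration arguments are valid, is the routine part of the work.
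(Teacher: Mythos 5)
Your proposal is correct and follows essentially the same route as the paper: both start from Theorem~\ref{dghh} and reduce $\HH^\bullet(X\times X,P_i\otimes P_i^\tra)$ to $\Hom^\bullet(P_i,P_i\circ S_X)$ by collapsing the filtration of $\Delta_*\CO_X$ coming from Theorem~\ref{sod-kernels} on each side, using exactly the vanishings of Corollaries~\ref{so-t} and~\ref{so} together with the Grothendieck--duality identification underlying Lemma~\ref{hohserre}. The only cosmetic difference is the order of operations --- you first trade $P_i^\tra$ for $\Delta_*\CO_X$ and then redo the duality computation of Lemma~\ref{hohserre} directly on $P_i\otimes\Delta_*\CO_X$, whereas the paper invokes the functoriality of that lemma's isomorphism and collapses both sides afterwards --- and your explicit warning against dualizing $P_i^\tra$ itself is exactly the right point to be careful about.
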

\begin{proof}
By the above Theorem we have $\HOH_\bullet(\CA) \cong \HH^\bullet(X\times X,P_i\otimes P_i^\tra)$.
Consider the isomorphism $\HH^\bullet(X\times X,\Delta_*\CO_X\otimes\Delta_*\CO_X) \cong \Hom(S_X^{-1},\Delta_*\CO_X)$
of Lemma~\ref{hohserre}. The filtration on $\Delta_*\CO_X$ induced by the semiorthogonal decomposition induces
filtrations on both sides of this isomorphisms. Since the isomorphism is functorial we conclude that it gives
isomorphisms on the factors of this filtrations, i.e. isomorphisms
$$
\HH^\bullet(X\times X,\Delta_*\CO_X\otimes P_i) \cong \Hom(S_X^{-1},P_i).
$$
Now consider the filtration on $\Delta_*\CO_X$ with the factors $P_j$.
It gives a filtration on $\Hom(S_X^{-1},P_i) \cong \Hom(\Delta_*\CO_X,P_i\circ S_X)$
with factors~$\Hom(P_j,P_i\circ S_X)$. By Corollary~\ref{so} all factors with $i \ne j$ vanish.

Similarly, transposing the filtration we obtain a filtration on
$\Delta_*\CO_X \cong (\Delta_*\CO_X)^\tra$ which gives a filtration on the left-hand-side
with factors $\HH^\bullet(X\times X,P_i\otimes P_j^\tra)$.
By Corollary~\ref{so-t} all factors with $i \ne j$ vanish.
So we conclude that $\HH^\bullet(X\times X,P_i\otimes P_i^\tra) \cong \Hom(P_i,P_i\circ S_X)$.
\end{proof}

\section{Generalized Hochschild cohomology}\label{ghoh}

Combining the results of Theorem~\ref{dghh} and Proposition~\ref{hhserre}
we see that both the Hochschild cohomology and homology of an admissible subcategory
can be written as $\Hom(P,P\circ T)$, where $P$ is the kernel of the projection functor
to $\CA$ and $T = \Delta_*\CO_X$ for cohomology or $T = S_X$ for homology. So, it is reasonable
to consider the graded vector spaces $\Hom(E,E\circ T)$ for arbitrary $E,T \in \D^b(X\times X)$
and investigate their behavior with respect to $E$ and $T$.

%This is done in the next Sections.

%Recall that we denote by $S_X = \Delta_*\omega_X[\dim X]$ the kernel
%of the Serre functor on $X$.

%\begin{definition}
%The {\sf Hochschild cohomology of $X$ with coefficients in $E \in \D^b(X\times X)$}\/ is defined as
%$$
%\HOH^k(X,E) = \Hom(E,E[k]),
%$$
%the $\Hom$'s are considered in the category $\D^b(X\times X)$.
%Similarly, the {\sf Hochschild homology of $X$ with coefficients in $E$}\/ is defined as
%$$
%\HOH_k(X,E) = \Hom(E,E\circ S_X[k]).
%$$
%\end{definition}
%
%Thus, the Hochshild (co)homology of $X$ coincides with the Hochschild (co)homology of $X$ with
%coefficients in the structure sheaf of the diagonal,
%$$
%\HOH^\bullet(X) = \HOH^\bullet(X,\Delta_*\CO_X),\qquad
%\HOH_\bullet(X) = \HOH_\bullet(X,\Delta_*\CO_X).
%$$
%Many properties of the Hochschild homology and cohomology are proved by the same argument.
%To refrain from repetition we use the following notation.

\begin{definition}
The {\sf Hochschild cohomology of $X$ with support in $T$ and coefficients in $E$}\/ is defined as
$$
\HOH_T^\bullet(X,E) = \Hom^\bullet(E,E\circ T)
$$
for any kernels $E,T \in \D^b(X\times X)$.
%We call $\HOH_T(X,E)$ generalized Hochschild cohomology of $X$ with coefficients in $E$
%with respect to $T$.
\end{definition}

In case of $T = \Delta_*\CO_X$ we write $\HOH^\bullet(X,E) = \HOH^\bullet_{\Delta_*\CO_X}(X,E)$
and call it the {\sf Hochschild cohomology of $X$ with coefficients in $E$}. Similarly,
in case of $T = S_X$ we write $\HOH_\bullet(X,E) = \HOH^\bullet_{S_X}(X,E)$
and call it the {\sf Hochschild homology of $X$ with coefficients in $E$}.

%We have, in particular
%$$
%\HOH^\bullet(X,E) = \HOH^\bullet_{\Delta_*\CO_X}(X,E),
%\qquad
%\HOH_\bullet(X,E) = \HOH^\bullet_{S_X}(X,E).
%$$

Fix a support $T \in \D^b(X\times X)$ with respect to which the Hochschild cohomology will be considered.
Let $K,E \in \D^b(X\times X)$. Consider an element $h \in \HOH^\bullet_T(X,E) = \Hom(E,E\circ T)$.
Taking the convolution with $K$ we obtain a map $K\circ E \xrightarrow{\ a_{K,E,T}(\id_K\circ h)\ } K\circ E\circ T$.
This defines a graded map
$$
\gamma_{K}:\HOH^\bullet_T(X,E) \to \HOH^\bullet_T(X,K\circ E).
$$
In particular, for $E = \Delta_*\CO_X$ we obtain a map $\gamma_K:\HOH^\bullet_T(X) \to \HOH^\bullet_T(X,K)$.

\begin{lemma}
We have $\gamma_K\circ\gamma_{K'} = \gamma_{K\circ K'}$.
\end{lemma}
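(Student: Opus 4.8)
The plan is to unwind the definition of $\gamma_K$ twice and identify the two composite maps using the associativity constraint $a$ and the pentagon identity. Fix the support $T$ and take an element $h \in \HOH^\bullet_T(X,E) = \Hom^\bullet(E,E\circ T)$. First I would compute $\gamma_K(h)$: by definition it is the composition $K\circ E \xrightarrow{\id_K\circ h} K\circ(E\circ T) \xrightarrow{a_{K,E,T}} (K\circ E)\circ T$, viewed as an element of $\Hom^\bullet(K\circ E, (K\circ E)\circ T) = \HOH^\bullet_T(X, K\circ E)$. Then I would apply $\gamma_{K'}$ to this: by definition $\gamma_{K'}(\gamma_K(h))$ is $a_{K', K\circ E, T}\circ(\id_{K'}\circ \gamma_K(h))$, a map $K'\circ(K\circ E) \to (K'\circ(K\circ E))\circ T$. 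On the other hand I would compute $\gamma_{K\circ K'}(h)$ directly — wait, one must be careful here with the order of composition of kernels, since the paper's convention is that kernel functors act right-to-left; I would first double-check that the statement is $\gamma_K\circ\gamma_{K'} = \gamma_{K\circ K'}$ with the convolution $K\circ K'$ in that same order, and track the factors $X_1, X_2, X_3$ accordingly.

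Next I would expand $\gamma_{K'}(\gamma_K(h))$ using functoriality of $a$ in its middle slot: $\id_{K'}\circ\gamma_K(h) = \id_{K'}\circ(a_{K,E,T}\circ(\id_K\circ h))$, so $\gamma_{K'}(\gamma_K(h)) = a_{K',(K\circ E),T}\circ(\id_{K'}\circ a_{K,E,T})\circ(\id_{K'}\circ\id_K\circ h)$. Meanwhile $\gamma_{K\circ K'}(h) = a_{(K'\circ K)? , E, T}\circ(\id\circ h)$ — and I want to rewrite the left-hand side so that $(\id_{K'}\circ\id_K)\circ h$ appears as $\id_{K'\circ K}\circ h$ up to the associator applied to the source. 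The pentagon identity, stated in the excerpt as
$$
(a_{N,M,L}\circ \id_K)\,a_{N,M\circ L,K}\,(\id_N\circ a_{M,L,K}) = a_{N,M,L\circ K}\,a_{N\circ M,L,K},
$$
applied with $N = K'$, $M = K$, $L = E$, and the last slot $= T$, is exactly the relation among the associators $a_{K',K\circ E,T}$, $\id_{K'}\circ a_{K,E,T}$, $a_{K',K,E\circ T}$, $a_{K'\circ K, E, T}$, and $a_{K',K,E}\circ\id_T$ that I need. I would therefore rearrange: $\gamma_{K\circ K'}(h)$, transported along $a_{K',K,E}$ on the source, should match $\gamma_{K'}(\gamma_K(h))$ after one more application of functoriality of $a$ in the first two slots to move $\id_{K'}\circ(\id_K\circ h)$ across to $(a_{K',K,E}^{-1})$-conjugated form.

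The key steps, in order: (1) write out both sides as explicit composites of the associativity constraints and $\id\circ h$; (2) use functoriality of $a_{-,-,-}$ in each argument to commute the associators past the maps induced by $h$; (3) invoke the pentagon identity once to rewrite the resulting string of four associators as the single associator appearing in $\gamma_{K\circ K'}$; (4) conclude that the two elements of $\HOH^\bullet_T(X, K\circ K\circ E)$ (or the appropriately associated object) coincide. I expect the main obstacle to be purely bookkeeping: keeping straight which object each associator lives over, and in particular identifying the source objects $K'\circ(K\circ E)$ and $(K'\circ K)\circ E$ via $a_{K',K,E}$ so that "equality" of the two graded maps is literally an equality in the same $\Hom$-space. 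Since $a$ is an isomorphism this identification is harmless, but it must be made explicit; once the pentagon is in hand the computation is a one-line diagram chase. No genuinely hard input is needed beyond the structural properties of convolution already recorded in the excerpt.
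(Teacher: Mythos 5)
Your proposal is correct and follows the paper's own (one-line) argument exactly: the paper simply cites functoriality of the associativity isomorphism and the pentagon equation, which are precisely the two ingredients you spell out. The only caveat is the order bookkeeping you already flagged yourself — your computation of $\gamma_{K'}(\gamma_K(h))$ proves $\gamma_{K'}\circ\gamma_K=\gamma_{K'\circ K}$, which is the stated claim after renaming $K\leftrightarrow K'$, and the identification of the targets via $a_{K',K,E}$ is the implicit convention the paper also uses.
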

\begin{proof}
Follows from the functoriality of the associativity isomorphism
and the pentahedron equation.
\end{proof}

\begin{lemma}\label{comm}
For any morphism of kernels $\phi:K \to L$ and any generalized Hochschild cohomology class $h \in \HOH^\bullet_T(X)$ the following
diagram commutes
$$
\xymatrix{
K \ar[rr]^{\phi} \ar[d]_{\gamma_K(h)} && L \ar[d]^{\gamma_L(h)} \\
K\circ T \ar[rr]^{\phi\circ\id_{T}} && L\circ T
}
$$
\end{lemma}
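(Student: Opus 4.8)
The plan is to unwind both legs of the square into explicit composites of convolutions and to collapse $\gamma_L(h)\circ\phi$ into $(\phi\circ\id_T)\circ\gamma_K(h)$ by a formal diagram chase, using only the bifunctoriality of the convolution $-\circ-$, the functoriality of the associativity constraint $a_{-,-,-}$ in each slot, and the functoriality in $K$ of the canonical isomorphism $r_K\colon K\circ\Delta_*\CO_X\exto{\sim}K$ of Lemma~\ref{conv-diag}.

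First I would record the explicit form of the class $\gamma_K(h)$. Recall that $h\colon\Delta_*\CO_X\to\Delta_*\CO_X\circ T$. By the definition of $\gamma_K$, together with the identification $\HOH^\bullet_T(X,K\circ\Delta_*\CO_X)\cong\HOH^\bullet_T(X,K)$ provided by $r_K$, the map $\gamma_K(h)\in\Hom^\bullet(K,K\circ T)$ is the composite
$$
K \exto{r_K^{-1}} K\circ(\Delta_*\CO_X) \exto{\id_K\circ h} K\circ(\Delta_*\CO_X\circ T) \exto{a} (K\circ\Delta_*\CO_X)\circ T \exto{r_K\circ\id_T} K\circ T,
$$
where $a = a_{K,\Delta_*\CO_X,T}$, and the analogous formula holds for $\gamma_L(h)$ with $K$ replaced by $L$ throughout.

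Then I would chase $\gamma_L(h)\circ\phi$ to the other side in four moves. (i) Naturality of $r$ with respect to $\phi$ gives $r_L^{-1}\circ\phi = (\phi\circ\id_{\Delta_*\CO_X})\circ r_K^{-1}$. (ii) Bifunctoriality (the interchange law) of $-\circ-$ gives $(\id_L\circ h)\circ(\phi\circ\id_{\Delta_*\CO_X}) = (\phi\circ\id_{\Delta_*\CO_X\circ T})\circ(\id_K\circ h)$, both sides being equal to $\phi\circ h$. (iii) Functoriality of $a_{-,\Delta_*\CO_X,T}$ in its first argument gives $a_{L,\Delta_*\CO_X,T}\circ(\phi\circ\id_{\Delta_*\CO_X\circ T}) = \bigl((\phi\circ\id_{\Delta_*\CO_X})\circ\id_T\bigr)\circ a_{K,\Delta_*\CO_X,T}$. (iv) Convolving the naturality square of $r$ on the right with $T$ gives $(r_L\circ\id_T)\circ\bigl((\phi\circ\id_{\Delta_*\CO_X})\circ\id_T\bigr) = (\phi\circ\id_T)\circ(r_K\circ\id_T)$. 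Substituting (i)--(iv) successively into $\gamma_L(h)\circ\phi$ rewrites it precisely as $(\phi\circ\id_T)\circ\gamma_K(h)$, which is the assertion.

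I do not expect a genuine obstacle: the argument is pure bookkeeping with coherence isomorphisms, needing neither the pentagon relation nor anything about the specific geometry. The only point deserving a comment is the functoriality in $K$ of the unit isomorphism $K\circ\Delta_*\CO_X\cong K$, which is immediate from its construction in Lemma~\ref{conv-diag} as a composite of base change, the projection formula and $p_2^*\CO_X\cong\CO_{X\times X}$, all functorial. One can even sidestep it by running the same four-step chase for the general map $\gamma_K\colon\HOH^\bullet_T(X,E)\to\HOH^\bullet_T(X,K\circ E)$, $h\mapsto a_{K,E,T}(\id_K\circ h)$, proving commutativity of the corresponding square for an arbitrary coefficient kernel $E$, and specializing to $E=\Delta_*\CO_X$ only at the end.
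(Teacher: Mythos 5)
Your argument is correct and is essentially the paper's own proof: the heart of both is the interchange law $(\phi\circ\id_T)(\id_K\circ h)=\phi\circ h=(\id_L\circ h)(\phi\circ\id_{\Delta_*\CO_X})$, the paper simply suppressing the unit isomorphism $K\circ\Delta_*\CO_X\cong K$ and the associativity constraint that you track explicitly via their naturality. Your extra bookkeeping (and the remark that the general-$E$ square commutes) is a harmless refinement, not a different route.
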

\begin{proof}
Indeed, by definition $\gamma_K(h) = \id_K\circ h$, $\gamma_L(h) = \id_L\circ h$,
so it remains to note that
$$
(\phi\circ\id_{T})(1\circ h) = \phi\circ h = (1\circ h)(\phi\circ\id_{\Delta_*\CO_X})
$$
and that $\phi\circ\id_{\Delta_*\CO_X} = \phi$.
\end{proof}

The generalized Hochschild cohomology $\HOH^\bullet_T$ is exact with respect to $T$.

\begin{lemma}
If $T_1 \to T_2 \to T_3$ is a distinguished triangle then
there is a distinguished triangle of generalized Hochschild cohomology
$$
\HOH^\bullet_{T_1}(X,K) \to \HOH^\bullet_{T_2}(X,K) \to \HOH^\bullet_{T_3}(X,K).
$$
\end{lemma}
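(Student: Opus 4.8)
The plan is to deduce the triangle by applying two exact functors in succession.

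\textbf{Step 1.} Fix the coefficient kernel $K$ and consider the convolution functor
$$
K\circ(-)\colon \D^b(X\times X) \longrightarrow \D^b(X\times X),\qquad T \longmapsto K\circ T = p_{13*}(p_{12}^*K\otimes p_{23}^*T).
$$
This is the composite of $p_{23}^*$, of tensoring with the fixed object $p_{12}^*K$, and of $p_{13*}$; each of the three is a triangulated functor that preserves bounded coherent derived categories (respectively, because $p_{23}$ is flat, because $p_{12}^*K$ is a perfect complex on the smooth variety $X\times X\times X$, and because $p_{13}$ is proper). Hence $K\circ(-)$ is exact, so it sends the distinguished triangle $T_1\to T_2\to T_3\to T_1[1]$ to a distinguished triangle
$$
K\circ T_1 \longrightarrow K\circ T_2 \longrightarrow K\circ T_3 \longrightarrow (K\circ T_1)[1]
$$
in $\D^b(X\times X)$, the first three maps being induced by those of the original triangle.

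\textbf{Step 2.} Now apply $\RHom^\bullet(K,-)\colon \D^b(X\times X)\to\D(\kk)$, which is again an exact functor. It carries the triangle of Step~1 to a distinguished triangle
$$
\RHom^\bullet(K,K\circ T_1)\longrightarrow \RHom^\bullet(K,K\circ T_2)\longrightarrow \RHom^\bullet(K,K\circ T_3)\longrightarrow \RHom^\bullet(K,K\circ T_1)[1]
$$
in $\D(\kk)$. Since $H^i\RHom^\bullet(K,K\circ T_j)=\Hom^i(K,K\circ T_j)=\HOH^i_{T_j}(X,K)$, this is precisely the asserted distinguished triangle of generalized Hochschild cohomology; equivalently, taking cohomology yields the long exact sequence
$$
\cdots\to\HOH^i_{T_1}(X,K)\to\HOH^i_{T_2}(X,K)\to\HOH^i_{T_3}(X,K)\to\HOH^{i+1}_{T_1}(X,K)\to\cdots.
$$
One checks that the first two maps coincide with the maps $\HOH^\bullet_{T_j}(X,K)\to\HOH^\bullet_{T_{j+1}}(X,K)$ obtained from the functoriality of $\HOH^\bullet_{T}(X,K)$ in the support $T$.

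There is no genuine obstacle here: the statement is a formal consequence of the exactness of convolution and of $\RHom$. The only points needing (routine) attention are the verification that each of $p_{23}^*$, $p_{12}^*K\otimes(-)$, $p_{13*}$ maps $\D^b$ to $\D^b$ --- this is where the smoothness of $X$ and properness of the projections enter --- and the harmless bookkeeping that identifies the three terms of the resulting triangle in $\D(\kk)$ with the graded vector spaces $\HOH^\bullet_{T_i}(X,K)$.
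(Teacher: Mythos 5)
Your argument is correct and is exactly the "evident" reasoning the paper has in mind (the paper's proof consists of the single word \emph{Evident}): the functor $T\mapsto K\circ T$ is exact in $T$, and composing with $\RHom^\bullet(K,-)$ turns the triangle of supports into the asserted triangle (equivalently, the long exact sequence) of generalized Hochschild cohomology. No issues.
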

\begin{proof}
Evident.
\end{proof}

Sometimes, the generalized Hochschild cohomology $\HOH^\bullet_T(X,K)$ is also linear
with respect to $K$, but for this some additional conditions are required.

\begin{proposition}\label{add}
Let $K_1 \xrightarrow{\phi_1} K \xrightarrow{\phi_2} K_2$ be an exact triangle and assume that
\begin{equation}\label{vancond1}
\Hom^\bullet(K_1,K_2\circ T) = 0.
\end{equation}
Then there is a map $\pi:\HOH^\bullet_T(X,K) \to \HOH^\bullet_T(X,K_1)\oplus\HOH^\bullet_T(X,K_2)$
such that the following diagram commutes
\begin{equation}\label{pig}
\vcenter{\xymatrix{
\HOH^\bullet_T(X) \ar[rrr]^-\Delta \ar[d]_{\gamma_K} &&& \HOH^\bullet_T(X)\oplus\HOH^\bullet_T(X) \ar[d]^{\gamma_{K_1}\oplus\gamma_{K_2}} \\
\HOH^\bullet_T(X,K) \ar[rrr]^-\pi &&& \HOH^\bullet_T(X,K_1)\oplus\HOH^\bullet_T(X,K_2)
}}
\end{equation}
Moreover, if additionally we have
\begin{equation}\label{vancond2}
\Hom^\bullet(K_2,K_1\circ T) = 0
\end{equation}
then the map $\pi:\HOH^\bullet_T(X,K) \to \HOH^\bullet_T(X,K_1)\oplus\HOH^\bullet_T(X,K_2)$ is an isomorphism.
In particular, for any $h \in \HOH^\bullet_T(X)$ we have
$\gamma_K(h) = 0$ if and only if $\gamma_{K_1}(h) = \gamma_{K_2}(h) = 0$.
\end{proposition}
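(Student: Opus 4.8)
The plan is to build the map $\pi$ directly from the triangle $K_1 \xrightarrow{\phi_1} K \xrightarrow{\phi_2} K_2$ by composing with $\phi_1$ and $\phi_2$ suitably, then use the vanishing hypotheses to control the resulting long exact sequences. First I would construct the two components of $\pi$. Given $h \in \HOH^\bullet_T(X,K) = \Hom^\bullet(K, K\circ T)$, apply the functor $\Hom^\bullet(K_1,-)$ to get $\Hom^\bullet(K_1,K) \xrightarrow{\;h_*\;} \Hom^\bullet(K_1, K\circ T)$ and precompose the image of $\phi_1$ (thought of as an element of $\Hom^0(K_1,K)$) to land in $\Hom^\bullet(K_1,K\circ T)$; the vanishing~\eqref{vancond1}, applied to the triangle $K_1\circ T \to K\circ T \to K_2\circ T$ obtained by convolving with $T$ on the right, shows $\Hom^\bullet(K_1,K\circ T)\cong\Hom^\bullet(K_1,K_1\circ T)=\HOH^\bullet_T(X,K_1)$, so this gives the first component $\pi_1(h)$. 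Symmetrically, applying $\Hom^\bullet(-,K_2\circ T)$ to the triangle and using~\eqref{vancond1} again gives $\Hom^\bullet(K,K_2\circ T)\cong\Hom^\bullet(K_2,K_2\circ T)=\HOH^\bullet_T(X,K_2)$, and composing $h$ with (the convolution-with-$T$ of) $\phi_2$ produces the second component $\pi_2(h)$. Set $\pi = (\pi_1,\pi_2)$.

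Next I would verify commutativity of~\eqref{pig}. Unwinding the definitions, for $h\in\HOH^\bullet_T(X)$ one has $\gamma_K(h)=\id_K\circ h$, and the identifications above are realized by composing with $\phi_1$, $\phi_2$; so $\pi_1(\gamma_K(h))$ is represented by $(\id_K\circ h)\phi_1 = \phi_1\circ h$ (using $\phi_1\circ\id_{\Delta_*\CO_X}=\phi_1$, as in Lemma~\ref{comm}), which under the identification with $\HOH^\bullet_T(X,K_1)$ is exactly $\gamma_{K_1}(h)=\id_{K_1}\circ h$; this is precisely the content of the commuting square in Lemma~\ref{comm} applied to $\phi_1$ (and dually to $\phi_2$). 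So~\eqref{pig} commutes on the nose, with the top arrow $\Delta$ the diagonal.

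Finally, for the isomorphism statement I would assume in addition~\eqref{vancond2} and analyze the long exact sequence more carefully. Apply $\Hom^\bullet(-,K\circ T)$ to the triangle $K_1\to K\to K_2$ to get a long exact sequence relating $\Hom^\bullet(K_2,K\circ T)$, $\Hom^\bullet(K,K\circ T)=\HOH^\bullet_T(X,K)$, and $\Hom^\bullet(K_1,K\circ T)\cong\HOH^\bullet_T(X,K_1)$. The term $\Hom^\bullet(K_2,K\circ T)$ sits in the triangle with $\Hom^\bullet(K_2,K_1\circ T)$ and $\Hom^\bullet(K_2,K_2\circ T)$; by~\eqref{vancond2} the first of these vanishes, so $\Hom^\bullet(K_2,K\circ T)\cong\Hom^\bullet(K_2,K_2\circ T)=\HOH^\bullet_T(X,K_2)$. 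Threading these identifications through the long exact sequence turns it into a triangle $\HOH^\bullet_T(X,K_2)\to\HOH^\bullet_T(X,K)\xrightarrow{\pi_1}\HOH^\bullet_T(X,K_1)\xrightarrow{[1]}$, i.e. a short exact sequence; one checks the splitting $\HOH^\bullet_T(X,K_2)\to\HOH^\bullet_T(X,K)$ is compatible with $\pi_2$, so $\pi=(\pi_1,\pi_2)$ is an isomorphism. The ``in particular'' clause for $h\in\HOH^\bullet_T(X)$ is then immediate from commutativity of~\eqref{pig} together with injectivity of $\gamma_{K_1}\oplus\gamma_{K_2}$ on the image — or more simply, $\gamma_K(h)=0\iff\pi(\gamma_K(h))=0\iff\gamma_{K_1}(h)=\gamma_{K_2}(h)=0$ using the isomorphism $\pi$ and the commuting square. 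The main obstacle I anticipate is bookkeeping: making sure the two identifications $\Hom^\bullet(K_1,K\circ T)\cong\HOH^\bullet_T(X,K_1)$ and $\Hom^\bullet(K,K_2\circ T)\cong\HOH^\bullet_T(X,K_2)$ are the \emph{same} ones implicitly used when reading off $\pi_1,\pi_2$ from the long exact sequence, so that the identification of the sequence with a triangle in generalized Hochschild cohomology is canonical and the diagram~\eqref{pig} genuinely commutes; once the signs and directions of the connecting maps are pinned down, the rest is formal homological algebra.
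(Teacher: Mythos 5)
Your proposal is correct and follows essentially the same route as the paper: the components $\pi_1,\pi_2$ are built exactly as in the paper (pre/post-composition with $\phi_1$, $\phi_2$ combined with the isomorphisms forced by~\eqref{vancond1}), commutativity of~\eqref{pig} comes from Lemma~\ref{comm}, and the isomorphism statement is obtained, as in the paper, by applying $\Hom^\bullet(-,K\circ T)$ to the triangle, using~\eqref{vancond2} to identify the third term with $\HOH^\bullet_T(X,K_2)$, and splitting the resulting exact sequence via the compatibility of the inclusion with $\pi_2$.
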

\begin{proof}
We have the following distinguished triangles in $\D^b(X\times X)$
\begin{equation}\label{hhdiagr}
\vcenter{\xymatrix@R=10pt{
K_1 \ar[rr]^{\phi_1} && K  \ar[rr]^{\phi_2} && K_2,  \\
K_1\circ T \ar[rr]^{\phi_1\circ\id_{T}} && K\circ T \ar[rr]^{\phi_2\circ\id_{T}} && K_2\circ T.
}}
\end{equation}
Consider the following commutative squares
\begin{equation}\label{iotapi}
\vcenter{\xymatrix{
\Hom(K_2,K\circ T) \ar[r]^-{\phi_2^*} \ar[d]_-{\phi_{2*}} &
\Hom(K,K\circ T) \ar[d]_-{\phi_{2*}} \\
\Hom(K_2,K_2\circ T) \ar[r]^-{\phi_2^*} &
\Hom(K,K_2\circ T)
}}
\qquad
\vcenter{\xymatrix{
\Hom(K,K_1\circ T) \ar[r]^-{\phi_{1*}} \ar[d]_-{\phi_1^*} &
\Hom(K,K\circ T) \ar[d]_-{\phi_1^*} \\
\Hom(K_1,K_1\circ T) \ar[r]^-{\phi_{1*}} &
\Hom(K_1,K\circ T)
}}
\end{equation}
It follows immediately from~\eqref{vancond1} that
the bottom arrows in these squares are isomorphisms.
Therefore, the composition of the right arrows in these squares
with the inverses of the bottom arrows give maps
$\pi_{\phi_2}:\HOH^\bullet_T(X,K) \to \HOH^\bullet_T(X,K_2)$ and
$\pi_{\phi_1}:\HOH^\bullet_T(X,K) \to \HOH^\bullet_T(X,K_1)$.
% such that
%$\pi_{\phi_2}\circ \phi_2^* = \phi_{2*}$ and
%$\pi_{\phi_1}\circ \phi_{1*} = \phi_1^*$.
Moreover, it follows immediately from Lemma~\ref{comm} that
for any $h \in \HOH^\bullet_T(X)$ we have $\phi_2^*(\gamma_{K_2}(h)) = \phi_{2*}(\gamma_K(h))$,
so $\gamma_{K_2}(h) = \pi_{\phi_2}(\gamma_K(h))$.
Analogously, $\gamma_{K_1}(h) = \pi_{\phi_1}(\gamma_K(h))$, so diagram~\eqref{pig} commutes.

Further, if~\eqref{vancond2} is also true then the left arrows in squares~\eqref{iotapi} are also isomorphisms.
Therefore, the composition of their inverses with the top arrows give morphisms
$\iota_{\phi_2}:\HOH^\bullet_T(X,K_2) \to \HOH^\bullet_T(X,K)$ and
$\iota_{\phi_1}:\HOH^\bullet_T(X,K_1) \to \HOH^\bullet_T(X,K)$.
% such that
%$\iota_{\phi_2}\circ \phi_{2*} = \phi_2^*$ and
%$\iota_{\phi_1}\circ \phi_1^* = \phi_{1*}$.
Moreover, commutativity of squares~\eqref{iotapi}
implies that we have equalities $\pi_{\phi_2}\circ\iota_{\phi_2} = \id_{\HOH_T^\bullet(X,K_2)}$,
$\pi_{\phi_1}\circ\iota_{\phi_1} = \id_{\HOH_T^\bullet(X,K_1)}$.
In particular, $\iota_{\phi_1}$ and $\iota_{\phi_2}$ are embeddings while
$\pi_{\phi_1}$ and $\pi_{\phi_2}$ are surjections.

Let us show that the map $(\pi_{\phi_1},\pi_{\phi_2}):\HOH^\bullet_T(X,K) \to \HOH^\bullet_T(X,K_1)\oplus\HOH^\bullet_T(X,K_2)$ is an isomorphism
if~\eqref{vancond2} is satisfied.
Indeed, applying the functor $\Hom(-,K\circ T[k])$ to the upper line of the diagram~\eqref{hhdiagr}
and recalling the definition of $\iota_{\phi_2}$ and $\pi_{\phi_1}$ we see that there is a distinguished triangle
$$
\xymatrix@1{\HOH^\bullet_T(X,K_2) \ar[rr]^-{\iota_{\phi_2}} && \HOH^\bullet_T(X,K) \ar[rr]^-{\pi_{\phi_1}} && \HOH^\bullet_T(X,K_1)}
$$
But since $\pi_{\phi_2}\iota_{\phi_2} = \id$, as we already proved,
this triangle splits and $(\pi_{\phi_1},\pi_{\phi_2})$ give an isomorphism.
\end{proof}

\section{Functoriality of generalized Hochschild cohomology}\label{ghohfun}

In this Section we construct action of functors on generalized Hochschild cohomology.

\begin{definition}
Assume given support kernels $T_X \in \D^b(X\times X)$, $T_Y \in \D^b(Y\times Y)$.
We will say that a kernel $K \in \D^b(X\times Y)$ is {\sf S-intertwining between
$T_X$ and $T_Y$ of degree $d$}\/ if there is given an isomorphism
$$
(T_Y^!\circ S_Y)\circ K \cong K\circ (T_X^!\circ S_X)[d].
$$
\end{definition}

\begin{lemma}
Any kernel $K \in \D^b(X\times Y)$ is S-intertwining of degree $0$ between $T_X = S_X$ and $T_Y = S_Y$.
\end{lemma}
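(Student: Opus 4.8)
The plan is to notice that, upon specializing to $T_X = S_X$ and $T_Y = S_Y$, both sides of the required isomorphism collapse to $K$, so the asserted $S$-intertwining structure is the obvious one. The only input needed is the identification of $T_Z^!\circ S_Z$ with the identity kernel on $Z$ for $Z \in \{X,Y\}$.

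First I would compute $S_Z^!$. Since $S_Z = \Delta_*\omega_Z[\dim Z]$ is the pushforward of a shifted line bundle along the diagonal, Lemma~\ref{adj-diag} applies verbatim and yields $S_Z^! \cong \Delta_*\bigl((\omega_Z[\dim Z])^\vee\bigr) = \Delta_*\omega_Z^{-1}[-\dim Z] = S^{-1}_Z$. Next I would compute the convolution $S^{-1}_Z\circ S_Z$: applying Lemma~\ref{conv-diag} together with the projection formula gives
$$
S^{-1}_Z\circ S_Z \cong \Delta_*\omega_Z^{-1}[-\dim Z]\otimes p_2^*(\omega_Z[\dim Z]) \cong \Delta_*(\omega_Z^{-1}\otimes\omega_Z) \cong \Delta_*\CO_Z ,
$$
the kernel of $\id_{\D^b(Z)}$. (Alternatively one may argue that $\Phi_{S_Z}$ is the Serre functor, hence an equivalence whose right adjoint coincides with its inverse $\Phi_{S^{-1}_Z}$, and invoke uniqueness of kernels.)

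Finally I would substitute these into the definition: the left-hand side becomes $(S_Y^!\circ S_Y)\circ K \cong \Delta_*\CO_Y\circ K$ and the right-hand side becomes $K\circ(S_X^!\circ S_X) \cong K\circ\Delta_*\CO_X$, and one further application of Lemma~\ref{conv-diag} (with $E = \CO_Z$, so that $\Delta_*\CO_Z$ is a two-sided unit for convolution) identifies both with $K$. This furnishes the required isomorphism $(S_Y^!\circ S_Y)\circ K \cong K\circ(S_X^!\circ S_X)$, manifestly with no shift, i.e.\ of degree $0$. There is essentially no obstacle here; the only mild point to keep track of is that all the identifications are the canonical ones coming from the associativity constraint $a_{-,-,-}$, which is guaranteed by the pentagon axiom.
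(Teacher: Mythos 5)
Your proof is correct and follows essentially the same route as the paper: identify $S_Z^! \cong S_Z^{-1}$ via Lemma~\ref{adj-diag}, compute $S_Z^{-1}\circ S_Z \cong \Delta_*\CO_Z$ via Lemma~\ref{conv-diag}, and conclude since $\Delta_*\CO_Z$ is the identity kernel. The paper's proof is just a terser statement of the same two steps, so no comparison is needed.
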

\begin{proof}
Indeed, $S_X^! = S_X^{-1}$ by Lemma~\ref{adj-diag}, so $S_X^!\circ S_X \cong \Delta_*\CO_X$ by Lemma~\ref{conv-diag}.
\end{proof}

Alternatively, this can be deduced from Corollary~\ref{sks}
using the following

\begin{lemma}
A kernel $K \in \D^b(X\times Y)$ is S-intertwining between $T_X$ and $T_Y$ of degree $d$
if and only if there is an isomorphism
\begin{equation}\label{tkcomp}
T_X \circ K^* \cong K^!\circ T_Y[d].
\end{equation}
\end{lemma}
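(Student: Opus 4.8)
The plan is to unwind the definition of S-intertwining, rewrite the isomorphism $(T_Y^! \circ S_Y)\circ K \cong K\circ (T_X^!\circ S_X)[d]$ in terms of adjoint kernels using Corollary~\ref{sks}, and then recognize that the resulting identity is exactly~\eqref{tkcomp} after applying the adjoint-kernel operation $(-)^!$ (or $(-)^*$). The key observation is that $(-)^*$ and $(-)^!$ are (anti-)involutions on kernels that reverse convolution, so an isomorphism between two convolutions is equivalent to the isomorphism of the corresponding dual convolutions.

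First I would rewrite the left-hand side. By Corollary~\ref{sks} we have $S_Y\circ (T_Y^!)^* \cong (T_Y^!)^!\circ S_Y$; more usefully, applying the corollary to the kernel $T_Y$ itself gives $S_Y\circ T_Y^* \cong T_Y^!\circ S_Y$, hence $T_Y^!\circ S_Y \cong S_Y\circ T_Y^*$. Then, using that $(-)^*$ is the kernel of the left adjoint and the standard formula $(L\circ M)^* \cong M^*\circ L^*$ (which follows from uniqueness of adjoints, $\Phi_L\Phi_M$ having left adjoint $\Phi_M^*\Phi_L^*$), I would compute $\big((T_Y^!\circ S_Y)\circ K\big)^*$ and $\big(K\circ(T_X^!\circ S_X)\big)^*$. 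Applying $(-)^*$ to the whole intertwining isomorphism, and using that $S_X^* \cong S_X^{-1}$ (Lemma~\ref{adj-diag}) together with $(T_X^!)^* \cong \RCHom$-formulas that reduce back to $T_X$ — more precisely $(T_X^!\circ S_X)^* \cong S_X^* \circ (T_X^!)^* \cong S_X^{-1}\circ T_X$ after checking $(T_X^!)^* \cong T_X$ — I would obtain $K^*\circ S_Y^{-1}\circ T_Y \cong S_X^{-1}\circ T_X\circ K^*[-d]$, and then convolve on the left with $S_X$ and on the right with an appropriate Serre kernel and use Corollary~\ref{sks} once more to land on $T_X\circ K^* \cong K^!\circ T_Y[d]$. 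The reverse implication is entirely symmetric: starting from~\eqref{tkcomp}, apply $(-)^!$ (or convolve with Serre kernels) and run the same chain of identities backwards.

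The cleanest route, which I would actually write out, is to not pass through $(-)^*$ on the big convolution at all, but instead to observe directly that convolving~\eqref{tkcomp} on the left by $S_X^{-1}$ and on the right by $S_Y$, and applying Corollary~\ref{sks} to move the Serre kernels past the starred/shrieked kernels, transforms it step by step into the S-intertwining identity. Concretely: $T_X\circ K^* \cong K^!\circ T_Y[d]$ is equivalent (convolve right with $S_Y$) to $T_X\circ K^*\circ S_Y \cong K^!\circ T_Y\circ S_Y[d]$; by Corollary~\ref{sks}, $K^*\circ S_Y \cong$ (a Serre-twisted version of) $S_X\circ K^!$-type expression, and similarly on the other side, and after a bounded number of such moves one recovers $(T_Y^!\circ S_Y)\circ K \cong K\circ(T_X^!\circ S_X)[d]$. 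Since every arrow used is a functorial isomorphism, the equivalence of the two formulations follows.

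The main obstacle I anticipate is purely bookkeeping: keeping track of which factor a Serre kernel or a canonical-bundle twist lives on (an $X$-factor versus a $Y$-factor), the exact placement of the shift $[d]$, and the distinction between $(-)^*$ and $(-)^!$ when the source and target varieties differ — one must invoke Lemma~\ref{adjs} and Corollary~\ref{sks} with the correct indexing, and check that $(T^!)^* \cong T$ and $(T^*)^! \cong T$ for a general support kernel $T$ (which again is just the statement that left and right adjoints are inverse operations on kernels, i.e. $\Phi_T^{*!}\cong\Phi_T\cong\Phi_T^{!*}$, combined with uniqueness of representing kernels). No genuinely hard input is needed beyond Corollary~\ref{sks}, Lemma~\ref{adj-diag}, Lemma~\ref{conv-diag}, and the associativity of convolution; the whole proof is a short diagram chase once the conventions are pinned down.
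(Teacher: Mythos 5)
Your proposal is correct and is essentially the paper's own argument run in the opposite direction: the paper starts from~\eqref{tkcomp}, substitutes $K^! \cong S_X\circ K^*\circ S_Y^{-1}$ (Corollary~\ref{sks}), left-convolves with $S_X^{-1}$ and passes to right adjoint kernels, while you apply $(-)^*$ to the intertwining isomorphism and then reinstate $K^!$ via the same corollary --- the ingredients (Corollary~\ref{sks}, order-reversal of $(-)^*$/$(-)^!$ under convolution, $(K^*)^!\cong K$, $S^*\cong S^{-1}$ from Lemma~\ref{adj-diag}) are identical. The only soft spot, shared with the paper's terse proof, is that facts like $(L\circ M)^*\cong M^*\circ L^*$ and $(T^!)^*\cong T$ should be checked on kernels via the explicit formulas of Lemma~\ref{adjs} rather than deduced from uniqueness of adjoint functors alone, but this is routine.
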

\begin{proof}
Note that $K^! \cong S_X\circ K^*\circ S_Y^{-1}$. Therefore~\eqref{tkcomp}
is equivalent to $T_X\circ K^* \cong S_X\circ K^*\circ S_Y^{-1}\circ T_Y[d]$.
Taking the left convolution with $S_X^{-1}$ and considering the right adjoints,
we see that this is equivalent to the definition of being $S$-intertwining.
\end{proof}

\begin{corollary}
If a kernel $K\in\D^b(X\times Y)$ gives an equivalence of $\D^b(X)$ and $\D^b(Y)$
then $K$ is S-intertwining between $T_X = \Delta_*\CO_X$ and $T_Y = \Delta_*\CO_Y$.
\end{corollary}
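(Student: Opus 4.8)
The plan is to reduce the $S$-intertwining condition for an equivalence to the classical fact that an equivalence of triangulated categories commutes with Serre functors. First I would invoke the preceding Lemma, which says that $K$ is $S$-intertwining between $T_X$ and $T_Y$ of degree $d$ if and only if $T_X\circ K^* \cong K^!\circ T_Y[d]$. Specializing to $T_X = \Delta_*\CO_X$, $T_Y = \Delta_*\CO_Y$ and $d = 0$, and using Lemma~\ref{conv-diag} (convolution with $\Delta_*\CO$ on either side does nothing, since $p_i^*\CO = \CO$), the condition to be established becomes simply the existence of an isomorphism $K^* \cong K^!$.

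Next I would produce this isomorphism from the hypothesis. By Lemma~\ref{adjs} the left and right adjoints of $\Phi_K$ are the kernel functors $\Phi_{K^*}$ and $\Phi_{K^!}$. When $\Phi_K$ is an equivalence both adjoints coincide with the inverse equivalence $\Phi_K^{-1}$, so $\Phi_{K^*} \cong \Phi_{K^!}$ as functors $\D^b(X) \to \D^b(Y)$. Since the kernel representing a given kernel functor between smooth projective varieties is unique up to isomorphism (the same uniqueness used in Theorem~\ref{sod-kernels}), we conclude $K^* \cong K^!$, as required.

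It is perhaps more transparent to run the argument directly in the Serre-functor picture: by Lemma~\ref{adj-diag} one has $T_X^! \cong \Delta_*(\CO_X^\vee) \cong \Delta_*\CO_X$ and likewise $T_Y^! \cong \Delta_*\CO_Y$, so by Lemma~\ref{conv-diag} the kernels $T_X^!\circ S_X$ and $T_Y^!\circ S_Y$ are just $S_X$ and $S_Y$. Thus being $S$-intertwining of degree $0$ between $\Delta_*\CO_X$ and $\Delta_*\CO_Y$ amounts to an isomorphism $S_Y\circ K \cong K\circ S_X$, i.e.\ to the kernel-level statement that $\Phi_K$ intertwines the Serre functors of $X$ and $Y$. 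For an equivalence this holds because the Serre functor is unique up to canonical isomorphism; lifting the functor isomorphism $\Phi_{S_Y}\Phi_K \cong \Phi_K\Phi_{S_X}$ to an isomorphism of kernels again uses uniqueness of kernels.

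The only step with genuine content, and the one I expect to be the (rather minor) main obstacle, is this passage from an isomorphism of functors to an isomorphism of the representing kernels; everything else is formal manipulation of the lemmas already proved. Since the varieties here are smooth and projective, the needed uniqueness statement is standard, so no real difficulty arises.
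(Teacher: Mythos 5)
Your reduction is exactly the one the paper makes: by the preceding lemma together with Lemma~\ref{conv-diag} the statement amounts to an isomorphism $K^*\cong K^!$, and the paper's entire proof is the chain $\Delta_*\CO_X\circ K^*\cong K^*\cong K^!\cong K^!\circ\Delta_*\CO_Y$, with $K^*\cong K^!$ for an equivalence treated as known. So you follow the same route and reach the right conclusion; the only point that needs repair is your justification of $K^*\cong K^!$. The principle you invoke --- that a kernel inducing a given kernel functor between smooth projective varieties is unique up to isomorphism --- is false in that generality (non-isomorphic kernels can induce isomorphic functors), and it is in any case not the uniqueness used in Theorem~\ref{sod-kernels}: there $P_i$ and $D_i$ are unique because the components of $\Delta_*\CO_X$ with respect to the induced semiorthogonal decomposition of $\D^b(X\times X)$ are unique, not because they represent prescribed functors. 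What saves your argument is that $\Phi_{K^*}$ and $\Phi_{K^!}$ are themselves equivalences (both quasi-inverse to $\Phi_K$), and for fully faithful functors the representing kernel is indeed unique up to isomorphism (Orlov's representability theorem, see~\cite{O-HH}); the same remark applies to your Serre-functor variant, since $S_Y\circ K$ and $K\circ S_X$ again induce equivalences. Alternatively, one can avoid descending from functors to kernels altogether by checking that the adjunction morphisms $\lambda_K$ and $\rho_K$ of Lemma~\ref{adjs} (and their counterparts on the other factor) are isomorphisms when $\Phi_K$ is an equivalence --- their cones are kernels inducing the zero functor, hence vanish --- which yields $K^*\cong (K^!\circ K)\circ K^*\cong K^!\circ(K\circ K^*)\cong K^!$ directly. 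With the uniqueness statement cited correctly (or replaced by this kernel-level argument), your proof is complete and coincides with the paper's.
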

\begin{proof}
Indeed, if $K$ gives an equivalence then
$\Delta_*\CO_X \circ K^* \cong K^* \cong K^! \cong K^!\circ \Delta_*\CO_Y$.
\end{proof}

\begin{lemma}\label{phik}
Let $X$, $Y$ be smooth projective varieties and $T_X \in \D^b(X\times X)$,
$T_Y \in \D^b(Y\times Y)$ support kernels. If $\Phi_K:\D^b(X) \to \D^b(Y)$ is a functor
given by an S-intertwining between $T_X$ and $T_Y$ of degree~$d$ kernel $K \in \D^b(X\times Y)$
then there exists a map
$\phi_K:\HOH_{T_X}^\bullet(X,\Delta_*\CO_X) \to \HOH_{T_Y}^{\bullet}(Y,\Delta_*\CO_Y)$
of degree~$d$ such that $\phi_{K\circ L} = \phi_K \circ \phi_L$.
\end{lemma}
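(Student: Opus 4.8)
The plan is to construct the map $\phi_K$ directly on the level of generalized Hochschild cohomology classes, viewed as maps of kernels. Recall that by definition $\HOH^\bullet_{T_X}(X,\Delta_*\CO_X) = \Hom^\bullet(\Delta_*\CO_X, S_X^{-1}\circ(\Delta_*\CO_X\circ S_X))$ — or more usefully, an element $h$ of $\HOH^i_{T_X}(X,\Delta_*\CO_X)$ is just a morphism $h:\Delta_*\CO_X \to \Delta_*\CO_X\circ T_X[i] = T_X[i]$ in $\D^b(X\times X)$. So the data of a class is simply a morphism $\Delta_*\CO_X \to T_X[i]$, and similarly a class in $\HOH^i_{T_Y}(Y,\Delta_*\CO_Y)$ is a morphism $\Delta_*\CO_Y \to T_Y[i]$. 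The task is thus: given $K$ which is $S$-intertwining between $T_X$ and $T_Y$ of degree $d$, and given $h:\Delta_*\CO_X \to T_X[i]$, produce $\phi_K(h):\Delta_*\CO_Y \to T_Y[i+d]$.

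First I would use the reformulation of the preceding Lemma: being $S$-intertwining of degree $d$ is equivalent to an isomorphism $T_X\circ K^* \cong K^!\circ T_Y[d]$. Now start from $h:\Delta_*\CO_X \to T_X[i]$. Convolving on the left with $K$ and on the right with $K^*$ gives $K\circ h\circ K^* : K\circ\Delta_*\CO_X\circ K^* \to K\circ T_X\circ K^*[i]$, i.e. $K\circ K^* \to K\circ T_X\circ K^*[i]$. Precompose with the unit $\lambda_K:\Delta_*\CO_Y \to K\circ K^*$ from Lemma~\ref{adjs} (using that $\Phi_K$ has a left adjoint given by the kernel $K^*$). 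This yields $\Delta_*\CO_Y \to K\circ T_X\circ K^*[i]$. Next apply the $S$-intertwining isomorphism to rewrite $K\circ T_X\circ K^* \cong K\circ K^!\circ T_Y[d]$ (after moving the convolution through using associativity), and finally postcompose with $\rho_K\circ\id_{T_Y}:K\circ K^!\circ T_Y \to \Delta_*\CO_Y\circ T_Y = T_Y$, where $\rho_K:K\circ K^! \to \Delta_*\CO_Y$ is the counit of the other adjunction. The composite is the desired morphism $\phi_K(h):\Delta_*\CO_Y \to T_Y[i+d]$. The degree shift $d$ comes precisely from the intertwining isomorphism, and the associativity manipulations are legitimate because the convolution is associative with the coherent pentagon identity stated earlier.

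It then remains to check functoriality $\phi_{K\circ L}=\phi_K\circ\phi_L$. Here I would first verify that if $L$ is $S$-intertwining between $T_Z$ and $T_X$ of degree $e$ and $K$ between $T_X$ and $T_Y$ of degree $d$, then $K\circ L$ is $S$-intertwining between $T_Z$ and $T_Y$ of degree $d+e$, which is a direct concatenation of the two intertwining isomorphisms. For the composition law itself, one traces a class $h:\Delta_*\CO_Z\to T_Z[i]$ through both constructions. The key input is the standard compatibility of units and counits under composition of kernel functors: $\lambda_{K\circ L}$ factors as $\Delta_*\CO_Y \xrightarrow{\lambda_K} K\circ K^* \xrightarrow{K\circ\lambda_L\circ K^*} K\circ L\circ L^*\circ K^* = (K\circ L)\circ(K\circ L)^*$, using that $(K\circ L)^*\cong L^*\circ K^*$, and dually for $\rho$. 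Plugging these factorizations into the definition of $\phi_{K\circ L}(h)$ and inserting a cancelling $\rho_L\circ\lambda_L$ (which is compatible via the triangle identities with the intermediate support $T_X$), one recovers $\phi_K(\phi_L(h))$. The bookkeeping with associativity constraints is tedious but formal.

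The main obstacle I expect is not any single deep idea but the coherence bookkeeping: one must be careful that all the re-bracketing of convolutions ($K\circ(T_X\circ K^*)$ versus $(K\circ T_X)\circ K^*$, etc.) is done via the associator $a$ consistently, and that the intertwining isomorphism is inserted in a way compatible with the pentagon identity, so that the resulting $\phi_K$ is well-defined independently of the order of manipulations. Equally, for functoriality the subtle point is that the cancellation of the "middle" unit–counit pair $L^*\circ L$ genuinely uses the triangle identity $\id_{L^*} = (\rho_L\circ\id)(\id\circ\lambda_L)$ and that this is compatible with conjugating by $T_X$ — i.e. that the $S$-intertwining data for $L$ and for $K$ were chosen so that the composite intertwining for $K\circ L$ is literally their concatenation. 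Provided the intertwining isomorphism for $K\circ L$ is \emph{defined} to be this concatenation, functoriality becomes a formal consequence of the adjunction triangle identities.
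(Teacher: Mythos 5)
Your construction of $\phi_K(h)$ is exactly the paper's: precompose with $\lambda_K$, insert $h$ in the middle of $K\circ\Delta_*\CO_X\circ K^*$, rewrite $T_X\circ K^*\cong K^!\circ T_Y[d]$ via the S-intertwining data, and postcompose with $\rho_K$ — the paper then simply asserts functoriality "can be checked straightforwardly," which your sketch via $(K\circ L)^*\cong L^*\circ K^*$, the unit/counit factorizations and the triangle identities fills in correctly (the stray formula $S_X^{-1}\circ(\Delta_*\CO_X\circ S_X)$ in your first sentence is a slip, but you immediately replace it with the correct description $\Hom^\bullet(\Delta_*\CO_X,T_X)$, so nothing is affected). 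The proposal is correct and follows essentially the same route as the paper.
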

\begin{proof}
For any $h \in \HOH^\bullet_{T_X}(X,\Delta_*\CO_X) = \Hom(\Delta_*\CO_X,T_X)$ consider the composition
$$
%\begin{multline*}
\Delta_*\CO_Y \xrightarrow{\ \lambda_K\ }
K\circ K^* \cong
K \circ \Delta_*\CO_X \circ K^* \xrightarrow{\ h\ }
K \circ T_X \circ K^* \cong
K \circ K^! \circ T_Y[d] \xrightarrow{\ \rho_K\ }
T_Y[d],
%\end{multline*}
$$
where the maps $\lambda_K$ and $\rho_K$ are defined in Lemma~\ref{adjs}.
The composition can be considered as an element of $\HOH^\bullet_{T_Y}(Y,\Delta_*\CO_Y)$ which we denote by $\phi_K(h)$.
This way a map $\phi_K:\HOH^\bullet_{T_X}(X,\Delta_*\CO_X) \to \HOH^\bullet_{T_Y}(Y,\Delta_*\CO_Y)$ is defined.
The required functoriality can be checked straightforwardly.
\end{proof}

In particular, we see that any kernel $K$ gives a map on Hochschild homology
and any equivalence gives a map on Hochschild cohomology. We would also need
a condition ensuring that a kernel $K$ gives a map on generalized Hochschild
cohomology.

\begin{definition}
Assume given coefficients $E_X \in \D^b(X\times X)$, $E_Y \in \D^b(Y\times Y)$  for
Hochschild cohomology. We will say that a kernel $K \in \D^b(X\times Y)$ is
{\sf intertwining between $E_X$ and $E_Y$ of degree $d$}\/ if there is given an isomorphism
$$
E_Y \circ K \cong K\circ E_X[d].
$$
\end{definition}

\begin{lemma}\label{phik1}
Let $X$, $Y$ be smooth projective varieties,
and $\Phi_K:\D^b(X) \to \D^b(Y)$ a functor given by a kernel $K \in \D^b(X\times Y)$ which
is S-intertwining between $T_X$ and $T_Y$ of degree $d_T$ and intertwining between $E_X$ and $E_Y$ of degree $d_E$.
Then there exists a map $\phi_K:\HOH_{T_X}^\bullet(X,E_X) \to \HOH_{T_Y}^{\bullet+d_T+d_E}(Y,E_Y)$ such that
the diagram
$$
\xymatrix{
\HOH^\bullet_{T_X}(X) \ar[rr]^-{\phi_K} \ar[d]_{\gamma_{E_X}} && \HOH^\bullet_{T_Y}(Y) \ar[d]^{\gamma_{E_Y}} \\
\HOH^\bullet_{T_X}(X,E_X) \ar[rr]^-{\phi_K} && \HOH^\bullet_{T_Y}(Y,E_Y)
}
$$
is commutative. Moreover, if $L \in \D^b(Z\times Y)$ S-intertwines between $T_Y$ and $T_Z$ and
intertwines between $E_Y$ and $E_Z$ then $\phi_{K\circ L} = \phi_K \circ \phi_L$.
\end{lemma}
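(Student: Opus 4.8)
The plan is to extend the construction of $\phi_K$ from Lemma~\ref{phik} to nontrivial coefficients by inserting the intertwining isomorphism $E_Y\circ K\cong K\circ E_X[d_E]$ at the appropriate place. Explicitly, given a class $h\in\HOH^\bullet_{T_X}(X,E_X)=\Hom(E_X,E_X\circ T_X)$, I would define $\phi_K(h)$ as the composition
$$
E_Y\circ K\cong K\circ E_X[d_E]\xrightarrow{\ h\ }K\circ E_X\circ T_X[d_E]\cong E_Y\circ K\circ T_X[d_E]\cong E_Y\circ T_Y\circ K[d_E],
$$
where the first and second isomorphisms are the given intertwining of $E_X,E_Y$, and the last isomorphism uses that $K$ is S-intertwining between $T_X$ and $T_Y$ to move $T_X$ past $K$ (paying the degree $d_T$; here one uses the form of the S-intertwining condition as stated, possibly passing through Corollary~\ref{sks} and adjoints $K^*,K^!$ as in the proof of Lemma~\ref{phik}). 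Composing once more with $\lambda_K$ and $\rho_K$ from Lemma~\ref{adjs} to absorb the extra copy of $K$, one obtains an element of $\Hom(E_Y,E_Y\circ T_Y[d_T+d_E])=\HOH^{\bullet+d_T+d_E}_{T_Y}(Y,E_Y)$, which is $\phi_K(h)$. One must be a little careful about the order in which $\lambda_K,\rho_K$ and the intertwining isomorphisms are applied so that the construction reduces to Lemma~\ref{phik} when $E_X=\Delta_*\CO_X$, $E_Y=\Delta_*\CO_Y$.

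Next I would check commutativity of the square with $\gamma_{E_X},\gamma_{E_Y}$. By definition $\gamma_{E_X}(h_0)=\id_{E_X}\circ h_0$ for $h_0\in\HOH^\bullet_{T_X}(X)=\Hom(\Delta_*\CO_X,T_X)$, and similarly for $\gamma_{E_Y}$. Going around the square one way gives the Lemma~\ref{phik} map $\phi_K(h_0)$ (a morphism $\Delta_*\CO_Y\to T_Y[d_T]$) convolved on the left with $\id_{E_Y}$; going the other way one first forms $\id_{E_X}\circ h_0$ and then runs the construction above. The equality of the two follows from the functoriality (in each argument) of the associativity isomorphism $a_{-,-,-}$ together with the pentahedron identity, exactly in the spirit of Lemma~\ref{comm} and the lemma $\gamma_K\gamma_{K'}=\gamma_{K\circ K'}$; the point is that convolving with $\id_{E_Y}$ commutes with the maps $\lambda_K,\rho_K$ used to define $\phi_K$, since all of these are built from the monoidal structure. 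No vanishing hypotheses are needed here, so this part is essentially bookkeeping.

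Finally, for the composability $\phi_{K\circ L}=\phi_K\circ\phi_L$ I would argue as in Lemma~\ref{phik}: one needs that the S-intertwining and intertwining isomorphisms for $K\circ L$ are the composites of those for $K$ and $L$ (which holds by construction/convention), and that the counit-unit maps satisfy $\lambda_{K\circ L}$, $\rho_{K\circ L}$ factoring through $\lambda_K,\lambda_L,\rho_K,\rho_L$ — this is a standard fact about adjoint kernels, already implicitly used in the proof of Lemma~\ref{phik} ($\phi_{K\circ L}=\phi_K\circ\phi_L$ there). Then unwinding both sides of $\phi_{K\circ L}=\phi_K\circ\phi_L$ gives the same composition of morphisms in $\D^b$, once one invokes the pentahedron equation to reassociate.

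The main obstacle, as usual in this circle of arguments, is not any conceptual difficulty but keeping the bookkeeping of associativity isomorphisms and the interplay of the two intertwining conditions with $\lambda_K,\rho_K$ under control: one must make sure the degrees $d_T$ and $d_E$ are inserted consistently and that the reduction to Lemma~\ref{phik} at $E=\Delta_*\CO$ is literally compatible with the commuting square. Once the construction is pinned down precisely, every verification reduces to functoriality of $a_{-,-,-}$ and the pentahedron identity. I would therefore state the construction carefully and then say "the required compatibilities are checked straightforwardly," as in the proof of Lemma~\ref{phik}.
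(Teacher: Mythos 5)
Your overall strategy (insert the coefficient intertwining $\kappa$, use $\lambda_K,\rho_K$ from Lemma~\ref{adjs}) is in the right spirit, but the displayed composition defining $\phi_K(h)$ contains a step that the hypotheses do not provide, and this is exactly the crux of the lemma. You use "$E_Y\circ K\circ T_X[d_E]\cong E_Y\circ T_Y\circ K[d_E]$", i.e.\ you move $T_X$ past $K$ as if the S-intertwining condition read $T_Y\circ K\cong K\circ T_X[d_T]$. It does not: S-intertwining is $(T_Y^!\circ S_Y)\circ K\cong K\circ(T_X^!\circ S_X)[d_T]$, equivalently (by~\eqref{tkcomp}) $T_X\circ K^*\cong K^!\circ T_Y[d_T]$ --- it relates $T_X$ and $T_Y$ only through the \emph{adjoint} kernels. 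The whole point of this definition is that for $T_X=S_X$, $T_Y=S_Y$ \emph{every} kernel is S-intertwining of degree $0$, whereas $S_Y\circ K\cong K\circ S_X$ fails for a general kernel (take $K$ the structure sheaf of a point of $Y$, viewed as a kernel from a point: $S_Y\circ K\cong K\otimes\omega_Y[\dim Y]\not\cong K$). So the isomorphism you invoke simply does not exist under the stated hypotheses, and with it the rest of your chain collapses.

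Nor can the construction be patched by "composing once more with $\lambda_K$ and $\rho_K$ to absorb the extra copy of $K$": after your chain you would hold a map $E_Y\circ K\to E_Y\circ T_Y\circ K[d_E+d_T]$ and would need to cancel the right-hand $K$, but the only counit available is $\rho_K:K\circ K^!\to\Delta_*\CO_Y$, while whiskering your map with $K^*$ and precomposing with $\lambda_K:\Delta_*\CO_Y\to K\circ K^*$ leaves you with $K\circ K^*$ on the right, out of which there is no canonical map to $\Delta_*\CO_Y$. The correct composition must carry $K^*$ along from the very beginning: start with $E_Y\xrightarrow{\lambda_K}E_Y\circ K\circ K^*$, apply $\kappa$ to get $K\circ E_X\circ K^*[d_E]$, apply $h$ to get $K\circ E_X\circ T_X\circ K^*[d_E]$, and then use the S-intertwining in the form $T_X\circ K^*\cong K^!\circ T_Y[d_T]$ (together with $\kappa^{-1}$) to reach $E_Y\circ K\circ K^!\circ T_Y[d_E+d_T]$, where $\rho_K$ finally produces $E_Y\circ T_Y[d_E+d_T]$. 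The S-intertwining hypothesis is used precisely to convert the $K^*$ created by the unit into the $K^!$ consumed by the counit; your proposal never identifies this mechanism, so the gap is structural rather than a matter of "careful ordering." The subsequent verifications (commutativity with $\gamma$, and $\phi_{K\circ L}=\phi_K\circ\phi_L$) are indeed the kind of bookkeeping you describe, but they can only be carried out once the map is defined correctly.
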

\begin{proof}
For any $h \in \HOH^\bullet_{T_X}(X,E_X) = \Hom(E_X,E_X\circ T_X)$ consider the composition
\begin{multline*}
E_Y \xrightarrow{\ \lambda_K\ }
E_Y\circ K\circ K^* \cong
K\circ E_X\circ K^*[d_E] \xrightarrow{\ h\ } \\
K \circ E_X\circ T_X \circ K^*[d_E] \cong
E_Y\circ K \circ K^!\circ T_Y[d_E+d_T] \xrightarrow{\ \rho_K\ }
E_Y\circ T_Y[d_E + d_T].
\end{multline*}
The composition can be considered as an element of $\HOH^\bullet_{T_Y}(Y,E_Y)$ which we denote by $\phi_K(h)$.
This way a map $\phi_K:\HOH^\bullet_{T_X}(X,E_X) \to \HOH^\bullet_{T_Y}(Y,E_Y)$ is defined.
The required functoriality can be checked straightforwardly.

Commutativity of $\phi_K$ with $\gamma$ follows from the commutativity of the following diagram
$$
\newcommand{\scirc}{\!\circ\!}
\xymatrix{
E_Y \ar[r]^-{\lambda_K} \ar@{=}[d] &
E_Y\scirc K\scirc K^* \ar[r]^-h \ar@{=}[d] \ar[dr]^\kappa &
E_Y\scirc K \scirc T_X \scirc K^* \ar[r]^-\tau \ar[dr]^\kappa &
E_Y\scirc K \scirc K^! \scirc T_Y \ar[r]^-{\rho_K} \ar@{=}[dr] &
E_Y\scirc T_Y \ar@{=}[dr] \\
E_Y \ar[r]^-{\lambda_K} &
E_Y\scirc K\scirc K^* \ar[r]^-\kappa &
K\scirc E_X\scirc K^* \ar[r]^-h &
K\scirc E_X \scirc T_X \scirc K^* \ar[r]^-{\kappa^{-1}\scirc\tau} &
E_Y\scirc K \scirc K^! \scirc T_Y \ar[r]^-{\rho_K} &
E_Y\scirc T_Y
}
$$
where $\kappa:E_Y\circ K \to K\circ E_X$ and $\tau:T_X\circ K^* \to K^!\circ T_Y$ stand for the intertwining isomorphisms.
Indeed, in the above diagram the top line gives $\gamma_{E_Y}(\phi_K(h))$ while the bottom line gives $\phi_K(\gamma_{E_X}(h))$.
\end{proof}

\section{Hochschild homology and cohomology of an admissible subcategory}\label{hohadm}

Given an admissible subcategory $\CA \subset \D^b(X)$ we consider any semiorthogonal
decomposition of $\D^b(X)$ containing $\CA$ as a component and consider the kernel
$P \in \D^b(X\times X)$ of the corresponding projection functor onto $\CA \subset \D^b(X)$.
Recall that in Section~\ref{sec-dg} we identified the Hochschild homology and cohomology
of $\CA$ with the vector spaces $\Hom^\bullet(P,P)$ and $\Hom^\bullet(P,P\circ S_X)$.
It follows that whenever we have another admissible subcategory
$\CB \subset \D^b(Y)$ with the kernel of the projection functor $Q \in \D^b(Y,Y)$,
if $\CB$ is equivalent to $\CA$ and the equivalence has a DG-enhancement,
then we have isomorphisms
$$
\Hom^\bullet(P,P) \cong \Hom^\bullet(Q,Q),
\qquad
\Hom^\bullet(P,P\circ S_X) \cong \Hom^\bullet(Q,Q\circ S_Y).
$$
The goal of this Section is to make these isomorphisms explicit.

\subsection{Hochschild cohomology}

Note that a priori the spaces $\Hom^\bullet(P,P)$ and $\Hom^\bullet(P,P\circ S_X)$
depend also on the choice of the projection functor $P$, that is on the choice of a semiorthogonal
decomposition of $\D^b(X)$ containing $\CA$ as a component. We start with analyzing this
dependence. Note that the kernel $P$ does not depend on a subdivision of components
distinct from $\CA$, so we may assume that the first decomposition is
$\D^b(X) = \langle \CA',\CA,\CA'' \rangle$ for some $\CA',\CA'' \subset \D^b(X)$.
On the other hand, consider the decomposition $\D^b(X) = \langle \CA^\perp,\CA \rangle$.
Let $P$ and $P'$ be the kernels of the corresponding projection functors onto $\CA$.

\begin{proposition}\label{mut-hcoh}
There is a morphism $P' \to P$ such that the induced maps
$\Hom^\bullet(P',P') \to \Hom^\bullet(P',P)$ and $\Hom^\bullet(P,P) \to \Hom^\bullet(P',P)$
are isomorphisms.
\end{proposition}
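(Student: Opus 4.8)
The plan is to reduce the statement to the setting already treated in Corollary~\ref{so1}, namely the case of a single mutation. Given the decomposition $\D^b(X) = \langle \CA', \CA, \CA'' \rangle$, the subcategory $\langle \CA'', \CA \rangle^\perp$ agrees with $\CA'$ so the kernel $P$ depends only on how $\CA$ sits relative to $\langle \CA'' \rangle^\perp$; thus we may first absorb $\CA''$ by passing to the decomposition $\D^b(X) = \langle \CA', \CA, \CA'' \rangle$ with $\CA'$ enlarged, i.e.\ reduce to the two-step decomposition $\D^b(X) = \langle {}^\perp\langle\CA,\CA''\rangle, \CA, \CA''\rangle$; but more directly, since the kernel of the projection onto $\CA$ in $\langle \CA',\CA,\CA''\rangle$ coincides with the one in $\langle \langle\CA',\ldots\rangle, \CA, \CA''\rangle$ for any refinement of the first component, it suffices to compare $P$ for $\D^b(X) = \langle \CA', \CA, \CA'' \rangle$ with $P'$ for $\D^b(X) = \langle \CA^\perp, \CA \rangle$. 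Now $\CA^\perp = \langle \CA', {}^\perp\CA \cap \CA'^\perp \rangle$ is not quite $\langle \CA', \CA'' \rangle$, so the cleanest route is: the decomposition $\langle \CA^\perp, \CA \rangle$ is obtained from $\langle \CA', \CA, \CA'' \rangle$ by a sequence of left mutations moving $\CA$ past $\CA''$ (one mutation, in the two-component case $\langle\CA,\CA''\rangle \rightsquigarrow \langle\CA''_{\text{new}},\CA\rangle$, iterated if $\CA''$ itself has several pieces — but again $\CA''$'s internal structure is irrelevant to the projection kernel onto $\CA$, so a single application of Corollary~\ref{so1} suffices after we collapse $\CA''$ to a single subcategory).

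So the key step is: set $\CA'' $ to be a single component (legitimate, since refining it changes neither $P$ nor $P'$ nor $\Hom^\bullet(P,-)$), and apply Corollary~\ref{so1} with $(\CA_i, \CA_{i+1}) = (\CA, \CA'')$ inside $\D^b(X) = \langle \CA', \CA, \CA'' \rangle$. Left-mutating $\CA''$ through $\CA$ yields $\D^b(X) = \langle \CA', \CA''_{\text{mut}}, \CA \rangle = \langle \CA^\perp, \CA \rangle$, and Corollary~\ref{so1} furnishes exactly a morphism (either $P' \to P$ or $P \to P'$ depending on the direction of the mutation) inducing canonical isomorphisms $\Hom^\bullet(P,P) \cong \Hom^\bullet(P',P) \cong \Hom^\bullet(P',P')$. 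One should check the arrow points the way claimed: in Corollary~\ref{so1} the morphism constructed is $P'_i \to D \to P_i$ where $D$ is the component of $\Delta_*\CO_X$ in $\CA_X$ for $\CA = \langle\CA_i,\CA_{i+1}\rangle$; translating to our labelling (our $P'$ being the projection for the mutated, i.e.\ orthogonal-complement, decomposition), this gives the desired $P' \to P$.

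The main obstacle is bookkeeping rather than mathematics: making sure that (i) passing from a general third component $\CA''$ to a single subcategory genuinely does not affect the kernel $P$ of the projection onto $\CA$ — this follows from Theorem~\ref{sod-kernels} and the uniqueness of semiorthogonal components, since $P = \Cone(D_{\CA'} \to D)$ only sees the truncation with respect to $\langle \CA, \CA'', \ldots\rangle$ versus $\langle \CA'', \ldots \rangle$, which is insensitive to subdividing the tail — and (ii) that $\langle \CA', \CA''_{\text{mut}} \rangle = \CA^\perp$, i.e.\ that left-mutating the tail past $\CA$ produces precisely the decomposition $\langle \CA^\perp, \CA \rangle$. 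Point (ii) is immediate from Lemma~\ref{sod_adm} together with the definition of $\LL_i$ in Section~\ref{ss-mut}: the mutated decomposition has $\CA$ as its last component, hence its first part is ${}^\perp\CA = \CA^\perp$ up to the relevant orthogonality. Once these are in place the proposition is just a citation of Corollary~\ref{so1}, and the naturality/canonicity of the isomorphisms is inherited from there.
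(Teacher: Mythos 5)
Your proposal is correct and is essentially the paper's own argument: left-mutate $\CA''$ through $\CA$ to get $\D^b(X)=\langle \CA',\BL(\CA''),\CA\rangle$, observe this is a subdivision of $\langle \CA^\perp,\CA\rangle$ (so its projection kernel onto $\CA$ is $P'$), and cite Corollary~\ref{so1}, whose morphism $P'\to D\to P$ points in the claimed direction. The detours in your first paragraph (and the slip ``${}^\perp\CA=\CA^\perp$'', where you just mean that the components preceding $\CA$ lie in and generate $\CA^\perp$) are unnecessary but harmless.
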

\begin{proof}
The left mutation of $\CA''$ through $\CA$ takes the first decomposition to
$\D^b(X) = \langle \CA',\BL_2(\CA''),\CA \rangle$ which is clearly a subdivision
of the second decomposition. Now the Proposition easily follows from
Corollary~\ref{so1}.
\end{proof}

Now we are ready to construct an explicit isomorphism in general case
for the Hochschild cohomology.
Let $\xi:\CA \to \CB$ be an equivalence and consider the functor
$\Phi:\D^b(X) \to \D^b(Y)$,
$$
\Phi = \beta\circ\xi\circ\alpha^*,
$$
where $\alpha:\CA \to \D^b(X)$ and $\beta:\CB \to \D^b(Y)$ are the embedding functors.
Note that this is a right splitting functor (see~\cite{K-HPD}).
In particular, $\Phi\Phi^! \cong \beta\beta^!$ is the right projection onto $\CB$,
while $\Phi^!\Phi \cong \alpha\alpha^*$ is the left projection onto $\CA$.
The main technical assumption we need is the geometricity of $\Phi$.
Namely, we assume that $\Phi$ is isomorphic to a kernel functor $\Phi_\CE$
given by an appropriate kernel $\CE \in \D^b(Y\times X)$.
Note that this condition is more or less equivalent to an existence
of a DG-enhancement for $\Phi$.

\begin{theorem}\label{extcor}
Let $P$ be the kernel of the left projection functor onto $\CA$ and $Q$ be the kernel
of the right projection functor onto $\CB$. If a right splitting functor $\Phi_\CE:\D^b(X) \to \D^b(Y)$
gives an equivalence of subcategories $\CA \subset \D^b(X)$ and $\CB \subset \D^b(Y)$
then we have isomorphisms
$$
\Hom^\bullet(P,P) \cong \Hom^\bullet(\CE,\CE) \cong \Hom^\bullet(\CE^!,\CE^!) \cong \Hom^\bullet(Q,Q),
$$
where $\CE^!$ is the kernel of the right adjoint functor.
\end{theorem}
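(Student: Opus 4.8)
The plan is to break the chain of four isomorphisms into two halves, related by the adjunction $\CE \leftrightarrow \CE^!$, and to identify the two outer spaces $\Hom^\bullet(P,P)$ and $\Hom^\bullet(Q,Q)$ with the middle ones. By Proposition~\ref{mut-hcoh} the space $\Hom^\bullet(P,P)$ does not change if we pass from the left projection onto $\CA$ to the projection coming from any semiorthogonal decomposition with $\CA$ as a component; similarly for $Q$ and $\CB$. So we are free to choose convenient decompositions. The key observation is that $\Phi = \beta\circ\xi\circ\alpha^*$ is a right splitting functor, so $\Phi^!\Phi \cong \alpha\alpha^*$ is the left projection onto $\CA$ and $\Phi\Phi^! \cong \beta\beta^!$ is the right projection onto $\CB$. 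Translated into kernels (using Lemma~\ref{adjs} and the fact that $\Phi = \Phi_\CE$), this says $\CE^!\circ\CE \cong P$ and $\CE\circ\CE^! \cong Q$, where $P$ and $Q$ are exactly the kernels in the statement.

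First I would establish $\Hom^\bullet(\CE,\CE) \cong \Hom^\bullet(\CE^!,\CE^!)$. Since $\Phi_\CE$ and $\Phi_{\CE^!} = \Phi_\CE^!$ are adjoint, and the restriction of $\Phi_\CE$ to $\CA$ is fully faithful with image $\CB$ (and dually for $\CE^!$), the natural map $\RHom^\bullet(\CE,\CE) \to \RHom^\bullet(\CE\circ\CE^!,\CE\circ\CE^!)$ obtained by convolving on the right with $\CE^!$, composed with the analogous map for $\CE^!$, should be inverse to each other up to the unit and counit; concretely one checks $\Hom^\bullet(\CE,\CE) \cong \Hom^\bullet(\CE,\CE\circ\CE^!\circ\CE) \cong \Hom^\bullet(\CE^!\circ\CE,\CE^!\circ\CE)$ using that $\CE^!\circ\CE\circ\CE^!\circ\CE \cong \CE^!\circ\CE$ (because $\alpha\alpha^*$ is idempotent) and that the relevant adjunction units are isomorphisms on the subcategory. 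Next I would prove $\Hom^\bullet(P,P) \cong \Hom^\bullet(\CE^!\circ\CE,\CE^!\circ\CE) \cong \Hom^\bullet(\CE,\CE)$: the first isomorphism is just the identification $P \cong \CE^!\circ\CE$, and the second is the same kind of adjunction argument run one step further, pushing $\CE^!\circ\CE$ to $\CE\circ\CE^!\circ\CE \cong \CE$ (here we use that $\Phi_\CE$ restricted to $\CA = \Im(\Phi_\CE^!\Phi_\CE)$ is fully faithful, so convolving with $\CE$ on the left is fully faithful on the image of $\CE^!$). By the symmetric argument $\Hom^\bullet(Q,Q) \cong \Hom^\bullet(\CE\circ\CE^!,\CE\circ\CE^!) \cong \Hom^\bullet(\CE^!,\CE^!)$, which closes the chain.

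The main obstacle I expect is bookkeeping the adjunction morphisms at the level of kernels: the maps $\lambda_\CE:\Delta_*\CO \to \CE\circ\CE^*$ and $\rho_\CE:\CE\circ\CE^! \to \Delta_*\CO$ of Lemma~\ref{adjs} do not directly give $\CE^!\circ\CE\circ\CE^!\circ\CE \xrightarrow{\sim}\CE^!\circ\CE$; one must instead use that on the idempotent-complete subcategories in play the unit $\id \to \Phi_\CE^!\Phi_\CE$ and counit $\Phi_\CE\Phi_\CE^! \to \id$ become isomorphisms after restriction, and translate this back into the statement that a suitable convolution map of kernels is an isomorphism. Making this precise requires being careful about which side the adjunction is applied on and that $\Phi$ is a right splitting functor (so the relevant triangle identity holds on the nose, not merely up to the non-admissible complement). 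Once this is set up, all four isomorphisms are instances of the single principle ``convolving with a kernel that is fully faithful on the relevant subcategory induces an isomorphism on derived endomorphisms,'' and the verification is a diagram chase with the unit/counit.
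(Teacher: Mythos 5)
Your structural identifications are reasonable ($\CE^!\circ\CE\cong P$ and $\CE\circ\CE^!\cong Q$ do follow from the splitting property together with the uniqueness of projection kernels in Theorem~\ref{sod-kernels}, and the paper itself uses exactly this in Theorem~\ref{hhhcor}), but the heart of your argument has a genuine gap: all the isomorphisms you need are isomorphisms of \emph{kernels}, while the facts you invoke are only statements about the associated \emph{functors}. An isomorphism of kernel functors does not imply an isomorphism of kernels, so idempotency of $\alpha\alpha^*$ does not by itself give $\CE\circ\CE^!\circ\CE\cong\CE$ or $\CE^!\circ\CE\circ\CE^!\circ\CE\cong\CE^!\circ\CE$; likewise, full faithfulness of $\Phi_\CE$ on $\CA\subset\D^b(X)$ does not formally give that $K\mapsto\CE\circ K$ is fully faithful on the subcategory of kernels $\CA_X\subset\D^b(X\times X)$ in which $P$ lives, and the unit and counit of $(\Phi_\CE,\Phi_\CE^!)$ becoming isomorphisms after restriction to $\CA$ and $\CB$ is again a functor-level fact whose ``translation back'' to an isomorphism of convolutions is precisely the non-formal step you wave at. This is exactly where the paper brings in outside input: it applies Theorem~6.4 of~\cite{K-FBC} to the functor $\tilde\Phi=\CE\circ(-):\D^b(X\times X)\to\D^b(Y\times X)$ (the kernel functor with kernel $p_{24}^*\Delta_*\CO_X\otimes p_{13}^*\CE$), which is fully faithful on $\CA_X$ and kills ${}^\perp\CA_X$; since $P$ is the $\CA_X$-component of $\Delta_*\CO_X$, this yields $\tilde\Phi(P)\cong\tilde\Phi(\Delta_*\CO_X)=\CE$ and hence $\Hom^\bullet(P,P)\cong\Hom^\bullet(\CE,\CE)$, dually $\Hom^\bullet(Q,Q)\cong\Hom^\bullet(\CE^!,\CE^!)$, while the middle isomorphism $\Hom^\bullet(\CE,\CE)\cong\Hom^\bullet(\CE^!,\CE^!)$ is immediate from the fact that $K\mapsto K^!$ is an antiequivalence of $\D^b(Y\times X)$ with $\D^b(X\times Y)$ --- no adjunction chase is needed there at all.

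If you want to keep your adjunction idea, it can be repaired, but the adjunction must be run one level up, between categories of kernels: $\CE^!\circ(-)$ is right adjoint to $\CE\circ(-)$ as functors between $\D^b(X\times X)$ and $\D^b(Y\times X)$ (both are kernel functors, and the adjoint kernel of $p_{13}^*\CE\otimes p_{24}^*\Delta_*\CO_X$ is $p_{13}^*\CE^!\otimes p_{24}^*\Delta_*\CO_X$). Then $\Hom^\bullet(\CE,\CE)\cong\Hom^\bullet(\Delta_*\CO_X,\CE^!\circ\CE)=\Hom^\bullet(\Delta_*\CO_X,P)\cong\Hom^\bullet(P,P)$, where the last isomorphism uses the triangle $R\to\Delta_*\CO_X\to P$ and the vanishing $\Hom^\bullet(R,P)=0$ coming from Proposition~\ref{ki} (this is the computation of Proposition~\ref{comp-hh}); a symmetric argument handles $Q$ and $\CE^!$. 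Note that even this repaired route still rests on kernel-level input ($P\cong\CE^!\circ\CE$, the partial-convolution adjunction, and the semiorthogonality of Proposition~\ref{ki}), not merely on the unit and counit of $(\Phi_\CE,\Phi_\CE^!)$ on $\D^b(X)$ and $\D^b(Y)$, so as written your proof does not go through.
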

\begin{proof}
%Consider $\D^b(X\times X)$ and its induced decomposition $\D^b(X\times X) = \langle \CA_X, {}^\perp\CA_X \rangle$.
%Let $P$ be the component of $\Delta_*\CO_X$ in $\CA_X$ (i.e. $P$ is the kernel of the left projector onto $\CA$).
%Then by definition we have $\HOH^\bullet(\CA) = \Hom^\bullet(P,P)$.
Consider the functor $\tilde\Phi:\D^b(X\times X) \to \D^b(Y\times X)$
given by the kernel
$$
\tilde{\CE} = p_{24}^*(\Delta_*\CO_X) \otimes p_{13}^*\CE \in \D^b(Y\times X\times X\times X).
$$
%
%Since $\Phi = \Phi_\CE:\D^b(X) \to \D^b(Y)$ is a kernel functor,
%it induces a functor $\tilde\Phi:\D^b(X\times X) \to \D^b(Y\times X)$
%(given by $\tilde{\CE} = p_{24}^*(\Delta_*\CO_X) \otimes p_{13}^*\CE \in \D^b(Y\times X\times X\times X)$).
A simple computation shows that $\tilde\Phi$ takes $\Delta_*\CO_X \in \D^b(X\times X)$ to $\CE \in \D^b(Y\times X)$.
Further, by Theorem~6.4 of~\cite{K-FBC} $\tilde\Phi$ is fully faithful on the subcategory
$\CA_X$ of $\D^b(X\times X)$, vanishes on ${}^\perp\CA_X$, and induces an equivalence
of $\CA_X$ with $\CB_X \subset \D^b(Y\times X)$. It follows that
$$
\tilde\Phi(P) = \tilde\Phi(\Delta_*\CO_X) = \CE,
$$
hence
$$
\Hom^\bullet(P,P) \cong \Hom^\bullet(\CE,\CE).
$$

%Similarly, consider $\D^b(Y\times Y)$ and its induced decomposition $\D^b(Y\times Y) = \langle \CB_Y^\perp,\CB_Y \rangle$.
%Let $Q$ be the component of $\Delta_*\CO_Y$ in $\CB_Y$ (i.e. $Q$ is the kernel of the right projector onto $\CB$).
%Then we have $\HOH^\bullet(\CB) = \Hom^\bullet(Q,Q)$.

Similarly, the functor $\tilde\Phi^!:\D^b(Y\times Y) \to \D^b(X\times Y)$
given by the kernel
$$
\tilde\CE^! = p_{24}^*(\Delta_*\CO_X) \otimes p_{13}^*\CE^! \in \D^b(X\times Y\times Y\times Y).
$$
%
%Similarly, the right adjoint functor $\Phi_{\CE^!}:\D^b(Y) \to \D^b(X)$ induces a functor
%$\tilde\Phi^!:\D^b(Y\times Y) \to \D^b(X\times Y)$
%(given by the kernel $\tilde\CE^! = p_{24}^*(\Delta_*\CO_X) \otimes p_{13}^*\CE^! \in \D^b(X\times Y\times Y\times Y)$),
%
takes $\Delta_*\CO_Y \in \D^b(Y\times Y)$ to $\CE^! \in \D^b(X\times Y)$.
By Theorem~6.4 of~\cite{K-FBC} this functor is fully faithful on the subcategory
$\CB_Y$ of $\D^b(Y\times Y)$, vanishes on $\CB_Y^\perp$, and induces an equivalence
of $\CB_Y$ with $\CA_Y \subset \D^b(X\times Y)$. It follows that
$$
\tilde\Phi^!(Q) = \tilde\Phi(\Delta_*\CO_Y) = \CE^!,
$$
hence
$$
\Hom^\bullet(Q,Q) \cong \Hom^\bullet(\CE^!,\CE^!).
$$
So, it remains to note that the functor $K \mapsto K^!$ is an antiequivalence of $\D^b(Y\times X)$ with $\D^b(X\times Y)$,
hence induces a canonical isomorphism $\Hom^\bullet(\CE^!,\CE^!) \cong \Hom^\bullet(\CE,\CE)$.
\end{proof}

A combination of Proposition~\ref{mut-hcoh} with Theorem~\ref{extcor} allows to construct the desired isomorphism.

\subsection{Hochschild homology}

Now we consider to the case of Hochschild homology. In this case it turns out that we don't need to investigate
the dependance on the choice of a semiorthogonal decomposition since one can identify $\Hom(P,P\circ S_X)$
with a certain {\em subspace}\/ of $\HOH_\bullet(X)$ depending only on $\CA$.
This easily follows from the following result.

Recall the maps
$\phi_P:\HOH_\bullet(X) \to \HOH_\bullet(X)$ and $\gamma_P:\HOH_\bullet(X) \to \HOH_\bullet(X,P)$
defined in Sections~\ref{ghohfun} and~\ref{ghoh} respectively.

\begin{theorem}\label{addone}
Let $\D^b(X) = \lan \CA_1,\dots,\CA_n)$ be a semiorthogonal decomposition and
let $P_i$ be the kernels of the corresponding projection functors onto $\CA_i$.
Then

\noindent$(i)$
the map
$$
\HOH_\bullet(X) \xrightarrow{\quad(\gamma_{P_1},\dots,\gamma_{P_n})\quad} \HOH_\bullet(X,P_1)\oplus\dots\oplus\HOH_\bullet(X,P_n)
$$
is an isomorphism;

\noindent$(ii)$
the maps $\phi_{P_i} : \HOH_\bullet(X) \to \HOH_\bullet(X)$ are idempotents
summing up to identity, that is
$$
\phi_{P_i}\circ\phi_{P_j} = \delta_{ij}\phi_{P_j},
\qquad
\sum\phi_{P_i} = \id;
$$

\noindent$(iii)$
$\Im\phi_{P_i} = \bigcap_{j\ne i} \Ker \gamma_{P_j}$.
\end{theorem}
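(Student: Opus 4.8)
The plan is to deduce all three statements from Proposition~\ref{add}, the vanishing of Corollary~\ref{so}, and the functoriality of $\phi$ and $\gamma$ from Lemmas~\ref{phik} and~\ref{phik1}. For part~$(i)$ I would peel off the components one at a time. Let $0 = D_n \to D_{n-1} \to \dots \to D_0 = \Delta_*\CO_X$ be the filtration of $\Delta_*\CO_X$ attached to the decomposition by Theorem~\ref{sod-kernels}, so that $\Cone(D_k \to D_{k-1}) \cong P_k$ and each $D_k$ carries a filtration whose graded factors are the $P_j$ with $j > k$. Applying Proposition~\ref{add} with $T = S_X$ to the triangle $D_k \to D_{k-1} \to P_k$, the hypotheses~\eqref{vancond1} and~\eqref{vancond2} become $\Hom^\bullet(D_k, P_k \circ S_X) = 0$ and $\Hom^\bullet(P_k, D_k \circ S_X) = 0$; since $D_k$, hence also $D_k \circ S_X$, is built from the $P_j$ with $j > k$, both vanishings follow from Corollary~\ref{so}. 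This yields isomorphisms $\HOH_\bullet(X, D_{k-1}) \xrightarrow{\,\sim\,} \HOH_\bullet(X, D_k) \oplus \HOH_\bullet(X, P_k)$, and tracking through the commutative diagram~\eqref{pig} (together with $\gamma_{\Delta_*\CO_X} = \id$) shows that the composite of these isomorphisms for $k = 1, \dots, n$ is precisely $(\gamma_{P_1}, \dots, \gamma_{P_n})$, which is $(i)$.

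For $(ii)$, I would first note that $P_i \circ P_j \cong \delta_{ij} P_j$: indeed $\Phi_{P_i} \circ \Phi_{P_j} = \alpha_i \circ \alpha_j$ equals $\alpha_j$ for $i = j$ (a projection functor is idempotent and $\alpha_i$ restricts to the identity on $\CA_i$) and $0$ for $i \ne j$ (as $\alpha_j$ takes values in $\CA_j$ while $\alpha_i$ annihilates $\CA_j$), so that $P_i \circ P_j$ represents $\delta_{ij}\alpha_j$; one then concludes either from the uniqueness of representing kernels in Theorem~\ref{sod-kernels} or directly from Proposition~\ref{ki} as in Corollary~\ref{so-new}. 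Functoriality of $\phi$ (Lemma~\ref{phik}; every kernel on $X\times X$ is $S$-intertwining between $S_X$ and $S_X$) then gives $\phi_{P_i}\circ\phi_{P_j} = \delta_{ij}\phi_{P_j}$, so the $\phi_{P_i}$ are pairwise orthogonal idempotents. By $(i)$, the remaining part of $(ii)$ — namely $\sum_i\phi_{P_i} = \id$ — together with all of $(iii)$ will follow from the identities
$$
\gamma_{P_j}\circ\phi_{P_i} = \delta_{ij}\,\gamma_{P_i} \colon \HOH_\bullet(X) \longrightarrow \HOH_\bullet(X, P_i).
$$
For $i \ne j$ this is immediate: $\gamma_{P_j}(\phi_{P_i}(h)) = \id_{P_j}\circ\phi_{P_i}(h)$ factors through $\id_{P_j}\circ\lambda_{P_i}\colon P_j \to P_j \circ (P_i\circ P_i^*)$, whose target $(P_j\circ P_i)\circ P_i^*$ vanishes. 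Granting also the diagonal case $\gamma_{P_i}\circ\phi_{P_i} = \gamma_{P_i}$, one obtains $\sum_i\phi_{P_i} = \id$ by comparing $\gamma_{P_k}$-components via $(i)$, and $(iii)$ because $\Im\phi_{P_i}\subseteq\bigcap_{j\ne i}\Ker\gamma_{P_j}$ by the off-diagonal vanishing, while for $v\in\bigcap_{j\ne i}\Ker\gamma_{P_j}$ the elements $\phi_{P_i}(v)$ and $v$ have equal images under the isomorphism of $(i)$ — both have trivial $P_j$-component for $j\ne i$ and $P_i$-component $\gamma_{P_i}(v)$ — hence coincide.

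The diagonal identity $\gamma_{P_i}\circ\phi_{P_i} = \gamma_{P_i}$ is the one substantive point, and I expect it to be the main obstacle. I would prove it by applying Lemma~\ref{phik1} to $K = P_i$, viewed as a kernel functor $\D^b(X)\to\D^b(X)$, with $T_X = T_Y = S_X$, $E_X = \Delta_*\CO_X$ and $E_Y = P_i$: since $P_i\circ\Delta_*\CO_X \cong P_i \cong P_i\circ P_i$, the kernel $P_i$ is intertwining between $\Delta_*\CO_X$ and $P_i$ of degree $0$, the intertwining isomorphism being the canonical $P_i\circ P_i\cong P_i$ afforded by idempotency of $\alpha_i$. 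The commutative square of Lemma~\ref{phik1}, with $\gamma_{\Delta_*\CO_X} = \id$, then reads $\gamma_{P_i}\circ\phi_{P_i} = \phi_{P_i}^{E_Y}$, where $\phi_{P_i}^{E_Y}$ is the map of that lemma, built as the composite of $\lambda_{P_i}$, the isomorphism $P_i\circ P_i\cong P_i$, the insertion $\id_{P_i}\circ h\circ\id_{P_i^*}$, the isomorphism $S_X\circ P_i^*\cong P_i^!\circ S_X$ of Corollary~\ref{sks}, and $\rho_{P_i}$. It then remains to check that this composite collapses to $\id_{P_i}\circ h = \gamma_{P_i}(h)$; this is where the work lies, and it amounts to a careful use of the triangle identities for the adjunctions $(\Phi_{P_i}^*,\Phi_{P_i})$ and $(\Phi_{P_i},\Phi_{P_i}^!)$ together with the compatibility of the chosen isomorphism $P_i\circ P_i\cong P_i$ with $\lambda_{P_i}$ and $\rho_{P_i}$ — all of which ultimately express that $\alpha_i$ is an idempotent restricting to the identity on $\CA_i$. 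Conceptually the identity says that $\phi_{P_i}$ factors as the canonical projection $\HOH_\bullet(X)\to\HOH_\bullet(\CA_i) = \HOH_\bullet(X, P_i)$ (cf.\ Proposition~\ref{hhserre}) followed by a section of it.
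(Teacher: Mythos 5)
Your part~$(i)$ is fine and is essentially the paper's own argument: the filtration $0=D_n\to\dots\to D_0=\Delta_*\CO_X$, the triangles $D_k\to D_{k-1}\to P_k$, and Proposition~\ref{add} with both vanishing hypotheses supplied by Corollary~\ref{so}. The orthogonality $\phi_{P_i}\circ\phi_{P_j}=0$ for $i\ne j$ and the idempotency $\phi_{P_i}^2=\phi_{P_i}$ via $P_i\circ P_j=0$, $P_i\circ P_i\cong P_i$ and Lemma~\ref{phik}, as well as the off-diagonal vanishing $\gamma_{P_j}\circ\phi_{P_i}=0$ (because $\phi_{P_i}(h)$ factors through $P_i\circ P_i^*$ and $P_j\circ P_i=0$), also coincide with what the paper does. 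The problem is the remaining piece, which is exactly the heart of the theorem: $\sum_i\phi_{P_i}=\id$. You reduce it (together with the nontrivial inclusion in $(iii)$) to the ``diagonal identity'' $\gamma_{P_i}\circ\phi_{P_i}=\gamma_{P_i}$, and you do not prove that identity; you only say it should follow from triangle identities plus ``the compatibility of the chosen isomorphism $P_i\circ P_i\cong P_i$ with $\lambda_{P_i}$ and $\rho_{P_i}$''. That compatibility is precisely what is missing and it is not formal: the maps $\lambda_{P_i}$ and $\rho_{P_i}$ of Lemma~\ref{adjs} are the unit of the adjunction $(\Phi_{P_i}^*,\Phi_{P_i})$ and the counit of the \emph{other} adjunction $(\Phi_{P_i},\Phi_{P_i}^!)$, so no triangle identity relates them directly; moreover the isomorphism $P_i\circ P_i\cong P_i$ is only produced abstractly (uniqueness of kernels), with no control at the kernel level over how it interacts with $\lambda_{P_i}$, $\rho_{P_i}$ and the Serre isomorphism of Corollary~\ref{sks}. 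So the composite you obtain from Lemma~\ref{phik1} does not visibly ``collapse'' to $\id_{P_i}\circ h$, and nothing in the paper's toolkit gives that collapse for free.

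The paper avoids this issue entirely by a different mechanism for $\sum\phi_{P_i}=\id$: it puts bifiltrations on $\Delta_*\CO_X\cong\Delta_*\CO_X\circ(\Delta_*\CO_X)^*$ and on $S_X\cong\Delta_*\CO_X\circ S_X\circ(\Delta_*\CO_X)^*$ with factors $P_i\circ P_j^*$ and $P_i\circ S_X\circ P_j^*\cong P_i\circ P_j^!\circ S_X$, identifies these with the bifiltrations of the semiorthogonal decomposition $\D^b(X\times X)=\lan\CA_i\boxtimes(\CA_j^\vee\otimes\omega_X)\ran$ using Proposition~\ref{ki} and Lemma~\ref{adjs}, and then uses the functoriality of such filtrations (Lemma~\ref{ff}) to conclude that an \emph{arbitrary} class $h:\Delta_*\CO_X\to S_X$ respects them; the off-diagonal graded pieces die by Corollary~\ref{so-new}, so $h$ factors as $\Delta_*\CO_X\to\oplus P_i\circ P_i^*\to\oplus P_i\circ P_i^!\circ S_X\to S_X$, which is literally $\sum\phi_{P_i}(h)$. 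If you want to salvage your route, you would have to prove your diagonal identity, and I expect any honest proof of the required compatibility of $P_i\circ P_i\cong P_i$ with $\lambda_{P_i}$ and $\rho_{P_i}$ will amount to redoing this filtration argument; as written, your proposal leaves the main statement of $(ii)$, and hence the equality in $(iii)$, unproven.
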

\begin{proof}
$(i)$ Let $D_i$ be the truncation functors of the semiorthogonal decomposition.
It is clear that $D_i$ is the projection functor onto the subcategory $\lan\CA_{i+1},\dots,\CA_n\ran \subset \D^b(X)$
and we have distinguished triangles $D_i \to D_{i-1} \to K_i$. Thus the conditions~\eqref{vancond1} and~\eqref{vancond2}
of Proposition~\ref{add} are satisfied, hence the maps $\gamma_{D_i}$, $\gamma_{D_{i-1}}$, $\gamma_{K_i}$ induce
a decomposition $\HOH_\bullet(X,D_{i-1}) = \HOH_\bullet(X,D_i) \oplus \HOH_\bullet(X,K_i)$.
Iterating we obtain a decomposition $\HOH_\bullet(X,D_0) = \oplus \HOH_\bullet(X,K_i)$ induced by $\gamma_{K_i}$.
Since $D_0 = \Delta_*\CO_X$ this is what we need.

$(ii)$
The semiorthogonal decomposition $\D^b(X\times X) = \lan \CA_{1X},\dots,\CA_{nX} \ran$
gives a filtration
$$
0 = D_n \to D_{n-1} \to \dots \to D_1 \to D_0 = \Delta_*\CO_X
$$
the factors of which are $P_i$. It induces two filtrations on
$(\Delta_*\CO_X)^* \cong (\Delta_*\CO_X)^! \cong \Delta_*\CO_X$,
$$
\Delta_*\CO_X = D_0^* \to D_1^* \to \dots \to D_{n-1}^* \to D_n^* = 0,
\quad\text{and}\quad
\Delta_*\CO_X = D_0^! \to D_1^! \to \dots \to D_{n-1}^! \to D_n^! = 0,
$$
with factors being $P_i^*$ and $P_i^!$ respectively.
It follows that the convolutions $(\Delta_*\CO_X)\circ(\Delta_*\CO_X)^* \cong \Delta_*\CO_X$
and $(\Delta_*\CO_X)\circ S_X\circ (\Delta_*\CO_X)^* \cong (\Delta_*\CO_X)\circ(\Delta_*\CO_X)^!\circ S_X \cong S_X$
have bifiltrations with factors $P_i\circ P_j^*$ and $P_i\circ S_X\circ P_j^* \cong P_i\circ P_j^!\circ S_X$ respectively.
Note that $P_i \in \CA_{iX}$, while $P_j^* \in {}_X(\CA_j^\vee\otimes\omega_X)$ by Lemma~\ref{adjs}
and the same is true for $P_j^!\circ S_X$
%$P_j^!\circ S_X \in {}_X(\CA_j^\vee\otimes\omega_X)$
by Lemma~\ref{adjs} and Lemma~\ref{conv-diag}.
Hence
$$
P_i\circ P_j^*,\
P_i\circ S_X\circ P_j^*,\
P_i\circ P_j^!\circ S_X \in \CA_i\boxtimes (\CA_j^\vee\otimes\omega_X).
$$
Therefore the above bifiltrations are the bifiltrations corresponding to the semiorthogonal decomposition
$$
\D^b(X\times X) = \lan \CA_i\boxtimes (\CA_j^\vee\otimes\omega_X) \ran_{i,j = 1}^n.
$$
By the functoriality of the filtration of a semiorthogonal decomposition we deduce
that every Hochschild homology class $h:\Delta_*\CO_X \to S_X$ is compatible with these filtrations,
and induces on the factors maps
$$
P_i\circ P_j^* \cong P_i\circ\Delta_*\CO_X\circ P_j^* \xrightarrow{\quad \id_{P_i}\circ h\circ\id_{P_j^*}\quad } P_i\circ S_X\circ P_j^* \cong P_i\circ P_j^!\circ S_X.
$$
By Corollary~\ref{so-new} we have $P_i\circ P_j^* = 0$ for $i < j$ and $P_i\circ P_j^! = 0$ for $i > j$.
Therefore the only nontrivial maps on the factors of the filtrations are that for $i = j$,
so the whole map $h:\Delta_*\CO_X \to S_X$ factors as
$$
\Delta_*\CO_X \xrightarrow{\ \oplus \lambda_{P_i}\ }
\bigoplus P_i\circ P_i^* \xrightarrow{\quad \id_{P_i}\circ h\circ\id_{P_j^*}\quad }
\bigoplus P_i\circ S_X\circ P_i^* \cong
\bigoplus P_i\circ P_i^!\circ S_X \xrightarrow{\ \oplus \rho_{P_i}\ }
S_X
$$
which is nothing but the sum of $\phi_{P_i}(h)$. This shows that $\sum \phi_{P_i}(h) = h$, so $\sum \phi_{P_i} = \id$.
The fact that $\phi_{P_i}$ are orthogonal idemptents follows immediately from Lemma~\ref{phik} since
$P_i\circ P_j = 0$ for $i \ne j$ and $P_i\circ P_i \cong P_i$.

$(iii)$
The same argument shows that $\gamma_{P_j}\circ\phi_{P_i} = 0$ for $j \ne i$.
Indeed, $\gamma_{P_j}$ is defined as the left convolution with $P_j$. But $\phi_{P_i}(h)$ factors
through $P_i\circ P_i^*$ and $P_j\circ P_i\circ P_i^* = 0$, so $\gamma_{P_j}\circ\phi_{P_i}$ factors through $0$.
So, it follows that $\Im\phi_{P_i} \subset \bigcap_{j\ne i} \Ker \gamma_{P_j}$, and it remains to check that this
is indeed an equality. But this immediately follows from
$$
\bigoplus_{i=1}^n \Im \phi_{P_i} = \HOH_\bullet(X) = \bigoplus_{i=1}^n \bigcap_{j\ne i} \Ker \gamma_{P_j},
$$
the first equality  follows from $\sum \phi_{P_i} = \id$, and the second follows from $(i)$.
\end{proof}

%Now let $\CA$ be an admissible subcategory in $\D^b(X)$.
%We can include it into a semiorthogonal decomposition of $\D^b(X)$
%in several ways, e.g. $\D^b(X) = \lan \CA,{}^\perp\CA \ran = \lan \CA^\perp,\CA \ran$,
%and actually there may be other ways. For each such decomposition consider the kernel $P$
%of the corresponding projection functor onto $\CA$ and the image of the corresponding
%projector $\phi_P$ in $\HOH_\bullet(X)$.

The claim of the Theorem can be made more precise because of the following

\begin{lemma}\label{phikind}
Take any semiorthogonal decomposition of $\D^b(X)$ containing $\CA$ as a component
and let $P \in \D^b(X\times X)$ be the kernel of the corresponding projection functor.
Then the image of the projector $\phi_P:\HOH_\bullet(X) \to \HOH_\bullet(X)$ does not depend
on the choice of the semiorthogonal decomposition
and the map $\gamma_P:\HOH_\bullet(X) \to \HOH_\bullet(X,P)$ identifies it with $\Hom(P,P\circ S_X)$.
%depends only on the subcategory $\CA \subset \D^b(X)$
%and does not depend on the choice of a semiorthogonal decomposition of $\D^b(X)$.
\end{lemma}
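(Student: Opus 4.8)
The plan is to establish the two assertions separately, reducing each to results already available. \emph{Independence of the subspace $\Im\phi_P$.} Let $P$ and $P'$ be the kernels of the projection functors onto $\CA$ associated with two semiorthogonal decompositions of $\D^b(X)$, each having $\CA$ as a component. The key point is that $\Phi_{P'}$ takes $\D^b(X)$ into $\CA$, while $\Phi_P$ restricts to the identity functor on $\CA$ (the value of a projection functor on an object of $\CA$ is its $\CA$-component, which is the object itself). Hence $\Phi_P\circ\Phi_{P'}\cong\Phi_{P'}$, and by uniqueness of the kernel representing the projection functor onto $\CA$ (Theorem~\ref{sod-kernels}) this gives an isomorphism of kernels $P\circ P'\cong P'$; by symmetry $P'\circ P\cong P$ as well. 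Since for Hochschild homology every kernel is $S$-intertwining of degree $0$ between $S_X$ and $S_X$, the composition law $\phi_{K\circ L}=\phi_K\circ\phi_L$ of Lemma~\ref{phik} applies and yields $\phi_P\circ\phi_{P'}=\phi_{P'}$ and $\phi_{P'}\circ\phi_P=\phi_P$. The first equality gives $\Im\phi_{P'}\subseteq\Im\phi_P$ and the second gives $\Im\phi_P\subseteq\Im\phi_{P'}$, so these two subspaces of $\HOH_\bullet(X)$ coincide.

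\emph{Identification via $\gamma_P$.} Now fix one semiorthogonal decomposition $\D^b(X)=\langle\CA_1,\dots,\CA_n\rangle$ with $\CA=\CA_i$ and with projection kernels $P_1,\dots,P_n$, so that $P=P_i$. By Theorem~\ref{addone}$(i)$ the map $(\gamma_{P_1},\dots,\gamma_{P_n}):\HOH_\bullet(X)\to\bigoplus_j\HOH_\bullet(X,P_j)$ is an isomorphism, and by Theorem~\ref{addone}$(iii)$ we have $\Im\phi_P=\bigcap_{j\ne i}\Ker\gamma_{P_j}$. Under the isomorphism above this subspace is exactly the $i$-th direct summand, so the restriction of $\gamma_{P_i}$ to it is the $i$-th coordinate projection and is therefore an isomorphism onto $\HOH_\bullet(X,P_i)=\HOH_\bullet(X,P)=\Hom^\bullet(P,P\circ S_X)$. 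Combined with the first part this shows that the subspace $\Im\phi_P\subset\HOH_\bullet(X)$ is intrinsic to $\CA$, while the identifying isomorphism $\gamma_P$ onto $\Hom^\bullet(P,P\circ S_X)$ depends on the chosen decomposition only through the kernel $P$.

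Everything here is bookkeeping on top of Theorem~\ref{addone} and Lemma~\ref{phik}, so I do not anticipate a serious obstacle. The one place that requires care is the first paragraph: one must use the facts that $\Phi_{P'}$ lands in $\CA$ and that $\Phi_P$ is the identity on $\CA$ in the correct order so as to obtain \emph{both} kernel identities $P\circ P'\cong P'$ and $P'\circ P\cong P$, and one should verify that the composition law for $\phi$ from Lemma~\ref{phik} is genuinely applicable in the degenerate situation $X=Y=Z$ with all support kernels equal to $S_X$.
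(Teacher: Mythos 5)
Your second paragraph is essentially the paper's own argument: Theorem~\ref{addone}$(i)$ and $(iii)$ immediately give that $\gamma_P$ maps $\Im\phi_P=\bigcap_{j\ne i}\Ker\gamma_{P_j}$ isomorphically onto $\HOH_\bullet(X,P)=\Hom^\bullet(P,P\circ S_X)$, so that part is fine. Your first paragraph, however, takes a genuinely different route from the paper. The paper proves independence by identifying $\Im\phi_P$ with the linear span $\lan\Im\phi_E\ran_{E\in\CA_X}$, which visibly depends only on $\CA$: one inclusion is $P\in\CA_X$, the other uses $L\circ E=0$ for $E\in\CA_X$ (so $\gamma_L\circ\phi_E=0$) together with $\Ker\gamma_L=\Im\phi_P$ from Theorem~\ref{addone}$(iii)$. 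You instead compare two decompositions directly via the kernel identities $P\circ P'\cong P'$ and $P'\circ P\cong P$ and the composition law $\phi_{K\circ L}=\phi_K\circ\phi_L$ of Lemma~\ref{phik}; this works and is arguably more symmetric, but the paper's description has the advantage of being manifestly intrinsic to $\CA$ without any cross-decomposition comparison.

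The one real weakness is how you obtain $P\circ P'\cong P'$. You pass from the isomorphism of functors $\Phi_P\circ\Phi_{P'}\cong\Phi_{P'}$ to an isomorphism of kernels by invoking the uniqueness clause of Theorem~\ref{sod-kernels}. But the uniqueness proved there is uniqueness of $D_i$ and $P_i$ as the semiorthogonal components of $\Delta_*\CO_X$ with respect to the induced decomposition of $\D^b(X\times X)$; it is not the statement that any kernel inducing a functor isomorphic to the projection is isomorphic to $P_i$ (upgrading an isomorphism of kernel functors to an isomorphism of kernels is precisely the kind of step the kernel formalism is meant to avoid). The identities you need are nevertheless true and can be proved by the same device the paper uses for $P_i\circ P_j=0$ and $P_i\circ P_i\cong P_i$ in the proof of Theorem~\ref{addone}: by Proposition~\ref{ki} one has $P'\in\CA_X$; convolving the filtration $0=D_m\to\dots\to D_0=\Delta_*\CO_X$ of the first decomposition on the right with $P'$ yields a filtration of $P'$ with factors $P_j\circ P'$, and for $\CA_j\ne\CA$ one has $P_j\circ(p_1^*A\otimes p_2^*F)\cong p_1^*\alpha_j(A)\otimes p_2^*F=0$ for $A\in\CA$ (base change as in Lemma~\ref{conv-decomp}), hence $P_j\circ P'=0$ since $\CA_X$ is generated by such objects; the only surviving factor gives $P\circ P'\cong P'$, and $P'\circ P\cong P$ follows symmetrically. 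With this replacement your argument is complete.
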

\begin{proof}
Let us show that $\Im\phi_P = \lan \Im \phi_E \ran_{E \in \CA_X}$,
the right-hand-side is the linear span in $\HOH_\bullet(X)$ of images of all maps $\phi_E$
for all kernels $E \in \CA_X \subset \D^b(X\times X)$. Since this depends only on $\CA$,
the first part would follow.

So, to check this we include $\CA$ into a semiorthogonal decomposition,
say $\D^b(X) = \lan \CA,{}^\perp\CA \ran$ and let $P$ and $L$ be the kernels of the corresponding
projection functors onto $\CA$ and ${}^\perp\CA$ respectively. Note that for any $E \in \CA_X$
we have $L\circ E = 0$, hence $\gamma_L\circ\phi_E = 0$. So, $\Im\phi_E \subset \Ker \gamma_L = \Im \phi_P$,
the last equality by Theorem~\ref{addone}~$(iii)$. This shows that the right-hand side is contained
in the left-hand-side. On the other hand, it is clear that $P \in \CA_X$,
hence the left-hand-side is contained in the right-hand-side.

The second part follows immediately from~Theorem~\ref{addone}~$(i)$.
Indeed, the map
$$
\gamma_P \oplus \gamma_L :\HOH_\bullet(X) \to \HOH_\bullet(X,P) \oplus \HOH_\bullet(X,L)
$$
is an isomorphism. Hence $\gamma_P$ gives an isomorphism of $\Ker\gamma_L$ onto $\HOH_\bullet(X,P)$.
But $\Ker\gamma_L = \Im\phi_P$ by~Theorem~\ref{addone}~$(iii)$.
\end{proof}

From now on we {\em identify}\/ the Hochschild homology $\HOH_\bullet(\CA)$
of an admissible subcategory $\CA \subset \D^b(X)$ with the subspace $\Im\phi_P$
of $\HOH_\bullet(X)$. We want to summarize the properties of these subspaces.

\begin{corollary}\label{hohadd}
For any semiorthogonal decomposition $\D^b(X) = \lan \CA_1,\dots,\CA_n \ran$
there is a direct sum decomposition
$\HOH_\bullet(X) = \HOH_\bullet(\CA_1) \oplus \dots \oplus \HOH_\bullet(\CA_n)$.
Moreover, for the kernel $P_i$ of the projection onto $\CA_i$ we have
\begin{enumerate}
\item $\gamma_{P_i}(\HOH_\bullet(\CA_j)) = 0$ for $i\ne j$;
\item $\gamma_{P_i}:\HOH_\bullet(\CA_i) \to \HOH_\bullet(X,P_i)$ is an isomorphism;
\item $\phi_{P_i}$ is a projector onto $\HOH_\bullet(\CA_i)$ along $\oplus_{j\ne i}\HOH_\bullet(\CA_j)$.
\end{enumerate}
\end{corollary}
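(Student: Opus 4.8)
The plan is to deduce the Corollary directly from Theorem~\ref{addone} and Lemma~\ref{phikind}, once we recall the two conventions adopted just above: first, that $\HOH_\bullet(\CA_i)$ \emph{is}, by definition, the subspace $\Im\phi_{P_i}\subset\HOH_\bullet(X)$; and second, that $\HOH_\bullet(X,P_i)=\Hom^\bullet(P_i,P_i\circ S_X)$ is simply the Hochschild homology of $X$ with coefficients in $P_i$, i.e. the case $T=S_X$ of generalized Hochschild cohomology. With these identifications in hand there is essentially nothing new to prove; all the real work was done in Theorem~\ref{addone}, and the Corollary is a matter of assembling its three parts.

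First I would treat the direct sum decomposition together with statement~(3). By Theorem~\ref{addone}~$(ii)$ the operators $\phi_{P_i}:\HOH_\bullet(X)\to\HOH_\bullet(X)$ form a complete system of orthogonal idempotents, i.e. $\phi_{P_i}\circ\phi_{P_j}=\delta_{ij}\phi_{P_j}$ and $\sum_i\phi_{P_i}=\id$. It is then a standard fact of linear algebra — applied degreewise to the graded vector space $\HOH_\bullet(X)$ — that $\HOH_\bullet(X)=\bigoplus_{i=1}^n\Im\phi_{P_i}$ and that each $\phi_{P_i}$ is precisely the projector onto the summand $\Im\phi_{P_i}$ along $\bigoplus_{j\ne i}\Im\phi_{P_j}$. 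Rewriting $\Im\phi_{P_i}=\HOH_\bullet(\CA_i)$ yields both the asserted decomposition $\HOH_\bullet(X)=\HOH_\bullet(\CA_1)\oplus\dots\oplus\HOH_\bullet(\CA_n)$ and statement~(3).

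Next I would handle~(1) and~(2). For~(1), I would invoke the vanishing $\gamma_{P_i}\circ\phi_{P_j}=0$ for $i\ne j$, which is exactly the computation carried out in the proof of Theorem~\ref{addone}~$(iii)$: it follows from $P_i\circ P_j=0$ and the fact that $\gamma_{P_i}$ is left convolution with $P_i$ while $\phi_{P_j}(h)$ factors through $P_j\circ P_j^*$. Applying this to an arbitrary $h\in\HOH_\bullet(\CA_j)=\Im\phi_{P_j}$ gives $\gamma_{P_i}(\HOH_\bullet(\CA_j))=0$. For~(2), I would simply quote the second part of Lemma~\ref{phikind}: the map $\gamma_{P_i}$ restricts to an isomorphism of $\Im\phi_{P_i}=\HOH_\bullet(\CA_i)$ onto $\Hom^\bullet(P_i,P_i\circ S_X)=\HOH_\bullet(X,P_i)$. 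Alternatively,~(2) can be read off from Theorem~\ref{addone}~$(i)$: the isomorphism $(\gamma_{P_1},\dots,\gamma_{P_n}):\HOH_\bullet(X)\xrightarrow{\sim}\bigoplus_i\HOH_\bullet(X,P_i)$ carries the summand $\HOH_\bullet(\CA_i)$ into the $i$-th factor by~(1), hence restricts to an isomorphism there.

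Since every ingredient is already in place, I do not expect any genuine obstacle; the only points requiring care are purely bookkeeping ones — keeping the identifications $\HOH_\bullet(\CA_i)=\Im\phi_{P_i}$ and $\HOH_\bullet(X,P_i)=\Hom^\bullet(P_i,P_i\circ S_X)$ straight, and noting that the "complete system of orthogonal idempotents" argument applies verbatim degree by degree. In short, this Corollary is essentially a restatement of Theorem~\ref{addone} in the language of the subspaces $\HOH_\bullet(\CA_i)\subset\HOH_\bullet(X)$.
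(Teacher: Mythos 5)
Your proposal is correct and follows exactly the route the paper intends: the Corollary is stated there without a separate proof, as a direct summary of Theorem~\ref{addone} (orthogonal idempotents $\phi_{P_i}$, the isomorphism $(\gamma_{P_1},\dots,\gamma_{P_n})$, and $\Im\phi_{P_i}=\bigcap_{j\ne i}\Ker\gamma_{P_j}$) combined with Lemma~\ref{phikind} and the identification $\HOH_\bullet(\CA_i)=\Im\phi_{P_i}$. Your assembly of these ingredients, including the degreewise idempotent argument and the restriction of the isomorphism of Theorem~\ref{addone}~$(i)$ to the summands, is exactly the bookkeeping the paper leaves implicit.
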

%\begin{proof}
%Now, if a semiorthogonal decomposition $\D^b(X) = \lan \CA_1,\dots,\CA_n \ran$ is fixed,
%we have a direct sum decomposition
%$\HOH_\bullet(X) = \HOH_\bullet(\CA_1) \oplus \dots \oplus \HOH_\bullet(\CA_n)$
%by Theorem~\ref{addone}. Finally, the properties~(1) and~(3) also follow from
%Proposition~\ref{philr}, and~(2) follows from Theorem~\ref{addone}.
%\end{proof}

Now we can construct explicit isomorphisms for Hochschild homology.

\begin{theorem}\label{hhhcor}
If a right splitting functor $\Phi_\CE:\D^b(X) \to \D^b(Y)$
gives an equivalence of subcategories $\CA \subset \D^b(X)$ and $\CB \subset \D^b(Y)$
then the map $\phi_\CE:\HOH_\bullet(X) \cong \HOH_\bullet(Y)$ induces an isomorphism
of Hochschild homologies $\HOH_\bullet(\CA) \subset \HOH_\bullet(X)$ and
$\HOH_\bullet(\CB) \subset \HOH_\bullet(Y)$.
\end{theorem}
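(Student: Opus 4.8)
The plan is to realize the subspaces $\HOH_\bullet(\CA)\subset\HOH_\bullet(X)$ and $\HOH_\bullet(\CB)\subset\HOH_\bullet(Y)$ as the images of two idempotents on Hochschild homology which are conjugate under $\phi_\CE$, and then invoke the elementary fact that conjugate idempotents restrict to mutually inverse isomorphisms between their images. Throughout I write $\CE^!\in\D^b(X\times Y)$ for the kernel of the right adjoint $\Phi_\CE^!$, as provided by Lemma~\ref{adjs}.

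The first step is to establish the kernel identities
$$
\CE^!\circ\CE\cong P,\qquad \CE\circ\CE^!\cong Q,
$$
where $P$ is the kernel of the left projection onto $\CA$ and $Q$ the kernel of the right projection onto $\CB$. This is the place where the right-splitting hypothesis is used: since $\Phi_\CE=\beta\circ\xi\circ\alpha^*$, the composite $\Phi_\CE^!\circ\Phi_\CE$ is canonically identified with $\alpha\alpha^*$, the left projection onto $\CA$, and $\Phi_\CE\circ\Phi_\CE^!$ with $\beta\beta^!$, the right projection onto $\CB$; the uniqueness clause of Theorem~\ref{sod-kernels} then forces the claimed isomorphisms of kernels. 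By the identification of the Hochschild homology of an admissible subcategory with $\Im\phi_P$ made just above, together with Theorem~\ref{addone}(ii) applied to the decompositions $\D^b(X)=\langle\CA,{}^\perp\CA\rangle$ and $\D^b(Y)=\langle\CB^\perp,\CB\rangle$, the maps $\phi_P$ and $\phi_Q$ are idempotents, and they are precisely the projectors of $\HOH_\bullet(X)$ onto $\HOH_\bullet(\CA)=\Im\phi_P$ and of $\HOH_\bullet(Y)$ onto $\HOH_\bullet(\CB)=\Im\phi_Q$ (cf.\ Corollary~\ref{hohadd}(3)).

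Next I would feed these kernel identities into the functoriality of Lemma~\ref{phik}: since any kernel is S-intertwining of degree $0$ between the Serre kernels, one has $\phi_{K\circ L}=\phi_K\circ\phi_L$, whence
$$
\phi_{\CE^!}\circ\phi_\CE=\phi_P,\qquad \phi_\CE\circ\phi_{\CE^!}=\phi_Q.
$$
From here the argument is a short diagram chase. For $h\in\HOH_\bullet(\CA)=\Im\phi_P$ we have $h=\phi_P(h)=\phi_{\CE^!}(\phi_\CE(h))$, and therefore $\phi_Q(\phi_\CE(h))=\phi_\CE(\phi_{\CE^!}(\phi_\CE(h)))=\phi_\CE(\phi_P(h))=\phi_\CE(h)$, so $\phi_\CE(h)\in\Im\phi_Q=\HOH_\bullet(\CB)$; symmetrically $\phi_{\CE^!}$ carries $\HOH_\bullet(\CB)$ into $\HOH_\bullet(\CA)$. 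Finally, $\phi_{\CE^!}\circ\phi_\CE=\phi_P$ restricts to the identity on $\Im\phi_P$ and $\phi_\CE\circ\phi_{\CE^!}=\phi_Q$ restricts to the identity on $\Im\phi_Q$, so the restrictions $\phi_\CE\colon\HOH_\bullet(\CA)\to\HOH_\bullet(\CB)$ and $\phi_{\CE^!}\colon\HOH_\bullet(\CB)\to\HOH_\bullet(\CA)$ are mutually inverse, which is the assertion of the theorem.

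The only genuinely non-formal step is the identification $\CE^!\circ\CE\cong P$, $\CE\circ\CE^!\cong Q$: it relies both on the right-splitting property of $\Phi_\CE$ (so that the two composites $\Phi_\CE^!\Phi_\CE$ and $\Phi_\CE\Phi_\CE^!$ are literally the relevant projection functors) and on the uniqueness of the kernels representing projection functors; once this is in hand, everything downstream is the conjugate-idempotent chase above. It is worth noting, incidentally, that $\phi_\CE$ need not be invertible on all of $\HOH_\bullet(X)$ — a splitting functor is not an equivalence of the ambient categories — so the content of the statement is precisely that its restriction to $\HOH_\bullet(\CA)$ is an isomorphism onto $\HOH_\bullet(\CB)$.
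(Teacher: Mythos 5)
Your proposal is correct and follows essentially the same route as the paper's proof: identify $\CE^!\circ\CE$ and $\CE\circ\CE^!$ with the projection kernels via the splitting property and the uniqueness in Theorem~\ref{sod-kernels}, use the functoriality $\phi_{K\circ L}=\phi_K\circ\phi_L$ of Lemma~\ref{phik} to get $\phi_{\CE^!}\circ\phi_\CE=\phi_P$ and $\phi_\CE\circ\phi_{\CE^!}=\phi_Q$, and conclude by Corollary~\ref{hohadd} that these are the projectors onto $\HOH_\bullet(\CA)$ and $\HOH_\bullet(\CB)$. Your explicit conjugate-idempotent chase at the end just spells out the paper's final sentence in more detail.
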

\begin{proof}
Recall that by Lemma~\ref{adjs} there is a kernel $\CE^! \in \D^b(X\times Y)$ which
gives a right adjoint functor to~$\Phi_\CE$. Since $\Phi_\CE$ is a splitting functor, the compositions
$\Phi_{\CE^!}\circ\Phi_\CE$ and $\Phi_\CE\circ\Phi_{\CE^!}$ are the projections functors
onto subcategories $\CA \subset \D^b(X)$ and $\CB \subset \D^b(Y)$ respectively,
so by Theorem~\ref{sod-kernels} we have
$$
P_\CA := \Phi_{\CE^!}\circ\Phi_\CE
\qquad\text{and}\qquad
P_\CB := \Phi_\CE\circ\Phi_{\CE^!}
$$
are the kernels giving these projection functors.
By Lemma~\ref{phik} the functors $\Phi_\CE$ and $\Phi_{\CE^!}$ induce maps
$\phi_\CE:\HOH_\bullet(X) \to \HOH_\bullet(Y)$ and $\phi_{\CE^!}:\HOH_\bullet(Y) \to \HOH_\bullet(X)$.
Moreover, $\phi_{\CE^!}\circ\phi_\CE = \phi_{\CE^!\circ\CE} = \phi_{P_\CA}$,
$\phi_{\CE}\circ\phi_{\CE^!} = \phi_{\CE\circ\CE^!} = \phi_{P_\CB}$,
which by Corollary~\ref{hohadd} are the projectors onto $\HOH_\bullet(\CA) \subset \HOH_\bullet(X)$
and $\HOH_\bullet(\CB) \subset \HOH_\bullet(Y)$ respectively. Hence
$\phi_\CE$ and $\phi_{\CE^!}$ induce isomorphisms of $\HOH_\bullet(\CA)$ and $\HOH_\bullet(\CB)$.
\end{proof}

\subsection{Exact sequences for Hochschild cohomology}

We close this section by a discussion of the relation of the Hochschild cohomology of the components
of a semiorthogonal decomposition of a triangulated category to the its Hochschild cohomology.
%In the case of Hochschild homology the situation is rather simple --- the Additivity Theorem~\ref{hohadd}
%says that whenever $\CA = \langle \CA_1,\CA_2 \rangle$ we have
%$$
%\HOH_\bullet(\CA) = \HOH_\bullet(\CA_1) \oplus \HOH_\bullet(\CA_2),
%$$
%the canonical direct sum decomposition.
%In the case of the cohomology the situation is much more complicated (and so more interesting).

%The following result allow to compute the Hochschild cohomology of a category with a given semiorthogonal decomposition
%can be computed if one knows the Hochschild cohomology of components and the gluing functor.
%For simplicity we consider the case of a 2-component decomposition.

\begin{theorem}\label{chh}
Let $\CA \subset \D^b(X)$ be an admissible subcategory and $\CA = \langle \CA_1,\CA_2 \rangle$
a semiorthogonal decomposition. Let $P,P_1,P_2 \in \D^b(X\times X)$ be the kernels of the
projection functors onto $\CA,\CA_1,\CA_2$. Then there is an exact sequence
\begin{equation}\label{chh1}
\dots \to \HOH^t(\CA) \to \HOH^t(\CA_1) \oplus \HOH^t(\CA_2) \to \Hom^{t+1}(P_1,P_2) \to \HOH^{t+1}(\CA) \to \dots.
\end{equation}
Moreover, let $P'_2$ be the kernel of the left projection onto $\CA_2$ in $\CA$.
Then there is an exact sequence
\begin{equation}\label{chh2}
\dots \to \HOH^t(\CA) \to \HOH^t(\CA_1) \to \Hom^{t+1}(P'_2,P_2) \to \HOH^{t+1}(\CA) \to \dots.
\end{equation}
\end{theorem}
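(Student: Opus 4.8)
The plan is to deduce both sequences from a single distinguished triangle of projection kernels together with two elementary $\Hom$-vanishings furnished by Proposition~\ref{ki}. Since $\CA$ is admissible, $\D^b(X)=\langle\CA^\perp,\CA\rangle=\langle\CA^\perp,\CA_1,\CA_2\rangle$ is a semiorthogonal decomposition of $\D^b(X)$, and by the uniqueness clause of Theorem~\ref{sod-kernels} I may take $P,P_1,P_2$ to be the projection kernels onto $\CA,\CA_1,\CA_2$ computed inside it. The stage of the truncation filtration of $\Delta_*\CO_X$ that separates $\CA_1$ from $\CA_2$ produces a distinguished triangle $P_2\to P\to P_1$ in $\D^b(X\times X)$: here $P$ is the truncation kernel whose factors are $P_2$ and $P_1$, i.e.\ the projection kernel onto $\CA=\langle\CA_1,\CA_2\rangle$, and $P_2$ occurs as the sub-object, being the projection onto the last component. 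By Proposition~\ref{ki}, $P_1\in\CA_1\boxtimes\CB_1^\vee$ and $P_2\in\CA_2\boxtimes\CB_2^\vee$ for the associated dual components, hence $\Hom^\bullet(P_2,P_1)=0$ since $\Hom^\bullet(\CA_2,\CA_1)=0$ by semiorthogonality of $\langle\CA_1,\CA_2\rangle$. Applying $\Hom^\bullet(P_2,-)$ and $\Hom^\bullet(-,P_1)$ to the triangle, and invoking Theorem~\ref{dghh}, we get canonical isomorphisms $\Hom^\bullet(P_2,P)\cong\HOH^\bullet(\CA_2)$ and $\Hom^\bullet(P,P_1)\cong\HOH^\bullet(\CA_1)$, while $\Hom^\bullet(P,P)=\HOH^\bullet(\CA)$.

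For \eqref{chh1} I work with the $\RHom^\bullet$-complexes. Let $r_2\colon\RHom^\bullet(P,P)\to\RHom^\bullet(P_2,P_2)$ and $r_1\colon\RHom^\bullet(P,P)\to\RHom^\bullet(P_1,P_1)$ be the maps induced on the factors of the triangle $P_2\to P\to P_1$ by functoriality of semiorthogonal filtrations (Lemma~\ref{ff}), composed with the identifications above. I compute $\Fiber(r_1,r_2)$ by iterating fibres: $\Fiber(r_2)=\RHom^\bullet(P_1,P)$ (apply $\RHom^\bullet(-,P)$ to the triangle), and the restriction of $r_1$ to it is, modulo the identifications, the morphism $\RHom^\bullet(P_1,P)\to\RHom^\bullet(P_1,P_1)$ induced by $P\to P_1$, whose fibre is $\RHom^\bullet(P_1,P_2)$ (apply $\RHom^\bullet(P_1,-)$ to the triangle). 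Thus there is a distinguished triangle $\RHom^\bullet(P_1,P_2)\to\RHom^\bullet(P,P)\to\RHom^\bullet(P_1,P_1)\oplus\RHom^\bullet(P_2,P_2)\to\RHom^\bullet(P_1,P_2)[1]$, and its long exact cohomology sequence is precisely \eqref{chh1}. (Equivalently, one runs the two two-step filtrations of $P$ in the two arguments of $\RHom^\bullet(-,-)$ simultaneously and reads off the spectral sequence, which degenerates because $\Hom^\bullet(P_2,P_1)=0$.)

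For \eqref{chh2}, note that $\Fiber(r_1)=\RHom^\bullet(P,P_2)$ (apply $\RHom^\bullet(P,-)$ to the triangle). Right-mutating $\CA_1$ past $\CA_2$ yields the semiorthogonal decomposition $\D^b(X)=\langle\CA^\perp,\CA_2,\,{}^\perp\CA_2\cap\CA\rangle$, whose truncation filtration gives a distinguished triangle $L'\to P\to P'_2$, where $L'$ is the projection kernel onto the left orthogonal ${}^\perp\CA_2\cap\CA$ and $P'_2$ is the kernel of the left projection onto $\CA_2$ in $\CA$. By Proposition~\ref{ki} again, the first tensor factor of $L'$ lies in ${}^\perp\CA_2\cap\CA$ while that of $P_2$ lies in $\CA_2$, so $\Hom^\bullet(L',P_2)=0$ by the defining property of the left orthogonal; applying $\Hom^\bullet(-,P_2)$ to $L'\to P\to P'_2$ gives $\RHom^\bullet(P,P_2)\cong\RHom^\bullet(P'_2,P_2)$. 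Combining, there is a distinguished triangle $\RHom^\bullet(P'_2,P_2)\to\RHom^\bullet(P,P)\to\RHom^\bullet(P_1,P_1)\to\RHom^\bullet(P'_2,P_2)[1]$, whose long exact cohomology sequence is \eqref{chh2}.

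The argument is essentially bookkeeping rather than a hard estimate, and the delicate points are all of that nature: fixing the orientation of the triangle $P_2\to P\to P_1$ (which object is the sub and which the quotient in the semiorthogonal filtration); keeping straight which copy of this triangle is inserted into which argument of $\RHom^\bullet(-,-)$; and making the iterated-fibre identification of $\Fiber(r_1,r_2)$ rigorous, most cleanly via two applications of the octahedron axiom. One must also verify that the various projection kernels are genuinely comparable across the several semiorthogonal decompositions used, which is exactly what the uniqueness part of Theorem~\ref{sod-kernels} and the structural results of Section~\ref{kernels} guarantee.
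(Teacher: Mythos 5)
Your proof is correct and takes essentially the same route as the paper: extend to $\D^b(X)=\langle\CA^\perp,\CA_1,\CA_2\rangle$, use the triangle $P_2\to P\to P_1$ together with the vanishing $\Hom^\bullet(P_2,P_1)=0$ to assemble the Mayer--Vietoris-type sequence \eqref{chh1}, and use the triangle $P_3\to P\to P'_2$ (projection onto ${}^\perp\CA_2\cap\CA$) with $\Hom^\bullet(P_3,P_2)=0$ to identify $\Hom^\bullet(P,P_2)\cong\Hom^\bullet(P'_2,P_2)$ and get \eqref{chh2}. You even state the key vanishing with the correct orientation, $\Hom^\bullet(P_2,P_1)=0$, whereas the paper's text writes the transposed $\Hom^\bullet(P_1,P_2)=0$, which is in general nonzero (it is the third term of \eqref{chh1}) and is evidently a typo.
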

\begin{proof}
We extend the semiorthogonal decomposition of $\CA$ to a decomposition of $\D^b(X)$ as
$$
\D^b(X) = \langle \CA^\perp,\CA_1,\CA_2 \rangle.
$$
Then we have a distinguished triangle
$$
P_2 \to P \to P_1 \to P_2[1].
$$
Since $\Hom^\bullet(P_1,P_2) = 0$ (see the proof of Corollary~\ref{so1}), we obtain two exact sequences
$$
\dots \to \Hom^t(P,P) \to \Hom^t(P_1,P_1) \to \Hom^{t+1}(P,P_2) \to \Hom^{t+1}(P,P) \to \dots
$$
and
$$
\dots \to \Hom^t(P,P) \to \Hom^t(P_1,P_1) \oplus \Hom^t(P_2,P_2) \to \Hom^{t+1}(P_1,P_2) \to \Hom^{t+1}(P,P) \to \dots.
$$
Recalling the definition of the Hochschild cohomology of an admissible subcategory,
we see that the second sequence gives~\eqref{chh1}.

Further, by definition of $P'_2$ we have a distinguished triangle
$$
P_3 \to P \to P'_2
$$
where $P_3$ is the projection onto ${}^\perp\CA_2$. In particular, $P_3 \in ({}^\perp\CA_2)_X$,
hence $\Hom^\bullet(P_3,P_2) = 0$. Therefore
$\Hom^\bullet(P,P_2) \cong \Hom^\bullet(P'_2,P_2)$,
so the first sequence gives~\eqref{chh2}.
\end{proof}

\begin{remark}
Let $\alpha_i:\CA_i \to \CA$ be the embedding functors.
Recall the functor
$$
\phi = \alpha_2^*\circ\alpha_1 : \CA_1 \to \CA_2,
$$
known as {\sf the gluing functor}\/ of the semiorthogonal decomposition.
One can show that
$$
\Hom^{\bullet+1}(P_1,P_2) \cong \Hom^\bullet(\phi,\phi),
$$
so the third term of~\eqref{chh1} can be thought of as endomorphisms of the gluing functor.
\end{remark}

\begin{remark}
It would be interesting to give an interpretation of the term $\Hom^\bullet(P_2',P_2)$
of the sequence~\eqref{chh2}. One can show that if $\CA_2 \cong \D^b(Y)$ and the equivalence
is given by a kernel functor $\Phi_K:\D^b(Y) \to \D^b(X)$ then
$\Hom^\bullet(P_2',P_2) \cong \Hom^\bullet(\Delta_*\CO_Y,K^!\circ S_\CA^{-1}\circ K\circ S_Y)$.
In particular, if $Y = \Spec\kk$, so that $K$ considered as an object of $\D^b(X\times\Spec\kk) = \D^b(X)$
is an exceptional object, then $\Hom^\bullet(P_2',P_2) \cong \Hom^t(K,S_{\CA}^{-1}(K))$.
\end{remark}

\section{Examples of Hochschild homology and cohomology of admissible subcategories}\label{s-comp}

Let $X$ be a smooth projective variety of dimension $n$.
We start with the following simple observation.

\begin{proposition}\label{comp-hh}
Let $\CA \subset \D^b(X)$ be an admissible subcategory.
Let $P$ be the kernel of the left projection to $\CA$.
Then
$$
\HOH^\bullet(\CA) = \HH^\bullet(X,\Delta^! P),
\qquad
\HOH_\bullet(\CA) = \HH^\bullet(X,\Delta^* P).
$$
\end{proposition}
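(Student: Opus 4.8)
The plan is to combine the identifications $\HOH^\bullet(\CA)\cong\Hom^\bullet_{X\times X}(P,P)$ and $\HOH_\bullet(\CA)\cong\Hom^\bullet_{X\times X}(P,P\circ S_X)$ supplied by Theorem~\ref{dghh} and Proposition~\ref{hhserre} with the distinguished triangle relating the projection kernels. Concretely, since $\CA$ is admissible we work with the semiorthogonal decomposition $\D^b(X)=\lan\CA,{}^\perp\CA\ran$, for which the projection onto $\CA$ is the left projection, so that its kernel is $P=P_1$. Writing $P_2$ for the kernel of the projection onto ${}^\perp\CA$, Theorem~\ref{sod-kernels} provides (for this two-term decomposition) a distinguished triangle
$$
P_2\to\Delta_*\CO_X\to P\to P_2[1]
$$
in $\D^b(X\times X)$, namely the triangle $D_1\to D_0\to P_1$.

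For the cohomology I would apply $\Hom^\bullet_{X\times X}(-,P)$ to this triangle. The resulting long exact sequence has third term $\Hom^\bullet(P_2,P)$, and this vanishes: by Proposition~\ref{ki} (equivalently, by the construction in the proof of Theorem~\ref{sod-kernels}) one has $P_2\in\CA_{2X}$ and $P=P_1\in\CA_{1X}$, and $\lan\CA_{1X},\CA_{2X}\ran$ is a semiorthogonal decomposition of $\D^b(X\times X)$. Hence $\Hom^\bullet(P,P)\cong\Hom^\bullet(\Delta_*\CO_X,P)$, and the adjunction $\Delta_*\dashv\Delta^!$ identifies the right-hand side with $\Hom^\bullet_X(\CO_X,\Delta^!P)=\HH^\bullet(X,\Delta^!P)$, which is the first formula.

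For the homology I would apply instead $\Hom^\bullet_{X\times X}(-,P\circ S_X)$ to the same triangle; now the third term is $\Hom^\bullet(P_2,P_1\circ S_X)$, which vanishes by Corollary~\ref{so}. Thus $\Hom^\bullet(P,P\circ S_X)\cong\Hom^\bullet(\Delta_*\CO_X,P\circ S_X)\cong\HH^\bullet(X,\Delta^!(P\circ S_X))$, again by $\Delta_*\dashv\Delta^!$. It then remains to compute $\Delta^!(P\circ S_X)$: by Lemma~\ref{conv-diag} we have $P\circ S_X\cong P\otimes p_2^*\omega_X[\dim X]$, while the twisted pullback along the regular diagonal embedding satisfies $\Delta^!N\cong\Delta^*N\otimes\omega_X^{-1}[-\dim X]$ for $N$ perfect (the case $N=\CO_{X\times X}$ being the identity $\Delta^!\CO_{X\times X}\cong\omega_X^{-1}[-\dim X]$ already used in Lemma~\ref{hohserre}); combining the two gives $\Delta^!(P\circ S_X)\cong\Delta^*P$, and hence the second formula.

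There is no genuine obstacle here. Once one chooses the decomposition $\lan\CA,{}^\perp\CA\ran$, so that the defining triangle of the projection kernels is available and has $P$ itself as its last term, both statements reduce to vanishing results already in hand — semiorthogonality of $\lan\CA_{1X},\CA_{2X}\ran$ for the cohomology, and Corollary~\ref{so} for the homology — together with the elementary identity $\Delta^!(P\circ S_X)\cong\Delta^*P$. The only point to watch is precisely this choice of semiorthogonal decomposition (with $\CA$ placed first), which is what makes the two $\Hom$-groups $\Hom^\bullet(P_2,P)$ and $\Hom^\bullet(P_2,P_1\circ S_X)$ vanish.
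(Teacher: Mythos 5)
Your proof is correct and is essentially the paper's own argument: the paper likewise uses the decomposition $\lan\CA,{}^\perp\CA\ran$, the triangle $R\to\Delta_*\CO_X\to P$ (your $P_2$ is its $R$), the vanishings $\Hom^\bullet(R,P)=\Hom^\bullet(R,P\circ S_X)=0$ via Proposition~\ref{ki} and Corollary~\ref{so}, the adjunction $\Delta_*\dashv\Delta^!$, and the identity $\Delta^!(P\circ S_X)\cong\Delta^*P$. No substantive difference.
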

\begin{proof}
Indeed, let $R$ be the kernel of the right projection onto ${}^\perp\CA$.
Then we have a distinguished triangle $R \to \Delta_*\CO_X \to P$.
On the other hand, $\Hom^\bullet(R,P) = \Hom^\bullet(R,P\circ S_X) = 0$ by Proposition~\ref{ki} and Corollary~\ref{so}.
Hence we have
$$
\begin{array}{l}
\Hom^\bullet(P,P) \cong \Hom^\bullet(\Delta_*\CO_X,P) \cong \Hom^\bullet(\CO_X,\Delta^!P),\\
\Hom^\bullet(P,P\circ S_X) \cong \Hom^\bullet(\Delta_*\CO_X,P\circ S_X) \cong \Hom^\bullet(\CO_X,\Delta^!(P\circ S_X)).
\end{array}
$$
and it remains to note that $\Delta^!(P\circ S_X) \cong \Delta^!(P\otimes p_2^*\omega_X[\dim X]) \cong \Delta^*P$.
\end{proof}

In the case $\CA = \D^b(X)$ we obtain
$$
\HOH^\bullet(X) = \HH^\bullet(X,\Delta^!\Delta_*\CO_X),
\qquad
\HOH_\bullet(X) = \HH^\bullet(X,\Delta^*\Delta_*\CO_X).
$$
To compute the RHS explicitly one uses the Atiyah classes.
Recall that {\sf the universal Atiyah class of $X$}\/ is the morphism
$$
\At:\Delta_*\CO_X \to \Delta_*\Omega_X[1]
$$
corresponding to the extension
\begin{equation}\label{at-seq}
0 \to I/I^2 \to \CO_{X\times X}/I^2 \to \Delta_*\CO_X \to 0,
\end{equation}
where $I \subset \CO_{X\times X}$ is the sheaf of the ideals of the diagonal
(note that $I/I^2$ is the conormal bundle for the diagonal, so it is canonically
isomorphic to $\Delta_*\Omega_X$).

\begin{remark}\label{at-e}
Considering $\Delta_*\CO_X$ and $\Delta_*\Omega_X[1]$ as kernels, we see that
the universal Atiyah class induces a morphism of kernel functors
$\Phi_{\Delta_*\CO_X} \to \Phi_{\Delta_*\Omega_X[1]}$. Evaluating it on an object $E \in \D^b(X)$
we obtain a map $\At_E:E \to E\otimes\Omega_X[1]$ which is known as {\sf the Atiyah class of $E$}.
Note that $\At_E$ is functorial with respect to $E$.
Note also that for a trivial vector bundle $E = \CO_X$ the Atiyah class $\At_{\CO_X}$ is zero.
Indeed to check this one should take the pushforward to $X$ of the sequence~\eqref{at-seq}
and to check that it splits (a splitting is given by the morphism $\CO_X \to p_{1*}(\CO_{X\times X}/I^2)$
corresponding by adjunction to the map $p_1^*\CO_X = \CO_{X\times X} \to \CO_{X\times X}/I^2$).
\end{remark}

Iterating the Atiyah class we obtain the maps
$$
\At^p:\Delta_*\CO_X \to \Delta_*\Omega^p_X[p],
\qquad
\At^p:\Delta_*\Omega^{n-p}_X[n-p] \to \Delta_*\omega^n_X[n],
$$
for all $p$. By adjunction they give maps $\Delta^*\Delta_*\CO_X \to \Omega^p_X[p]$
and $\Lambda^pT_X[-p] \to \Delta^!\Delta_*\CO_X$.

\begin{theorem}[\cite{Ma,Ma2}]\label{hkr}
The maps
$$
\xymatrix@1{\Delta^*\Delta_*\CO_X \ar[rr]^-{\oplus \At^p} && \bigoplus\limits_{p=0}^n \Omega^p_X[p]},
\qquad
\xymatrix@1{\bigoplus\limits_{p=0}^n \Lambda^pT_X[-p] \ar[rr]^-{\oplus \At^p} && \Delta^!\Delta_*\CO_X}
$$
are isomorphisms. In particular
$$
\HOH^\bullet(X) \cong \bigoplus_{p=0}^n H^{t-p}(X,\Lambda^pT_X),
\qquad
\HOH_\bullet(X) \cong \bigoplus_{p=0}^n H^{p+t}(X,\Omega^p_X).
$$
\end{theorem}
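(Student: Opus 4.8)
The plan is to deduce everything from the two displayed isomorphisms of objects of $\D^b(X)$. Indeed, since $\HOH^\bullet(X) = \HH^\bullet(X,\Delta^!\Delta_*\CO_X)$ and $\HOH_\bullet(X) = \HH^\bullet(X,\Delta^*\Delta_*\CO_X)$, the two cohomology and homology formulas follow by taking hypercohomology and using $\HH^t(X,\Omega^p_X[p]) = H^{t+p}(X,\Omega^p_X)$ and $\HH^t(X,\Lambda^pT_X[-p]) = H^{t-p}(X,\Lambda^pT_X)$. Moreover, the two object-level isomorphisms are interchanged by Grothendieck duality: $\Delta$ is a regular embedding of codimension $n$ with normal bundle $T_X$, so $\Delta^!F \cong \Delta^*F\otimes\omega_X^{-1}[-n]$, and under the perfect pairing $\Omega^p_X\otimes\Omega^{n-p}_X\to\omega_X$ (which gives $\Omega^p_X\otimes\omega_X^{-1}\cong\Lambda^{n-p}T_X$) the two Atiyah maps correspond. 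So it is enough to prove that $\oplus\At^p:\Delta^*\Delta_*\CO_X \to \bigoplus_{p=0}^n\Omega^p_X[p]$ is a quasi-isomorphism.

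This is a local statement: a morphism in $\D^b(X)$ is an isomorphism if and only if it is one stalk-locally, so I may assume there are local coordinates $x_1,\dots,x_n$ on $X$. On $X\times X$ the diagonal is then cut out by the regular sequence $(x_i-y_i)_{i=1}^n$, and the corresponding Koszul complex $\Kosz^\bullet$ (with generators $e_i$ in degree $-1$ mapping to $x_i-y_i$) is a finite locally free resolution of $\Delta_*\CO_X$. Since applying $\Delta^*$ amounts to tensoring over $\CO_{X\times X}$ with $\CO_X$, which kills every $x_i-y_i$, the complex $\Delta^*\Kosz^\bullet$ has zero differential, giving a canonical identification
$$\Delta^*\Delta_*\CO_X \;\cong\; \bigoplus_{p=0}^n \Lambda^p\bigl(\textstyle\bigoplus_{i=1}^n \CO_X e_i\bigr)[p],$$
which becomes $\bigoplus_p\Omega^p_X[p]$ under $e_i\mapsto dx_i$. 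In particular $\CH^{-p}(\Delta^*\Delta_*\CO_X)\cong\Omega^p_X$; globally these cohomology sheaves form the exterior algebra $\Lambda^\bullet_{\CO_X}\Omega_X$, since $\CH^{-1}=\Tor_1^{\CO_{X\times X}}(\CO_X,\CO_X)\cong I/I^2\cong\Omega_X$ ($I$ the ideal of the diagonal) and $\Delta^*\Delta_*\CO_X$ is a commutative DG-algebra.

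Because $\oplus\At^p$ is a globally defined morphism between objects with identical cohomology sheaves, it suffices to show it induces an isomorphism $\CH^{-p}(\Delta^*\Delta_*\CO_X)\to\Omega^p_X$ for every $p$, i.e.\ that $\At^p$ is an isomorphism $\Lambda^p\Omega_X\to\Omega^p_X$. The family $\{\At^p\}$ is obtained by iterating and wedging the degree-one universal Atiyah class, and by its multiplicativity it induces on cohomology sheaves the graded $\CO_X$-algebra homomorphism $\Lambda^\bullet\Omega_X\to\bigoplus_p\Omega^p_X$ whose degree-one component is $\At^1|_{\CH^{-1}}$; hence $\At^p|_{\CH^{-p}}=\Lambda^p(\At^1|_{\CH^{-1}})$, and it is enough to treat $p=1$. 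Here one unwinds the definition: $\At^1$ is adjoint to the class in $\Ext^1_{X\times X}(\Delta_*\CO_X,\Delta_*\Omega_X)$ of the conormal sequence $0\to I/I^2\to\CO_{X\times X}/I^2\to\Delta_*\CO_X\to 0$. Representing that class via the Koszul resolution --- lift $\Kosz^0=\CO_{X\times X}\to\Delta_*\CO_X$ to the quotient map $\CO_{X\times X}\to\CO_{X\times X}/I^2$ and pass to $I/I^2$ --- one finds $\At$ represented by the chain map sending $e_i\mapsto dx_i$ in degree $-1$, so that after applying $\Delta^*$ and the counit $\Delta^*\Delta_*\Omega_X\to\Omega_X$ the induced map on $\CH^{-1}=\bigoplus_i\CO_X e_i$ is $e_i\mapsto dx_i$, an isomorphism. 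Therefore each $\At^p$ is an isomorphism on $\CH^{-p}$ and $\oplus\At^p$ is a quasi-isomorphism.

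The step I expect to be the main obstacle is the last one: pinning down, from the definition of the universal Atiyah class through the conormal sequence and the Koszul model, that $\At^1$ acts on $\CH^{-1}$ as an isomorphism, together with making the multiplicativity of $\{\At^p\}$ on cohomology sheaves precise enough to pass from $p=1$ to all $p$ (alternatively one may trace the iterated Atiyah class directly through the Koszul model, at the cost of a longer chain-level computation). Everything else --- the local Koszul identification, the hypercohomology bookkeeping, and the duality between the two statements --- is routine.
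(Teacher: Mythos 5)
The paper does not prove this statement at all: Theorem~\ref{hkr} is quoted from Markarian \cite{Ma,Ma2} (it is the Hochschild--Kostant--Rosenberg isomorphism in the form ``HKR is realized by the exponential of the universal Atiyah class''), so there is no internal proof to compare yours against. What you propose is essentially the standard argument of those references: identify $\Delta^*\Delta_*\CO_X$ locally via the Koszul resolution of the diagonal, observe that the differential dies after restriction so the cohomology sheaves are $\Omega^p_X$, and then check that the globally defined map $\oplus\At^p$ induces isomorphisms on cohomology sheaves, which is a local question settled by the explicit Koszul representative of the conormal extension in degree one. Your hypercohomology bookkeeping matches the paper's own reformulation $\HOH^\bullet(X)=\HH^\bullet(X,\Delta^!\Delta_*\CO_X)$, $\HOH_\bullet(X)=\HH^\bullet(X,\Delta^*\Delta_*\CO_X)$, and the twist $\Delta^!F\cong\Delta^*F\otimes\omega_X^{-1}[-n]$ you use is the same one the paper uses in Proposition~\ref{comp-hh}.

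Two points that you assert rather than prove are exactly where the real content of \cite{Ma,Ma2} sits, and you should not present them as routine. First, the passage from $p=1$ to general $p$: the claim $\At^p|_{\CH^{-p}}=\Lambda^p\bigl(\At^1|_{\CH^{-1}}\bigr)$ requires knowing that the iterated/antisymmetrized Atiyah class is compatible with the Tor-algebra structure on $\CH^{-\bullet}(\Delta^*\Delta_*\CO_X)$; this is true, but it is a genuine chain-level (or convolution-level) verification, and it is also precisely where the characteristic-zero hypothesis of the paper enters (the antisymmetrization introduces factorials, harmless here only because they are nonzero scalars). Second, the reduction of the $\Delta^!$-statement to the $\Delta^*$-statement by Grothendieck duality needs an actual check that the map $\bigoplus\Lambda^pT_X[-p]\to\Delta^!\Delta_*\CO_X$ of the theorem corresponds, under $(\Delta_*\CO_X)^\vee\cong\Delta_*\omega_X^{-1}[-n]$ and the pairing $\Omega^p_X\otimes\Omega^{n-p}_X\to\omega_X$, to the dual of the first map; alternatively one can avoid this entirely by running the same Koszul computation directly for $\Delta^!$. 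With those two steps filled in, your argument is a correct proof of the cited theorem.
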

The isomorphisms of the Theorem are known as the {\sf Hochschild-Kostant-Rosenberg}\/
({\sf HKR}\/ for short) {\sf isomorphisms}.

\begin{corollary}\label{hohk}
Let $E \in \D^b(X)$ be an exceptional object. Then $\HOH_\bullet(\lan E\ran) = \HOH^\bullet(\lan E\ran) = \kk$.
\end{corollary}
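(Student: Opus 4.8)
The plan is to compute directly using the description of Hochschild (co)homology of an admissible subcategory as $\Hom^\bullet(P,P)$ and $\Hom^\bullet(P,P\circ S_X)$, where $P$ is the kernel of the projection functor onto $\langle E\rangle$. An exceptional object $E$ means $\Hom^\bullet(E,E)=\kk$ (concentrated in degree $0$), so the subcategory $\langle E\rangle$ is equivalent to $\D^b(\Spec\kk)$ via the functor sending a vector space $V$ to $V\otimes E$. Its right adjoint is $F\mapsto \Hom^\bullet(E,F)$. Composing, the projection functor onto $\langle E\rangle$ is $F\mapsto \Hom^\bullet(E,F)\otimes E$, whose kernel is $P \cong E \boxtimes E^\vee$ (more precisely $E^\vee\boxtimes E$ or $E\boxtimes E^\vee$ depending on the ordering of factors; I will fix the convention so that $P = p_1^*E\otimes p_2^*E^\vee$, which indeed represents $F\mapsto p_{1*}(p_1^*E\otimes p_2^*E^\vee\otimes p_2^*F)\cong E\otimes \RGamma(X,E^\vee\otimes F)\cong E\otimes\Hom^\bullet(E,F)$).

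First I would compute $\HOH^\bullet(\langle E\rangle) = \Hom^\bullet_{X\times X}(P,P)$. With $P = p_1^*E\otimes p_2^*E^\vee$ we get
$$
\Hom^\bullet(P,P) \cong \HH^\bullet(X\times X, P^\vee\otimes P) \cong \HH^\bullet(X\times X, p_1^*(E^\vee\otimes E)\otimes p_2^*(E\otimes E^\vee)),
$$
and by the Künneth formula this equals $\Hom^\bullet(E,E)\otimes_\kk\Hom^\bullet(E,E) \cong \kk\otimes_\kk\kk = \kk$. Alternatively, and more cleanly, since $\Phi_\CE\colon \D^b(\Spec\kk)\to\D^b(X)$, $V\mapsto V\otimes E$, is a right splitting functor giving an equivalence of $\D^b(\Spec\kk)$ with $\langle E\rangle$, Theorem~\ref{extcor} yields $\HOH^\bullet(\langle E\rangle)\cong\Hom^\bullet(\CE,\CE)$ where $\CE = E\in\D^b(X\times\Spec\kk) = \D^b(X)$, so $\HOH^\bullet(\langle E\rangle)\cong\Hom^\bullet(E,E) = \kk$. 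For the homology, Theorem~\ref{hhhcor} gives $\HOH_\bullet(\langle E\rangle)\cong\HOH_\bullet(\Spec\kk)$, and $\HOH_\bullet(\Spec\kk) = \Hom^\bullet_{\Spec\kk}(\CO,S_{\Spec\kk}) = \kk$ since the Serre functor of $\D^b(\Spec\kk)$ is the identity. One can also verify this directly: $\HOH_\bullet(\langle E\rangle)\cong\Hom^\bullet(P,P\circ S_X)$, and $P\circ S_X\cong P\otimes p_2^*\omega_X[n] = p_1^*E\otimes p_2^*(E^\vee\otimes\omega_X[n])$, so again by Künneth this is $\Hom^\bullet(E,E)\otimes\Hom^\bullet(E,E^\vee\otimes\omega_X[n])$, and the second factor is $\Hom^\bullet(E,E)^\vee = \kk$ by Serre duality, giving $\kk$.

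The only real point requiring care is that the equivalence $\langle E\rangle\cong\D^b(\Spec\kk)$ is geometric, i.e.\ induced by a kernel functor, so that Theorems~\ref{extcor} and~\ref{hhhcor} apply; but this is immediate since the functor $V\mapsto V\otimes E$ is visibly the kernel functor $\Phi_E$ with $E\in\D^b(X) = \D^b(X\times\Spec\kk)$, and it is a right splitting functor because its right adjoint $\Hom^\bullet(E,-)$ composed with it is the projection onto $\langle E\rangle$ while the composition in the other order is the identity of $\D^b(\Spec\kk)$. No step is genuinely hard; the direct Künneth computation is entirely routine once the shape of $P$ is identified, and identifying $P = p_1^*E\otimes p_2^*E^\vee$ is a one-line application of the definitions. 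I would present the direct Künneth argument as the main proof and mention the functorial argument via Theorems~\ref{extcor} and~\ref{hhhcor} as an alternative.
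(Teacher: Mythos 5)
Your argument is correct. Note, however, that the paper offers no written proof of this corollary at all: it is treated as immediate from the definition of Hochschild (co)homology of an admissible subcategory, since $\lan E\ran$ is generated by $E$ with $\RHom^\bullet(E,E)=\kk$, so $C_{\lan E\ran}=\kk$ and $\HOH^\bullet(\kk)=\HOH_\bullet(\kk)=\kk$. You instead run the machinery of Theorem~\ref{dghh} and Proposition~\ref{hhserre} on the explicit projection kernel $P=E\boxtimes E^\vee$ (which is indeed the kernel of the right projection onto $\lan E\ran$, as the paper itself uses later in the proof of Theorem~\ref{chhort2}) and finish by K\"unneth and Serre duality; your alternative route through Theorems~\ref{extcor} and~\ref{hhhcor}, viewing $V\mapsto V\otimes E$ as a right splitting kernel functor from $\D^b(\Spec\kk)$, is also legitimate. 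What your computation buys is an explicit geometric verification, consistent with the viewpoint of Section~\ref{s-comp}; what the implicit DG argument buys is brevity and independence of any kernel computation. One small slip: in the homology computation the second K\"unneth factor should be $\Hom^\bullet(E^\vee,E^\vee\otimes\omega_X[\dim X])$, not $\Hom^\bullet(E,E^\vee\otimes\omega_X[\dim X])$; with this correction Serre duality gives exactly $\Hom^\bullet(E^\vee,E^\vee)^\vee\cong\kk$ as you conclude, so the argument stands.
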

%\begin{proof}
%Indeed, the projection functor to $\lan E\ran$ is given by the object $P = E\boxtimes E^\vee \in \D^b(X\times X)$,
%hence $\Delta^*P \cong E\otimes E^\vee$ and $\Delta^!P \cong E\otimes E^\vee\otimes\omega_X[n]$.
%Thus
%$$
%HOH_\bullet(\lan E\ran) = H^\bullet(X,E\otimes E^\vee) = \Ext^\bullet(E,E) = \kk
%$$
%since $E$ is exceptional, and
%$$
%\HOH^\bullet(\lan E\ran) = H^\bullet(X,E\otimes E^\vee\otimes\omega_X[n]) = (H^\bullet(X,E\otimes E^\vee))^\vee = \kk
%$$
%by Serre duality. Alternatively, we could use the fact that $\lan E\ran \cong \D^b(\Spec(\kk))$,
%and applying Theorem~\ref{hhhcor} and Theorem~\ref{extcor} conclude that
%$\HOH_\bullet(\lan E\ran) = \HOH_\bullet(\Spec(\kk)) = \kk$,
%$\HOH^\bullet(\lan E\ran) = \HOH^\bullet(\Spec(\kk)) = \kk$.
%\end{proof}

The goal of this section is to find a generalization
of this result for some admissible subcategories.
% of $\D^b(X)$.

\subsection{The orthogonal to the structure sheaf}

The following result applies to any Fano variety.

\begin{theorem}\label{chh-operp}
Assume that $X$ is a smooth projective variety such that
$\CO_X$ is an exceptional bundle.
%{\rm(}e.g. $X$ is a Fano variety{\rm)}.
Then we have
$$
%\HOH_t(\CO_X^\perp) = \begin{cases} \HOH_t(X), & \text{if $t \ne 0$}\\ \HOH_0(X)/\kk, & \text{if $t = 0$}\end{cases}
\HOH_\bullet(\CO_X^\perp) = \HOH_\bullet(X) / \kk,
\qquad
\HOH^t(\CO_X^\perp) = \oplus_{p=0}^{n-1} H^{t-p}(X,\Lambda^pT_X).
$$
%Assume that $E$ is an exceptional vector bundle.
%$$
%\HOH_t(E^\perp) = \begin{cases} \HOH_t(X), & \text{if $t \ne 0$}\\ \HOH_0(X)/\kk, & \text{if $t = 0$}\end{cases}
%\qquad
%\HOH^t(E^\perp) = \oplus_{p=0}^{n-1} H^{t-p}(X,\Lambda^pT_X).
%$$
\end{theorem}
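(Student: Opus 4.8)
The plan is to exhibit $\CO_X^\perp$ as a component of a two‑term semiorthogonal decomposition, describe the kernel of the corresponding projection functor explicitly, and then feed it into Proposition~\ref{comp-hh}. Since $\CO_X$ is exceptional, $\langle\CO_X\rangle$ is an admissible subcategory and $\D^b(X)=\langle\CO_X^\perp,\langle\CO_X\rangle\rangle$ with $\langle\CO_X\rangle={}^\perp(\CO_X^\perp)$, so the projection onto $\CO_X^\perp$ in this decomposition is the left projection of Proposition~\ref{comp-hh}; let $P\in\D^b(X\times X)$ be its kernel. The complementary projection, onto $\langle\CO_X\rangle$, is the functor $T\mapsto\RGamma(X,T)\otimes\CO_X=p_{1*}p_2^*T=\Phi_{\CO_{X\times X}}(T)$, so by the uniqueness assertion of Theorem~\ref{sod-kernels} its kernel is $\CO_{X\times X}$, and there is a distinguished triangle in $\D^b(X\times X)$
$$
\CO_{X\times X}\xrightarrow{\ \nu\ }\Delta_*\CO_X\longrightarrow P ,
$$
in which $\nu$ is the canonical morphism; up to a nonzero scalar it is the unit of the adjunction $(\Delta^*,\Delta_*)$ at $\CO_{X\times X}$, since $\Hom^0(\CO_{X\times X},\Delta_*\CO_X)=H^0(X,\CO_X)=\kk$ is one-dimensional.

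Next I would compute $\Delta^*P$. Applying $\Delta^*$ to this triangle and using the Hochschild--Kostant--Rosenberg isomorphism $\Delta^*\Delta_*\CO_X\cong\bigoplus_{p=0}^n\Omega^p_X[p]$ of Theorem~\ref{hkr}, everything reduces to understanding the map $\Delta^*\nu\colon\CO_X=\Delta^*\CO_{X\times X}\to\bigoplus_{p=0}^n\Omega^p_X[p]$. Its $p=0$ component is, up to a nonzero scalar, the composition of $\Delta^*\nu$ with the counit $\epsilon\colon\Delta^*\Delta_*\CO_X\to\CO_X$, which equals $\id_{\CO_X}$ by the triangle identity for $(\Delta^*,\Delta_*)$; and $\epsilon$ is precisely the degree-zero HKR projection, because its components $\Omega^p_X[p]\to\CO_X$ for $p\ge1$ lie in $\Ext^{-p}(\Omega^p_X,\CO_X)=0$. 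Hence $\Delta^*\nu$ is a split monomorphism whose retraction is the projection onto the $p=0$ summand, so its cone is the complementary summand:
$$
\Delta^*P\cong\bigoplus_{p=1}^n\Omega^p_X[p].
$$
Since $\Delta^!F\cong\Delta^*F\otimes\omega_X^{-1}[-n]$ (the relative dualizing complex of the diagonal is $\omega_X^{-1}[-n]$, as $\det T_X=\omega_X^{-1}$) and $\Omega^p_X\otimes\omega_X^{-1}\cong\Lambda^{n-p}T_X$, tensoring yields
$$
\Delta^!P\cong\bigoplus_{p=1}^n\Lambda^{n-p}T_X[p-n]\cong\bigoplus_{q=0}^{n-1}\Lambda^qT_X[-q].
$$

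Finally I would apply Proposition~\ref{comp-hh}, which gives $\HOH^\bullet(\CO_X^\perp)=\HH^\bullet(X,\Delta^!P)$ and $\HOH_\bullet(\CO_X^\perp)=\HH^\bullet(X,\Delta^*P)$. The first identity yields $\HOH^t(\CO_X^\perp)=\bigoplus_{p=0}^{n-1}H^{t-p}(X,\Lambda^pT_X)$ at once. The second gives $\HOH_\bullet(\CO_X^\perp)=\bigoplus_{p=1}^nH^{\bullet+p}(X,\Omega^p_X)$; comparing with $\HOH_\bullet(X)=\bigoplus_{p=0}^nH^{\bullet+p}(X,\Omega^p_X)$ of Theorem~\ref{hkr}, and noting that the missing $p=0$ term is $H^\bullet(X,\CO_X)=\kk$, concentrated in degree $0$ because $\CO_X$ is exceptional, one gets $\HOH_\bullet(\CO_X^\perp)=\HOH_\bullet(X)/\kk$. (Alternatively, the homology statement follows in one line from the additivity of Hochschild homology, Corollary~\ref{hohadd}, together with $\HOH_\bullet(\langle\CO_X\rangle)=\kk$ from Corollary~\ref{hohk}.) The only step that requires care is the identification of $\Delta^*\nu$ with the degree-zero inclusion under HKR; as indicated, this reduces to the triangle identity $\epsilon\circ\Delta^*\nu=\id_{\CO_X}$ and the vanishing of negative $\Ext$'s, after which the rest is routine bookkeeping with the HKR decomposition and Grothendieck duality along $\Delta$.
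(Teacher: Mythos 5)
Your proposal is correct and is essentially the paper's own argument: the same decomposition $\D^b(X)=\langle\CO_X^\perp,\CO_X\rangle$, the same kernel triangle $\CO_{X\times X}\to\Delta_*\CO_X\to P$, and the same reduction via Proposition~\ref{comp-hh} together with the HKR isomorphism of Theorem~\ref{hkr}, with the homology statement also available (as you note parenthetically) from additivity plus Corollary~\ref{hohk}, which is exactly what the paper does. The only local difference is how the connecting map is controlled under HKR: you use the triangle identity for $(\Delta^*,\Delta_*)$ and the vanishing of negative $\Ext$'s, which suffices since only invertibility of the degree-zero component is needed to split off that summand, whereas the paper invokes the vanishing of the Atiyah class of $\CO_X$ to see that the map becomes (after $\Delta^!$) an isomorphism onto the summand $\Lambda^nT_X[-n]$; both justifications are valid.
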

\begin{proof}
The first is evident by the Additivity Theorem and Proposition~\ref{hohk}. So we have to check the second.
By Proposition~\ref{comp-hh} we have to compute $\Delta^!P$, where $P$
is the kernel of the left projection onto~$\CO_X^\perp$.
Since the kernel of the right projection onto $\langle \CO_X \rangle$
is given by the sheaf $\CO_X\boxtimes\CO_X \in \D^b(X\times X)$, we conclude
that we have a distinguished triangle
$$
\CO_X\boxtimes\CO_X \to \Delta_*\CO_X \to P.
$$
Applying the functor $\Delta^!$ and taking into account Theorem~\ref{hkr} we obtain the triangle
$$
\omega_X^{-1}[-n] \to \mathop{\oplus}_{p=0}^n \Lambda^pT_X[-p] \to \Delta^!P.
$$
Since the HKR isomorphism is given by the universal Atiyah class,
and the Atiyah class of $\CO_X$ is trivial, it follows that
the first map in the above triangle is an isomorphism onto the $n$-th summand.
Thus $\Delta^!P \cong \mathop{\oplus}_{p=0}^{n-1} \Lambda^pT_X[-p]$ and we are done.
\end{proof}

\subsection{The orthogonal to an exceptional pair}

Now assume that there is an exceptional vector bundle $E$ on $X$ right orthogonal to $\CO_X$,
that is $E \in \CO_X^\perp$. Then the same argument can be applied once again.
Before stating the result we introduce some notation.

Let $V^\bullet = \HH^\bullet(X,E^\vee)^\vee$ and let $E^\perp$ be the left mutation of $E^\vee$ through $\CO_X$ shifted by $-1$.
By Theorem~\ref{muts} we have the following distinguished triangles
$$
E^\perp \to V^\vee\otimes\CO_X \to E^\vee,
\qquad
E \to V\otimes\CO_X \to E^{\perp\vee}.
$$
Recall that by functoriality of the Atiyah class we have a commutative diagram
$$
\xymatrix{
&
E \ar[r] \ar[d]^{\At_{E}} \ar@{..>}[dl]_{\alpha_E} &
V\otimes\CO_X \ar[d]^{\At_{V\otimes\CO_X}} \\
E^{\perp\vee}\otimes\Omega_X \ar[r] &
E\otimes\Omega_X[1] \ar[r] &
V\otimes\Omega_X[1]
}
%\xymatrix{
%V^\vee\otimes\CO_X \ar[r] \ar[d]_{\At_{V^\vee\otimes\CO_X}} &
%E^\vee \ar[r] \ar[d]_{\At_{E^\vee}} & E^\perp[1] \ar@{..>}[dl]^{\alpha_E} \\
%V^\vee\otimes\Omega_X[1] \ar[r] &
%E^\vee\otimes\Omega_X[1]
%}
$$
But $\At_{V\otimes\CO_X} = 0$ (see Remark~\ref{at-e}), so it follows that the Atiyah class $\At_{E}$ factors
as a composition $E \to E^{\perp\vee}\otimes\Omega_X \to E\otimes\Omega_X[1]$ for some
$\alpha_E \in \Hom(E,E^{\perp\vee}\otimes\Omega_X) \cong \Hom(E^\perp\otimes E,\Omega_X)$
(see the proof of Theorem~\ref{chhort2} for a more invariant construction of $\alpha_E$).
%But $\At_{V^\vee\otimes\CO_X} = 0$ since this is a pullback from a point,
%so it follows that $\At_{E^\vee}$ factors as a composition
%$E^\vee \to E^\perp[1] \to E^\vee\otimes\Omega_X[1]$ for some
%$\alpha_E \in \Hom(E^\perp,E^\vee\otimes\Omega_X) \cong \Hom(E^\perp\otimes E,\Omega_X)$.

\begin{remark}
If $E^\vee$ is a globally generated vector rank $r$ bundle without higher cohomology
then there is a map $\phi:X \to \Gr(r,V)$ such that $E$ is the pullback of the tautological bundle.
Then $E^\perp\otimes E \cong \phi^*\Omega_{\Gr(r,V)}$ and the map $\alpha_E$ coincides with
$d\phi:\phi^*\Omega_{\Gr(r,V)} \to \Omega_X$.
\end{remark}

Combining $\alpha_E$ with the powers of the Atiyah class of $E$ we obtain a collection of maps
$$
\xymatrix@1{
E^\perp\otimes E \ar[r]^-{\At^{i-1}_E} &
E^\perp\otimes E\otimes\Omega^{i-1}_X[i-1] \ar[r]^-{\alpha_E} &
\Omega_X\otimes\Omega^{i-1}_X[i-1] \ar[r] &
\Omega^i_X[i-1]
}
$$
which we denote by $\alpha^i_E$. Further, let $\CN^\vee$ denote the cone of the map
$\alpha_E:E^\perp\otimes E \to \Omega_X$ shifted by $-1$, so that we have a distinguished triangle
$$
\CN^\vee \to E^\perp\otimes E \to \Omega_X.
$$
The composition of $\alpha^i_X$ with the map $\CN^\vee \to E^\perp\otimes E$ is a map
$\CN^\vee \to \Omega^i_X[i-1]$ which we denote by $\nu^i_E$.

\begin{remark}
If $E$ is the pullback of the tautological bundle via a map $\phi:X \to \Gr(r,V)$
and $\phi$ is a closed embedding then $\CN^\vee$ is the conormal bundle.
\end{remark}

%Actually, one can show that such $\alpha_E$ is unique. Indeed, a priori
%it is defined up to an element of $\Hom(V^\vee\otimes\CO_X,E^\vee\otimes\Omega)$.
%But we have an exact sequence
%$$
%\Hom(V^\vee\otimes\CO_X,V^\vee\otimes\Omega_X) \to \Hom(V^\vee\otimes\CO_X,E^\vee\otimes\Omega_X) \to \Ext^1(V^\vee\otimes\CO_X,E^\perp)
%$$

\begin{theorem}\label{chhort2}
The Hochschild homology of the category $\langle E,\CO_X \rangle^\perp$ is given by the formula
$$
\HOH_t(\langle E,\CO_X \rangle^\perp) = \begin{cases} \HOH_t(X), & \text{if $t \ne 0$}\\ \HOH_0(X)/\kk^2, & \text{if $t = 0$}\end{cases}
$$
For the Hochschild cohomology we have the following results:

\noindent
$(i)$ there is an exact sequence
$$
\dots \!\!\to
\mathop{\oplus}\limits_{p=0}^{n-1} H^{t-p}(X,\Lambda^pT_X) \!\to
\HOH^t(\langle E,\CO_X \rangle^\perp) \!\to
H^{t-n+2}(X,E^\perp\otimes E\otimes\omega_X^{-1}) \stackrel{\alpha}\to
\mathop{\oplus}\limits_{p=0}^{n-1} H^{t+1-p}(X,\Lambda^pT_X) \to\! \dots,
$$
where
$\alpha = \sum_{p=0}^{n-1}\alpha^{n-1-p}_E:
E^\perp\otimes E\otimes\omega_X^{-1} \to
\Omega^{n-p}_X\otimes\omega_X^{-1} =
\Lambda^pT_X$;

\noindent
$(ii)$ there is an exact sequence
$$
\dots \!\to
\mathop{\oplus}\limits_{p=0}^{n-2} H^{t-p}(X,\Lambda^pT_X) \to
\HOH^t(\langle E,\CO_X \rangle^\perp) \to
H^{t-n+2}(X,\CN^\vee_{X/\Gr}\otimes\omega_X^{-1}) \stackrel{\nu}\to
\mathop{\oplus}\limits_{p=0}^{n-2} H^{t+1-p}(X,\Lambda^pT_X) \to\!
\dots,
$$
where
$\nu = \sum_{p=0}^{n-2}\nu^{n-1-p}_E:
\CN^\vee\otimes\omega_X^{-1} \to
\Omega^{n-p}_X\otimes\omega_X^{-1} =
\Lambda^pT_X$;

\noindent
$(iii)$ if $E$ is a line bundle then $\nu = 0$ and
$$
\HOH^t(\langle E,\CO_X \rangle^\perp) \cong
\mathop{\oplus}\limits_{p=0}^{n-2} H^{t-p}(X,\Lambda^pT_X) \oplus H^{t-n+2}(X,\CN^\vee_{X/\Gr}\otimes\omega_X^{-1}).
$$
\end{theorem}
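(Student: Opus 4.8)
The Hochschild homology formula is immediate from the Additivity Theorem. The plan is to extend the exceptional pair $(E,\CO_X)$ to the semiorthogonal decomposition $\D^b(X) = \langle\langle E,\CO_X\rangle^\perp, \langle E\rangle, \langle\CO_X\rangle\rangle$; then Corollary~\ref{hohadd} gives $\HOH_\bullet(X) = \HOH_\bullet(\langle E,\CO_X\rangle^\perp)\oplus\HOH_\bullet(\langle E\rangle)\oplus\HOH_\bullet(\langle\CO_X\rangle)$, and by Corollary~\ref{hohk} the last two summands are each $\kk$ sitting in degree $0$, which yields the stated answer.

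For the cohomology, by Proposition~\ref{comp-hh} everything reduces to computing $\Delta^!P$, where $P$ is the kernel of the left projection onto $\langle E,\CO_X\rangle^\perp$. First I would note that this projection is the composition of the left projection onto $\CO_X^\perp$ followed by the left projection of $\CO_X^\perp$ onto $\langle E\rangle^\perp=\langle E,\CO_X\rangle^\perp$; on kernels this reads $P\cong P''\circ P'$, where $P'$ is the kernel of the left projection onto $\CO_X^\perp$ --- for which Theorem~\ref{chh-operp} already gives $\Delta^!P'\cong\bigoplus_{p=0}^{n-1}\Lambda^pT_X[-p]$ --- and $P''=\Cone(E\boxtimes E^\vee\to\Delta_*\CO_X)$, the map being the evaluation kernel realizing the counit $\RHom(E,-)\otimes E\to\id$. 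Convolving the triangle $E\boxtimes E^\vee\to\Delta_*\CO_X\to P''$ on the right with $P'$ and using Lemmas~\ref{conv-diag} and~\ref{conv-decomp}, one finds $(E\boxtimes E^\vee)\circ P'\cong p_1^*E\otimes p_2^*\Phi_{(P')^\tra}(E^\vee)$; since $(P')^\tra\cong P'$ (the defining triangle of $P'$ is symmetric under transposition) and $\Phi_{P'}(E^\vee)=\Cone(V^\vee\otimes\CO_X\to E^\vee)=E^\perp[1]$ by the very definition of $E^\perp$, this equals $(E\boxtimes E^\perp)[1]$. Thus one obtains a distinguished triangle $(E\boxtimes E^\perp)[1]\to P'\to P$; applying $\Delta^!$, with $\Delta^!(E\boxtimes E^\perp)\cong E^\perp\otimes E\otimes\omega_X^{-1}[-n]$, yields
\[
E^\perp\otimes E\otimes\omega_X^{-1}[-n+1]\ \xrightarrow{\ g\ }\ \bigoplus_{p=0}^{n-1}\Lambda^pT_X[-p]\ \longrightarrow\ \Delta^!P\ \longrightarrow\ E^\perp\otimes E\otimes\omega_X^{-1}[-n+2].
\]
Taking the long exact sequence of $\HH^\bullet(X,-)$ and recalling $\HOH^\bullet(\langle E,\CO_X\rangle^\perp)=\HH^\bullet(X,\Delta^!P)$ produces the exact sequence of $(i)$, once one checks that the $p$-th component of $g$ is, after twisting by $\omega_X^{-1}$, the map $\alpha^{n-p}_E$ of the statement.

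This last identification is the main obstacle. It amounts to computing $\Delta^!$ of the morphism $(E\boxtimes E^\vee\to\Delta_*\CO_X)\circ\id_{P'}$ against the Hochschild--Kostant--Rosenberg isomorphism for $\Delta^!P'$ of Theorem~\ref{hkr}, which is itself assembled from iterated universal Atiyah classes. The plan here is to first give an invariant construction of $\alpha_E$ as the map through which $\At_E$ factors --- this uses the vanishing of the Atiyah class of the trivial bundle $V\otimes\CO_X$ (Remark~\ref{at-e}) applied to the triangle $E\to V\otimes\CO_X\to E^{\perp\vee}$ --- and then to trace the evaluation map through the convolution with $P'$: the powers $\At^{i-1}_E$ entering the HKR description of $\Delta^!P'$ combine with $\alpha_E$ into precisely the maps $\alpha^i_E$, and comparing graded pieces matches the $p$-th component of $g$ with $\alpha^{n-p}_E$.

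Granting $(i)$, parts $(ii)$ and $(iii)$ are formal. For $(ii)$: the $p=n-1$ component of $g$ is $\alpha^1_E=\alpha_E$, an honest morphism $E^\perp\otimes E\otimes\omega_X^{-1}\to\Omega_X\otimes\omega_X^{-1}=\Lambda^{n-1}T_X$, so the octahedron axiom applied to the triangle $\CN^\vee\to E^\perp\otimes E\xrightarrow{\alpha_E}\Omega_X$ rewrites $\Delta^!P=\Cone(g)$ as $\Cone(h)$ for an induced map $h\colon\CN^\vee\otimes\omega_X^{-1}[-n+1]\to\bigoplus_{p=0}^{n-2}\Lambda^pT_X[-p]$ whose $p$-th component is $\nu^{n-p}_E$ twisted by $\omega_X^{-1}$; the long exact sequence of $\HH^\bullet(X,-)$ then gives $(ii)$. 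For $(iii)$: if $E$ is a line bundle, $\At_E=c_1(E)\cdot\id_E$, so each $\alpha^i_E$ factors as $E^\perp\otimes E\xrightarrow{\alpha_E}\Omega_X\to\Omega^i_X[i-1]$ (wedging with $c_1(E)^{i-1}$); since $\CN^\vee\to E^\perp\otimes E\xrightarrow{\alpha_E}\Omega_X$ is a cofiber sequence, the composite $\CN^\vee\to\Omega_X$ vanishes, hence $\nu^i_E=0$ and $h=0$, giving $\Delta^!P\cong\bigoplus_{p=0}^{n-2}\Lambda^pT_X[-p]\oplus\CN^\vee\otimes\omega_X^{-1}[-n+2]$ and the direct sum formula of $(iii)$.
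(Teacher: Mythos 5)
Your proof is correct and essentially identical to the paper's: you arrive at the same key triangle $E\boxtimes E^\perp[1]\to P_1\to P_2$ (obtained by composing the two projection kernels rather than, as the paper does, convolving the defining triangle of $P_1$ with $E\boxtimes E^\vee\to\Delta_*\CO_X$ --- a trivial variant), then apply $\Delta^!$ together with Proposition~\ref{comp-hh}, Theorem~\ref{hkr} and Theorem~\ref{chh-operp}, and treat $(ii)$ and $(iii)$ exactly as the paper does, via the defining triangle of $\CN^\vee$ and the fact that $\At_E$ is a scalar cohomology class when $E$ is a line bundle. The one step you leave as a plan --- identifying the components of the connecting map with the $\alpha^{n-p}_E$, i.e.\ with the factorizations of the Atiyah classes of $E$ through $E^{\perp\vee}\otimes\Omega_X$ --- is precisely the point the paper itself dispatches with a one-sentence assertion, so relative to the paper's own level of detail you are not missing anything.
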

\begin{proof}
The first is evident by additivity theorem. So we have to check the second.
Let $P_1$ be the kernel of the left projection onto $\CO_X^\perp$
and $P_2$ be the kernel of the left projection onto $\langle E,\CO_X \rangle^\perp$.
We have a distinguished triangle $\CO_X\boxtimes\CO_X \to \Delta_*\CO_X \to P_1$.
On the other hand, the object $E\boxtimes E^\vee$ is the kernel of the right projection onto $\langle E\rangle$.
Taking the convolution of the above triangle with the canonical morphism $E\boxtimes E^\vee \to \Delta_*\CO_X$
we obtain a commutative diagram
$$
\xymatrix{
E\boxtimes (V^\vee\otimes\CO_X) \ar[r] \ar[d] &
E\boxtimes E^\vee \ar[r] \ar[d] &
E\boxtimes E^\perp[1] \ar[d] \\
\CO_X\boxtimes\CO_X \ar[r] &
\Delta_*\CO_X \ar[r] &
P_1
%\ar[d] \\ && P_2
}
$$
(indeed, it is easy to see that $(E\boxtimes E^\vee)\circ(\CO_X\boxtimes\CO_X) \cong E\boxtimes (V^\vee\otimes\CO_X)$
by Lemma~\ref{conv-decomp}, and the induced map $E\boxtimes (V^\vee\otimes\CO_X) \to E\boxtimes E^\vee$
is the canonical map $V^\vee\otimes\CO_X \to E^\vee$ tensored with $E$, hence the third vertex of the upper line
is $E\boxtimes E^\perp$). So, it follows that $E\boxtimes E^\perp = (E\boxtimes E^\vee)\circ P_1$,
hence the corresponding kernel functor is the composition of the left projection onto $\CO_X^\perp$
and the right projection onto $\langle E\rangle$. Hence the cone of the map $E\boxtimes E^\perp[1] \to P_1$
is the kernel of the left projection onto $\langle E,\CO_X \rangle^\perp$, which is $P_2$.
Thus the right vertical map extends to a distinguished triangle
$$
E\boxtimes E^\perp[1] \to P_1 \to P_2.
$$
By Proposition~\ref{comp-hh} we have to compute $\Delta^!P_2$.
Applying the functor $\Delta^!$ to the above diagram we obtain
$$
\xymatrix{
V^\vee\otimes E\otimes\omega_X^{-1}[-n] \ar[r] \ar[d] &
E\otimes E^\vee\otimes\omega_X^{-1}[-n] \ar[r] \ar[d] &
E\otimes E^\perp\otimes\omega_X^{-1}[1-n] \ar[d] \\
\omega_X^{-1}[-n] \ar[r] &
\mathop{\oplus}_{p=0}^{n} \Lambda^pT_X[-p] \ar[r] &
\mathop{\oplus}_{p=0}^{n-1} \Lambda^pT_X[-p]
%\ar[d] \\ && P_2
}
$$
Here the middle vertical arrow is the sum of the Atiyah classes of $E$.
By the commutativity of the diagram the right arrow gives their
factorizations $\alpha^i_E$. This proves part~$(i)$ of the Theorem.
Part~$(ii)$ follows immediately by definition of $\CN^\vee$ and $\nu$.
Finally, for $(iii)$ we note that if $E$ is a line bundle then $\At^{i-1}_E \in H^{i-1}(X,\Omega^{i-1}_X)$,
hence we have a commutative diagram
$$
\xymatrix{
\CN^\vee \ar[r] &
E^\perp\otimes E \ar[r]^{\alpha_E} \ar[d]_{\At_E^{i-1}} &
\Omega_X \ar[d]_{\At_E^{i-1}} \\
&
E^\perp\otimes E\otimes\Omega_X^{i-1}[i-1] \ar[r]^-{\alpha_E} &
\Omega_X\otimes\Omega_X^{i-1}[i-1] \ar[r] &
\Omega_X^{i}[i-1]
}
$$
The map $\nu^i_E$ by definition is the composition of the maps in the diagram.
But the composition in the upper line is zero by the definition of $\CN^\vee$.
Hence $\nu^i_E = 0$ and we are done.
\end{proof}

\subsection{Fano threefolds of index $2$}

Now we are going to apply these results to Fano threefolds.
In the following Theorem we remind the classification of Fano threefolds
of index 2 (see e.g.~\cite{IP} for details) and compute the Hochschild cohomology
of the nontrivial component of their derived categories.

\begin{theorem}
Let $X = X_d$ be a Fano threefold of index $2$ with $\Pic X = \ZZ$ and the degree $d$,
so that $(\CO_X(-1),\CO_X)$ is an exceptional pair.
Let $\CA_X = \langle \CO_X(-1),\CO_X \rangle^\perp$ be the orthogonal subcategory.
\begin{itemize}
\item if $d = 5$ and $X \subset \Gr(2,W)$, $\dim W = 5$, is the zero locus
of a regular section of a vector bundle $A^\vee \otimes \CO_{\Gr(2,W)}(1)$, $\dim A = 3$,
corresponding to an embedding $A \to \Lambda^2W^\vee$, then
$$
\HOH^p(\CA_X) =
\begin{cases}
\kk, & \text{if $p = 0$},\\
\fsl(A) \cong \kk^8, & \text{if $p = 1$},\\
0, & \text{if $p \ne 0,1$}.
\end{cases}
$$
\item if $d = 4$ and $X \subset \PP(V)$, $\dim V = 6$, is the zero locus
of a regular section of a vector bundle $A^\vee \otimes \CO_{\PP(V)}(2)$, $\dim A = 2$,
corresponding to an embedding $A \to S^2V^\vee$, then
$$
\HOH^p(\CA_X) =
\begin{cases}
\kk, & \text{if $p = 0$},\\
A \cong \kk^2, & \text{if $p = 1$},\\
S^2A \cong \kk^3, & \text{if $p = 2$},\\
0, & \text{if $p \ne 0,1,2$}.
\end{cases}
$$
\item if $d = 3$ and $X \subset \PP(V)$, $\dim V = 5$ is a cubic hypersurface then
$$
\HOH^p(\CA_X) =
\begin{cases}
\kk, & \text{if $p = 0$},\\
S^3V^\vee/\fgl(V) \cong \kk^{10}, & \text{if $p = 2$},\\
0, & \text{if $p \ne 0,2$}.
\end{cases}
$$
\item if $d = 2$ and $X \stackrel{\phi}\to \PP(V)$, $\dim V = 4$,
is the double covering ramified in a quartic surface, then
$$
\HOH^p(\CA_X) =
\begin{cases}
\kk, & \text{if $p = 0$},\\
S^4V^\vee/\fgl(V) \oplus \kk \cong \kk^{20}, & \text{if $p = 2$},\\
\kk, & \text{if $p = 4$},\\
0, & \text{if $p \ne 0,2,4$}.
\end{cases}
$$
\item if $d = 1$ and $X \subset \PP(1,1,1,2,3)$ is a degree $6$ hypersurface
in the weighted projective space, then
$$
\HOH^p(\CA_X) =
\begin{cases}
\kk, & \text{if $p = 0$},\\
\kk^{34} \oplus \kk \cong \kk^{35}, & \text{if $p = 2$},\\
S^2\kk^3 \oplus \kk \cong \kk^7, & \text{if $p = 4$},\\
0, & \text{if $p \ne 0,2,4$}.
\end{cases}
$$
\end{itemize}
\end{theorem}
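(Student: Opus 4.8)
\textit{Proof strategy.} The plan is to derive everything from Theorem~\ref{chhort2}. For each $d$ the category in question is $\CA_X=\langle E,\CO_X\rangle^\perp$ with $E=\CO_X(-1)$; since $X$ is Fano of index $2$ we have $\omega_X=\CO_X(-2)$, so Kodaira vanishing gives $H^\bullet(X,\CO_X(-1))=0$, hence $(\CO_X(-1),\CO_X)$ is an exceptional pair with $\CO_X(-1)\in\CO_X^\perp$ and the hypotheses of Theorem~\ref{chhort2} hold. As $E$ is a line bundle, part $(iii)$ applies, and with $n=\dim X=3$ it yields a direct sum decomposition
\[
\HOH^t(\CA_X)\cong H^t(X,\CO_X)\oplus H^{t-1}(X,T_X)\oplus H^{t-1}\bigl(X,\CN^\vee\otimes\omega_X^{-1}\bigr),
\]
where $\CN^\vee=\Fiber\bigl(\alpha_E\colon E^\perp\otimes E\to\Omega_X\bigr)$ and $\omega_X^{-1}=\CO_X(2)$. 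Since $X$ is rationally connected, $H^\bullet(X,\CO_X)=\kk$ in degree $0$, which already accounts for the $\kk$ in degree $0$ in every case; what remains is to compute $H^\bullet(X,T_X)$ and $H^\bullet(X,\CN^\vee(2))$.

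Next I would identify $\CN^\vee$. The half-anticanonical system $|\CO_X(1)|=|{-K_X/2}|$ defines a (rational) map $\phi\colon X\dashrightarrow\PP(V)$ with $V=H^0(X,\CO_X(1))^\vee$, $\dim V=d+2$, and by the Remark following Theorem~\ref{chhort2} one has $E^\perp=\phi^*\Omega_{\PP(V)}(1)$, $E^\perp\otimes E=\phi^*\Omega_{\PP(V)}$ and $\alpha_E=d\phi$, so $\CN^\vee=\Fiber(d\phi)$. For $d=3,4,5$ the map $\phi$ is a closed embedding and $\CN^\vee$ is the conormal bundle $\CN^\vee_{X/\PP(V)}$: for $d=3$ (a cubic in $\PP^4$) it is $\CO_X(-3)$; for $d=4$ (the pencil $A$ of quadrics in $\PP^5$) it is $A\otimes\CO_X(-2)$; for $d=5$ ($X=\Gr(2,W)\cap\PP^6$ transversally, $\dim W=5$) the transverse-intersection formula together with the classical identification $\CN_{\Gr(2,W)/\PP(\Lambda^2 W)}\cong\Lambda^2Q(1)=Q^\vee(2)$ gives $\CN^\vee=Q(-2)|_X$, $Q$ the tautological quotient bundle. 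For $d=2$ the map $\phi$ is the double covering of $\PP^3$ branched in the quartic $B$; here $d\phi$ is injective with cokernel $i_{R*}\CN^\vee_{R/X}$, where $R\cong B$ is the ramification divisor (a $K3$ surface, since $K_R=(K_X+R)|_R=\CO_R$), so $\CN^\vee=i_{R*}\CN^\vee_{R/X}[-1]$. For $d=1$ ($X$ a sextic in $\PP(1,1,1,2,3)$) the system $|\CO_X(1)|$ has a single base point $p_0=X\cap\{x_0=x_1=x_2=0\}$, so $\phi$ is only rational; I would work directly with the defining triangle $E^\perp\to V^\vee\otimes\CO_X\to\CO_X(1)$, whose cokernel is the skyscraper at $p_0$, together with the weighted Euler sequence on $\PP(1,1,1,2,3)$.

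Then I would run the cohomology computations. For $H^\bullet(X,T_X)$: $H^0(X,T_X)=\mathfrak{aut}(X)$ is $\fsl_2$ for $X_5$ (since $\mathsf{Aut}(X_5)=\PGL_2$) and vanishes for $d\le 4$; moreover $H^{\ge 1}(X,T_X)=0$ for $X_5$ (rigidity), while for the hypersurface and complete-intersection cases the normal-bundle, Euler and Koszul sequences give $H^1(X,T_X)\cong S^3V^\vee/\fgl(V)$ (cubic, $\dim V=5$) resp. $S^2A$ (two quadrics) and $H^{\ge 2}(X,T_X)=0$; the cases $d=2,1$ are handled the same way through the double-cover, resp.\ weighted-hypersurface, presentation. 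For $H^\bullet(X,\CN^\vee(2))$: it vanishes for $d=3$ (it equals $H^\bullet(X,\CO_X(-1))$), is $A$ in degree $0$ for $d=4$, and for $d=5$ equals $H^\bullet(X,Q|_X)=W$ in degree $0$ — here one twists the Koszul resolution of $\CO_X$ in $\Gr(2,W)$ by $Q$ and observes, via Borel--Weil--Bott, that $Q(k)$ is acyclic on $\Gr(2,W)$ for $k=-1,-2,-3$, so the resolution collapses to $H^\bullet(\Gr(2,W),Q)=W$; for $d=2$ one gets $H^{t-1}(X,\CN^\vee(2))=H^{t-2}(R,\CO_R)$, and the $K3$ cohomology $(\kk,0,\kk)$ produces exactly the extra $\kk$ in degree $2$ and the $\kk$ in degree $4$; for $d=1$ a similar but longer computation in $\PP(1,1,1,2,3)$ produces the $S^2\kk^3$ together with the two extra $\kk$'s. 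Assembling the three summands in each case gives the stated answers.

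\textit{Main obstacle.} The hardest case is $d=1$: because $|\CO_X(1)|$ has a base point, $\phi$ is only rational, $\CN^\vee$ is not a conormal bundle, and $H^\bullet(X,\CN^\vee(2))$ must be computed by hand inside $\PP(1,1,1,2,3)$ while carefully tracking the contribution of $p_0$ and of the weight-$2,3$ variables. The case $d=2$ is the next most delicate, the point being to pin down the correct twist in $\CN^\vee=i_{R*}\CN^\vee_{R/X}[-1]$ and to recognize $H^\bullet(R,\CO_R)$ as $K3$ cohomology. By contrast, the inputs $\mathsf{Aut}(X_5)=\PGL_2$, the rigidity of $X_5$, and $H^{\ge 2}(X,T_X)=0$ for $d\le 4$ are standard, and the cubic and complete-intersection computations are routine Bott-vanishing arguments.
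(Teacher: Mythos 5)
Your strategy is essentially the paper's: feed $H^\bullet(X,\CO_X)$, $H^\bullet(X,T_X)$ and $H^\bullet(X,\CN^\vee\otimes\omega_X^{-1})$ into Theorem~\ref{chhort2} and identify $\CN^\vee$ case by case via the half-anticanonical map. Your identifications for $d=5,4,3,2$ agree with the paper's ($\CN^\vee\cong (W/\CU)(-2)$, $A\otimes\CO_X(-2)$, $\CO_X(-3)$, $i_*\CN^\vee_{R/X}[-1]$ respectively, your Grassmannian-conormal and Koszul/Bott route for $d=5$ being a harmless variant), and using part $(iii)$ uniformly (legitimate, since $E$ is a line bundle) where the paper uses part $(ii)$ for $d=5$ only changes the packaging: you obtain $\fsl_2\oplus W\cong\kk^8$ where the paper identifies the middle term canonically as $\fsl(A)$. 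Similarly, for $d=4$ the standard normal/Euler/Koszul sequences give $H^1(X,T_X)\cong (A^\vee\otimes(S^2V^\vee/A))/\fsl(V)$, and the paper needs a separate argument to recognize this as $S^2A$; you simply assert the answer, though for the vector-space statement a dimension count would do.

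The genuine gap is $d=1$: you only promise that ``a similar but longer computation produces the $S^2\kk^3$ together with the two extra $\kk$'s'', i.e.\ the stated answer for that bullet is assumed rather than derived, and you flag it yourself as the main obstacle. The paper's argument there is concrete and short: $\Omega_X$ is the middle cohomology of the complex $\CO_X(-6)\to\CO_X(-3)\oplus\CO_X(-2)\oplus\CO_X(-1)^{\oplus 3}\to\CO_X$ coming from the weighted Euler/conormal sequences, $E^\perp\otimes E$ is resolved by $\CO_X(-1)^{\oplus 3}\to\CO_X$, and $\alpha_E$ is the natural inclusion of complexes, whence a distinguished triangle $\CN^\vee\to\CO_X(-6)\to\CO_X(-3)\oplus\CO_X(-2)$; twisting by $\CO_X(2)$ and computing line-bundle cohomology on $X_1$ gives $H^\bullet(X,\CN^\vee(2))=\kk[-1]\oplus(S^2\kk^3\oplus\kk)[-3]$, which together with the quoted $H^\bullet(X,T_X)=\kk^{34}[-1]$ yields the answer. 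Note that this route never treats the base point of $|\CO_X(1)|$ as a separate issue --- working with the complexes handles it automatically --- so the difficulty you single out dissolves once the computation is written down; but until it (and the $d=1$ tangent cohomology you also defer) is actually carried out, the $d=1$ case of the theorem is not proved.
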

\begin{proof}
By Theorem~\ref{chhort2} we should compute $H^\bullet(X,\CO_X)$, $H^\bullet(X,T_X)$ and $H^\bullet(X,\CN^\vee(2))$.
We will use a case-by-case analysis.

{\bf The case $d = 5$.}\/ It is easy to see that $\CN^\vee \cong W/\CU(-2)$, where $\CU$ is the restriction to $X$
of the tautological bundle on $\Gr(2,W)$. So, $\CN^\vee(2) \cong W/\CU$ and it is easy to compute
$H^\bullet(X,W/\CU) \cong W$. On the other hand, it is well known that $H^\bullet(X,\CO_X) = \kk$
and $H^\bullet(X,T_X) = \fso(A^\vee,q)$, where $q \in S^2A$ is contained in the kernel of the natural map
$S^2A \to S^2(\Lambda^2 W^\vee) \to \Lambda^4 W^\vee \cong W$
induced by the embedding $A \to \Lambda^2W^\vee$.
Applying exact sequence of Theorem~\ref{chhort2}(ii) we obtain $\HOH^0(\CA_X) = \kk$, $\HOH^{\ne 0,1}(\CA_X) = 0$
and an exact triple
$$
0 \to \fso(A^\vee,q) \to \HOH^1(\CA) \to W \to 0.
$$
Finally, it is easy to see that this sequence can be identified with the sequence
$$
0 \to \Lambda^2 A \to (A\otimes A)/\lan q \ran \to W \to 0,
$$
and that an isomorphism $q: A^\vee \to A$ identifies its middle term with $\fsl(A)$.

\begin{remark}
An alternative computation of $\HOH^\bullet(\CA_X)$ can be given by an explicit description of $\CA_X$.
Indeed, it is well known (see~\cite{O-V5}) that $(\CU(-1),\CU^\perp(-1),\CO_X(-1),\CO_X)$ is a full exceptional collection
on $X = X_5$, hence $\CA_X = \langle \CU(-1),\CU^\perp(-1) \rangle$ is equivalent to the derived category
of a quiver with 2 vertices and the space of arrows $A$.
\end{remark}

{\bf The case $d = 4$.}\/ It is evident that $\CN^\vee \cong A\otimes\CO_X(-2)$.
So, $\CN^\vee(2) \cong A\otimes\CO_X$ and $H^\bullet(X,A\otimes\CO_X) \cong A$.
On the other hand, it is well known that $H^\bullet(X,\CO_X) = \kk$
and $H^\bullet(X,T_X) = (A^\vee\otimes (S^2V^\vee/A))/\fsl(V) [-1]$.
Moreover, one can show that the later space is just $S^2A[-1]$.
Indeed, we have a canonical map
$$
\frac{A^\vee \otimes S^2V}{\fgl(A) \oplus \fsl(V)} \to \frac{S^6A^\vee}{\fgl(A)} \to S^2A,
$$
where the first arrow is the differential of the determinant (with respect to $V$)
and the second map is the differential of the covariant of $\fsl(A)$.
It is a straightforward computation that the map is surjective,
hence it is an isomorphism.
Now we apply part $(iii)$ of Theorem~\ref{chhort2} and obtain the result.

\begin{remark}
An alternative computation of $\HOH^\bullet(\CA_X)$ can be given by an explicit description of $\CA_X$.
Indeed, it is known (see~\cite{BO1,K-Q}) that $\CA_X\cong \D^b(C)$, where $\psi:C \to \PP(A)$ is the double covering
ramified in the points of $\PP(A)$ corresponding to degenerate quadrics in the pencil $A \subset S^2V^\vee$
(thus $C$ is a curve of genus~$2$). We have
$\psi_*\CO_C \cong \CO_{\PP(A)} \oplus \CO_{\PP(A)}(-3)$,
$\psi_*T_C \cong \CO_{\PP(A)}(-1) \oplus \CO_{\PP(A)}(-4)$,
and Theorem~\ref{hkr} applies.
\end{remark}

{\bf The case $d = 3$.}\/ It is evident that $\CN^\vee \cong \CO_X(-3)$.
So, $\CN^\vee(2) \cong \CO_X(-1)$ and $H^\bullet(X,\CO_X(-1)) = 0$.
On the other hand, it is well known that $H^\bullet(X,\CO_X) = \kk$
and $H^\bullet(X,T_X) = (S^3V^\vee)/\fgl(V) [-1]$.
Now we apply part $(iii)$ of Theorem~\ref{chhort2} and obtain the result.

{\bf The case $d = 5$.}\/ We have an exact sequence
$$
0 \to \phi^*\Omega_{\PP(V)} \to \Omega_X \to i_*\CN_{D/X} \to 0,
$$
where $i:D \to X$ is the ramification divisor of the double covering and $\CN_{D/X}$ is the normal bundle.
Thus $\CN^\vee = i_*\CN_{D/X}[-1]$. Further, the map $\phi\circ i:D \to \PP(V)$
is an isomorphism of $D$ with the quartic hypersurface in $\PP(V)$
(which by an abuse of notation we also denote by $D$) and $\CN_{D/X}$
gets identified with $\CO_D(-2)$. Thus
$H^\bullet(X,\CN^\vee(2)) = H^\bullet(D,\CO_D[-1]) = \kk[-1] \oplus \kk[-3]$.
On the other hand, it is well known that $H^\bullet(X,\CO_X) = \kk$
and $H^\bullet(X,T_X) = (S^4V^\vee)/\fgl(V) [-1]$.
Now we apply part $(iii)$ of Theorem~\ref{chhort2} and obtain the result.

{\bf The case $d = 5$.}\/ The sheaf $\Omega_X$ is the middle cohomology of a complex
$$
0 \to \CO_X(-6) \to \CO_X(-3) \oplus \CO_X(-2) \oplus \CO_X(-1)^{\oplus 3} \to \CO_X \to 0,
$$
while for $E^\perp\otimes E$ we have a resolution
$$
0 \to E^\perp\otimes E \to \CO_X(-1)^{\oplus 3} \to \CO_X \to 0,
$$
and the map $\alpha_E$ is given by the natural embedding of complexes.
Therefore we have a distinguished triangle
$$
\CN^\vee \to \CO_X(-6) \to \CO_X(-3) \oplus \CO_X(-2).
$$
Twisting it by $\CO_X(2)$ and computing the cohomology we obtain
$H^\bullet(X,\CN^\vee(2)) = \kk[-1] \oplus (S^2\kk^3\oplus\kk)[-3]$.
On the other hand, it is well known that $H^\bullet(X,\CO_X) = \kk$
and $H^\bullet(X,T_X) = \kk^{34} [-1]$.
Now we apply part~$(iii)$ of Theorem~\ref{chhort2} and obtain the result.
\end{proof}

\subsection{Conic bundles}

Another case we consider is the case of a conic bundle.
Recall that a conic bundle is a flat projective morphism
$f:X \to Y$ each fiber of which is isomorphic (as a scheme) to a conic
(possibly degenerate) in~$\PP^2$. Each conic bundle can be embedded into
a projectivization $p:\PP_Y(E) \to E$ of some rank $3$ vector bundle $E$ on $Y$
(e.g. $E = (f_*\omega_{X/Y}^{-1})^\vee$) and can be represented as a zero locus
of a global section of a line bundle $p^*L\otimes\CO_{\PP_Y(E)/Y}(2)$ on $X$
for an appropriate line bundle $L$ on $Y$. This global section can be thought as a section
of the vector bundle $L\otimes S^2E^*$ on $Y$, and so gives a morphism $E \to L\otimes E^*$.
The zero locus $D$ of the determinant $\det E \to L^3\otimes\det E^*$ of this map
is the discriminant locus of $f:X \to Y$. For any point $y \in Y\setminus D$ the fiber
$X_y$ is a smooth conic, while for $y \in D$ the fiber $X_y$ is degenerate conic
(either a union of two lines, or a double line).
Moreover, it is well known that if both $X$ and $Y$ are smooth then the nonreduced
fivers of $f$ correspond to singular points of $D$. Below we assume that $D$ is smooth
and so all singular fibers are the unions of two distinct lines. In this case there is
an embedding $j:D \to X$ each point $y \in D$ goes to the singular point of the fiber $X_y$
(the intersection point of the corresponding two lines). Moreover, in this case the Stein factorization
of $f^{-1}(D) \to D$ gives a nonramified double covering $\tilde{D} \to D$. Let $M \in \Pic^0(D)$
be the corresponding point of order $2$.

For any conic bundle $f:X \to Y$ the pullback functor $f^*:\D^b(Y) \to \D^b(X)$ is fully faithful
and gives rise to a semiorthogonal decomposition $\D^b(X) = \langle \CA_X, f^*(\D^b(Y)) \rangle$.
Actually, $\CA_X \cong \D^b(Y,\CB_0)$, the derived category of sheaves of modules over the sheaf
of even parts of the Clifford algebras of conics (see~\cite{K-Q} for further details).
In the following Theorem we compute the Hochschild homology and cohomology of the category $\CA_X$.

\begin{theorem}
Let $f:X \to Y$ be a conic bundle. Let $i:D \to Y$ be the discriminant locus of~$f$.
Assume that $X$, $Y$, and $D$ are smooth, $\dim X = n$.
Then
$$
\HOH_t(\CA_X) \cong
\HOH_t(X)/\HOH_\bullet(Y) \cong
\HOH_t(Y) \oplus \left( \bigoplus_{p=0}^{n-2}\, H^{p+t}(D,\Omega^p_D\otimes M) \right).
$$
Further, the Hochschild cohomology of $\CA_X$ is isomorphic to the cohomology of polyvector fields
on $Y$ tangent to $D$. More precisely,
$$
\HOH^t(\CA_X) = \bigoplus_{p=0}^n H^{t-p}(Y,\Ker(\Lambda^pT_Y \to i_*(\Lambda^{p-1}T_D\otimes\CN_{D/Y}))).
$$
\end{theorem}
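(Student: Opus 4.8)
The plan is to reduce everything, via Proposition~\ref{comp-hh}, to a computation on the fibre product $X\times_Y X$, and then to a local analysis of $f$ near its discriminant. The first isomorphism is immediate from the Additivity Theorem (Corollary~\ref{hohadd}) applied to $\D^b(X) = \langle \CA_X, f^*\D^b(Y) \rangle$: since $f^*$ is fully faithful, $\HOH_\bullet(f^*\D^b(Y)) \cong \HOH_\bullet(Y)$, with $\HOH_\bullet(\CA_X)$ its complement inside $\HOH_\bullet(X)$. For the explicit formulas I would first pin down the projection kernel: as $\CA_X$ is left admissible with ${}^\perp\CA_X = f^*\D^b(Y)$, the right projection onto $f^*\D^b(Y)$ is $f^*f_*$ (using $f_*f^*\cong\id$), and the integral kernel of $f^*f_*$ is $\CO_{X\times_Y X}$, pushed forward along $X\times_Y X\hookrightarrow X\times X$ --- the composition of the two graph kernels is transverse, so no higher $\Tor$'s occur. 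Hence, writing $P$ for the kernel of the left projection onto $\CA_X$, Theorem~\ref{sod-kernels} gives a distinguished triangle $\CO_{X\times_Y X}\to\Delta_*\CO_X\to P$ in $\D^b(X\times X)$, and by Proposition~\ref{comp-hh} it remains to compute $\HH^\bullet(X,\Delta^*P)$ and $\HH^\bullet(X,\Delta^!P)$.

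Write $W = X\times_Y X$ with structure map $g\colon W\to Y$. Since $f$ is flat and $\Delta_Y\subset Y\times Y$ is a regular embedding of codimension $n-1$, the scheme $W = (f\times f)^{-1}(\Delta_Y)$ is a local complete intersection in $X\times X$ with locally free conormal bundle $g^*\Omega_Y$; the l.c.i.\ self-intersection formula, restricted along the relative diagonal $\delta\colon X\hookrightarrow W$ (for which $g\delta = f$), gives
$$
\Delta^*\CO_{W}\cong\bigoplus_{k=0}^{n-1} f^*\Omega^k_Y[k],\qquad \Delta^!\CO_{W}\cong\bigoplus_{k=0}^{n-1} f^*\Omega^k_Y\otimes\omega_X^{-1}[k-n]\cong\bigoplus_{p=1}^{n} f^*\Lambda^{p-1}T_Y\otimes\omega_{X/Y}^{-1}[-p],
$$
using $\Delta^! = \Delta^*(-)\otimes\omega_X^{-1}[-n]$ and $\Omega^{n-p}_Y\cong\Lambda^{p-1}T_Y\otimes\omega_Y$. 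Next I would show, using Theorem~\ref{hkr} (which identifies $\Delta^*\Delta_*\CO_X$ with $\bigoplus_p\Omega^p_X[p]$ and $\Delta^!\Delta_*\CO_X$ with $\bigoplus_p\Lambda^pT_X[-p]$) and the functoriality of the Atiyah class (Remark~\ref{at-e}), that the morphism induced by $\CO_W\to\Delta_*\CO_X$ is, grading piece by grading piece, the canonical map $f^*\Omega^p_Y\to\Omega^p_X$, respectively $f^*\Lambda^{p-1}T_Y\otimes\omega_{X/Y}^{-1}\to\Lambda^pT_X$ (wedging along the relative tangent direction), with no components shifting the grading --- a diagram chase comparing the Koszul grading by powers of the conormal bundle with the HKR grading. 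Hence $\Delta^*P$ is the direct sum of $\Omega^n_X[n]$ with the cones $\Cone(f^*\Omega^p_Y\to\Omega^p_X)[p]$, $1\le p\le n-1$ (this cone being the relative cotangent complex $L_{X/Y}$ for $p=1$), and $\Delta^!P$ is built analogously from $\CO_X$ and the cones of $f^*\Lambda^{p-1}T_Y\otimes\omega_{X/Y}^{-1}\to\Lambda^pT_X$, $1\le p\le n$.

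The last step is to take $\RGamma(X,-) = \RGamma(Y, f_*(-))$. Over $Y\setminus D$ the fibres are smooth conics: there $\Cone(f^*\Omega^p_Y\to\Omega^p_X) = \omega_{X/Y}\otimes f^*\Omega^{p-1}_Y$, with $f_*$ of it equal to $\Omega^{p-1}_Y[-1]$, and $\Cone(f^*\Lambda^{p-1}T_Y\otimes\omega_{X/Y}^{-1}\to\Lambda^pT_X) = f^*\Lambda^pT_Y$. The rest of the contribution is supported on the singular locus $j(D)$ of $f$, where the fibre is a pair of lines meeting in a node, and computing $f_*$ of it reduces to the local model $\{uv=0\}$ of a node together with the bookkeeping of how the two branches are permuted along $D$ --- the \'etale double cover $\tilde D\to D$, equivalently the $2$-torsion class $M\in\Pic^0(D)$. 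On the homology side this adds to $f_*$ of the $p$-th cone a correction term $i_*(\Omega^{p-1}_D\otimes M)[-1]$; assembling over $p$ (together with $f_*\CO_X = \CO_Y$ and $f_*\Omega^n_X[n]\cong\Omega^{n-1}_Y[n-1]$) and taking $\RGamma$, the generic parts reproduce $\HOH_\bullet(Y)\subset\HOH_\bullet(X)$ and the corrections yield, after reindexing, $\bigoplus_{p=0}^{n-2}H^{p+t}(D,\Omega^p_D\otimes M)$. On the cohomology side, by contrast, $f_*$ of the $p$-th cone turns out to be precisely the subsheaf $\Ker(\Lambda^pT_Y\to i_*(\Lambda^{p-1}T_D\otimes\CN_{D/Y}))$ of $\Lambda^pT_Y$ --- here only the normal bundle of $D$ in $Y$ enters, so there is no twist by $M$ --- and together with the $p=0$ term $f_*\CO_X = \CO_Y$ this gives $\HOH^t(\CA_X)\cong\bigoplus_p H^{t-p}(Y,\Ker(\Lambda^pT_Y\to i_*(\Lambda^{p-1}T_D\otimes\CN_{D/Y})))$, the cohomology of polyvector fields on $Y$ tangent to $D$.

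I expect the real work to lie in this last step: the local-to-global computation of $f_*$ of the relative (co)tangent complex of $f$ near the discriminant, and especially the asymmetry whereby the $2$-torsion class $M$ enters the homology correction but not the cohomology one; the identification of $P$ and the Koszul/HKR bookkeeping should be routine by comparison. A useful cross-check would run the same computation through the equivalence $\CA_X\cong\D^b(Y,\CB_0)$ of~\cite{K-Q}, resolving the diagonal $\CB_0$-bimodule by Clifford-algebra bimodules, which makes the role of $\tilde D\to D$ explicit.
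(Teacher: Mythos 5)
Your reduction coincides with the paper's: additivity for the first isomorphism, the identification of the kernel of the projection onto $f^*\D^b(Y)$ as $\mu_*\CO_{X\times_Y X}$ (with $\mu:X\times_Y X\to X\times X$), the triangle $\mu_*\CO_{X\times_Y X}\to\Delta_*\CO_X\to P$, Proposition~\ref{comp-hh}, and HKR to rewrite $\Delta^*$ and $\Delta^!$ of both terms. One caveat on your version of this setup: the splitting of $\Delta^*\mu_*\CO_{X\times_Y X}$ is not an instance of a general l.c.i.\ self-intersection formula (such decompositions are not automatic for an arbitrary regular embedding); here it follows at once from $\mu_*\CO_{X\times_Y X}\cong(f\times f)^*\Delta_{Y*}\CO_Y$, hence $\Delta^*\mu_*\CO_{X\times_Y X}\cong f^*\Delta_Y^*\Delta_{Y*}\CO_Y$, and HKR on $Y$ --- which is also the cleanest way to handle the compatibility of the induced map with the HKR gradings that you rightly flag as needing a check.

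The genuine gap is exactly where you yourself locate ``the real work'' and then stop: the computation of $Rf_*$ of the cones of $f^*\Omega^p_Y\to\Omega^p_X$ and of $f^*\Lambda^{p-1}T_Y\otimes\omega_{X/Y}^{-1}\to\Lambda^pT_X$, i.e.\ precisely the step in which $M$, $\CN_{D/Y}$ and the homology/cohomology asymmetry appear. Your plan --- the local model $\{uv=0\}$ plus monodromy bookkeeping for $\tilde{D}\to D$ --- only announces the expected answer; gluing the local analysis into the asserted global correction terms, and showing for homology that the resulting extension of $\Omega^{p-1}_Y$ by $i_*(\Omega^{p-1}_D\otimes M)$ splits (needed for the direct-sum formula), is the actual content of the theorem. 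The paper does this globally, with no local models: the differential $T_X\to f^*T_Y$ has kernel $\omega_{X/Y}^{-1}$ and cokernel $j_*\CN_{D/Y}$, giving \eqref{tan}, and dualizing with the identification $j^*\omega_{Y/X}\cong M$ gives \eqref{cotan}; taking exterior powers and pushing forward yields \eqref{tan1} and \eqref{cotan1} (the latter split), from which $f_*\Delta^*P\cong\oplus_{p}\bigl(\Omega^{p-1}_Y\oplus i_*(\Omega^{p-1}_D\otimes M)\bigr)[p-1]$ and $f_*\Delta^!P\cong\oplus_{p}\Ker\bigl(\Lambda^pT_Y\to i_*(\Lambda^{p-1}T_D\otimes\CN_{D/Y})\bigr)$ drop out directly, explaining structurally why $M$ enters only on the cotangent side. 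To complete your argument you must either establish these global sequences or genuinely carry out and globalize the local computation you sketch; as written, the decisive step is missing.
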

\begin{proof}
Consider the natural map $T_X \to f^*T_Y$ (the differential of $f$).
Note that its kernel $T_{X/Y}$ is a reflexive sheaf of rank $1$, hence an invertible sheaf.
On the other hand, its cokernel is supported at $j(D)$. An easy computation shows that
actually we have an exact sequence
\begin{equation}\label{tan}
0 \to \omega^{-1}_{X/Y} \to T_X \to f^*T_Y \to j_*\CN_{D/Y} \to 0.
\end{equation}
Dualizing it and taking into account an isomorphism $j^*\omega_{Y/X} \cong M$ we obtain an exact sequence
\begin{equation}\label{cotan}
0 \to f^*\Omega_Y \to \Omega_X \to \omega_{X/Y} \to j_*M \to 0.
\end{equation}
Taking the $p$-th exterior powers we obtain exact sequences
$$
0 \to f^*(\Lambda^{p-1}T_Y)\otimes\omega_{X/Y}^{-1} \to \Lambda^{p}T_X \to f^*(\Lambda^{p}T_Y) \to j_*(\Lambda^{p-1}T_D\otimes\CN_{D/Y}) \to 0,
$$
$$
0 \to f^*\Omega^p_Y \to \Omega^p_X \to \omega_{X/Y}\otimes f^*\Omega^{p-1}_Y \to j_*(\Omega^{p-1}_D\otimes M) \to 0.
$$
Taking the pushforward to $Y$ we obtain an exact sequence
\begin{equation}\label{tan1}
0 \to \Lambda^{p-1}T_Y\otimes f_*(\omega_{X/Y}^{-1}) \to f_*(\Lambda^{p}T_X) \to \Lambda^{p}T_Y \to i_*(\Lambda^{p-1}T_D\otimes\CN_{D/Y}) \to 0
\end{equation}
as well as
\begin{equation}\label{cotan1}
R^0f_*\Omega^p_X = \Omega^p_Y,
\qquad
0 \to i_*(\Omega^{p-1}_D\otimes M) \to R^1f_*\Omega^p_X \to \Omega^{p-1}_Y \to 0.
\end{equation}
Moreover, it is easy to see that the sequence in~\eqref{cotan1} splits.
Now we can use these sequences to compute the Hochschild homology and cohomology of $\CA_X$.

For this we note that the right projection onto $\D^b(Y)$ embedded into $\D^b(X)$ via $f^*$ is given
by the kernel functor with the kernel being $\mu_*\CO_{X\times_Y X}$, where $\mu:X\times_Y X \to X\times X$
is the natural embedding (this follows from the base-change). Hence the kernel $P$ 
of the left projection functor onto $\CA_X$ is given by the following distinguished triangle
\begin{equation}\label{pax}
\mu_*\CO_{X\times_Y X} \to \Delta_*\CO_X \to P.
\end{equation}
Note also that $\mu_*\CO_{X\times_Y X} \cong (f\times f)^*\Delta_*\CO_Y$, hence
$$
\Delta^*\mu_*\CO_{X\times_Y X} \cong
\Delta^*(f\times f)^*\Delta_*\CO_Y \cong
f^*\Delta^*\Delta_*\CO_Y.
$$
So, applying to~\eqref{pax} the functor $\Delta^*$ and using Theorem~\ref{hkr} we obtain the following triangle
\begin{equation}\label{dpax}
\oplus_{p=0}^{n-1} f^*\Omega^p_Y[p] \to \oplus_{p=0}^n \Omega^p_X[p] \to \Delta^*P,
\end{equation}
with the left map being the direct sum of morphisms $f^*\Omega^p_Y \to \Omega^p_X$.
After the pushforward $f_*$ each of them goes to the isomorphism of~\eqref{cotan1}. Hence
%\begin{multline*}
%\HOH_t(\CA_X) =
%\HH^\bullet(X,\Delta^*P) =
%\HH^\bullet(Y,f_*\Delta^*P) =
%%
%%\HOH_t(X)/f^*(\HOH_t(Y)) =
%%\HH^t(X,\Cone(\mathop{\oplus}_{p=0}^n f^*\Omega^p_Y[p] \to \mathop{\oplus}_{p=0}^n \Omega^p_X[p])) = \\ =
%%\HH^t(Y,\Cone(\mathop{\oplus}_{p=0}^n  \Omega^p_Y[p] \to \mathop{\oplus}_{p=0}^n  f_*\Omega^p_X[p])) =
%\HH^t(Y,\mathop{\oplus}_{p=0}^n  R^1f_*\Omega^p_X[p-1]) = \\ =
%\HH^t(Y,\mathop{\oplus}_{p=1}^{n} \Omega^{p-1}_Y[p-1]) \oplus \HH^t(D, \mathop{\oplus}_{p=1}^{n-1} \Omega^{p-1}_D\otimes M),
%\end{multline*}
$$
f_*\Delta^*P \cong
{\oplus}_{p=0}^n  R^1f_*\Omega^p_X[p-1]) \cong
{\oplus}_{p=1}^{n} (\Omega^{p-1}_Y \oplus i_*(\Omega^{p-1}_D\otimes M))[p-1].
$$
Taking the hypercohomology we obtain the required formula for $\HOH_\bullet(\CA_X)$.

%Similarly, we have
%$$
%\Delta^!\mu_*\CO_{X\times_Y X} \cong
%\Delta^!(f\times f)^*\Delta_*\CO_Y \cong
%\Delta^!(f\times f)^!\Delta_*\CO_Y \cong
%f^*\Delta^*\Delta_*\CO_Y.
%$$

On the other hand, note that $\Delta^!P \cong \Delta^*P\otimes\omega_X^{-1}[-n]$, hence
from~\eqref{dpax} we obtain
$$
\oplus_{p=0}^{n-1} \omega_X^{-1}\otimes f^*\Omega^p_Y[p-n] \to \oplus_{p=0}^n \omega_X^{-1}\otimes \Omega^p_X[p-n] \to \Delta^!P.
$$
Moreover, we have
$$
\omega_X^{-1}\otimes \Omega^p_X[p-n] \cong \Lambda^{n-p}T_X[p-n],
\qquad
\omega_X^{-1}\otimes f^*\Omega^p_Y[p-n] \cong \omega_{X/Y}^{-1}\otimes f^*\Lambda^{n-1-p}T_Y[p-n],
$$
hence the above triangle can be rewritten as
$$
\oplus_{p=0}^{n-1} \omega_{X/Y}^{-1}\otimes f^*\Lambda^{p-1}T_Y[-p] \to \Lambda^{p}T_X[-p] \to \Delta^!P.
$$
Taking the pushforward and using~\eqref{tan1} we obtain an isomorphism
$$
f_*\Delta^!P \cong \oplus_{p=0}^n \Ker(\Lambda^{p}T_Y \to  i_*(\Lambda^{p-1}T_D\otimes\CN_{D/Y})).
$$
Taking the hypercohomology we obtain the required formula for $\HOH^\bullet(\CA_X)$.
%
%
%For the cohomology we use Theorem~\ref{chh} and Proposition~\ref{chhort}.
%It follows that we have to compute the cokernel of the natural map
%$\Hom^t(\Delta_*\CO_Y,\Delta_*f_*\omega_{X/Y}^{-1}[-1]) \to \Hom^{t}(\Delta_*\CO_X,\Delta_*\CO_X)$.
%Note that
%\begin{multline*}
%\Hom^t(\Delta_*\CO_Y,\Delta_*f_*\omega_{X/Y}^{-1}[-1]) \cong
%\Hom^\bullet(\CO_Y,\Delta^!\Delta_*f_*\omega_{X/Y}^{-1}[-1]) \cong \\
%H^t(Y,\mathop{\oplus}_{p=0}^{n-1}\Lambda^pT_Y\otimes f_*\omega_{X/Y}^{-1}[-1-p]) \cong
%\mathop{\oplus}_{p=1}^{n} H^{t-p}(X,f^*(\Lambda^{p-1}T_Y)\otimes \omega_{X/Y}^{-1}),
%\end{multline*}
%while $\Hom^{t}(\Delta_*\CO_X,\Delta_*\CO_X) \cong \mathop{\oplus}_{p=0}^{n} H^{t-p}(Y,\Lambda^pT_X)$.
%It is easy to see that the map we are interesting in is induced by the first map in~\eqref{tan1}.
%So, Theorem~\ref{chh} and Proposition~\ref{chhort} imply that
%$$
%\HOH^t(\CA_X) = \oplus_{p=0}^n H^{t-p}(X,\Ker(f^*(\Lambda^pT_Y) \to j_*(\Lambda^{p-1}T_D\otimes\CN_{D/Y}))).
%$$
%Taking the pushforward of the right-hand-side to $Y$ and taking into account
%that by the projection formula $f_*f^*(\Lambda^pT_Y) \cong \Lambda^pT_Y\otimes f_*\CO_X \cong \Lambda^pT_Y$,
%while $f_*\circ j_* \cong i_*$ we obtain the required isomorphism.
\end{proof}

%\section{\bf\Huge\underline{Additions}}
\section{The Nonvanishing Conjecture}\label{sec-nvc}

We close the paper with the following Nonvanishing Conjecture.
In this Section we assume that $X$ is a smooth projective variety.

\begin{conjecture}
If $\CA \subset \D^b(X)$ is an admissible subcategory and $\HOH_\bullet(\CA) = 0$ then $\CA = 0$.
\end{conjecture}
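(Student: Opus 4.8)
\medskip
\noindent\emph{Reduction to numerical triviality.} Since $\CA\subset\D^b(X)$ is admissible and $X$ is smooth and projective, $\CA$ is a smooth and proper enhanced triangulated category, so (via the noncommutative Hirzebruch--Riemann--Roch formalism) there is a Chern character $\ch\colon K_0(\CA)\to\HOH_0(\CA)$ and a canonical pairing on $\HOH_\bullet(\CA)$ satisfying $\langle\ch(E),\ch(F)\rangle=\chi(E,F)$, where $\chi(E,F)=\sum_i(-1)^i\dim_\kk\Hom_\CA(E,F[i])$ is the Euler form. Hence $\HOH_\bullet(\CA)=0$ forces $\ch$ to vanish and therefore $\chi\equiv 0$ on $\CA$. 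Now embed $\CA$ in the semiorthogonal decomposition $\D^b(X)=\langle\CA^\perp,\CA\rangle$ and let $\alpha$ be the right projection onto $\CA$; for $E\in\CA$ and arbitrary $G\in\D^b(X)$ the triangle $B\to G\to\alpha(G)$ with $B\in\CA^\perp$ gives $\Hom^\bullet(E,G)\cong\Hom^\bullet(E,\alpha(G))$, so $\chi(E,G)=\chi(E,\alpha(G))=0$, and $\chi(G,E)=0$ by Serre duality. Thus $K_0(\CA)$ lies in the radical of the Euler form of $X$: \emph{every object of $\CA$ becomes numerically trivial in $\D^b(X)$}.

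\smallskip
\noindent\emph{The remaining statement and easy cases.} It now suffices to prove that a nonzero admissible subcategory of $\D^b(X)$ cannot consist entirely of numerically trivial objects. This is clear when $\CA$ is generated by an exceptional collection, since then $\HOH_\bullet(\CA)\cong\kk^{\oplus k}\ne 0$ by Corollary~\ref{hohk} and Corollary~\ref{hohadd}, and is vacuous when $\D^b(X)$ has no nontrivial semiorthogonal decomposition; the content lies in the intermediate range. For the general case I would attempt a dévissage: take $0\ne E\in\CA$; a single nonzero coherent sheaf has nonzero numerical class (its top Chern class is the fundamental cycle of its support), so $E$ must have several nonzero cohomology sheaves, and one restricts $E$ successively to generic points of the strata of the support filtration of the $\CH^i(E)$, trying to isolate a closed point $x$ for which $\RHom_X(E,\CO_x)$ has nonzero Euler characteristic. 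The hope would be to run this together with an induction on $\dim X$ via general hyperplane sections (which preserve admissibility of the relevant subcategory) and with the reformulation $\HOH_\bullet(\CA)=\Hom^\bullet(P,P\otimes p_2^*\omega_X[\dim X])$ of the hypothesis in terms of the projection kernel $P$.

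\smallskip
\noindent\emph{Main obstacle.} The crux is the second step: it says that ``phantom'' admissible subcategories do not exist, and the reduction shows this is essentially equivalent to the conjecture. Numerically trivial nonzero objects of $\D^b(X)$ are plentiful --- already $E\oplus E[1]$ for any $E\ne 0$ --- so a proof must use essentially that they are organized into an \emph{admissible} subcategory, i.e. that $P$ is an idempotent kernel, $P\circ P\cong P$, with $\Hom^\bullet(P,P\otimes p_2^*\omega_X[\dim X])=0$; I do not see how to force a contradiction from this with the tools developed here. Using the full vanishing of $\HOH_\bullet(\CA)$ rather than of $\HOH_0(\CA)$ does not obviously help, as it only yields vanishing of negative cyclic and periodic cyclic homology, which still pins down $K_0(\CA)$ only up to numerical equivalence. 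A convincing route would likely need either a noncommutative Hodge theory for $\CA$ providing a rational or integral lattice inside periodic cyclic homology whose nonvanishing is forced by $K_0(\CA)\ne 0$, or a genuinely new geometric constraint on numerically trivial objects; the computations of Section~\ref{s-comp}, where every nontrivial component has manifestly nonzero Hochschild homology, are best viewed as evidence rather than as a blueprint.
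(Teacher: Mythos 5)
This statement is not proved in the paper at all: it is stated as the Nonvanishing \emph{Conjecture}, and Section~\ref{sec-nvc} only derives consequences from it (fullness criteria for exceptional collections, stabilization of chains of admissible subcategories). So there is no proof to compare yours with, and your proposal, by your own admission, is not a proof either: everything hinges on the second step, that a nonzero admissible subcategory cannot consist entirely of numerically trivial objects, and that step is exactly the full content of the conjecture, not a genuine reduction. Note moreover that the d\'evissage you sketch cannot work even in principle: if $E\in\CA$ is numerically trivial, i.e.\ its class vanishes in the numerical Grothendieck group of $X$, then $\chi(E,F)=0$ for \emph{every} $F\in\D^b(X)$, since the Euler pairing factors through numerical equivalence; in particular $\chi(E,\CO_x)=\rank E=0$ for every closed point $x$, so no choice of test object obtained by restricting to strata of the support can detect $E$ through Euler characteristics. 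Any argument would have to use non-additive invariants of the idempotent kernel $P$ itself, and the tools of this paper (additivity of $\HOH_\bullet$, the exact sequences for $\HOH^\bullet$, the computations of Section~\ref{s-comp}) give no handle on that.

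It is also worth knowing that the missing step is not merely hard but false: admissible subcategories with $\HOH_\bullet(\CA)=0$ and $\CA\ne 0$ (quasi-phantom and phantom categories) were later constructed inside derived categories of certain surfaces of general type and their products, so the conjecture as stated fails, and no completion of your strategy is possible. Your first paragraph (vanishing of $\HOH_0(\CA)$ forces $K_0(\CA)$ into the radical of the Euler form via the noncommutative Riemann--Roch pairing) is correct and is a standard observation, but it is strictly weaker than the conjecture, consistent with the existence of the counterexamples above, and it uses machinery (noncommutative HRR) not developed in this paper. In short: the paper offers no proof, your proposal does not supply one, and the key assertion you isolate as the crux is known to be false.
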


This conjecture has several very pleasant corollaries.

\begin{corollary}
If $\CA_1,\dots,\CA_n \subset \D^b(X)$ is a semiorthogonal collection of admissible subcategories
such that $\oplus_{i=1}^n \HOH_\bullet(\CA_i) = \HOH_\bullet(X)$ then $\D^b(X) = \langle \CA_1,\dots,\CA_n \rangle$
is a semiorthogonal decomposition.
\end{corollary}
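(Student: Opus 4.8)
The plan is to deduce the statement from the Nonvanishing Conjecture applied to the orthogonal complement of the subcategory generated by the collection, using the additivity of Hochschild homology established above (Corollary~\ref{hohadd}).

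First I would pass to the triangulated subcategory $\CA := \langle \CA_1,\dots,\CA_n \rangle \subset \D^b(X)$ generated by the collection. By the standard fact that a triangulated subcategory generated by a semiorthogonal collection of admissible subcategories is itself admissible and carries the evident semiorthogonal decomposition by the $\CA_i$, the subcategory $\CA$ is admissible, so $\D^b(X) = \langle \CA^\perp, \CA \rangle$ by Lemma~\ref{sod_adm}; refining the right-hand component by $\langle \CA_1,\dots,\CA_n \rangle$ produces a semiorthogonal decomposition
\[
\D^b(X) = \langle \CC, \CA_1, \dots, \CA_n \rangle, \qquad \CC := \CA^\perp,
\]
all of whose components, and in particular $\CC$, are automatically admissible by Lemma~\ref{sod_adm1}.

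Next I would feed this decomposition into Corollary~\ref{hohadd}. It yields a direct sum decomposition into subspaces of $\HOH_\bullet(X)$,
\[
\HOH_\bullet(X) = \HOH_\bullet(\CC) \oplus \HOH_\bullet(\CA_1) \oplus \dots \oplus \HOH_\bullet(\CA_n),
\]
in which each summand is the canonical subspace $\Im \phi_P$ attached to the corresponding projection kernel $P$ via Lemma~\ref{phikind}, and in which the summands genuinely sit in direct sum position. The hypothesis of the corollary being proved is precisely that the subspace $\HOH_\bullet(\CA_1) \oplus \dots \oplus \HOH_\bullet(\CA_n)$ already exhausts $\HOH_\bullet(X)$; comparing with the displayed equality forces $\HOH_\bullet(\CC) = 0$.

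Finally, the Nonvanishing Conjecture applied to the admissible subcategory $\CC \subset \D^b(X)$ gives $\CC = 0$, hence $\CA = \D^b(X)$ and $\D^b(X) = \langle \CA_1,\dots,\CA_n \rangle$ is a semiorthogonal decomposition, as claimed. The only step that is not pure formalism is the compatibility, for the finer decomposition $\langle \CC, \CA_1,\dots,\CA_n \rangle$, between the subspace $\HOH_\bullet(\CA_i) \subset \HOH_\bullet(X)$ supplied by Corollary~\ref{hohadd} and the intrinsic subspace attached to $\CA_i$ by Lemma~\ref{phikind}; this follows from the independence of the chosen semiorthogonal decomposition (Lemma~\ref{phikind}) together with the multiplicativity $\phi_{P \circ P'} = \phi_P \circ \phi_{P'}$ of Lemma~\ref{phik}. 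I expect this bookkeeping to be the main point to verify; everything else is a direct application of the machinery of Sections~\ref{ghoh}--\ref{hohadm}, and the whole statement is of course conditional on the Conjecture.
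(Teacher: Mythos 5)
Your proposal is correct and follows essentially the same route as the paper: complete the collection to a semiorthogonal decomposition $\D^b(X) = \langle \CC, \CA_1,\dots,\CA_n \rangle$ with $\CC$ the orthogonal complement of $\langle \CA_1,\dots,\CA_n\rangle$, use the additivity of Hochschild homology (Corollary~\ref{hohadd}, with Lemma~\ref{phikind} guaranteeing the subspaces $\HOH_\bullet(\CA_i)\subset\HOH_\bullet(X)$ do not depend on the chosen decomposition) to conclude $\HOH_\bullet(\CC)=0$, and then invoke the Nonvanishing Conjecture to get $\CC=0$. The extra compatibility check you flag is exactly what Lemma~\ref{phikind} provides, and the paper relies on it implicitly in the same way.
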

\begin{proof}
Take $\CA = \langle \CA_1,\dots,\CA_n \rangle^\perp$. Then $\CA$ is admissible and
$\D^b(X) = \langle \CA,\CA_1,\dots,\CA_n \rangle$ is a semiorthogonal decomposition.
Further, since the Hochschild homology of $X$ is the direct sum of the Hochschild
homology of the components $\CA_1$, \dots, $\CA_n$, we conclude that $\HOH(\CA) = 0$,
hence $\CA = 0$ by the Conjecture.
\end{proof}

\begin{corollary}
Let $X$ be an algebraic variety such that all integer cohomology classes are algebraic.
Let $n = \dim_\QQ(H^\bullet(X,\QQ))$. Assume that $E_1,\dots,E_n$ is an exceptional
collection in $\D^b(X)$. Then it is full, so that $\D^b(X) = \langle E_1,\dots,E_n \rangle$
is a semiorthogonal decomposition.
\end{corollary}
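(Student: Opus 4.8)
The plan is to derive the statement from the Nonvanishing Conjecture (equivalently, from the preceding Corollary) by a Hodge-theoretic dimension count; the argument is therefore conditional on that conjecture.

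First I would set up the relevant semiorthogonal decomposition. Since $E_1,\dots,E_n$ is an exceptional collection, the subcategory $\CB:=\langle E_1,\dots,E_n\rangle$ generated by it is admissible in $\D^b(X)$, so by Lemma~\ref{sod_adm} together with the refinement of $\CB$ by the exceptional collection we obtain a semiorthogonal decomposition
\[
\D^b(X)=\langle\CB^\perp,\langle E_1\rangle,\dots,\langle E_n\rangle\rangle .
\]
Each $\langle E_i\rangle$ is admissible with $\HOH_\bullet(\langle E_i\rangle)\cong\kk$ by Corollary~\ref{hohk}, so additivity of Hochschild homology (Corollary~\ref{hohadd}) gives
\[
\HOH_\bullet(X)=\HOH_\bullet(\CB^\perp)\oplus\bigoplus_{i=1}^n\HOH_\bullet(\langle E_i\rangle),
\qquad\text{hence}\qquad
\dim_\kk\HOH_\bullet(X)=\dim_\kk\HOH_\bullet(\CB^\perp)+n .
\]

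Next I would compute $\dim_\kk\HOH_\bullet(X)$ using the hypothesis. By the Hochschild--Kostant--Rosenberg isomorphism (Theorem~\ref{hkr}) one has $\HOH_t(X)\cong\bigoplus_p H^{p+t}(X,\Omega^p_X)$. The assumption that every integral cohomology class of $X$ is algebraic forces $H^k(X,\C)=0$ for $k$ odd and $H^{2p}(X,\C)=H^{p,p}(X)$, so that $h^{p,q}(X)=0$ whenever $p\neq q$. Consequently $\HOH_t(X)=0$ for $t\neq 0$, while
\[
\dim_\kk\HOH_\bullet(X)=\dim_\kk\HOH_0(X)=\sum_p h^{p,p}(X)=\sum_{p,q}h^{p,q}(X)=\dim_\QQ H^\bullet(X,\QQ)=n .
\]
Comparing with the previous display yields $\dim_\kk\HOH_\bullet(\CB^\perp)=0$, i.e.\ $\HOH_\bullet(\CB^\perp)=0$, and therefore also $\bigoplus_{i=1}^n\HOH_\bullet(\langle E_i\rangle)=\HOH_\bullet(X)$ as subspaces. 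Applying the Nonvanishing Conjecture to $\CB^\perp$ (or, equivalently, invoking the preceding Corollary, whose hypothesis we have just verified) gives $\CB^\perp=0$, hence $\D^b(X)=\CB=\langle E_1,\dots,E_n\rangle$, which is the assertion. Beyond the bookkeeping the only substantive input is the Hodge-theoretic dimension count just given; the genuine obstacle is that the whole statement rests on the still-unproven Nonvanishing Conjecture, without which the argument cannot be completed.
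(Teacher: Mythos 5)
Your argument is correct and matches the paper's own proof: both reduce the statement to the preceding Corollary (hence to the Nonvanishing Conjecture) by using HKR together with Corollary~\ref{hohk} to see that $\dim_\kk\HOH_\bullet(X)=n=\sum_i\dim_\kk\HOH_\bullet(\langle E_i\rangle)$, so the complement has vanishing Hochschild homology and must be zero. The only difference is cosmetic --- you unfold the proof of the preceding Corollary and spell out the Hodge-theoretic count explicitly, whereas the paper simply cites it.
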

\begin{proof}
Note that $\HOH_\bullet(X) \cong H^\bullet(X,\kk) \cong H^\bullet(X,\QQ)\otimes_\QQ \kk \cong \kk^n$
by the HKR isomorphism.
On the other hand, $\oplus_{i=1}^n \HOH_\bullet(\lan E_i\ran) = \oplus_{i=1}^n \kk = \kk^n$.
Hence the assumptions of the previous Corollary are satisfied, so we conclude that
the collection is full.
\end{proof}

Another important consequence of the Nonvanishing Conjecture is the following

\begin{corollary}
Any increasing sequence $\CA_1 \subset \CA_2 \subset \dots $ of admissible subcategories of $\D^b(X)$ stabilizes.
\end{corollary}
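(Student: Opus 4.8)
The plan is to attach to every admissible subcategory $\CA\subset\D^b(X)$ the subspace $\HOH_\bullet(\CA)\subset\HOH_\bullet(X)$ of Lemma~\ref{phikind}, to observe that this assignment is strictly monotone under inclusion of admissible subcategories once the Nonvanishing Conjecture is granted, and then to use that $\HOH_\bullet(X)$ is finite-dimensional: by the HKR isomorphism (Theorem~\ref{hkr}) it is the Hodge cohomology of the smooth projective variety $X$.

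First I would reduce to a two-step comparison. Suppose $\CA\subseteq\CB$ are both admissible in $\D^b(X)$. Restricting the left and right adjoints of $\CA\hookrightarrow\D^b(X)$ to $\CB$ shows that $\CA$ is admissible in $\CB$, so by Lemma~\ref{sod_adm} there is a semiorthogonal decomposition $\CB=\langle\CA,\CA''\rangle$ with $\CA''={}^\perp_{\CB}\CA$. Hence $\D^b(X)=\langle\CB^\perp,\CA,\CA''\rangle$, and by Lemma~\ref{sod_adm1} every component of this decomposition is admissible in $\D^b(X)$; in particular the Nonvanishing Conjecture applies to $\CA''$.

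Next I would check that $\HOH_\bullet(\CA)=\HOH_\bullet(\CB)$ as subspaces of $\HOH_\bullet(X)$ forces $\CA=\CB$. The inclusion $\CA\subseteq\CB$ gives $\CA_X\subseteq\CB_X$ inside $\D^b(X\times X)$ (the generators $p_1^*A\otimes p_2^*F$ of $\CA_X$ with $A\in\CA$ already lie in $\CB_X$, and the closure operation defining these categories is monotone), so the intrinsic description $\HOH_\bullet(\CA)=\langle\,\Im\phi_E\,\rangle_{E\in\CA_X}$ established in the proof of Lemma~\ref{phikind} yields $\HOH_\bullet(\CA)\subseteq\HOH_\bullet(\CB)$. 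On the other hand, applying the Additivity Theorem (Corollary~\ref{hohadd}) both to $\D^b(X)=\langle\CB^\perp,\CB\rangle$ and to $\D^b(X)=\langle\CB^\perp,\CA,\CA''\rangle$, and cancelling the common summand $\HOH_\bullet(\CB^\perp)$ (the same subspace in both, by Lemma~\ref{phikind}), gives $\dim\HOH_\bullet(\CB)=\dim\HOH_\bullet(\CA)+\dim\HOH_\bullet(\CA'')$. Therefore $\HOH_\bullet(\CA)=\HOH_\bullet(\CB)$ implies $\HOH_\bullet(\CA'')=0$, hence $\CA''=0$ by the Conjecture, hence $\CB=\langle\CA,0\rangle=\CA$. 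Combined with the containment above, this says: a strict inclusion $\CA\subsetneq\CB$ of admissible subcategories produces a strict inclusion $\HOH_\bullet(\CA)\subsetneq\HOH_\bullet(\CB)$ of subspaces of $\HOH_\bullet(X)$.

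Finally, given an increasing chain $\CA_1\subseteq\CA_2\subseteq\cdots$, the associated chain $\HOH_\bullet(\CA_1)\subseteq\HOH_\bullet(\CA_2)\subseteq\cdots$ of subspaces of the finite-dimensional space $\HOH_\bullet(X)$ must stabilize, say $\HOH_\bullet(\CA_N)=\HOH_\bullet(\CA_{N+k})$ for all $k\ge0$; by the previous paragraph $\CA_N=\CA_{N+k}$ for all $k\ge0$, so the original chain stabilizes. The argument is essentially formal once the Conjecture is granted; the only point deserving care is the reduction in the second paragraph, which must be arranged so that the complementary piece $\CA''$ is admissible \emph{in $\D^b(X)$} (not merely in $\CB$), since that is what makes the Nonvanishing Conjecture applicable to it.
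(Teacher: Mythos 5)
Your proposal is correct and follows essentially the same route as the paper: identify each $\HOH_\bullet(\CA_i)$ with a subspace of the finite-dimensional space $\HOH_\bullet(X)$, use additivity over the decomposition $\CA_{i+1}=\langle\CA_i,\CA''\rangle$ to show the subspaces strictly increase unless $\CA''$ has vanishing Hochschild homology, and invoke the Nonvanishing Conjecture to conclude stabilization. The extra care you take (admissibility of the complement in $\D^b(X)$, monotonicity of the subspaces via Lemma~\ref{phikind}) just makes explicit what the paper's proof leaves implicit.
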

\begin{proof}
From the sequence of subcategories we obtain an increasing sequence
$\HOH_\bullet(\CA_1) \subset \HOH_\bullet(\CA_2) \subset \dots$
of vector subspaces in the Hochschild homology $\HOH_\bullet(X)$. Since $\HOH_\bullet(X)$
is finite dimensional, this sequence stabilizes, hence
$\HOH_\bullet(\CA_{i}) = \HOH_\bullet(\CA_{i+1})$ for $i \gg 0$.
On the other hand, we have a semiorthogonal decomposition $\CA_{i+1} = \langle \CA_i,\fa_i \rangle$,
where $\fa_i = {}^\perp\CA_i \cap \CA_{i+1}$ is an admissible subcategory in $\D^b(X)$.
It follows that $\HOH_\bullet(\fa_i) = 0$ for $i \gg 0$, hence $\fa_i = 0$ by
the Nonvanishing Conjecture. Thus $\CA_{i+1} = \CA_i$ for $i \gg 0$.
\end{proof}

\end{document}